\setlist[enumerate,1]{label={\roman*)}}
\definecolor{MyColor}{HTML}{0047AB}
\renewcommand{\@secnumfont}{\bfseries}
\patchcmd{\section}{\scshape}{\bfseries}{}{}
\patchcmd{\section}{\normalfont}{\normalfont\color{MyColor}}{}{}
\patchcmd{\subsection}{\normalfont}{\normalfont\color{MyColor}}{}{}
\def\subsubsection{\@startsection{subsubsection}{3}%
\z@{.5\linespacing\@plus.7\linespacing}{-.5em}%
{\normalfont\bfseries}}
\newtheorem{theorem}{Theorem}[section]
\newtheorem{lemma}[theorem]{Lemma}
\newtheorem{definition}[theorem]{Definition}
\newtheorem{proposition}[theorem]{Proposition}
\newtheorem{corollary}[theorem]{Corollary}
\newtheorem{assumption}[theorem]{Assumption}
\newtheorem{remark}[theorem]{Remark}
\def\command@factory#1{%
\expandafter\def\csname b#1\endcsname{\mathbf{#1}}
\expandafter\def\csname fk#1\endcsname{\mathfrak{#1}}
\expandafter\def\csname bb#1\endcsname{\mathbb{#1}}
\expandafter\def\csname cl#1\endcsname{\mathcal{#1}}
\expandafter\def\csname scr#1\endcsname{\mathscr{#1}}
\expandafter\def\csname bcl#1\endcsname{\mathbfcal{#1}}
}
\newcommand{\rmd}{\mathrm{d}}
\newcommand{\ds}{\, \rmd s}
\newcommand{\dt}{\, \rmd t}
\newcommand{\dZt}{\, \rmd \bZ_t}
\newcommand{\dd}{\mathrm{d}}
\newcommand{\var}{\textnormal{var}}
\newcommand{\loc}{\textnormal{loc}}
\newcommand{\cB}{\mathcal{B}}
\newcommand{\cD}{\mathcal{D}}
\newcommand{\cE}{\mathcal{E}}
\newcommand{\cF}{\mathcal{F}}
\newcommand{\cL}{\mathcal{L}}
\newcommand{\cX}{\mathcal{X}}
\newcommand{\N}{\mathbb{N}}
\newcommand{\R}{\mathbb{R}}
\newcommand{\T}{\mathbb{T}}
\newcommand{\Z}{\mathbb{Z}}
\newcommand{\eps}{\varepsilon}
\renewcommand{\leq}{\leqslant}
\renewcommand{\geq}{\geqslant}
\newcommand{\lp}{\llparenthesis}
\newcommand{\rp}{\rrparenthesis}
\DeclareMathOperator{\supp}{supp}
\begin{document}

\title[On the well-posedness of (nonlinear) rough continuity equations]{On the well-posedness of (nonlinear) rough continuity equations}

\author{Lucio Galeati$^1$}
\address{$^1$Dipartimento di Ingegneria e Scienze dell'Informazione e Matematica, Università degli Studi dell'Aquila, Italy}
\email{$^1$lucio.galeati@univaq.it}
\author{James-Michael Leahy$^2$}
\address{$^2$ Department of Mathematics, Imperial College London, United Kingdom and PhysicsX, United Kingdom}
\email{$^2$j.leahy@imperial.ac.uk}
\author{Torstein Nilssen$^3$}
\address{$^3$Department of Mathematics, University of Adger, Norway}
\email{$^3$torstein.nilssen@uia.no}

\maketitle
\begin{abstract}
Motivated by applications to fluid dynamics, we study rough differential equations (RDEs) and rough partial differential equations (RPDEs) with non-Lipschitz drifts.
We prove well-posedness and existence of a flow for RDEs with Osgood drifts, as well as well-posedness of weak $L^p$-valued solutions to linear rough continuity and transport equations on $\R^d$ under DiPerna--Lions regularity conditions; a combination of the two then yields flow representation formula for linear RPDEs.
We apply these results to obtain existence, uniqueness and continuous dependence for $L^1\cap L^\infty$-valued solutions to a general class of nonlinear continuity equations.
In particular, our framework covers the $2$D Euler equations in vorticity form with rough transport noise, providing a rough analogue of Yudovich's theorem.
As a consequence, we construct an associated continuous random dynamical system, when the driving noise is a fractional Brownian motion with Hurst parameter $H \in (1/3,1)$.
We further prove weak existence of solutions for initial vorticities in $L^1\cap L^p$, for any $p\in [1,\infty)$.\\

\noindent \textbf{Keywords:} Rough partial differential equations; flow representation; rough 2D Euler; DiPerna--Lions theory; Yudovich theorem.\\

\noindent \textbf{MSC Classification (2020):} 60L20, 60L50, 60H15, 35R60, 35Q31.

\end{abstract}

{\hypersetup{linkcolor=MyColor}
\setcounter{tocdepth}{3}
\tableofcontents}

\section{Introduction}

Over the last thirty years, several authors have advocated for the use of stochastic fluid dynamics equations with transport-type noise, in order to account for the turbulent small scales of realistic fluids, see for instance \cite{BrCaFl1991,BrCaFl1992,MikRoz2004,memin2014fluid,Holm2015,crisan2019solution}.
The starting point of the theoretical derivation in most of these works is to prescribe an evolution for the Lagrangian particles composing the fluid, formally of the form
\begin{equation} \label{eq:intro ODE}
\dot{y}_t = u_t(y_t) +  \sum_{k=1}^m \xi_k(y_t) \dot{Z}^k_t, \qquad y_0 = x \in \R^d.
\end{equation}
Here $u$ is the coarse-grained macroscopic velocity field of the fluid, while the random terms $\dot Z^k$ represent the unresolved, turbulent small scales fluctuations around it.
Starting from this Lagrangian description one can then move to an Eulerian one, by means of the (stochastic) material derivative.
For instance, the evolution of a density $\rho$ carried along the fluid flow \eqref{eq:intro ODE}, by the conservation of mass principle, is (formally) given by the stochastic continuity equation
\begin{equation}\label{eq:intro fluid CE}
    \partial_t \rho_t + \nabla\cdot (u_t \rho_t) + \sum_{k=1}^m \nabla\cdot (\xi_k \dot Z^k_t) = 0.
\end{equation}
One can similarly derive more complex nonlinear equations for the fluid flow $u$ itself, resulting in stochastic Euler and Navier--Stokes-type systems.

Separation of scales and homogenization arguments suggest to take the fluctuations Gaussian in nature. Due to mathematical convenience, most authors (cf. \cite{BrCaFl1991,MikRoz2004,memin2014fluid}) then assume $Z^k$ to be Brownian, in order to have access to It\^o calculus and consequently solve \eqref{eq:intro ODE}-\eqref{eq:intro fluid CE}.
However, in terms of modelling turbulent fluids, it is important to allow for non-Markovian noise with memory dependence, see for instance \cite{lilly2017fractional, faranda2014modelling,franzke2015stochastic}.

Rough path theory, introduced by Terry Lyons in \cite{Lyons98}, allows to overcome this obstacle, providing a comprehensive solution theory for differential equations of the form \eqref{eq:intro ODE} whenever $Z$ is not differentiable in time, but can be lifted to a \emph{rough path}. As a consequence, it allows for a large class of non-Markovian signals, like fractional Brownian motion of Hurst parameter $H\in (1/4,1)$.
However, the implementation of this theory typically requires generous regularity assumptions on $u$ and $\xi_k$, which can be become challenging to verify when $u$ is itself a solution to a nonlinear equation.

The seminal paper \cite{Holm2015} by Holm provides a systematic way of deriving stochastic fluid dynamics equations from geometric constraints, by enforcing the validity of an underlying variational principle; at the Eulerian level, the resulting noise is usually of \emph{Lie transport} type. As a consequence, the geometrical structure of geophysical fluids can become as a guiding principle for designing robust \emph{stochastic parametrization schemes} and has relevant applications in uncertainty quantification, data assimilation and filtering (see e.g. \cite{cotter2020modelling,cotter2020filtering}).

Recently, the framework from \cite{Holm2015} has been combined with rough path theory in \cite{CrisanHolmLeahyNilssen22a}, where the authors allow the noise $Z$ to be any geometric rough path.
The resulting incompressible Euler equations with rough Lie transport take the form 
\begin{equation} \label{eq:Euler intro}
    \begin{cases}
        \partial_t u +  u_t \cdot \nabla u_t + \sum_{k=1}^m \left( \xi_k \cdot \nabla u_t + (D \xi_k)^T u_t \right) \dot{Z}_t^k + \nabla p_t = 0,\\
        \nabla\cdot u_t =0.
    \end{cases}
\end{equation}
On the $d$-dimensional torus $\T^d$, the mathematical solvability of \eqref{eq:Euler intro} is studied in \cite{CrisanHolmLeahyNilssen22b}, proving local well-posedness of maximal solutions in $W^{n,2}$ for $n \geq \lfloor \frac{d}{2} \rfloor + 2$, as well as a Beale-Kato-Majda blow-up criterion in terms of the $L^1_t L^\infty_x$-norm of the vorticity; the results hold for sufficiently regular, divergence-free $\xi_k$.
In dimension $d=2$, the equation for the vorticity $\omega := \nabla^\perp\cdot u$ reads as
\begin{equation} \label{eq:vorticity intro}
    \partial_t \omega_t +  u_t \cdot \nabla \omega_t + \sum_{k=1}^m  \xi_k \cdot \nabla \omega_t\,  \dot{Z}_t^k  = 0,
\end{equation}
where the velocity $u$ can be recovered from $\omega$ by the Biot-Savart law
$u = K\ast \omega$; the active scalar transport nature of \eqref{eq:vorticity intro} then allows to deduce preservation of the $L^\infty_x$-norm of $\omega$.
Combined with the aforementioned blow-up criterion, this ultimately allows the authors in \cite{CrisanHolmLeahyNilssen22b} to deduce global well-posedness for the rough $2$D Euler equations \eqref{eq:Euler intro} for velocities $u$ in $W^{3,2}$.
On the other hand, in light of the structure \eqref{eq:vorticity intro} and the classical results by Yudovich \cite{yudocivh1963,yudovich1995} in the deterministic case, it is reasonable to expect \eqref{eq:vorticity intro} to be wellposed as soon as (for instance) $\omega_0\in L^1_x\cap L^\infty_x$, without the need for any further Sobolev regularity.

The goal of the present paper is to address the current gaps in the existing literature concerning the solvability of \eqref{eq:intro ODE}-\eqref{eq:intro fluid CE}-\eqref{eq:vorticity intro}, in the presence of rough path noise $Z$, in order to provide a robust theoretical framework to support the aforementioned applications.
In short, our main contributions the following:
\begin{itemize}
    \item Wellposedness of the RDE \eqref{eq:intro ODE} and existence of an underlying continuous flow whenever $u$ satisfies an Osgood regularity condition, but is not necessarily Lipschitz.
    \item Solvability of rough transport and continuity equations of the form \eqref{eq:intro fluid CE} when $u$ satisfies DiPerna--Lions regularity conditions \cite{diperna1989ordinary}.
    \item Solvability the rough $2$D Euler equations in the class of vorticities $\omega\in L^1_x\cap L^\infty_x$, as part of a more general class of nonlinear rough continuity equations (see \eqref{eq:intro_nonlinear_RCE} below), and construction of an underlying random dynamical system.
\end{itemize}

Let us finally remark that throughout this paper we take the underlying state space to be the (technically slightly more challenging) full space $\R^d$, rather than the torus $\T^d$ as in previous works. We believe this choice to be important in order to pave the way for future generalizations to arbitrary smooth domains, by using extension theorems and flow representation methods; it is clear however that, mutatis mutandis, our results readily readapt to the torus $\T^d$ as well.

\subsection{Main results and discussion} 

The first main goal of the present work is to 
prove well-posedness of rough differential equations and construct the corresponding flow of homeomorphisms, when the drift term enjoys Osgood-type regularity.
In the following, in order to specify what we mean by the \eqref{eq:intro ODE} when $Z^k$ are not differentiable, we will assume that $Z$ admits a rough lift $\bZ=(Z,\bbZ)$ of finite $\fkp$-variation for some $\fkp\in [2,3)$, $\bZ\in \mathcal{C}^p$ for short, so that we can apply rough path theory (cf. Section \ref{sec:RDEs}).
Within this framework, \eqref{eq:intro ODE} admits a rigorous interpretation as the rough differential equation (RDE) \eqref{eq:intro_RDE} below.

Our main result for RDEs can be summarized as follows:
\begin{theorem}\label{thm:Osgood intro}
Assume that $Z$ lifts to a rough path $\bZ \in \clC^{\mathfrak{p}}$ for $\mathfrak{p} \in [2,3)$, $\xi \in C_b^3$ and $b$ is a bounded Osgood vector field. Then the rough differential equation
\begin{equation}\label{eq:intro_RDE}
    \dd y_t = b_t(y_t) \dd t + \sum_{k=1}^m \xi_k(y_t) \dd \mathbf{Z}^k_t, \quad y_0=x\in\R^d
\end{equation}
is well-posed and induces a flow $(t,x) \mapsto \Phi_t(x)$ of homeomorphisms on $\R^d$, which satisfies 
$$
\sup_{t \in [0,T]} |\Phi_t(x) - \Phi_t(\tilde{x})|  \leq F(|x- \tilde{x}|)
$$
for some modulus of continuity $F$. The same estimate holds with $\Phi$ replaced by its inverse $\Phi^{-1}$. 
\end{theorem}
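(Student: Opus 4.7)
My plan is to reduce the RDE \eqref{eq:intro_RDE} to a classical ODE by means of a Doss--Sussmann-type flow decomposition, and then to apply the classical Osgood--Bihari theory to the transformed equation. First, I would consider the pure-rough flow $\psi_t$ generated by
$$ \dd \psi_t = \sum_{k=1}^m \xi_k(\psi_t)\, \dd \bZ^k_t, \quad \psi_0 = \mathrm{id}. $$
Since $\xi \in C_b^3$ and $\bZ \in \clC^{\mathfrak{p}}$ with $\mathfrak{p}<3$, standard rough-flow theory (Lyons, Kunita, Friz--Victoir) produces $\psi_t$ as a $C^{1+\gamma}$-diffeomorphism of $\R^d$, with quantitative bounds on $\psi_t^{\pm 1}$ and $D\psi_t^{\pm 1}$, uniform in $t \in [0,T]$ on compact sets.

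Next, I would perform the change of variable $z_t := \psi_t^{-1}(y_t)$. For any solution $y$ to \eqref{eq:intro_RDE}, a rough chain rule shows that $z_t$ is absolutely continuous in time and satisfies the random ODE
$$ \dot{z}_t = [D\psi_t(z_t)]^{-1}\, b_t(\psi_t(z_t)) =: \tilde{b}_t(z_t), \qquad z_0 = x, $$
while conversely any solution of this ODE lifts through $y_t = \psi_t(z_t)$ to a solution of \eqref{eq:intro_RDE}. The transformed drift $\tilde{b}$ is bounded and locally Osgood-continuous in space, with modulus of the form $C(K)\,\omega(C(K)\cdot)$ on a ball of radius $K$. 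Since the condition $\int_0^1 \dd u/\omega(u) = \infty$ is preserved under composition with a Lipschitz map, this is still an Osgood modulus. The classical Osgood theorem then yields global existence, uniqueness, and the homeomorphism property of $x \mapsto z_t^x$, together with the Bihari--Osgood quantitative estimate $|z_t^x - z_t^{\tilde x}| \leq G(|x-\tilde x|)$ for an explicit modulus $G$.

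Finally, I would set $\Phi_t := \psi_t \circ z_t^{\cdot}$, which is automatically a homeomorphism of $\R^d$ and solves \eqref{eq:intro_RDE}; combining the Bihari estimate with the local Lipschitz continuity of $\psi_t$ gives the desired modulus $F$. The estimate for $\Phi_t^{-1}$ follows symmetrically by writing $\Phi_t^{-1} = (z_t^{\cdot})^{-1} \circ \psi_t^{-1}$, so that $(z_t^{\cdot})^{-1}$ is the Osgood flow of a time-reversed drift and inherits an analogous Bihari bound. The main technical obstacle is the flow decomposition step: since $\psi_t$ is only $C^{1+\gamma}$ in space, a naive It\^o--Kunita formula is not directly applicable, and one must invoke a controlled rough path chain rule exploiting the absolute continuity of $z$ in time to separate rough and finite-variation contributions cleanly. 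This is where the boundedness of $b$ and the $C_b^3$-regularity of $\xi$ enter in a crucial way, and it is what one has to do carefully to make the whole reduction rigorous.
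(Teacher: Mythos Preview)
Your Doss--Sussmann reduction is a natural strategy and genuinely different from the paper's, but there is a real gap at the step where you assert that the transformed drift $\tilde b_t(z)=[D\psi_t(z)]^{-1}\,b_t(\psi_t(z))$ inherits an Osgood modulus of the form $C\,h(C\,\cdot)$. Splitting the increment,
\[
|\tilde b_t(z)-\tilde b_t(z')| \;\le\; C\,h\big(L|z-z'|\big) \;+\; \|b\|_\infty\,\big|[D\psi_t(z)]^{-1}-[D\psi_t(z')]^{-1}\big|,
\]
the second term is governed by the spatial modulus of $D\psi_t$. With only $\xi\in C^3_b$ and $\mathfrak{p}\in[2,3)$, the rough flow $\psi_t$ is at best $C^{1+\gamma}$ for some $\gamma<1$ (and even $C^1$ is borderline, since the joint RDE for $(\psi,D\psi)$ has only $C^2_b$ coefficients); Lipschitz continuity of $D\psi_t$ would require $\xi\in C^4_b$. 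The resulting modulus for $\tilde b$ is therefore $h(Lr)+C'r^\gamma$, and for any Osgood modulus that is not already H\"older-dominated (e.g.\ $h(r)=r(1-\log r)$) one has $h(Lr)\ll r^\gamma$ as $r\to 0$, so the sum behaves like $r^\gamma$ and $\int_{0^+} r^{-\gamma}\,\dd r<\infty$: the Osgood condition \emph{fails} for $\tilde b$, and the Bihari argument does not close.

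The paper avoids this regularity loss by never differentiating the rough flow. It views $\mu_t=\int_0^t b_s(y_s)\,\dd s$ as a finite $1$-variation forcing and establishes a linear contraction estimate for forced RDEs (Lemma~\ref{lem:contraction-RDE-forcing}), namely $\sup_t|y^1_t-y^2_t|\le C(|x^1-x^2|+\|\mu^1-\mu^2\|_{1,T})$, valid precisely for $\xi\in C^3_b$. The point is that the $1$-variation of a Lebesgue integral is controlled by the $L^1$-norm of the integrand, so $\|\mu^1-\mu^2\|_{1,T}\le\int_0^T g_s\,h(|y^1_s-y^2_s|)\,\dd s$; feeding this back into the contraction inequality and applying Bihari (Lemma~\ref{lem:bihari}) gives the flow estimate directly, and the inverse flow is handled by a time-reversal argument (Lemma~\ref{lem:time-reversal-RDE}) rather than by differentiating anything. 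Your approach would go through under the stronger hypothesis $\xi\in C^4_b$, but as written it does not deliver the theorem at the stated regularity.
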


See Theorem \ref{thm:wellposedness-RDE} in Section \ref{subsec:RDE-flow} for the more precise version of the above statement, with an explicit formula for $F$, which is defined in terms of the Osgood modulus of continuity of $b$.
At a technical level, Theorem \ref{thm:Osgood intro} is non-trivial as it truly requires to work with finite $\fkp$-variation spaces, and the same proof would not work when manipulating simpler H\"older spaces $C^\gamma$ (with $\gamma\sim 1/\fkp$) as often done in rough paths \cite{FH2020}. This is because, in order to establish uniqueness and regularity of the flow leveraging on the regularity of $b$, one needs to close estimates for quantities of the form
\begin{align*}
    \bigg\llbracket \int_0^\cdot b_t(y^1_t)\dd t- \int_0^\cdot b_t(y^2_t)\dd t \bigg\rrbracket_E
\end{align*}
for a suitable choice of the seminorm and function space $E$.
Contrary to H\"older functions, spaces of finite $\fkp$-variation are very convenient for this task, as the $1$-variation of Lebesgue integrals can be controlled by the $L^1$-norm of its integrand.
Let us also point out that Theorem \ref{thm:wellposedness-RDE} holds for any rough path $\bZ$, not necessarily geometric; in particular, existence and regularity of the inverse flow $\Phi^{-1}$ requires the use of time reversal arguments. However a general statement about time-reversed RDEs, for non geometric $\mathfrak{p}$-variation rough paths, seems to be missing in the literarure; we fill here this gap, cf. Definition \ref{defn:time-reversed-path} and Lemma \ref{lem:time-reversal-RDE} from Section \ref{subsec:RDE-flow}.

Next, we pass to consider the linear rough partial differential equations (RPDEs) on $\R^d$ associated to the RDE \eqref{eq:intro_RDE}, namely the rough continuity equation
\begin{equation} \label{eq:intro CE}
\dd \rho_t + \nabla \cdot (b_t \rho_t)\dd t  +  \sum_{k=1}^m \nabla \cdot (\xi_k  \rho_t) \dd \bZ_t^k = 0
\end{equation}
and rough transport equation
\begin{equation} \label{eq:intro TE}
\dd f_t +  b_t \cdot \nabla  f_t\, \dd t  + \sum_{k=1}^m  \xi_k \cdot \nabla f_t\,  \dd \bZ_t^k = 0.
\end{equation}

Equations \eqref{eq:intro CE}-\eqref{eq:intro TE} again can be made meaningful by a rough path formalism; in particular, here we adopt the unbounded rough drivers framework from \cite{BaiGub2017}.
To this end, from now on we will always assume $\bZ$ to be a geometric rough path of finite $\fkp$-variation, which we abbreviate by $\bZ\in \mathcal{C}^\fkp_g$.

In view of applications to nonlinear PDEs, where the drift depends nontrivially on the solution itself, it is often unreasonable to impose too strong regularity assumptions on $b$;
moreover, as long as the vector fields $\xi_k$ are taken smooth enough, it is reasonable to expect the equations \eqref{eq:intro CE}-\eqref{eq:intro TE} to be well-posed under the same conditions on $b$ allowed by the deterministic literature.
We prove that this is indeed the case, and in particular we successfully develop a wellposedness theory for $L^p$-valued solutions, as long as $b$ satisfies the same regularity assumptions as in the celebrated DiPerna--Lions theory \cite{diperna1989ordinary}, filling a noticeable gap in the existing RPDE literature.
%
The main analytical challenge with such low-regularity solutions is that the RPDE is only satisfied in the sense of distributions (in space); thus, special care is needed whenever using energy estimates, duality or doubling of variables arguments. On top of that, one must handle low time regularity coming from $\bZ$.
Nevertheless, a careful combination of the techniques from \cite{diperna1989ordinary} and \cite{BaiGub2017} allows to successfully solve \eqref{eq:intro CE}-\eqref{eq:intro TE}, under rather minimal requirements on $(b,\xi)$. That is, the joint regularity condition we impose on $(b,\xi)$ coincides with the known ones for each term taken separately, coming respectively from deterministic and rough PDEs.

Compared the standard deterministic theory from \cite{diperna1989ordinary}, our strategy for solving linear RPDEs adds a little twist by mostly focusing on \eqref{eq:intro CE} rather than \eqref{eq:intro TE}. Uniqueness is achieved by establishing a \emph{product formula}, see below, which gives access to duality arguments and weak-strong stability results. \emph{Renormalizability} of transport equations then becomes a mere corollary of the latter, and does not play anymore a pivotal role as it did in \cite{diperna1989ordinary}.
 
Our main result for linear RPDEs is summarized below; we refer the reader to Section \ref{subsec:function.spaces} for the relevant function spaces and notations.

\begin{theorem}\label{thm:wellposedness-linear-RPDE-intro}
    Let $\fkp\in [2,3)$ and assume that $Z$ lifts to a geometric rough path $\bZ\in \clC^{\fkp}_g$; let
    \begin{equation*}
        \frac{b}{1+|x|}\in L^1_t L^1_x + L^1_t L^\infty_x,\quad b\in L^1_t W^{1,1}_\loc, \quad \nabla\cdot b \in L^1_t L^\infty_x,\quad \xi_k \in C^3_b, \quad \nabla\cdot \xi_k =0.
    \end{equation*}
    Then for any $\rho_0\in L^1_x\cap L^\infty_x$ (resp. $f_0\in L^1_x\cap L^\infty_x$), there exists a unique solution to \eqref{eq:intro CE} (resp. \eqref{eq:intro TE}) belonging to $\cB_b([0,T]; L^1_x\cap L^\infty_x)$; furthermore, $\rho,f\in C([0,T];L^p_x)$ for all $p\in [1,\infty)$.

    Moreover, the following hold:
    \begin{enumerate}
        \item {\em Product formula:} the product $\rho f$ is again a solution to \eqref{eq:intro CE};
        \item {\em Duality:} for every $t\in [0,T]$, $\langle \rho_t,f_t\rangle=\langle \rho_0,f_0\rangle$;
        \item {\em Renormalizability:} for every $\beta\in C^1_b$, $\beta(f)$ is again a solution to \eqref{eq:intro TE}.
    \end{enumerate}
\end{theorem}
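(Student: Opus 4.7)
The plan is to deduce all the stated conclusions from a double mollification procedure combined with commutator estimates carried out within the unbounded rough drivers framework of \cite{BaiGub2017}.

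\textbf{Existence.} First I would mollify $b$ spatially to obtain $b^\varepsilon \in L^1_t C^\infty_b$, preserving all the DiPerna--Lions bounds uniformly in $\varepsilon$. Theorem \ref{thm:Osgood intro} applied to the regularized RDE produces a flow $\Phi^\varepsilon$, and one defines $\rho^\varepsilon_t := \rho_0 \circ (\Phi^\varepsilon_t)^{-1}\, J^\varepsilon_t$ and $f^\varepsilon_t := f_0 \circ (\Phi^\varepsilon_t)^{-1}$, which solve the associated regularized RPDEs. Because $\nabla\cdot\xi_k = 0$, the Jacobian $J^\varepsilon_t$ depends only on $\nabla\cdot b^\varepsilon$, giving uniform $L^1_x\cap L^\infty_x$ bounds through $\|J^\varepsilon_t\|_\infty \leq \exp(\|\nabla\cdot b\|_{L^1_t L^\infty_x})$. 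Weak-$*$ compactness yields candidate solutions, and stability of the RPDE formulation under such limits confirms that the limits solve \eqref{eq:intro CE} and \eqref{eq:intro TE}.

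\textbf{Product formula (the key step).} The heart of the argument is to show that $\rho f$ solves \eqref{eq:intro CE}. Regularize by spatial convolution $\rho^\delta := \rho*\eta^\delta$, $f^\delta := f*\eta^\delta$; these satisfy \eqref{eq:intro CE} and \eqref{eq:intro TE} respectively, up to Friedrichs-type commutator errors:
\[
\dd \rho^\delta + \nabla\cdot (b\,\rho^\delta)\dt + \sum_k \nabla\cdot(\xi_k \rho^\delta)\dd \bZ^k = [\nabla\cdot(b\,\cdot),\eta^\delta*]\rho\dt + \sum_k [\nabla\cdot(\xi_k\,\cdot),\eta^\delta*]\rho\dd\bZ^k,
\]
and analogously for $f^\delta$. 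At the smooth level the Leibniz rule gives an equation of continuity type for the product $\rho^\delta f^\delta$, plus commutator remainders, and one passes to the limit $\delta \to 0$. The classical Friedrichs lemma, combined with $b\in L^1_t W^{1,1}_\loc$ and $\nabla\cdot b\in L^1_t L^\infty_x$, handles the drift commutators in $L^1_t L^1_{x,\loc}$. The rough commutators require much more care: they must be controlled as two-parameter germs whose first- and second-order components, together with their sewing remainders, all vanish in the $\fkp$-variation norms dictated by \cite{BaiGub2017}. This is where the hypothesis $\xi_k\in C^3_b$ is consumed, and is the main technical obstacle.

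\textbf{Duality and uniqueness.} Once the product formula is established, testing the equation for $\rho f$ against a cut-off $\chi_R(x)=\chi(x/R)$ and sending $R\to\infty$ yields $\langle \rho_t, f_t\rangle = \langle \rho_0, f_0\rangle$: the boundary contribution from the drift is controlled via the growth splitting $b/(1+|x|) \in L^1_t L^1_x + L^1_t L^\infty_x$, while the one from the rough term vanishes because $\nabla\cdot \xi_k = 0$. Uniqueness of \eqref{eq:intro CE} then follows by the classical duality pairing: for two solutions $\rho^1,\rho^2$ with the same initial datum and any $g\in C^\infty_c$, construct (via the existence part) a solution $f$ of the backward transport equation with terminal datum $g$; duality yields $\langle \rho^i_t, g\rangle = \langle \rho^i_0, f_0\rangle$, and thus $\rho^1 = \rho^2$. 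The symmetric argument gives uniqueness for \eqref{eq:intro TE}.

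\textbf{Time-continuity and renormalizability.} Weak continuity $t\mapsto \langle \rho_t,\varphi\rangle$ for $\varphi \in C^\infty_c$ is immediate from the integral RPDE formulation, and strong $L^p_x$-continuity for $p\in[1,\infty)$ follows by interpolation once the renormalization identity is used to ensure continuity of $t \mapsto \|\rho_t\|_{L^p_x}$; this identity is obtained by passing to the limit in the chain-rule applied to $|\rho^\delta|^p$ under the same commutator estimates used in the product formula. Finally, renormalizability of \eqref{eq:intro TE} is handled by the same commutator analysis applied to $\beta(f^\delta)$: the $C^1_b$-regularity of $\beta$ is enough to control the chain-rule error in the vanishing $\delta\to 0$ limit, so that $\beta(f)$ inherits the equation from $f$.
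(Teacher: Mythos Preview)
Your outline is structurally sound and the existence, duality, and uniqueness portions broadly match the paper's logic. The substantive divergence is in the \emph{product formula} --- the technical heart of the theorem --- and here your approach both differs from the paper's and contains a gap.

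The paper does \emph{not} mollify $\rho$ and $f$ separately and rely on Friedrichs commutators. Instead it uses the \emph{doubling of variables / tensorization} technique of \cite{BaiGub2017}: one first derives the RPDE satisfied by $\rho\otimes f$ on $\R^{2d}$, then applies a blow-up transformation $T_\varepsilon$ concentrating near the diagonal $\{x=y\}$, establishes uniform-in-$\varepsilon$ bounds on the transformed unbounded rough driver (this is where $\xi\in C^3_b$ is consumed) and on the transformed drift (this is where $b\in L^1_t W^{1,1}_\loc$ enters, via an estimate that is the exact analogue of the DiPerna--Lions commutator), and finally sends $\varepsilon\to 0$ to recover the equation for the diagonal trace $\rho f$.

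Your mollification route stalls at the Leibniz step. You write that ``at the smooth level the Leibniz rule gives an equation of continuity type for the product $\rho^\delta f^\delta$'', but spatial mollification makes $\rho^\delta,f^\delta$ smooth in $x$ only; in $t$ they remain genuinely rough (controlled paths driven by $\bZ$), so no classical product rule is available. Likewise the rough commutator terms you write, such as $[\nabla\cdot(\xi_k\,\cdot),\eta^\delta*]\rho\,\dd\bZ^k$, are not a priori meaningful objects: they are neither rough integrals nor unbounded rough drivers, and must instead be absorbed into the $\fkp/3$-variation remainder and shown to vanish there as $\delta\to 0$ --- a task at least as hard as the tensorization itself. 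One could attempt a rescue by applying the rough product rule for controlled paths pointwise in $x$ (geometricity of $\bZ$ would kill the bracket correction), but that is not what you wrote, and carrying it through at the level of unbounded rough drivers essentially reconstructs the tensorization machinery.

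A secondary difference: the paper derives renormalizability as a \emph{corollary of strong stability} (the smooth approximants $f^n$ are classically renormalized, and one passes to the limit via the already-established convergence $f^n\to f$ in $C([0,T];L^p_x)$), rather than by a direct commutator argument on $\beta(f^\delta)$; the latter would face the same rough chain-rule obstruction.
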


Theorem \ref{thm:wellposedness-linear-RPDE-intro} will be proved in Section \ref{subsec:linear-stability}.
Let us stress that Section \ref{sec:linear-RPDEs} overall contains many other useful results, like for instance conservation of mass (Lemma \ref{lem:conservation-mass}), stability in both weak and strong topologies (Corollaries \ref{cor:stability-1-rte} and \ref{cor:linear-RPDE-stability-2}) and criteria for strong compactness (Corollary \ref{cor:linear-compactness}).
The result remain true whenever e.g. $\rho_0\in L^p_x$ for some $p\in [1,\infty]$, as long as $b$ possesses the correct ``complementary integrability'', cf. Theorem \ref{thm:uniqueness-linear-RPDE}.

When $b$ satisfies the Osgood regularity assumption, we can further show that solutions to the continuity and transport equations \eqref{eq:intro CE}-\eqref{eq:intro TE} admit classical flow representations; see Theorem \ref{thm:linear-RPDE-wellposed-lagrangian} in Section \ref{subsec:linear-stability} for a more precise version of the next statement.
Here, given a measure $\mu$ on $\R^d$, $(\Phi_t)_\sharp \mu$ denotes its pushforward under the continuous map $\Phi_t$.

\begin{theorem}\label{thm:flow_repr_intro}
    Under the assumptions of Theorem \ref{thm:wellposedness-linear-RPDE-intro}, suppose in addition that $b$ is an Osgood vector field.
    Then the flow $\Phi$ associated to the RDE \eqref{eq:intro_RDE} from Theorem \ref{thm:Osgood intro} is {\em quasi-incompressible}, in the sense that for any $t\geq 0$ and any Borel set $A\subset \R^d$ it holds
    \begin{equation*}
        \exp\Big(-\int_0^t \| \nabla\cdot b_s\|_{L^\infty} \dd s\Big) \mathscr{L}^d(A) 
        \leq \mathscr{L}^d(\Phi_t (A)) 
        \leq  \exp\Big(+\int_0^t \| \nabla\cdot b_s\|_{L^\infty} \dd s\Big) \mathscr{L}^d(A)
    \end{equation*}
    where $\mathscr{L}^d$ denotes the Lebesgue measure on $\R^d$; a similar statement holds with $\Phi_t$ replaced by $\Phi^{-1}_t$.

    Moreover in this case the unique solution $\rho$ to \eqref{eq:intro CE} is given by
    $$
        \rho_t = (\Phi_t)_{\sharp}\rho_0
    $$
    and the unique solution $f$ to \eqref{eq:intro TE} is given by
    $$
        f_t = f_0\circ \Phi^{-1}_t.
    $$
\end{theorem}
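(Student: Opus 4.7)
My plan is to reduce to the classical smooth setting by approximation of the drift. Concretely, I select mollifications $b^\eps$ of $b$ which preserve its Osgood modulus of continuity (up to a harmless constant) as well as the uniform bound $\|\nabla\cdot b^\eps\|_{L^1_t L^\infty_x}\leq \|\nabla\cdot b\|_{L^1_t L^\infty_x}$. Let $\Phi^\eps$ denote the flow of the corresponding smooth RDE; then Theorem \ref{thm:Osgood intro}, together with the quantitative stability estimates underlying it, yields uniform convergence on compact subsets of $[0,T]\times\R^d$ of $\Phi^\eps_t$ to $\Phi_t$, and of $(\Phi^\eps_t)^{-1}$ to $\Phi_t^{-1}$.

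For the approximating system, $\Phi^\eps$ is a $C^1$-diffeomorphism and the Jacobian $J^\eps_t(x):=\det D\Phi^\eps_t(x)$ satisfies a Liouville-type equation: the matrix $D\Phi^\eps$ solves a linear RDE, and since each $\xi_k$ is divergence-free the rough driving term contributes nothing to the trace, leaving the pathwise identity
\[
J^\eps_t(x)=\exp\Big(\int_0^t (\nabla\cdot b^\eps_s)(\Phi^\eps_s(x))\,\dd s\Big),
\]
from which the two-sided bound $e^{-\int_0^t\|\nabla\cdot b^\eps_s\|_\infty\dd s}\leq J^\eps_t(x)\leq e^{+\int_0^t\|\nabla\cdot b^\eps_s\|_\infty\dd s}$ is immediate. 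A change of variables yields the quasi-incompressibility estimate for $\Phi^\eps$; passing to the limit via uniform convergence of the flows and inner/outer regularity of $\mathscr{L}^d$ yields the claim for $\Phi$. The analogous bound for $\Phi^{-1}$ follows from applying the same argument to the time-reversed RDE of Lemma \ref{lem:time-reversal-RDE}, noting that the reversed drift inherits the same divergence and Osgood bounds.

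By the classical method of characteristics, in the smooth case $\rho^\eps_t:=(\Phi^\eps_t)_\sharp\rho_0$ and $f^\eps_t:=f_0\circ(\Phi^\eps_t)^{-1}$ are the unique classical solutions of \eqref{eq:intro CE}--\eqref{eq:intro TE} with drift $b^\eps$, and by the Jacobian bounds they are uniformly controlled in $L^1_x\cap L^\infty_x$. Uniform convergence of $\Phi^\eps_t$ to $\Phi_t$ (respectively of their inverses), combined with dominated convergence, then gives $\rho^\eps_t\to(\Phi_t)_\sharp\rho_0$ and $f^\eps_t\to f_0\circ\Phi_t^{-1}$ in, say, $C([0,T];L^p_x)$ for each $p\in[1,\infty)$. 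Weak--strong stability for linear RPDEs (Corollary \ref{cor:stability-1-rte}) identifies these limits as weak solutions of \eqref{eq:intro CE}--\eqref{eq:intro TE} with drift $b$, and the uniqueness part of Theorem \ref{thm:wellposedness-linear-RPDE-intro} pins them down as $\rho$ and $f$.

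The hardest step will be precisely this last passage to the limit. Because $b^\eps\to b$ only in the low-regularity spaces dictated by the DiPerna--Lions assumptions, one cannot hope to pass to the limit directly inside the unbounded-rough-driver formulation of \eqref{eq:intro CE}--\eqref{eq:intro TE}; the abstract stability theory of Section \ref{sec:linear-RPDEs} must be invoked instead. A related subtlety is to construct a single mollification scheme $b^\eps$ that simultaneously respects the Osgood structure (needed at the Lagrangian level to invoke Theorem \ref{thm:Osgood intro} and to get convergence of flows) and the DiPerna--Lions regularity (needed at the Eulerian level to apply Theorem \ref{thm:wellposedness-linear-RPDE-intro}); standard symmetric mollification does the job, but some bookkeeping is needed to propagate both sets of estimates uniformly through the limit.
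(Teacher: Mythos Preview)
Your proposal is correct and follows essentially the same route as the paper: the quasi-incompressibility is exactly Corollary~\ref{cor:flow-incompressibility} (mollify $b$, use the smooth Liouville formula for the Jacobian, pass to the limit via Corollary~\ref{cor:stability-RDE-flow}), and the flow representation for $\rho$ is obtained in Theorem~\ref{thm:linear-RPDE-wellposed-lagrangian} by the same approximation-plus-stability argument you outline. The one minor difference is that for the transport equation the paper does not pass to the limit in $f_0\circ(\Phi^\eps_t)^{-1}$ directly; instead, once $\rho_t=(\Phi_t)_\sharp\rho_0$ is established, it applies the duality formula $\langle\rho_t,f_t\rangle=\langle\rho_0,f_0\rangle$ with arbitrary $\rho_0$ to read off $f_t(\Phi_t(x))=f_0(x)$ a.e., which sidesteps the issue of justifying $L^p$-convergence of $f_0\circ(\Phi^\eps_t)^{-1}$ for merely measurable $f_0$.
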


Armed with the above results, we are then able to deduce well-posedness in $L^1_x\cap L^{\infty}_x$ and flow-representation for a class of non-linear rough PDEs of the form
\begin{equation}\label{eq:intro_nonlinear_RCE}
    \dd \rho_t + \nabla \cdot ((K \ast \rho_t) \rho_t) \dd t  +  \sum_{k=1}^m \nabla \cdot (\xi_k  \rho_t) \dd \bZ_t^k = 0
\end{equation}
for suitable convolutional kernels $K$. 

\begin{theorem}\label{thm:intro_wellposed_nonlinear}
Let $\xi \in C_b^3$ with $\nabla \cdot \xi_k = 0$ and assume that $Z$ lifts to a geometric rough path $\bZ \in \clC^{\mathfrak{p}}_g$ for some $\mathfrak{p} \in [2,3)$.
Further assume that the convolution kernel $K$ satisfies the following:
\begin{enumerate}
    \item[i.] $K\in L^1_x + L^\infty_x$, $\nabla\cdot K\in L^\infty_x.$
    \item[ii.] $\nabla K$ is a Fourier multiplier of $0$-homegeneity: denoting by $\widehat{\nabla K}$ the Fourier transform of $\nabla K$, it holds $\widehat{\nabla K}\in C^\infty(\R^d\setminus \{0\})$ and there exists $C>0$ such that
    \begin{align*}
        |D^{(\alpha)} \widehat{\nabla K}(\eta)|\leq C |\eta|^{-\alpha} \quad\forall\, \eta\in \R^d\setminus\{0\}, \quad \alpha\in \bigg\{0,\ldots, 2\Big\lfloor 1+\frac{d}{2}\Big\rfloor\bigg\}.
    \end{align*}
\end{enumerate}
Then for every initial condition $\rho_0 \in L^1_x \cap L^{\infty}_x$ there exists a unique solution $\rho$ to  \eqref{eq:intro_nonlinear_RCE} in the class $\cB_b([0,T]; L^1_x \cap L^{\infty}_x)$.
Moreover, $\rho$ is of the form $\rho_t = (\Phi_t)_{\sharp} \rho_0$,
    where $\Phi$ is the flow associated to the RDE \eqref{eq:intro_RDE} with $b_t= K \ast \rho_t$.
\end{theorem}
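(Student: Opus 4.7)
The plan is to reduce to the linear theory of Theorems \ref{thm:wellposedness-linear-RPDE-intro} and \ref{thm:flow_repr_intro} by treating $b_t = K \ast \rho_t$ as an a priori drift, closing the nonlinear loop via a compactness scheme for existence and a flow-based Yudovich--Osgood argument for uniqueness.

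As a preliminary observation, for any $\rho\in \cB_b([0,T]; L^1_x \cap L^\infty_x)$ the induced drift $b_t := K\ast \rho_t$ fits simultaneously all the relevant linear hypotheses. Splitting $K = K_1 + K_2$ with $K_1\in L^1_x$, $K_2\in L^\infty_x$, Young's inequality yields $b\in L^\infty_t L^\infty_x$, and assumption (i) also gives $\nabla\cdot b = (\nabla\cdot K)\ast \rho \in L^\infty_t L^\infty_x$. Assumption (ii) is a standard Calder\'on--Zygmund condition which, combined with the $L^1\cap L^\infty$ bound on $\rho$, yields the classical log-Lipschitz estimate
$$|b_t(x) - b_t(y)| \leq C\,\|\rho\|_{L^\infty_t(L^1_x\cap L^\infty_x)}\,\omega(|x-y|), \qquad \omega(r)\simeq r(1+|\log r|),$$
so that $b$ meets both the DiPerna--Lions hypotheses of Theorem \ref{thm:wellposedness-linear-RPDE-intro} and the Osgood hypothesis of Theorems \ref{thm:Osgood intro} and \ref{thm:flow_repr_intro}.

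For existence, I would mollify $\rho_0^n := \rho_0 \ast \eta_n$ and $K^n := K\ast \eta_n$ and construct smooth solutions $\rho^n$ to the mollified equation by Picard iteration, using the linear theory as the propagator at each step. The flow representation combined with quasi-incompressibility from Theorem \ref{thm:flow_repr_intro} gives $\|\rho^n_t\|_{L^p} \leq \|\rho_0^n\|_{L^p} \exp(\int_0^t \|\nabla\cdot b^n_s\|_{L^\infty}\dd s)$ for every $p\in [1,\infty]$, and since $\|\nabla\cdot b^n_s\|_{L^\infty}\leq \|\nabla\cdot K\|_{L^\infty}\|\rho^n_s\|_{L^1}$ is controlled by the conserved $L^1$-norm, uniform bounds in $\cB_b([0,T]; L^1_x\cap L^\infty_x)$ follow. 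The drifts $b^n$ are then equi-bounded and equi-log-Lipschitz, so Corollary \ref{cor:linear-compactness} extracts a subsequence converging in $C([0,T]; L^p_x)$ for every $p\in [1,\infty)$; strong $L^1_x$-convergence of $\rho^n$ implies uniform convergence $b^n = K^n \ast \rho^n \to K\ast \rho$, which suffices to pass to the limit in the weak formulation of the nonlinear RPDE.

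The main obstacle is uniqueness, namely the rough Yudovich theorem. Given two solutions $\rho^1,\rho^2$ with identical initial datum, the preliminary step shows that each drift $b^i = K\ast \rho^i$ is Osgood, so by Theorem \ref{thm:flow_repr_intro} one has $\rho^i_t = (\Phi^i_t)_\sharp \rho_0$ with $\Phi^i$ the RDE flow driven by the common noise $\xi\dd\bZ$ and the distinct drifts $b^i$. Introducing
$$ Q(t) := \int_{\R^d}\bigl(|\Phi^1_t(x)-\Phi^2_t(x)| \wedge 1\bigr)\,\rho_0(x)\dd x, $$
I would estimate $Q$ in the $\fkp$-variation sense. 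The drift contribution splits by the triangle inequality into $|b^1_s(\Phi^1_s x) - b^1_s(\Phi^2_s x)|$, bounded by $\omega(|\Phi^1_s - \Phi^2_s|\wedge 1)$ via log-Lipschitz regularity, and $|b^1_s(\Phi^2_s x) - b^2_s(\Phi^2_s x)|$, bounded after integration against $\rho_0$ by a Loeper-type inequality (a further consequence of assumption (ii)) combined with the quasi-incompressibility of $\Phi^2$; both thus contribute a quantity of order $\omega(Q(s))$. The rough-integral difference $\int_0^\cdot[\xi_k(\Phi^1_s)-\xi_k(\Phi^2_s)]\dd\bZ^k_s$ has a Lipschitz integrand and, via rough-path stability for fixed $\bZ$, contributes at most $C\,Q(t)$ on sufficiently short intervals. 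Iterating over such intervals and applying Osgood's lemma forces $Q\equiv 0$, hence $\Phi^1 = \Phi^2$ and $\rho^1 = \rho^2$.
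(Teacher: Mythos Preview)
Your strategy matches the paper's: verify that $b_t = K\ast\rho_t$ is simultaneously Osgood and DiPerna--Lions regular, mollify and pass to the limit for existence, and run a Yudovich--Loeper flow comparison for uniqueness. The uniqueness argument in particular is essentially identical to the paper's Proposition~\ref{prop:nonlinear-uniqueness}: the quantity $I_t = \int |\Phi^1_t-\Phi^2_t|\,|\rho_0|\,\dd x$ (you should use $|\rho_0|$, not $\rho_0$), the splitting into an Osgood piece $|b^1(\Phi^1)-b^1(\Phi^2)|$ and a transposed-kernel piece $|b^1-b^2|(\Phi^2)$, and the final Bihari--Osgood lemma are all the same. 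For the rough contribution the paper invokes Lemma~\ref{lem:contraction-RDE-forcing} once, which already absorbs the $\xi\,\dd\bZ$ difference into the multiplicative constant and reduces everything to the $1$-variation of the drifts; this is cleaner than treating the rough integral separately on short intervals, but equivalent in spirit.

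There is one genuine gap in your existence step: invoking Corollary~\ref{cor:linear-compactness} for \emph{strong} compactness of $\{\rho^n\}$ requires precompactness of $\{\nabla\cdot b^n\}$ in $L^1_\loc$, but $\nabla\cdot b^n = (\nabla\cdot K^n)\ast\rho^n$ and you only have weak convergence of $\rho^n$ at this stage, so the hypothesis is circular and does not follow from $\nabla\cdot K\in L^\infty_x$ alone. The paper (Proposition~\ref{prop:nonlinear-existence}) instead uses only \emph{weak} compactness in $C_w([0,T];L^p_x)$; equi-log-Lipschitzness of $u^n = K^n\ast\rho^n$ plus Ascoli--Arzel\`a upgrades $u^n\to u$ to local uniform convergence, and then weak--strong pairing passes the nonlinear term $(K^n\ast\rho^n)\rho^n$ to the limit. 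Strong convergence of $\rho^n$ is not needed.
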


In fact, the results from  Section \ref{sec:nonlinear-RPDEs} allow a more general class of (time-dependent) kernels $K$, see the (slightly more technical) Assumptions \ref{ass:abstract-kernel-1}-\ref{ass:abstract-kernel-2}-\ref{ass:abstract-kernel-3} therein; in particular, Theorem \ref{thm:intro_wellposed_nonlinear} is a consequence of Proposition \ref{prop:nonlinear-uniqueness} and Theorem \ref{thm:nonlinear-wellposedness2} from Section \ref{subsec:nonlinear-uniqueness}.
To the best of our knowledge, even in the deterministic case $\xi\equiv 0 $,  at this level of generality our result on well-posedness for \eqref{eq:intro_nonlinear_RCE} appears to be new; the only work sufficiently close to it we are aware of (imposing stronger assumptions on the kernel $K$) is \cite{inversi2023lagrangian}.

The most important example of nonlinear rough continuity equations of the form \eqref{eq:intro_nonlinear_RCE} we can cover are the 2D rough Euler equations in vorticity form:
\begin{equation}\label{eq:intro Euler}
    \begin{cases}
    \dd \omega_t + u_t  \cdot  \nabla \omega_t\, \dd t  +  \sum_{k=1}^m  \xi_k \cdot \nabla   \omega_t\, \dd \bZ_t^k = 0,\\
    \nabla\cdot u_t=0,\quad \nabla^\perp\cdot u_t = \omega_t.
\end{cases}
\end{equation}
In this case, the velocity field $u$ can be reconstructed from $\omega$ by the Biot--Savart law:
\begin{align*}
    u_t = K \ast \omega_t, \quad \text{where}\quad K(z) = \frac{z^{\perp}}{2 \pi |z|^2}.
\end{align*}
Theorem \ref{thm:intro_wellposed_nonlinear} then recovers the celebrated  wellposedness result due to Yudovich \cite{yudocivh1963,yudovich1995} and extends it to the rough setting.
In particular, the stability estimates we obtain in this case are strong enough to construct a \emph{random dynamical system} underlying the dynamics induces by random $\bZ$. In the next statement, we allow for an infinite time horizon $t\in \R_{\geq 0}=[0,+\infty)$.

\begin{theorem}\label{thm:intro_Euler}
    Let $\fkp \in [2,3)$, $\bZ\in\mathcal{C}^\fkp_g$, $\xi\in C^3_b$ with $\nabla\cdot\xi_k=0$.
    Then for any $\omega_0\in L^1_x\cap L^\infty_x$, there exists a unique global solution $\omega \in \cB_b(\R_{\geq 0};L^1_x\cap L^\infty_x)$ to \eqref{eq:rough-euler}, which moreover belongs to $C(\R_{\geq 0}; L^p_x)\cap C_{w-\ast}(\R_{\geq 0};L^\infty_x)$ for any $p\in [1,\infty)$. The solution is renormalized and of the form
    \begin{equation*}
        \omega_t(x) = \omega_0(\Phi_t^{-1}(x))
    \end{equation*}
    where $\Phi$ is the flow generated by the RDE \eqref{eq:intro_RDE} for $b_t=K \ast \omega_t$, $K$ being the Biot-Savart kernel.
    Moreover, we have
    \begin{equation*}
        \|\omega_t \|_{L^p_x} = \|\omega_0 \|_{L^p_x}, \qquad \forall\, t\geq 0, \ \forall\, p \in [1,\infty].
    \end{equation*}
    Let $\{\omega^n_0\}_n$ be a bounded sequence in $L^1_x\cap L^\infty_x$, resp. $\omega_0\in L^1_x\cap L^\infty_x$, and denote by $\omega^n$, resp. $\omega$, the associated solutions to \eqref{eq:rough-euler}. Let $p\in (1,\infty)$, then:
    \begin{itemize}
        \item[i)] if $\omega_0^n \rightharpoonup \omega_0$ weakly in $L^p_x$, then $\omega^n$ converge to $\omega$ in $C_w([0,T];L^q_x)\cap C_{w-\ast}([0,T];L^\infty_x)$ for all $q\in (1,\infty)$ and $T\in (0,+\infty)$;
        \item[ii)] if $\omega^n_0\to \omega_0$ strongly in $L^p_x$, then $\omega^n\to \omega$ in $C([0,T]; L^q_x)$ for all $q\in (1,\infty)$ and $T\in (0,+\infty)$.
    \end{itemize}
    Let $R\in (0,+\infty)$ and define
    \begin{align*}
        \mathcal{X}_R := \left\{ \omega_0 \in L^1_x \cap L_x^{\infty} : \|\omega_0\|_{L^1_x \cap L^{\infty}_x} \leq R \right\}.
    \end{align*}
    Suppose now that $\bZ$ is a random geometric $\mathfrak{p}$-rough path cocycle.
    Then the associated RPDE \eqref{eq:rough-euler} generates a continuous random dynamical system on $\cX_R$, when it is endowed with either the strong topology $\tau^{strong}$ induced on $\mathcal{X}_R$ by the $L^p_x$-norm, or the weak topology $\tau^{weak}$ induced by weak convergence in $L^p_x$.
\end{theorem}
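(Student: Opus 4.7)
The plan is to derive all claims from the general nonlinear result Theorem \ref{thm:intro_wellposed_nonlinear} applied to the Biot--Savart kernel $K(z)=z^\perp/(2\pi|z|^2)$, then exploit $\nabla\cdot K=0$ to upgrade to global solutions and conservation laws. First I would verify that $K$ satisfies assumptions (i)-(ii): in two dimensions $K$ is locally integrable at the origin and bounded outside a compact set, hence $K\in L^1_x+L^\infty_x$; moreover $\nabla\cdot K\equiv 0$, while the components of $\nabla K$ are Riesz-type Fourier multipliers whose symbols are $0$-homogeneous and smooth away from the origin, giving the required derivative bounds. Theorem \ref{thm:intro_wellposed_nonlinear} then provides, on any compact interval $[0,T]$, a unique solution $\omega\in \cB_b([0,T]; L^1_x\cap L^\infty_x)$ with flow representation $\omega_t=(\Phi_t)_{\sharp} \omega_0$, where $\Phi$ is the flow of the RDE driven by $b_t=K\ast \omega_t$.

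Since $b$ is divergence-free, Theorem \ref{thm:flow_repr_intro} yields that $\Phi_t$ is measure preserving, so that $(\Phi_t)_{\sharp}\omega_0 = \omega_0\circ \Phi_t^{-1}$ and $\|\omega_t\|_{L^p_x}=\|\omega_0\|_{L^p_x}$ for every $p\in [1,\infty]$. This excludes blow-up, so a standard concatenation argument extends the solution to $\R_{\geq 0}$ with uniqueness inherited from the local statement. Time continuity $\omega\in C(\R_{\geq 0};L^p_x)$ for finite $p$ follows from the representation formula combined with continuity of $t\mapsto \Phi_t^{-1}$ as a map into $C(\R^d;\R^d)$ (Theorem \ref{thm:Osgood intro}), after approximating $\omega_0$ by continuous compactly supported functions; weak-$\ast$ continuity in $L^\infty_x$ is then an immediate consequence of the uniform bound and density. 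Renormalizability comes from item (iii) of Theorem \ref{thm:wellposedness-linear-RPDE-intro}, applied to the linear transport equation satisfied by $\omega$ once $b$ is fixed.

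The technical heart of the theorem is the stability statement, which I expect to be the main obstacle. Given $\{\omega_0^n\}$ bounded in $L^1_x\cap L^\infty_x$, the drifts $b^n_t=K\ast \omega^n_t$ satisfy the classical Yudovich log-Lipschitz bound uniformly in $n$ and $t$, hence the associated flows $\Phi^n$ and their inverses are equicontinuous on compacts by the explicit Osgood modulus in Theorem \ref{thm:Osgood intro}; an Arzel\`a--Ascoli extraction produces a limiting continuous flow. To identify this limit, under weak convergence $\omega^n_0\rightharpoonup \omega_0$ in $L^p_x$ one first upgrades to strong local convergence of $u^n=K\ast \omega^n$ by standard Calder\'on--Zygmund/Sobolev compactness for $K$, and then applies the linear RPDE stability (Corollaries \ref{cor:stability-1-rte} and \ref{cor:linear-RPDE-stability-2}) to match the limit of $\omega^n$ with the unique solution driven by $b=K\ast \omega$. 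Weak convergence in $C_w([0,T];L^q_x)$ follows by testing against smooth functions combined with equicontinuity in time from the flow representation; strong stability is analogous but easier, as strong convergence of $\omega^n_0$ together with locally uniform convergence of $(\Phi^n)^{-1}$ promotes $\omega^n_t=\omega^n_0\circ (\Phi^n_t)^{-1}$ to strong convergence in $C([0,T];L^q_x)$ by a routine composition estimate.

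For the random dynamical system claim, when $\bZ$ is a rough path cocycle the flow $\Phi$ inherits the cocycle property over $\bZ$ by uniqueness in Theorem \ref{thm:Osgood intro}, and hence so does the solution map $\omega_0\mapsto \omega_0\circ \Phi_t^{-1}$. The set $\cX_R$ is preserved by the dynamics thanks to the $L^p$-conservation identity of the preceding step, and joint measurability together with continuity in $\omega_0$ in either topology $\tau^{\mathrm{strong}}$ or $\tau^{\mathrm{weak}}$ is exactly the content of the stability result (ii)-(i), specialized to a constant sequence of initial data varying only through the limit point. Combining these ingredients yields the claimed continuous RDS.
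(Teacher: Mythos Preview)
Your overall plan for existence, uniqueness, flow representation, $L^p$-conservation, and the RDS structure matches the paper's and is fine.

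The stability argument has a genuine gap. You propose to ``upgrade to strong local convergence of $u^n=K\ast \omega^n$ by standard Calder\'on--Zygmund/Sobolev compactness'' and then apply linear RPDE stability. But $u^n_t = K\ast \omega^n_t$ depends on $\omega^n_t$ for $t>0$, and you have established no convergence of $\omega^n_t$: your Arzel\`a--Ascoli step extracts a limiting flow, but weak convergence of $\omega^n_0$ combined with locally uniform convergence of $(\Phi^n_t)^{-1}$ does \emph{not} give weak convergence of the composition $\omega^n_0\circ(\Phi^n_t)^{-1}$. So the argument is circular: convergence of the drifts requires convergence of the solutions, which is what you are trying to prove. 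The same circularity affects your strong-stability argument, since the convergence of $(\Phi^n)^{-1}$ via Corollary \ref{cor:stability-RDE-flow} already presupposes $b^n\to b$ in $C^0_\loc$.

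The paper breaks the circle by working purely at the Eulerian level (Theorem \ref{thm:nonlinear-wellposedness2}): from the uniform $L^1_x\cap L^\infty_x$ bounds on $\omega^n$ and the unbounded-rough-driver a priori estimates (Lemma \ref{lem:apriori-unbounded}), one gets equicontinuity of $t\mapsto\langle\omega^n_t,\varphi\rangle$ and hence compactness of $\{\omega^n\}$ in $C_w([0,T];L^p_x)$ directly (Proposition \ref{prop:compactness-lp}). Only \emph{after} extracting $\omega^{n_k}\to\rho$ in $C_w$ does one show $u^{n_k}_t\to K\ast\rho_t$ in $C^0_\loc$ (via the uniform Osgood modulus and duality through $K\oast^T$), pass to the limit in the RPDE, and use uniqueness to identify $\rho=\omega$; strong convergence in (ii) then follows from the linear result Corollary \ref{cor:linear-RPDE-stability-2}, since for Biot--Savart $\nabla\cdot u^n\equiv 0$ so condition 4) of Theorem \ref{thm:nonlinear-wellposedness2} is trivial. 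A minor point: the RDS cocycle property should be argued directly for the PDE solution map via nonlinear uniqueness (as in Theorem \ref{thm:euler_RDS}), not via the characteristic flow, since the drift in the latter already depends on the solution.
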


Theorem \ref{thm:intro_Euler} comes from a combination of the more general Theorems \ref{thm:yudovich-euler}, \ref{thm:yudovich stability} and \ref{thm:euler_RDS} from Section \ref{subsec:nonlinear-euler-yudovich}.
The stability results in points \emph{i)} and \emph{ii)} above are on par with the deterministic literature, cf. \cite{nguyen2022}.
Let us point out that, with the aforementioned topologies, $(\mathcal{X}_R,\tau^{strong})$ is a separable metric space, while  $(\mathcal{X}_R,\tau^{weak})$ is a compact metric space.
In particular, Theorem \ref{thm:intro_Euler} applies when the driving signal $\bZ$ is (the geometric rough enhancement) of a fractional Brownian motion of Hurst parameter $H\in (1/3,1)$.
In the Brownian case $H=1/2$, the geometric requirement amounts to working with Stratonovich noise, and our result is comparable to previous ones like \cite{brzezniak2016existence}. 

Finally, in the case of unbounded initial vorticity $\omega_0$ and less regular $\xi_k$, we are still obtain a weak existence result for \eqref{eq:intro Euler}, in the style of those from DiPerna--Majda \cite{DiPMaj1987}, Delort \cite{Delort1991} and Schochet \cite{Schochet1995}.
This last statement follows from Propositions \ref{prop:schochet-delort} and \ref{prop:schochet-delort-renormalized} from Section \ref{subsec:nonlinear-euler-delort}.

\begin{theorem}\label{thm:intro-schochet-delort}
    Let $\fkp \in [2,3)$, $\bZ\in\mathcal{C}^\fkp_g$, $\xi\in C^2_b$ with $\nabla\cdot\xi=0$.
    Then for any $p\in [1,\infty)$ and any $\omega_0\in L^1_x\cap L^p_x$, there exists a global weak solution $\omega$ to \eqref{eq:intro Euler} satisfying
    \begin{equation*}
        \sup_{t\geq 0} \| \omega_t\|_{L^q_x} \leq \| \omega_0\|_{L^q_x} \quad \forall\, q\in [1,p].
    \end{equation*}
    If moreover $p\in [2,\infty)$, then $\omega\in C(\R_{\geq 0};L^1_x\cap L^p_x)$ and it is renormalized, in the sense that for any $\beta\in C^1_b$, $v= \beta(\omega)$ is a weak solution to
    \begin{align*}
        \dd v_t + (K\ast \omega_t)\cdot\nabla v_t + \sum_{k=1}^m \xi_k\cdot\nabla v_t\, \dd \bZ^k_t =0. 
    \end{align*}
\end{theorem}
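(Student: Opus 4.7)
The plan is a compactness/approximation scheme built on Theorems \ref{thm:intro_Euler} and \ref{thm:wellposedness-linear-RPDE-intro}. First I would regularize the initial datum by mollification, taking $\omega_0^n := \omega_0 \ast \rho_n \in L^1_x \cap L^\infty_x$ with $\omega_0^n \to \omega_0$ strongly in $L^1_x \cap L^p_x$ and $\|\omega_0^n\|_{L^q_x} \leq \|\omega_0\|_{L^q_x}$ for every $q \in [1,p]$. By Theorem \ref{thm:intro_Euler} each $\omega_0^n$ gives rise to a unique global solution $\omega^n$ to \eqref{eq:intro Euler}, with the conservation law $\|\omega^n_t\|_{L^q_x} = \|\omega^n_0\|_{L^q_x}$ for every $q\in [1,\infty]$. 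In particular $\{\omega^n\}_n$ is bounded in $L^\infty_t(L^1_x\cap L^p_x)$ uniformly in $n$.

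Second I would extract time regularity of the approximants, uniformly in $n$. Testing \eqref{eq:intro Euler} against an arbitrary $\varphi\in C^\infty_c(\R^2)$ and exploiting the unbounded rough driver formulation from Section \ref{sec:linear-RPDEs}, the map $t\mapsto \langle\omega^n_t,\varphi\rangle$ admits uniform-in-$n$ control of its $\mathfrak{p}$-variation on any $[0,T]$, coming from the uniform $L^1_x\cap L^p_x$-bound, the $C^2_b$ regularity of $\xi_k$ and the fixed rough path $\bZ$. Combined with the uniform $L^p_x$-bound, a diagonal Arzel\`a-Ascoli argument over a countable dense family of test functions and compact exhaustion of $\R^2$ yields, up to a subsequence, $\omega^n\to\omega$ in $C_w([0,T];L^q_x)$ for every $q\in (1,p]$, and at least weakly-$\ast$ in $L^\infty_t(L^1_x)$ together with tightness as measures when $p=1$.

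Third, I would pass to the limit in the weak formulation. The linear drift and noise terms are continuous under this weak convergence, since $\xi_k$ and $\bZ$ are fixed. The nonlinear term $\nabla\cdot(u^n\omega^n)$ is handled by the classical Delort-Schochet symmetrization: for $\varphi\in C^\infty_c(\R^2)$, the anti-symmetry of the Biot-Savart kernel gives
\begin{equation*}
    \int_{\R^2} u^n_t\omega^n_t \cdot\nabla\varphi\, \dd x
    = \frac{1}{2}\int_{\R^2}\!\!\int_{\R^2} H_\varphi(x,y)\, \omega^n_t(x)\omega^n_t(y)\, \dd x\, \dd y,
\end{equation*}
where $H_\varphi(x,y):=K(x-y)\cdot(\nabla\varphi(x)-\nabla\varphi(y))$ is bounded and continuous on $\R^2\times\R^2$ thanks to the $-1$-homogeneity of $K$ and the Lipschitz regularity of $\nabla\varphi$. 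For $p>1$, weak convergence in $L^p$ directly yields convergence of $\omega^n\otimes\omega^n$ against bounded continuous kernels; for $p=1$, an additional concentration-cancellation argument using equi-integrability in $L^1$ and the continuity of $H_\varphi$ across the diagonal is needed to rule out Dirac-mass formation. The bound $\sup_t\|\omega_t\|_{L^q_x}\leq\|\omega_0\|_{L^q_x}$ then follows by weak lower semicontinuity of the $L^q_x$-norm.

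Fourth, for $p\geq 2$ I would upgrade to renormalizability and strong time-continuity. Calder\'on-Zygmund applied to the Biot-Savart law gives $\nabla u\in L^\infty_t L^p_x$; so setting $b:=u=K\ast\omega$, the hypotheses of Theorem \ref{thm:wellposedness-linear-RPDE-intro} are met, and viewing $\omega$ as the unique solution of the linear continuity equation with this drift $b$, the renormalization property stated therein yields both $\omega\in C([0,T];L^1_x\cap L^p_x)$ and that every $\beta(\omega)$ with $\beta\in C^1_b$ solves the announced linear transport equation. The main obstacle is the third step for $p=1$: passing to the limit in the Delort double integral without any $L^q$-strong compactness, which requires a careful use of equi-integrability of $\{\omega^n\}$ combined with the specific near-diagonal regularity of $H_\varphi$, rather than a direct bilinear continuity argument.
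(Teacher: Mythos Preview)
Your overall strategy matches the paper's (Propositions~\ref{prop:schochet-delort} and~\ref{prop:schochet-delort-renormalized}), but there is one genuine gap in the first step: Theorem~\ref{thm:intro_Euler} requires $\xi\in C^3_b$, while the hypothesis here only gives $\xi\in C^2_b$, so you cannot apply the Yudovich theory directly to produce the approximants. The paper handles this by mollifying \emph{both} the initial datum and the vector fields, taking $\xi^n\in C^3_b$ with $\sup_n\|\xi^n\|_{C^2_b}\leq\|\xi\|_{C^2_b}$, $\nabla\cdot\xi^n=0$, and $\xi^n\to\xi$ in $C^2_\loc$. The approximate solutions $\omega^n$ then solve rough Euler driven by $(\xi^n,\bZ)$; the unbounded rough driver bounds depend only on $\|\xi^n\|_{C^2_b}$ (Lemma~\ref{lem:URD_continuity}), so the a priori estimates are uniform and the limit passage goes through.

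A secondary point concerns your $p=1$ compactness: you assert ``tightness as measures'' and later invoke ``equi-integrability in $L^1$'', but neither follows from the uniform $L^1$-bound alone, and you do not say where they come from. The paper obtains both from the flow representation $\omega^n_t=\omega^n_0\circ(\Phi^n_t)^{-1}$: incompressibility of $\Phi^n_t$ transfers equi-integrability of $\{\omega^n_0\}_n$ to $\{\omega^n_t\}_{n,t}$, while tightness uses the decomposition $K=K\mathbbm{1}_{|x|\leq 1}+K\mathbbm{1}_{|x|>1}\in L^1_x+L^\infty_x$ together with the quantitative estimate of Lemma~\ref{lem:equi-p-integrability}. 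Once weak $L^1$-convergence of $\omega^n_t$ is established, the nonlinear term converges directly by Lemma~\ref{lem:biot-savart}, since $H_\varphi\in L^\infty(\R^4)$ and weak $L^1$-convergence tensorizes to $\omega^n\otimes\omega^n\rightharpoonup\omega\otimes\omega$ in $L^1(\R^4)$; no separate concentration-cancellation argument is needed at that stage. Your fourth step (renormalization for $p\geq 2$ via the linear theory) is essentially the paper's Proposition~\ref{prop:schochet-delort-renormalized}; just note that you must also verify the growth condition $u/(1+|x|)\in L^1_tL^1_x+L^1_tL^\infty_x$, which again comes from the same $L^1_x+L^\infty_x$ splitting of $K$.
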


In this paper we always work on the full space $\R^d$.
This implies a few technical difficulties, related to compact embeddings not being naively available anymore, and the need in applications to work with coefficients $b$ only satisfying local regularity and/or growth conditions. Moreover, natural distributional concepts of solutions are only local in nature, based on testing against $C^\infty_c$, which lacks either a Banach or Fréchet structure; therefore at the level of unbounded rough drivers, rather than working with a fixed scale of Banach spaces, we need to consider a family $(\cF_{l,R})_{l,R}$ of them, indexed by the radius $R\in (0,+\infty)$ related to the support of functions; see Section \ref{subsec:unbounded-rough} for the details.

It is clear however that all aforementioned results transfer to the (simpler) case of RPDEs on the torus $\T^d$, with periodic boundary conditions; in this case, both local regularity and growth conditions can be replaced by simpler global requirements.

\subsection{Relations to the existing literature}

Stochastic partial differential equations (SPDEs) with Brownian transport-type noise have been advocated in the context of fluid dynamics in many works, see \cite{BrCaFl1991,BrCaFl1992,MikRoz2004,memin2014fluid,Holm2015} and the references therein.
The separation of scales ideas adopted by these authors are older and date back at least to the pioneering work of Hasselmann \cite{Hasselmann1976} in stochastic climate modelling; Hasselmann's paradigm has become a standard in modern applications, see the review \cite{franzke2015stochastic}.
For a recent revisitation of Hasselmann's ideas in the framework of \cite{Holm2015}, see \cite{CrHoKo2023}.
For regular coefficients, a large class of first and second order SPDEs are treated in the monograph \cite{Kunita1997}.
Among the theoretical reasons for transport noise, besides modelling ones like the Wong--Zakai principle, let us mention the fascinating possibility of regularization by noise phenomena, as first noticed in \cite{FGP2010}.
Stochastic fluid dynamics is currently a very active field of research and we do not aim here at describing it in its entirety; we refer to the monographs \cite{Flandoli2015,FlaLuon2023,CriGoo2024} for an overview.

Yudovich's theorem \cite{yudocivh1963,yudovich1995} remains to this day among the sharpest well-posedness results for $2$D Euler, also in view of the recent counterexamples \cite{ABCDLGJK2024,BruCol2023,brue2024flexibility}. It has received numerous revisitations over the years, see for instance \cite{loeper2006uniqueness,crippa2021elementary,nguyen2022}.
In the SPDE case with Stratonovich Brownian noise, with domain $\T^2$, an analogous result was first established in \cite{brzezniak2016existence}. Therein, leveraging on the power of It\^o calculus, the authors only need to require the coefficients $\xi_k$ to be Lipschitz continuous, with suitable summability; they however impose the additional condition $\sum_k \xi_k\otimes \xi_k= c I_2$, where $I_2$ is the $2\times 2$ identity matrix. To the best of our knowledge, even in the Brownian case, our result is the first to treat the full space case $\R^2$; although we need the higher regularity $\xi_k\in C^3_b$, coming from the applicability of rough path theory, we do not need to impose any further condition on the resulting covariance of the noise.

Born with the seminal work of Terry Lyons \cite{Lyons98},
rough paths have now grown into a mature theory; we refer to the monographs \cite{LyQi2002,FV2010,FH2020} for a comprehensive overview.

RDEs \eqref{eq:intro_RDE} are well-known to be solvable when $\xi_k$ are regular enough (typically $\xi_k\in C^\gamma_b$ with $\gamma\sim \fkp$) and the drift $b$ is bounded and globally Lipschitz, see \cite[Ch. 12.1]{FV2010} and \cite[Thm. 8]{DOR2015}; this result can now also be seen as a particular subcase of the one concerning rough stochastic differential equations from \cite{friz2021rough}. Possibly unbounded drifts satisfying monotonicity conditions have been treated in \cite{RieSch2017,BONNEFOI202258}, while measure-valued drifts corresponding to a reflection measure are considered in \cite{DGHTa}. Our work seems to be the first one treating Osgood drifts instead.

Rough transport PDEs remain somewhat less studied, in comparison to the vast literature of RDEs.
Among the first results in this direction, let us mention \cite{CarFri2009,CaFrOb2011,CDFO2013,DOR2015}, mostly based on pathwise arguments like stability properties of the underlying RDEs, random flow transformations, random rough paths and Feynman--Kac representations.
This class of results however did not allow to provide an intrinsic meaning to the RPDE, whose solutions were merely recovered as the unique limit of mollified problems.
This issue was successfully addressed by different methods in \cite{DFS} (later refined in \cite{FNS}) and, more important to our approach, the framework of \emph{unbounded rough drivers} from \cite{BaiGub2017}.
The latter provides a purely Eulerian framework, without relying on representations based on the Lagrangian flow of the underlying RDE; it takes inspiration from the approach to RDEs developed by Davie \cite{davie2008}. In some sense, the present paper merges the ideas of \cite{DFS} and \cite{BaiGub2017} by using the Eulerian formulation of \cite{BaiGub2017} along with a product formula to show well-posedness, as done in \cite{DFS}.
The theory of unbounded rough drivers has been subsequently developed in \cite{DGHT2019, HN2021} and these techniques have been applied to the rough Navier-Stokes equations in \cite{HLN2021,HLN1, FHLN} and to rough Euler equations in \cite{CrisanHolmLeahyNilssen22b}.
Among other approaches to transport equations driven by (higher order) rough paths, let us also mention the recent \cite{BDFT2021}.

While preparing this paper, the preprint \cite{roveri2024wellposednessrough2deuler} appeared on arXiv. Therein, the authors show well-posedness of Euler equations in the Yudovich class on the 2D torus $\T^2$, as well as well-posedness of \eqref{eq:intro_RDE} for log-Lipschitz $b$ (which is a particular case of Osgood drift). Their work shares similar intuitions as ours, based on the use of spaces of finite $\fkp$-variation and unbounded rough drivers.
However their results are as not as general in scope, as they do not include any treatment of linear RPDEs with DiPerna--Lions drifts, nor general nonlinear rough continuity equations or weak existence results of Schochet-Delort type; moreover, differently from ours, the stability results obtained therein are not robust enough to construct the underlying continuous random dynamical system.

\subsection{Future perspectives}
The results obtained in this paper open up several directions to be developed in the future.

First of all, in order to solve the linear RPDEs \eqref{eq:intro CE}-\eqref{eq:intro TE}, here we pursued an Eulerian approach à la DiPerna--Lions \cite{diperna1989ordinary} (and its readaptation in the rough framework from \cite{BaiGub2017}), based on a priori $L^p$-estimates and commutators; it would be interesting instead to focus directly on the construction of a \emph{(rough) Regular Lagrangian Flow} for the RDE \eqref{eq:intro_RDE}, readapting the techniques from Crippa--De Lellis \cite{CriDeL2008}.

In the context of nonlinear fluid dynamics equations, in this paper we only focused on (more challenging) inviscid RPDEs like rough $2$D Euler \eqref{eq:intro Euler}, but it is clear that similar arguments apply to viscous ones like rough $2$D Navier--Stokes:
\begin{align*}
        \dd \omega^\nu_t + (K\ast \omega^\nu_t)\cdot\nabla \omega^\nu_t + \xi\cdot\nabla \omega^\nu_t\, \dd \bZ_t = \nu \Delta\omega^\nu_t. 
\end{align*}
Similarly to the deterministic case studied in \cite{NSW2021,CiCrSp2021,CDE2022}, we expect the techniques from the proof of Theorem \ref{thm:intro_wellposed_nonlinear} to provide \emph{quantitative convergence rates} of rough $2$D Navier--Stokes to rough $2$D Euler in the \emph{vanishing viscosity limit} $\nu\to 0^+$.
Notice however that the use of a Feyman-Kac type flow representation for the rough Navier--Stokes requires to consider an underlying rough stochastic differential equation, admitting both a rough path and a Brownian motion as underlying drivers; therefore, additional techniques from either \cite{DOR2015} or more recently \cite{friz2021rough} might be needed for this task.

The stability properties of the solution map $2$D Euler obtained in Theorem \ref{thm:intro_Euler} immediately imply Wong--Zakai type results, see \cite[Section 9.2]{FH2020} and \cite{roveri2024wellposednessrough2deuler} for similar discussions, as well as support theorems, cf. \cite[Section 9.3]{FH2020}.
In the case of Gaussian rough paths, one can then employ Schilder's theorem and the contraction principle to derive \emph{large deviation results} in the \emph{vanishing noise} limit, by considering \eqref{eq:intro Euler} with $Z$ replaced by $\sqrt{\eps} Z$ and sending $\eps\to 0^+$. It would be interesting to couple this result to the one described in the previous paragraph, and actually consider large deviations in the joint limit $(\eps,\nu)\to (0,0)$ of vanishing noise and vanishing diffusion.

Although our main motivation for studying rough continuity equations \eqref{eq:intro_nonlinear_RCE} comes from fluid dynamics, there is another fundamental PDE sharing a similar structure, which is given by the \emph{Vlasov-Poisson equations}, as noted in \cite{loeper2006uniqueness}; in fact, in the deterministic setting Euler can be recovered by Vlasov through a quasi-neutral limit, see the classical work \cite{Brenier2000}. Loeper's techniques for Vlasov-Poisson have recently received many revisitations, see \cite{Iacobelli2022,CISS2024,IacJun2024}.
A specific type of transport Brownian noise was recently proposed in the Vlasov-Poisson system in \cite{bedrossian2022vlasov}, where local well-posedness in Sobolev spaces was shown; it would be interesting to pursue instead the Lagrangian approach to prove global wellposedness and propagation of regularity, for general rough transport noise sharing the same structure as in \cite{bedrossian2022vlasov}.

Throughout the paper we only considered $2$D Euler in full space $\R^2$; as mentioned, one can similarly treat the torus $\T^2$. It would be interesting to extend the result by allowing general smooth domains $\Omega\subset\R^2$, endowed with the slip condition $u\cdot \overrightarrow{n}=0$ on $\partial\Omega$, where $\overrightarrow{n}$ denotes the outward normal at the boundary. For deterministic Euler, Yudovich theorem still holds in this framework, see \cite[Sec. 2.3, Thm. 3.1]{marchioro2012mathematical}; however, in the SPDE and RPDE literature, we have found almost no references treating this problem (differently from Dirichlet/no slip b.c., which in different contexts is analyzed e.g. in \cite{funaki1979,NevOli2021,noboriguchi2024}).
The closest result we are aware of in this direction is \cite[Thm. 1.17]{Goodair2023}, providing a weak existence result for stochastic $2$D Euler equations in convex domains, under slip b.c. $u\cdot \overrightarrow{n}=0$.

\subsection*{Structure of the paper}

In Section \ref{sec:preliminaries} we collect the notation and preliminary results needed in the main body of the paper; they concern function spaces, space of $\fkp$-variation, rough paths, moduli of continuity and finally some novel concepts of weak convergence for Banach-valued paths.
%
Section \ref{sec:RDEs} shows well-posedness of the RDE \eqref{eq:intro_RDE} under Osgood regularity conditions on $b$, as well as regularity and quasi-incompressibility of the induced flow.
Section \ref{sec:linear-RPDEs} is devoted to linear rough continuity and transport equations \eqref{eq:intro CE}-\eqref{eq:intro TE}.
By employing the unbounded rough drivers framework, we provide criteria for existence, uniqueness, as well as renormalizability and continuous dependence on the data (in strong and weak topologies), when the drift $b$ satisfies a DiPerna--Lions regularity condition. 
In Section \ref{sec:nonlinear-RPDEs}, we consider general nonlinear rough continuity equations on $\R^d$ of the form \eqref{eq:intro_nonlinear_RCE}; we obtain existence, uniqueness and continuous dependence results. From them, we infer a Yudovich-type theorem in Section \ref{subsec:nonlinear-euler-yudovich}, where we also construct the underlying random dynamical system; we then prove a Schochet--Delort-type result in Section \ref{subsec:nonlinear-euler-delort}.
We collect several technical results in the Appendices \ref{app:tech-lem}, \ref{app:compactness} and \ref{app:smoothing}, concerning respectively rough path lemmas, compactness criteria in weak topologies, and tensorization and smoothing operator results from the theory of unbounded rough drivers. 

\section{Notation and preliminaries}\label{sec:preliminaries}

This section is mostly a collection of standard terminologies and recaps, which can be skipped at a first reading by the experienced reader.

Sections \ref{subsec:notation}-\ref{subsec:function.spaces} consist of notations and function spaces used throughout the whole paper; Sections \ref{subsec:p-variation}-\ref{subsec:osgood} deal respectively with spaces of finite $\mathfrak{p}$-variation and Osgood moduli of continuity, which will be mostly needed in the study of RDEs from Section \ref{sec:RDEs}. Finally, Section \ref{subsec:UCW} is a bit more original and introduces a concept of uniform convergence in weak topologies, which will be crucial when applying compactness arguments in Sections \ref{sec:linear-RPDEs}-\ref{sec:nonlinear-RPDEs}.

\subsection{Notation}\label{subsec:notation}
We write $a \lesssim b$ if there exists a constant $c>0$ such that $a \leq cb$, and $a \lesssim_\lambda b$ if we want to stress that the constant $c$ depends on a parameter $\lambda$.

For $x,y \in \R^d$, we write $x \cdot y$ for their scalar product and $|x|$ for the Euclidean norm.
We write $B_R(x)$ for the $d$-dimensional open ball of radius $R$ centered in $x$; when $x=0$, we use the shorthand notation $B_R := B_R(0)$. We set $\R_{\geq 0}=[0,+\infty)$.
We denote by $\mathscr{L}^d$ the Lebesgue measure on $\R^d$.

For a smooth function $f:\R^d \rightarrow \R^n$, we denote by $\partial_j f_i$ its partial derivatives and by $D_x f=(\partial_j f_i)_{i,j}$ its Fréchet differential, seen as a map from $\R^d$ to $\R^{n\times d}$; when there is no room for confusion, we simply write $Df$.
When $n=1$, we write $\nabla f=(\partial_i f)_{i=1}^d$ for the gradient, and when $n=d$ we write $\nabla \cdot f=\sum_{i=1}^d \partial_i f_i$ for the divergence.

We denote by $C^{\infty}_c(\R^d;\R^n)$, or simply $C^\infty_c$ when there is no risk of confusion, the set of infinitely differentiable, compactly supported functions, with the usual topology of test functions. Given $U\subset \R^d$, we write $C^\infty_c(U)$ for the subset of test functions such that $\supp \varphi\subset U$, where $\supp \varphi$ denotes the support.
We denote by $\cD'(\R^d;\R^m)$, or simply $\cD'$, the dual space of (possibly vector-valued) distributions. 
We keep using the notations $\partial_j f_i$, $D f$ to denote weak derivatives/differentials whenever $f\in\cD'$.
We let $f\ast g$ denote the convolution between (sufficiently integrable) functions, as well as between test functions and (fastly decaying) distributions.

Given Banach spaces $E_1$, $E_2$, $\mathcal{L}(E_1,E_2)$ denotes the Banach space of bounded linear operators from $E_1$ to $E_2$, with operator norm $\| \cdot\|_{\mathcal{L}(E_1,E_2)}$.
Whenever $E_i\subset F$ for $i=1,2$, where $F$ is an ambient Banach space, the intersection $E_1 \cap E_2$ is again a Banach space with the canonical norm $\| \cdot \|_{E_1 \cap E_2} = \| \cdot \|_{E_1} + \| \cdot \|_{E_2}$; we denote by $E_1+E_2$ the space of all elements $f\in F$ which may be written as $f=f_1+f_2$ with $f_i\in E_i$.
We denote by $E^\ast$ the topological dual of a Banach space $E$.

When there is no risk of confusion, we will use the notation $\otimes$ to denote different kinds of tensor products. Specifically:
given $x,y\in \R^d$, $x\otimes y\in \R^{d\times d}$ is the matrix given by $(x\otimes y)_{ij}=x_i y_j$;
given functions $f,g:\R^d\to R$, $f\otimes g:\R^{2d}\to\R$ is given by $(f\otimes g)(x,y)=f(x)g(y)$, and we extend this definition by duality to distributions, so that if $f,g\in \cD'(\R^d)$, then $f\otimes g\in\cD'(\R^{2d})$;
if $E_1$, $E_2$ are abstract Banach spaces and $v\in E_1$, $w\in E_2$, then $v\otimes w\in \cL(E_2^\ast,E_1)$ is given by $(v\otimes w)(w^\ast):=v \langle w,w^\ast \rangle_{E_2, E^\ast_2}$, for all $w^\ast\in E_2^\ast$;
with a slight abuse, $v\otimes w$ may also be interpreted as a bilinear operator on $E_1^\ast\times E_2^\ast$, by $(v\otimes w)(v^\ast,w^\ast):= \langle v,v^\ast \rangle_{E_1, E^\ast_1}\,\langle w,w^\ast \rangle_{E_2, E^\ast_2}$, or as an element of the tensor space $E_1\otimes E_2$.

\subsection{Function spaces}\label{subsec:function.spaces}

For $m\geq 0$, we denote by $C^m_b(\R^d;\R^n)$ the Banach space of $m$-times differentiable functions from $\R^d$ to $\R^n$ which are bounded and with continuous bounded derivatives up to order $m$, with norm
\begin{equation*}
    \| f\|_{C^m_b} :=\sum_{k=0}^m \| D^{(k)}f\|_{C^0_b} := \sum_{k=0}^m \sup_{x\in \R^d} | D^{(k)}f(x)|,
\end{equation*}
with the convention that $D^{(0)}f=f$.
When the domain and codomain are clear from the context, we simply write $C^m_b$.
We denote by $C_\loc^m=C^m_\loc(\R^d;\R^n)$ the Fréchet space of functions which are locally in $C^m_b$, namely such that $f\,\varphi\in C^m_b$ for all $\varphi\in C^\infty_c$; $f^n\to f$ in $C^m_\loc$ if and only if $\| \varphi (f^n-f)\|_{C^m_b}\to 0$ for all $\varphi\in C^\infty_c$. Note that convergence in $C^0_\loc$ amounts to uniform convergence on compact sets. 

For $p \in [1,\infty]$. we write $L^p(\R^d;\R^n)$ for the usual Lebesgue spaces; when domain and codomain are clear, we simply write $L^p_x$, and denote their norm by $\|f\|_{L^p_x}$. Given $p\in [1,\infty]$, we denote by $p'$ its conjugate H\"older exponent, viz $1/p+1/p'=1$.
Given a Lebesgue measurable subset $A\subset \R^d$, we denote by $\mathbbm{1}_A$ its characteristic function and set $\| f\|_{L^p(A)}:=\| f\mathbbm{1}_A\|_{L^p_x}$.
$L^p_{\loc}=L^p_\loc(\R^d;\R^n)$ stand for local $L^p$ spaces, given by functions $f$ such that $f \varphi\in L^p_x$ for all $\varphi\in C^\infty_c$.

For $k\in\N$, we write $W^{k,p}(\R^d;\R^n)$ for the usual Sobolev spaces; since we shall only consider Sobolev spaces in the space variable $x$, whenever clear we shortly denote them by $W^{k,p}$ and their norm by
\begin{align*}
    \|f\|_{W^{k,p}}:= \sum_{k=0}^m \| D^{(k)}f\|_{L^p_x}.
\end{align*}
We write $W^{k,p}_\loc$ for the Fréchet space of functions $f\in L^p_\loc$ such that $\varphi\, f \in W^{k,p}$ for all $\varphi\in C^\infty_c$.
For $k=0$, as a convention we have $W^{0,p}=L^p_x$, $W^{0,p}_\loc=L^p_\loc$; $f^n\to f$ in $W^{k,p}_\loc$ if $\| \varphi(f^n-f)\|_{W^{k,p}}\to 0$ for all $\varphi\in C^\infty_c$.

Given a locally convex space $V$, we write either $\langle f, g\rangle_{V,V^\ast}$ or $\langle g, f\rangle_{V^\ast,V}$ for the duality pairing between $V$ and its dual space $V^\ast$.
When there is no risk of confusion, we simply write $\langle\cdot,\cdot \rangle$; we use it for instance to denote the inner product in $L^2_x$, but also the duality pairing between $L^p_x$ and $L^{p'}_x$, or the one between $C^\infty_c(\R^d)$ and the dual space of distributions $\mathcal{D}'(\R^d)$.

Given a closed interval $I \subset [0,\infty)$ and a function $f: I \rightarrow E$ for some set $E$, we shall employ the subscript notation $f_t$ for the evaluation of $f$ at $t \in I$.
When $E$ is a Banach space, we denote by $\mathcal{B}_b(I;E)$ the space of bounded, measurable functions $f:I\to E$, where $E$ is endowed with the Borel $\sigma$-algebra coming from $\| \cdot\|_E$.
$\mathcal{B}_b(I;E)$ is a Banach space with norm
\begin{align*}
    \| f\|_{\mathcal{B}_b(I;E)}:=\sup_{t\in I} \| f_t\|_E.
\end{align*} 
When $E$ is a Banach space and the interval $I$ is clear from the context, we denote by $L^p_tE := L^p(I;E)$ the Bochner--Lebesgue space of (equivalence classes of) strongly measurable functions $f:I\to E$ such that
\begin{align*}
    \|f\|_{L_t^q E}:= \bigg( \int_I \| f_t\|_E^q\, \dd t\bigg)^{1/q}<\infty
\end{align*}
with the usual convention that we take the essential supremum norm if $q=\infty$.

For $E\in\{C^m_b, L^p_x. W^{k,p}\}$, we denote by $L^q_t E_\loc=L^q(I;E_\loc)$ the Fréchet space of functions $f$ such that $\varphi\, f\in L^q_t E$ for all $\varphi\in C^\infty_c$; $f^n\to f$ in $L^q_t E_\loc$ if $\| \varphi(f^n-f)\|_{L^q_t E}\to 0$ for all $\varphi\in C^\infty_c$.

Given a measure space $(E,\mathcal{E},\mu)$ and a measurable mapping $f: (E,\mathcal{E},\mu) \rightarrow (F,\cF)$, where $\mathcal{F}$ is a $\sigma$-algebra on  $F$, we denote by $f_{\sharp} \mu$ the \emph{pushforward} of $\mu$ under $f$, namely the measure on $(F, \mathcal{F})$ defined by
$$
f_{\sharp} \mu (A) := \mu( f^{-1}(A)) \qquad \forall\, A \in \mathcal{F}.
$$

\subsection{Spaces of $\mathfrak{p}$-variation and rough paths}\label{subsec:p-variation}

We recall in this section several basic concepts from rough path theory; for further details, see the monographs \cite{FV2010,FH2020}. We mostly follow the presentations from \cite[Section 2]{DGHT2019}, \cite[Section 2.2]{HLN2021}.

For a closed interval $I$ , we define
\begin{align*}
    \Delta_I := \{(s, t) \in I^2 : s\leq t\}, \quad \Delta^{(2)}_I:=\{ (s,u,t)\in I^3: s\leq u \leq t\}.
\end{align*}
For notational simplicity, for $T\in (0,+\infty)$, we let $\Delta_T := \Delta_{[0,T]}$ and $\Delta^{(2)}_T := \Delta^{(2)}_{[0,T ]}$.
Let $(E,\| \cdot\|_E)$ be a Banach space; given a function $g: \Delta_I\to E$, we denote by $g_{st}$ the evaluation of $g$ at $(s,t)\in \Delta_I$. $g$ is said to have \emph{finite $\mathfrak{p}$-variation} on $I$ for some $\mathfrak{p}\in (0,+\infty)$ if
\begin{equation*}
    \llbracket g\rrbracket_{\mathfrak{p},I;E}:= \sup_{\pi\in \Pi(I)} \bigg( \sum_{(t_i,t_{i+1})\in \pi} \| g_{t_i t_{i+1}} \|_E^\mathfrak{p} \bigg)^{1/\mathfrak{p}} <\infty,
\end{equation*}
where $\Pi(I)$ denotes the set of all possible finite partitions $\pi$ of $I$; we denote by $C^{\mathfrak{p}-\var}_2 (I;E)$ the set of all continuous functions $g:\Delta_I\to E$ of finite $\mathfrak{p}$-variation.
Similarly, we denote by $C^{\mathfrak{p}-\var}(I;E)$ the set of all continuous paths $x:I\to E$ such that $\delta x\in C^{\mathfrak{p}-\var}_2(I;E)$, where $\delta x_{st} := x_t-x_s$ for $s\leq t$.
In the latter case, with a slight abuse, we will identify $\llbracket x\rrbracket_{\mathfrak{p},I;E}$ with $\llbracket \delta x\rrbracket_{\mathfrak{p},I;E}$.
For any $\mathfrak{p}\in [1,\infty)$, the space $C^{\mathfrak{p}-\var}([s,t];E)$ is Banach with norm
\begin{equation*}
    \| x\|_{\mathfrak{p},[s,t];E}:= \| x_s\|_E + \llbracket x\rrbracket_{\mathfrak{p},[s,t];E}.
\end{equation*}
Whenever $E=\R^m$ for some $m\in\N$ and the context is clear, we will shorten the notation and just write $\| x\|_{\mathfrak{p},[s,t]}$, $\llbracket x\rrbracket_{\mathfrak{p},[s,t]}$. When $[s,t]=[0,T]$, we will just write $\| x\|_{\mathfrak{p},T}$, $\llbracket x\rrbracket_{\mathfrak{p},T}$ and denote $C^{\mathfrak{p}-\var}([0,T];\R^m)$ simply by $C^{\mathfrak{p}-\var}$, similarly for $C^{\mathfrak{p}-\var}_2$.

\begin{remark}\label{rem:p-variation-lsc}
    For any $\mathfrak{p}\in [0,+\infty)$, using the definition, it is easy to check that the $C^{\mathfrak{p}-\var}_2$\mbox{-}seminorm is lower semicontinuous (l.s.c.): given a bounded, pointwise converging sequence $\{g^n\}_n$ in $C^{\mathfrak{p}-\var}_2(I;E)$, namely such that $\sup_n \llbracket g^n\rrbracket_{\mathfrak{p},I;E}<\infty$ and $g^n_{st}\to g_{st}$ in $E$ for all $(s,t)\in\Delta_I$ and some $g\in C(\Delta_I;E)$, it holds that $g\in C^{\mathfrak{p}-\var}_2(I;E)$ and
    \begin{equation*}
        \llbracket g\rrbracket_{\mathfrak{p},I;E} \leq \liminf_{n\to\infty} \llbracket g^n\rrbracket_{\mathfrak{p},I;E}.
    \end{equation*}
    Similarly for bounded sequences $\{x^n\}_n\subset C^{\fkp-\var}(I;E)$ converging pointwise to $x\in C(I;E)$.
\end{remark}

We say that a mapping $w:\Delta_I\to [0,+\infty)$ is a \textit{control} on $I$ if it is continuous, $w(s,s)=0$ for all $s\in I$ and it is superadditive, namely
\begin{equation*}
    w(s,u)+w(u,t)\leq w(s,t)\quad \forall (s,u,t)\in \Delta^{(2)}_I.
\end{equation*}

\begin{remark}\label{rem:relation-control-variation}
    Given $g\in C^{\mathfrak{p}-\var}_2(I;E)$, it can be shown (cf. \cite[Proposition 5.8]{FV2010}) that
\begin{align*}
    w_g(s,t):= \llbracket g\rrbracket_{\mathfrak{p},[s,t];E}^\mathfrak{p}
\end{align*}
defines a control. This control is ``optimal'', in the sense that $\| g_{st}\|_E \leq w_g(s,t)^{1/p}$ and for any other control $\tilde w$ satisfying this property it holds $w_g(s,t)\leq \tilde w(s,t)$, cf. \cite[Proposition 5.10]{FV2010}.
\end{remark}

Given any $g:\Delta_I\to E$, we define the second order increment operator $\delta g:\Delta_I^{(2)}\to E$ by
\begin{align*}
    \delta g_{sut} := g_{st}-g_{su}-g_{ut} \quad \forall\, (s,u,t)\in \Delta_I^{(2)}.
\end{align*}
With these preparations, we can now recall some fundamental concepts from rough path theory.

\begin{definition}
    Let $m\in \N$, $T\in (0,+\infty)$ and $\mathfrak{p}\in [2,3)$. A pair $\mathbf{Z}=(Z,\Z):[0,T]\to \R^m\times \R^{m\times m}$ is a {\rm continuous $\mathfrak{p}$-rough path}, $\mathbf{Z}\in\clC^\mathfrak{p}=\clC^\mathfrak{p}([0,T];\R^m)$ for short, if $Z\in C^{\mathfrak{p}-\var}$, $\Z\in C^{\mathfrak{p}/2-\var}_2$ and they satisfy {\rm Chen's relation}
    \begin{equation}\label{eq:chen-relation}
        \delta\Z_{sut} = \delta Z_{su}\otimes \delta Z_{ut}\quad \forall \, (s,u,t)\in \Delta_T^{(2)}.
    \end{equation}
\end{definition}

For any $\mathbf{Z}\in \clC^\mathfrak{p}$ and $s\leq t$, we adopt the incremental notation $\mathbf{Z}_{st}:=(\delta Z_{st}, \Z_{st})$. $\clC^\mathfrak{p}$ lacks a linear structure, but is a complete metric space endowed with the metric
\begin{equation}\label{eq:metric-p-rough}
    d_{\mathfrak{p},T}(\mathbf{Z},\tilde{\mathbf{Z}}):= \| Z-\tilde Z\|_{\mathfrak{p},T} + \llbracket \Z - \tilde\Z\rrbracket_{\mathfrak{p}/2,T}.
\end{equation}
With a slight abuse, we will still use the norm notation to denote
\begin{align*}
    \| \mathbf{Z}\|_{\mathfrak{p},T}:= d_{\mathfrak{p},T}(\mathbf{Z},0):= \| Z\|_{\mathfrak{p},T} + \llbracket \Z \rrbracket_{\mathfrak{p}/2,T}
\end{align*}

\begin{remark}\label{rem:control-rough-path}
    Analogously to Remark \ref{rem:relation-control-variation}, the joint condition $(Z,\Z)\in C^{\mathfrak{p}-\var}\times C^{\mathfrak{p}/2-\var}_2$ is equivalent to the existence of a control $w_{\mathbf{Z}}$ such that
    \begin{equation}\label{eq:control-p-rough}
        |\delta Z_{st}|^\mathfrak{p} + |\Z_{st}|^{\mathfrak{p}/2} \leq w_{\mathbf{Z}}(s,t) \quad\forall\, (s,t)\in \Delta_T,
    \end{equation}
    with an explicit choice given by $w_{\mathbf{Z}}(s,t):= \llbracket Z\rrbracket_{\mathfrak{p},[s,t]}^\mathfrak{p} + \llbracket \Z\rrbracket_{\mathfrak{p}/2,[s,t]}^{\mathfrak{p}/2}$. Again this is ``almost optimal'', in the sense that if $\tilde w$ is another control satisfying \eqref{eq:control-p-rough}, then it holds
    \begin{equation*}
         \llbracket Z\rrbracket_{\mathfrak{p},[s,t]}^\mathfrak{p} \leq \tilde w(s,t), \quad \llbracket \Z\rrbracket_{\mathfrak{p}/2,[s,t]}^{\mathfrak{p}/2} \leq \tilde w(s,t), \quad
         w_{\mathbf{Z}}(s,t) \leq 2 \tilde w(s,t).
    \end{equation*}
\end{remark}
Any smooth path $Z:[0,T]\to \R^m$ admits a so called \textit{canonical lift} to a rough path, by setting
\begin{equation}\label{eq:canonical-lift}
    \Z_{st}:= \int_s^t \delta Z_{sr} \otimes \dot Z_r\, \dd r;
\end{equation}
it is easy to check that such $\Z$ satisfies \eqref{eq:chen-relation} and the resulting $\mathbf{Z}$ is called a \textit{smooth rough path}.

\begin{definition}\label{defn:geom-rough-path}
    A continuous $\mathfrak{p}$-rough path $\mathbf{Z}$ is {\rm geometric} if it belongs to the closure of the set of smooth rough paths, under the metric $d_{\mathfrak{p},T}$.
    In other terms, $\mathbf{Z}$ is geometric if there exists a sequence of smooth rough paths $\mathbf{Z}^n$ as defined by \eqref{eq:canonical-lift} such that $d_{\mathfrak{p},T}(\mathbf{Z}, \mathbf{Z}^n)\to 0$.
    The space of geometric $\mathfrak{p}$-rough paths is denoted by $\clC^\mathfrak{p}_g=\clC^\mathfrak{p}_g([0,T];\R^m)$.
\end{definition}

To exemplify the concept, let us mention that in the case of $Z$ being sampled as a Brownian motion, its {\em Stratonovich lift} $(Z,\bbZ^{Strat})$ is a geometric $\mathfrak{p}$-rough path for any $\mathfrak{p}>2$, while the {\em It\^o lift} $(Z,\bbZ^{It\hat{o}})$ is not, cf. \cite[Exercise 13.11]{FV2010}.

Let us recall some properties of controls which will come handy in the sequel.

\begin{remark}\label{rem:properties-controls}
If $w_1$, $w_2$ are controls, then it is immediate to check that so is $a\, w_1 + b\, w_2$ for all $a,\, b\geq 0$, as well as $w_1 w_2$.
More generally, if $w$ is a control and $\gamma:\Delta\to [0,+\infty)$ is increasing, in the sense that $\gamma(s,t)\leq \gamma(s',t')$ whenever $[s,t]\subset [s',t']$, then $\gamma\, w$ is a control.
Moreover for any controls $w_1$, $w_2$ and any $a,\,b>0$, there exists another control $w_3$ such that
\begin{equation*}
w_1(s,t)^a w_2(s,t)^b = w_3(s,t)^{a+b} \quad \forall\, [s,t]\subset [0,T]
\end{equation*}
which is precisely given by $w_3=w_1^{a/(a+b)} w_2^{b/(a+b)}$; if $a>1$ and $w$ is a control, then so is $w^a$. For the proofs of the last two statements, see \cite[Exercise 1.9]{FV2010}.    
\end{remark}

A key tool in rough path theory is the so called \emph{sewing lemma}, first introduced in \cite{gubinelli2004,feyel2006}. The statement here is a convenient rephrasing of \cite[Lemma 2.2]{DGHT2019}, to which we refer the reader for a proof.

\begin{lemma}[Sewing Lemma]\label{lem:sewing}
Let $E$ be a Banach space, $I$ as interval, $g:\Delta_I\to E$. Suppose $g\in C^{1/\eta-\var}_2(I;E)$ for some $\eta>1$ and that there exists a control $w$ such that
\begin{equation*}
    \| \delta g_{sut}\|_E \leq w(s,t)^{\eta} \quad \forall\, (s,u,t)\in \Delta^{(2)}_I;
\end{equation*}
then it holds
\begin{equation*}
    \| g_{st}\|_E\leq C\, w(s,t)^{\eta} \quad \forall\, (s,t)\in \Delta_I ,
\end{equation*}
and therefore as a consequence
\begin{equation*}
\llbracket g\rrbracket_{1/\eta,[s,t];E} \leq C w(s,t)^{\eta} \quad \forall\, (s,t)\in \Delta_I.
\end{equation*}
\end{lemma}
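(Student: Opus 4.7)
The plan is to combine the classical iterative partition-reduction argument (à la Young--Gubinelli) with the a priori finite-variation assumption on $g$, which serves precisely to kill the ``zeroth-order'' contribution that the bare sewing procedure would otherwise leave behind.

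First, I would fix $(s,t)\in\Delta_I$ and, for any partition $\pi = \{s=u_0<u_1<\cdots<u_n=t\}$ of $[s,t]$, introduce the Riemann-type sum
\begin{equation*}
    g^{\pi} := \sum_{i=0}^{n-1} g_{u_i u_{i+1}} \in E,
\end{equation*}
so that the trivial partition $\pi_0=\{s,t\}$ gives $g^{\pi_0}=g_{st}$. When $n\geq 2$, removing an interior point $u_j$ (with $1\leq j\leq n-1$) from $\pi$ to form a new partition $\pi'$ changes the sum by exactly $-\delta g_{u_{j-1} u_j u_{j+1}}$, hence
\begin{equation*}
    \|g^{\pi'}-g^\pi\|_E = \|\delta g_{u_{j-1} u_j u_{j+1}}\|_E \leq w(u_{j-1}, u_{j+1})^\eta.
\end{equation*}
By superadditivity of $w$ one has $\sum_{j=1}^{n-1} w(u_{j-1},u_{j+1}) \leq 2w(s,t)$, so a pigeonhole argument produces an interior index $j$ with $w(u_{j-1},u_{j+1})\leq \tfrac{2}{n-1}w(s,t)$. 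Iterating this ``remove the cheapest point'' step from $\pi$ down to the trivial partition, and summing the errors over the $n-1$ reductions, yields the uniform-in-$\pi$ bound
\begin{equation*}
    \|g_{st}-g^\pi\|_E \leq \sum_{k=1}^{n-1}\Bigl(\tfrac{2}{k}\Bigr)^\eta w(s,t)^\eta \leq 2^\eta \zeta(\eta)\, w(s,t)^\eta =: C_\eta\, w(s,t)^\eta,
\end{equation*}
which is finite precisely because $\eta>1$.

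The key remaining step, and the place where the a priori hypothesis is used, is to show that $g^\pi \to 0$ in $E$ as the mesh $|\pi|\to 0$. Setting $a_i:=\|g_{u_iu_{i+1}}\|_E^{1/\eta}$, the assumption $g\in C^{1/\eta-\var}_2(I;E)$ yields the uniform bound $\sum_i a_i \leq \llbracket g\rrbracket_{1/\eta,[s,t];E}^{1/\eta}$, while the continuity of $g$ (built into that space) together with uniform continuity on the compact set $\Delta_{[s,t]}$ forces $\max_i a_i \to 0$. Since $\eta-1>0$, the elementary estimate
\begin{equation*}
    \|g^\pi\|_E \leq \sum_i a_i^\eta \leq \bigl(\max_i a_i\bigr)^{\eta-1} \sum_i a_i \xrightarrow[|\pi|\to 0]{} 0
\end{equation*}
combined with the uniform bound above, after sending $|\pi|\to 0$, gives $\|g_{st}\|_E \leq C_\eta w(s,t)^\eta$, which is the first claim.

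The second claim is then a direct corollary: applying the first bound to each subinterval of any partition $\pi$ of $[s,t]$ and invoking superadditivity of $w$,
\begin{equation*}
    \sum_i \|g_{u_iu_{i+1}}\|_E^{1/\eta} \leq C_\eta^{1/\eta}\sum_i w(u_i,u_{i+1}) \leq C_\eta^{1/\eta}\, w(s,t),
\end{equation*}
and raising to the $\eta$-th power and taking the supremum over $\pi$ yields $\llbracket g\rrbracket_{1/\eta,[s,t];E}\leq C_\eta w(s,t)^\eta$. The main conceptual content is the pigeonhole-plus-telescoping reduction, which is standard but requires $\eta>1$ to ensure summability of $\sum k^{-\eta}$; the more subtle role is played by the assumption $g\in C^{1/\eta-\var}_2$, which is exactly what converts the classical closeness estimate on $\|g_{st}-g^\pi\|_E$ into a quantitative bound on $\|g_{st}\|_E$ itself.
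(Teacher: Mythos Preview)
Your proof is correct and follows the classical Young--Gubinelli partition-reduction argument; the paper itself does not give a proof but simply cites \cite[Lemma 2.2]{DGHT2019}, whose argument is essentially the same as yours. One small point you gloss over: the step ``$\max_i a_i\to 0$'' implicitly uses $g_{uu}=0$ for all $u$, which is not part of the definition of $C^{1/\eta-\var}_2$ but does follow from finite $1/\eta$-variation together with continuity (otherwise refining a partition near a point where $g_{uu}\neq 0$ would make the variation blow up).
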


\subsection{Osgood moduli of continuity}\label{subsec:osgood}

We recall here some basic facts concerning moduli of continuity, which will be used later.

\begin{definition}
We say that $h:\R_{\geq 0}\to\R_{\geq 0}$ is an {\rm Osgood modulus of continuity} if $h$ is continuous, strictly increasing, subadditive (i.e. $h(r_1+r_2)\leq h(r_1)+h(r_2)$) and satisfying
\begin{equation}\label{eq:defn.osgood}
h(0)=0, \quad \int_{0+} \frac{1}{h(r)} \dd r=+\infty.
\end{equation}
\end{definition}

\begin{remark}\label{rem:modulus-1}
Given a subadditive modulus of continuity $h$, \cite[Lem. B]{medvedev2001} guarantees the existence of another concave modulus of continuity $\tilde h$ satisfying the two-sided bound $h \leq \tilde h \leq 2 h$; thus if needed, we may always assume the Osgood modulus $h$ we are working with to be concave without loss of generality. Together with $h(0)=0$, this implies \emph{pseudoconcavity} of $h$, namely that
\begin{equation*}
r\mapsto \frac{h(r)}{r} \text{ is decreasing}.
\end{equation*}
In particular, $h$ grows at most linearly at infinity, since $h(r)\leq h(1) r$ for $r\geq 1$, while it satisfies $h(r)\geq h(1) r$ close to the origin.

In particular, if $h$ is concave and Osgood, then so is $r\mapsto h(r)+\alpha r$, for any $\alpha\geq 0$: indeed for any $\eps\in (0,1)$ we have
\begin{align*}
    \int_{0}^\eps \frac{1}{h(r)+\alpha r} \dd r
    \geq \Big( 1 +\frac{\alpha}{h(1)}\Big)^{-1} \int_{0}^\eps \frac{1}{h(r)} \dd r = +\infty
\end{align*}
which implies the validity of \eqref{eq:defn.osgood} for $\tilde h(r)=h(r)+\alpha r$.
\end{remark}
Correspondingly to a Osgood modulus $h$, for any $r_0\in (0,+\infty)$, we set
\begin{equation*}
    G(r) := \int_{r_0}^r \frac{1}{h(\tilde r)} \dd \tilde r.
\end{equation*}
\begin{remark}
Combining the Osgood condition and the at most linear growth at infinity from Remark \ref{rem:modulus-1}, we see that $G$ is an increasing, invertible function that can be extended to the whole $[0,+\infty]$ by setting $G(0):=-\infty$, $G(+\infty):=+\infty$. Correspondingly, for any $\beta\geq 0$ we can define
\begin{equation}\label{eq:osgood-lemma-function}
    M^{h,\beta}(\alpha):=M^h(\alpha,\beta):=G^{-1}(G(\alpha)+\beta).
\end{equation}
$M^{h,\beta}$ is an increasing, continuous function on $[0+\infty]$, whose values are independent of $r_0$ appearing in the definition of $G$; moreover it satisfies $M^{h,\beta}(0)=0$ for any $\beta\geq 0$.
\end{remark}

The importance of Osgood moduli of continuity, in connection to solvability of ODEs, comes from the Bihari--Osgood inequalities (see \cite{bihari1956} and \cite[Lemma 2.1]{LR2013} for more general statements and the proof).

\begin{lemma}\label{lem:bihari}
Let $h$ be an Osgood modulus of continuity, $T\in (0,+\infty]$. Let $f$, $g$ be nonnegative, integrable functions on $[0,T)$ such that
    \begin{equation*}
    f_t \leq K + \int_0^t g_s\, h(f_s)\, \dd s\quad \forall\, t\in [0,T)
    \end{equation*}
for some constant $K\geq 0$. Then it holds
    \begin{equation*}
    f_t \leq M^h\Big(K, \int_0^t g_s\, \dd s\Big)\quad \forall\, t\in [0,T)
    \end{equation*}
for $M^h$ as defined in \eqref{eq:osgood-lemma-function}. In particular, if $K=0$, then $f\equiv 0$.
\end{lemma}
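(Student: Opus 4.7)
The plan is to adapt the classical Gr\"onwall-type comparison argument, with the Osgood modulus $h$ replacing the linear function. First I would treat the case $K>0$, then recover the case $K=0$ by approximation.

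For $K>0$, define $F_t := K + \int_0^t g_s\, h(f_s)\, \dd s$, so that $f_t \leq F_t$ pointwise and $F_0 = K$. Note that $g h(f)$ is integrable: by the monotonicity and subadditivity of $h$, together with Remark \ref{rem:modulus-1} which gives $h(r) \leq h(1)(1+r)$, we have $g_s h(f_s) \leq h(1)\, g_s (1+f_s) \in L^1$. Hence $F$ is absolutely continuous and nondecreasing, with $\dot F_t = g_t h(f_t)$ a.e.; moreover $F_t \geq K > 0$ for all $t$. Using monotonicity of $h$ and $f_s \leq F_s$, we obtain $\dot F_t \leq g_t\, h(F_t)$ a.e.

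Since $1/h$ is continuous on $(0,\infty)$ and $F_t$ stays bounded away from $0$ on every $[0,t]\subset [0,T)$ (as $F\geq K>0$), the chain rule for absolutely continuous functions yields
\begin{equation*}
    \frac{\dd}{\dd t} G(F_t) = \frac{\dot F_t}{h(F_t)} \leq g_t \quad\text{a.e.}
\end{equation*}
Integrating from $0$ to $t$ gives $G(F_t) \leq G(K) + \int_0^t g_s\, \dd s$, and since $G$ is strictly increasing with continuous inverse extended to $[0,+\infty]$, we conclude
\begin{equation*}
    f_t \leq F_t \leq G^{-1}\Bigl( G(K) + \int_0^t g_s\, \dd s\Bigr) = M^h\Bigl(K, \int_0^t g_s\, \dd s\Bigr),
\end{equation*}
which is the desired estimate.

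For $K=0$, fix any $\eps>0$ and apply the previous step to the weaker inequality $f_t \leq \eps + \int_0^t g_s h(f_s)\, \dd s$, obtaining $f_t \leq M^h(\eps, \int_0^t g_s\, \dd s)$. The main subtlety, which I expect to be the only genuine difficulty, is passing to the limit $\eps \to 0^+$: by the Osgood condition \eqref{eq:defn.osgood}, $G(\eps) \to -\infty$ as $\eps\to 0^+$, so for every fixed $t$ the argument $G(\eps) + \int_0^t g_s \dd s$ tends to $-\infty$, and by continuity of $G^{-1}$ at $-\infty$ (with $G^{-1}(-\infty)=0$) we deduce $M^h(\eps, \int_0^t g_s\, \dd s)\to 0$. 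Hence $f_t = 0$ for all $t\in[0,T)$, completing the proof.
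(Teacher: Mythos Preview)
The paper does not supply its own proof of this lemma; it simply refers the reader to \cite{bihari1956} and \cite[Lemma~2.1]{LR2013} for the classical Bihari--Osgood inequality. Your argument is exactly the standard comparison proof found in those references and is correct in substance.

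One small caveat: the justification ``$g_s(1+f_s)\in L^1$'' does not follow from $f,g\in L^1$ alone, since products of two $L^1$ functions need not be integrable. You do not actually need this step: the finiteness of $\int_0^t g_s\,h(f_s)\,\dd s$ for each $t$ is implicit in the hypothesis (the inequality is understood with a finite right-hand side), which directly gives absolute continuity of $F$ on every compact subinterval of $[0,T)$, and that is all your chain-rule argument requires. Alternatively, in every application of the lemma in the paper the function $f$ is continuous, hence locally bounded, so $g\,h(f)$ is trivially locally integrable.
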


\subsection{Uniform convergence in weak topologies}\label{subsec:UCW}

If $V$ is a Banach space, we write $\rightharpoonup$ for weak convergence in $V$; namely, $v^n\rightharpoonup v$ in $v$ if $\langle v^n,u\rangle_{V,V^\ast}\to \langle v,u\rangle_{V,V^\ast}$ for all $u\in V^\ast$. Similarly, if $V$ has a dual structure, $V=U^\ast$, then we denote weak\mbox{-}$\ast$ convergence by $\xrightharpoonup{\ast}$, viz $v^n \xrightharpoonup{\ast} v$ if $\langle v^n,u\rangle_{U^\ast,U}\to \langle v,u\rangle_{U^\ast,U}$ for all $u\in U$. For an overview of basic properties of weak and weak\mbox{-}$\ast$ topologies, we refer to \cite{brezis2011functional}.

\begin{definition}\label{defn:uniform_weak_convergence}
    Let $V$ be a Banach space, $T\in (0,+\infty)$. We denote by $C_w([0,T];V)$ the space of weakly continuous functions $f:[0,T]\to V$, namely such that $f_{t_n}\rightharpoonup f_t$ in $V$ whenever $t_n\to t$.

    Given a sequence $\{ f^n\}_n\subset C_w([0,T];V)$, we say that $f^n\to f$ in $C_w([0,T];V)$ if
    \begin{align*}
        \lim_{n\to\infty} \sup_{t\in [0,T]} |\langle f^n_t-f_t,g\rangle_{V,V^\ast}|=0\quad \forall\, g\in V^\ast.
    \end{align*}
    
    If $V$ has a dual structure, $V=U^\ast$ for some Banach $U$, we similarly denote by $C_{w-\ast}([0,T];V)$ the space of weakly-$\ast$ continuous functions $f:[0,T]\to V$, namely such that $f_{t_n}\xrightharpoonup{\ast} f_t$ in $V$ whenever $t_n\to t$. Given $\{ f^n\}_n\subset C_{w-\ast}([0,T];V)$, $f^n\to f$ in $C_{w-\ast}([0,T];V)$ if, 
    \begin{align*}
        \lim_{n\to\infty} \sup_{t\in [0,T]} |\langle f^n_t-f_t,g\rangle_{U^\ast,U}|=0\quad \forall\, g\in U.
    \end{align*}
\end{definition}

\begin{remark}\label{rem:uniform_weak_convergence}
    Definition \ref{defn:uniform_weak_convergence} encodes the concept of ``uniform convergence on compacts in the weak topology'' (resp. ``uniform convergence on compacts in the weak\mbox{-}$\ast$ topology''). It satisfies the following properties:
    \begin{itemize}
        \item[i)] Notation is consistent: if $f^n\to f$ in $C_w([0,T];V)$, then $f\in C_w([0,T];V)$. Indeed, for any $g\in V^\ast$, $t\mapsto \langle f_t,g\rangle_{V,V^\ast}$ is continuous as it is the uniform limit of continuous functions. Similarly for convergence in $C_{w-\ast}([0,T];V)$.
        \item[ii)] By testing against $g$ and using properties of uniform convergence, it's easy to see that if $t_n\to t$ and $f^n\to f$ in $C_w([0,T];V)$, then $f^n_{t_n}\rightharpoonup f_t$; similarly for $C_{w-\ast}([0,T];V)$.
        \item[iii)] If $f^n\to f$ in either $C_w([0,T];V)$ or $C_{w-\ast}([0,T];V)$, then
        \begin{equation*}
            \sup_n \sup_{t\in [0,T]} \| f^n_t\|_V<\infty.
        \end{equation*}
        Indeed, suppose this was not the case, say for $f^n\to f$ in $C_{w-\ast}([0,T];V)$. Then we could find a sequence $(n_k,t_{n_k})$ such that $\| f^{n_k}_{t_{n_k}}\|_V\to\infty$. Since $[0,T]$ is compact, up to extracting a (not relabelled) subsequence, we may assume $t_{n_k}\to t$. But then $f^{n_k}_{t_{n_k}}\xrightharpoonup{\ast} f_t$, which by the Banach--Steinhaus theorem implies that $\{f^{n_k}_{t_{n_k}}\}_k$ is bounded in $V$, absurd.
        \item[iv)] By the same logic, it's easy to check that if $f\in C_w([0,T];V)$ (resp. $C_{w-\ast}([0,T];V)$, then
        \begin{align*}
            \sup_{t\in [0,T]} \| f_t\|_V<\infty.
        \end{align*}
        Moreover, if either $V$ is separable or $V=U^\ast$ with $U$ separable, then the weak (resp. weak\mbox{-}$\ast$ topology) and the strong topology generate the same Borel $\sigma$-algebra, therefore in this case $C_w([0,T];V)\subset \mathcal{B}_b([0,T];V)$ (resp. $C_{w-\ast}([0,T];V)\subset \mathcal{B}_b([0,T];V)$).
        In particular, if $V=U^\ast$ for some separable $U$, then we have the chain of inclusions
        \begin{equation*}
            C([0,T];V)\subset C_w([0,T];V)\subset C_{w-\ast}([0,T];V)\subset \mathcal{B}_b([0,T];E)\subset L^\infty([0,T];E).
        \end{equation*}
        \item[v)] Suppose now that $V=U^\ast$ for some separable $U$; in this case, by the previous point (and lower semicontinuity of $\| \cdot \|_V$ w.r.t. weak\mbox{-}$\ast$ convergence), there exists $R>0$ such that
        \begin{equation*}
            \| f^n_t\|_V \leq R, \quad \| f_t\|_V \leq R \quad \forall\, n\in\N,\, t\in [0,T].
        \end{equation*}
        Set $B_V(R)=\{v\in V: \| v\|_V\leq R\}$; by \cite[Thm. 3.28]{brezis2011functional}, there exists a metric $d$ which induces the weak\mbox{-}$\ast$ topology on $B_V(R)$ and such that $(B_V(R),d)$ is a complete metric space. It is then easy to check that $f^n,f\in C([0,T]; B_V(R),d)$ and that
        \begin{align*}
            \lim_{n\to\infty} \sup_{t\in [0,T]} d(f^n_t,f_t) =0.
        \end{align*}
        In other words, under the above assumptions, convergence in $C_{w-\ast}([0,T];V)$ is equivalent to uniform convergence in $C([0,T];B_V(R),d)$, for some $R\in (0,\infty)$.
    \end{itemize}
\end{remark}

Convergence in $C_w([0,T];V)$ (resp. $C_{w-\ast}([0,T];V)$) will be extremely useful throughout the paper, especially in Sections \ref{sec:linear-RPDEs}-\ref{sec:nonlinear-RPDEs}, whenever we will construct weak solutions by compactness arguments; we refer to Appendix \ref{app:compactness} for a class of compactness criteria for convergence in $C_w([0,T];L^p_x)$ and $C_{w-\ast}([0,T];L^\infty_x)$.

\section{RDEs with Osgood drifts}\label{sec:RDEs}

The overall goal of this section is to present a satisfying solution theory for rough differential equations (RDEs) on $\R^d$ of the form
\begin{equation*}
    \dd y_t = b_t(y_t) \dd t + \xi(y_t) \dd \mathbf{Z}_t,
\end{equation*}
where $b:\R_{\geq 0}\times\R^d\to \R^d$, $\xi:\R^d\to \R^{d\times m}$ and $\mathbf{Z}$ is the rough path enhancement of an $\R^m$-valued path $Z$; in particular, the drift $b$ is assumed to be Osgood continuous in space, but not Lipschitz.
In this case , we will see that it is still possible to establish existence and uniqueness of solutions, which form a flow of homeomorphisms on $\R^d$; furthermore the flow depends continuously on $(b,\xi,\mathbf{Z})$ and, under suitable assumptions on the divergences of $b$ and $\xi$, leaves the Lebesgue measure on $\R^d$ quasi-invariant.

Throughout the section, we will always assume the rough paths $\mathbf{Z}\in \clC^\mathfrak{p}$ in consideration to be defined for $t\in [0,+\infty)$. For simplicity, we will work on finite intervals $[0,T]$, but these may be taken arbitrarily large.

\subsection{Forced RDEs and a priori estimates}

Whenever useful, we will identify $\xi:\R^d\to \R^{d\times m}$ with a collection $\{\xi_k\}_{k=1}^m$ of vector fields on $\R^d$, and viceversa.
Associated to $\xi$, we define the {\em second order map} $\Xi(x):= D\xi(x)\, \xi(x)$, such that for any fixed $x\in \R^d$, $\Xi(x)$ is a map from $\R^{m\times m}$ to $\R^d$, as follows:
\begin{align*}
    \Xi(x) A := \sum_{j,k=1}^m D\xi_k (x) \xi_j(x) A_{jk}.
\end{align*}
With this notation, we can provide a solution concept for RDEs, in the style of Davie~\cite{davie2008}.
Such concept is useful for numerical schemes and has a natural generalization to rough PDEs, see~\cite{BaiGub2017,DGHT2019} as well as the upcoming Section~\ref{sec:linear-RPDEs}.

The main novelty, compare to the majority of the rough path literature, is that here we focus on RDEs with a finite $1$-variation forcing term $\mu$ appearing on the r.h.s. This will be very useful later in Section \ref{subsec:RDE-flow}, when replacing $\mu$ by the actual drift term $\int_0^\cdot b_s(y_s) \dd s$.

\begin{definition}\label{defn:RDE-forcing}
    Let $\mathfrak{p}\in [2,3)$, $\mathbf{Z}\in \clC^{\mathfrak{p}-\var}$, $\mu\in C^{1-\textnormal{var}}$ and $\xi\in C^2_b$.
    We say that $y:\R_{\geq 0}\to \R^d$ is a {\em solution to the RDE}
    \begin{equation}\label{eq:RDE-with-forcing}
        \dd y_t = \dd \mu_t + \xi(y_t) \dd \mathbf{Z}_t
    \end{equation}
    on an interval $[0,T]$ if, for any $[s,t]\subset [0,T]$, it holds that
    \begin{equation}\label{eq:defn-RDE-forcing}
        \delta y_{st} = \delta \mu_{st} + \xi(y_s) \delta Z_{st} + \Xi(y_s) \Z_{st} + y^\natural_{st}
    \end{equation}
    where the two-parameter function $y^\natural$ defined by \eqref{eq:defn-RDE-forcing} is such that $y^\natural \in C^{\mathfrak{p}/3-\var}_2$. We will sometimes refer to $(x,\mu)$ as the {\rm data} associated to the RDE~\eqref{eq:RDE-with-forcing}, where $x:=y\vert_{t=0}\in\R^d$ is the {\rm initial datum}.
    We say that $y$ is a {\rm global solution} to the RDE if it is a solution on $[0,T]$ for all $T\in (0,+\infty)$.
\end{definition}

\begin{remark}\label{rem:defn-RDE-forcing}
By Remark~\ref{rem:relation-control-variation}, condition $y^\natural\in C^{\mathfrak{p}/3-\var}_2$ is equivalent to the existence of a control $w_\natural$ such that $|y^\natural_{s,t}| \leq w_\natural (s,t)^{3/\mathfrak{p}}$ for all $(s,t)\in \Delta_T$.
Moreover, by the assumptions on $\mu$ and $\mathbf{Z}$ coming from Definition \ref{defn:RDE-forcing} and properties of controls, it is easy to check that if $y$ is a solution to \eqref{eq:RDE-with-forcing}, then necessarily $y\in C^{\mathfrak{p}-\var}$.
By repeatedly applying Taylor expansion, one can check that in the case of smooth $\mu$ and smooth rough path $\mathbf{Z}$, Definition~\ref{defn:RDE-forcing} is in agreement with $y$ being a classical solution to the ODE
\begin{equation*}
    \dot y_t = \dot\mu_t + \xi(y_t) \dot Z_t.
\end{equation*}
Definition~\ref{defn:RDE-forcing} is also in agreement with other solution concepts from rough path theory, like Gubinelli's controlled rough paths and Lyons' original definition, see Sections 8.8-8.9 from \cite{FH2020} for a deeper discussion.
\end{remark}

The next lemma provides a priori estimates for the RDE \eqref{eq:RDE-with-forcing}.

\begin{lemma}\label{lem:apriori-RDE-forcing}
Let $\mathfrak{p}\in [2,3)$, $\mathbf{Z}\in \clC^{\mathfrak{p}-\var}$, $\xi\in C^2_b$ and $y$ be a solution to the RDE~\eqref{eq:RDE-with-forcing} on $[0,T]$ associated to $(x,\mu)$. Then there exists a constant $C=C(\mathfrak{p},\|\xi\|_{C^2_b}, \| \mathbf{Z}\|_{\mathfrak{p},T})$, increasing in all its entries, such that 
\begin{align} 
    \sup_{t\in [0,T]} |y_t-x| & \leq C(1+ \| \mu\|_{1,T}), \label{eq:apriori-RDE-forcing-1} \\
    |\delta y_{st}| \leq \llbracket y\rrbracket_{\mathfrak{p},[s,t]} & \leq C\, \big( w_{\mu}(s,t) + w_{\mathbf{Z}} (s,t)^{1/\mathfrak{p}}\big) \quad \forall\, (s,t) \in \Delta_T, \label{eq:apriori-RDE-forcing-2} \\ 
    |y^{\natural}_{st}| \leq \llbracket y^{\natural}\rrbracket_{\mathfrak{p}/3,[s,t]} & \leq C \big( w_\mu (s,t)\, w_{\mathbf{Z}}(s,t)^{1/\mathfrak{p}}  + w_{\mathbf{Z}}(s,t)^{3/\mathfrak{p}} \big) \quad  \forall \, (s,t) \in \Delta_T  \label{eq:apriori-RDE-forcing-3} .
\end{align}
The controls $w_\mu$ and $w_\mathbf{Z}$ 
appearing in the above estimates come respectively from Remark~\ref{rem:relation-control-variation} (applied for $g=\mu$ and $\mathfrak{p}=1$) and Remark~\ref{rem:control-rough-path}. 
\end{lemma}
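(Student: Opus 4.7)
The approach is the classical sewing-lemma argument, adapted to the forced setting. First I would derive an algebraic identity for $\delta y^\natural_{sut}$ by applying the second-order increment operator to \eqref{eq:defn-RDE-forcing}: since $\delta(\delta y) = 0$ and $\delta(\delta \mu) = 0$, invoking Chen's relation \eqref{eq:chen-relation} gives
\begin{align*}
    \delta y^\natural_{sut} = [\xi(y_u) - \xi(y_s)]\delta Z_{ut} - \Xi(y_s)(\delta Z_{su}\otimes \delta Z_{ut}) + [\Xi(y_u) - \Xi(y_s)]\Z_{ut}.
\end{align*}
Taylor-expanding $\xi$ to first order around $y_s$ and substituting \eqref{eq:defn-RDE-forcing} for $\delta y_{su}$ produces the term $D\xi(y_s)\xi(y_s)(\delta Z_{su}\otimes\delta Z_{ut})$, which by the very definition of $\Xi$ exactly cancels the Chen remainder. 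What remains is a sum of five terms:
\begin{align*}
    D\xi(y_s)\delta\mu_{su}\delta Z_{ut},\ \ D\xi(y_s)\Xi(y_s)\Z_{su}\delta Z_{ut},\ \ D\xi(y_s)\, y^\natural_{su}\delta Z_{ut},\ \ R_\xi\,\delta Z_{ut},\ \ [\Xi(y_u)-\Xi(y_s)]\Z_{ut},
\end{align*}
where $R_\xi$ is the quadratic Taylor remainder, satisfying $|R_\xi|\lesssim \|\xi\|_{C^2_b}|\delta y_{su}|^2$.

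Next I would estimate each piece in terms of $w_\mu$, $w_\mathbf{Z}$ and a provisional control $w_\natural$ for $y^\natural$ (which by Remark~\ref{rem:relation-control-variation} exists since $y^\natural\in C^{\mathfrak{p}/3-\var}_2$), using the bound $|\delta y_{su}|\lesssim w_\mu(s,t) + w_\mathbf{Z}(s,t)^{1/\mathfrak{p}} + w_\natural(s,t)^{3/\mathfrak{p}}$ read off directly from \eqref{eq:defn-RDE-forcing}. Invoking Remark~\ref{rem:properties-controls} to rewrite each resulting product of controls as $\Omega(s,t)^\eta$ with $\eta>1$, the sewing Lemma~\ref{lem:sewing} yields an a posteriori bound for $|y^\natural_{st}|$ on any subinterval. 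The catch is that $w_\natural$ reappears on the right-hand side, producing a self-referential estimate. To close it I would restrict to subintervals $[\sigma,\tau]\subset [0,T]$ on which $w_\mathbf{Z}(\sigma,\tau)\leq L$, with $L$ depending only on $\mathfrak{p}$ and $\|\xi\|_{C^2_b}$; on such intervals the $w_\natural$-contributions carry a prefactor $w_\mathbf{Z}^{1/\mathfrak{p}}$ or $w_\mathbf{Z}^{2/\mathfrak{p}}$ and can be absorbed into the left-hand side, leaving a bound purely in terms of $w_\mu$ and $w_\mathbf{Z}$. Since $w_\mathbf{Z}$ is continuous and vanishes on the diagonal, $[0,T]$ partitions into a finite number of such intervals, controlled by $\|\mathbf{Z}\|_{\mathfrak{p},T}$, and superadditivity of the $\mathfrak{p}/3$-variation allows concatenation of the local bounds to deliver the global estimate \eqref{eq:apriori-RDE-forcing-3}.

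Finally, \eqref{eq:apriori-RDE-forcing-2} follows by substituting \eqref{eq:apriori-RDE-forcing-3} back into \eqref{eq:defn-RDE-forcing} and bounding each term, while \eqref{eq:apriori-RDE-forcing-1} is a direct corollary of \eqref{eq:apriori-RDE-forcing-2} applied at $(0,t)$, using $\llbracket\mu\rrbracket_{1,T}\leq\|\mu\|_{1,T}$ and absorbing the $\|\mathbf{Z}\|_{\mathfrak{p},T}$-dependent contribution into $C$. The main technical obstacle is precisely the self-reference handled in the middle paragraph: one must track carefully that the absorption yields a constant depending only on $\mathfrak{p}$, $\|\xi\|_{C^2_b}$ and $\|\mathbf{Z}\|_{\mathfrak{p},T}$; the fact that $w_\mu$ never appears in the sewing bound with a standalone exponent less than $1$ ensures that the forcing $\mu$ does not force any further splitting and contributes only additively in the final estimate.
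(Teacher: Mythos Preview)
Your strategy coincides with the paper's: compute $\delta y^\natural_{sut}$ via Chen's relation, Taylor-expand $\xi$, apply the sewing Lemma~\ref{lem:sewing}, absorb the self-referential $w_\natural$-term on subintervals where $w_{\mathbf{Z}}$ is small, then pass from local to global. The paper packages the Taylor step through the shorthand $\xi(y)^\sharp_{st}:=\delta\xi(y)_{st}-D\xi(y_s)\xi(y_s)\delta Z_{st}$ and Lemma~\ref{lem:basic-taylor}, but the content is identical to your five-term expansion.

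There is, however, a real slip in your globalization. Superadditivity of the control $w_\natural(s,t)=\llbracket y^\natural\rrbracket_{\mathfrak{p}/3,[s,t]}^{\mathfrak{p}/3}$ reads $\sum_i w_\natural(t_i,t_{i+1})\leq w_\natural(0,T)$, which is the \emph{wrong inequality}: it bounds the global variation from below, not above. Because $y^\natural$ is not a path increment ($\delta y^\natural\neq 0$), one cannot telescope $y^\natural_{st}$ across a partition, so local $\mathfrak{p}/3$-variation bounds do not concatenate naively. The paper circumvents this by reversing your order: it first plugs the local bound on $y^\natural$ back into \eqref{eq:defn-RDE-forcing} to get a local bound on $|\delta y_{st}|$, then globalizes \eqref{eq:apriori-RDE-forcing-2} via Lemma~\ref{lem:local-to-global} (here telescoping \emph{does} work, since $\delta y$ is a genuine increment); with $\delta y$ globally controlled, the RDE expansion yields a crude sup bound $\|y^\natural\|_{C(\Delta_T)}$, and Lemma~\ref{lem:local-to-global} (first part) then delivers the global \eqref{eq:apriori-RDE-forcing-3}. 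Your argument becomes correct once you swap the order and invoke this local-to-global step rather than superadditivity.
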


\begin{proof}
    By Remark~\ref{rem:defn-RDE-forcing}, $y\in C^{\mathfrak{p}-\var}$; denote by $w_y$ the associated control, coming from Remark~\ref{rem:relation-control-variation}.
    By the structure of \eqref{eq:defn-RDE-forcing}, it holds
    \begin{equation}\label{eq:apriori-RDE-forcing-eq1}\begin{split}
    w_y(s,t)^{1/\mathfrak{p}} & \lesssim w_{\mu}(s,t) + w_{\bZ}(s,t)^{1/\mathfrak{p}}  + w_{\natural}(s,t)^{3/\mathfrak{p}},\\
    |y^\natural_{st}| & \lesssim w_{\mu}(s,t) + w_{\bZ}(s,t)^{1/\mathfrak{p}} + w_y(s,t)^{1/\mathfrak{p}},
    \end{split}\end{equation}
    where all hidden constants are allowed to depend on the aforementioned parameters, so that e.g. we may omit $\| \xi\|_{C^2_b}$ and we may write $w_{\bZ}(s,t)^{2/\mathfrak{p}} \lesssim w_{\bZ}(s,t)^{1/\mathfrak{p}}$.
    
    The proof is split in two main steps: we first show that \eqref{eq:apriori-RDE-forcing-3} holds for all $(s,t)$ such that $w_{\bZ}(s,t) \leq h$, for some appropriately chosen $h>0$, aided by the sewing lemma (Lemma \ref{lem:sewing}); we then upgrade this to global bounds, by applying several times Lemma \ref{lem:local-to-global} from Appendix \ref{app:tech-lem}.
    
    In order to apply Lemma \ref{lem:sewing} to $y^\natural$, we start by computing $\delta y^\natural_{sut}$;
    by \eqref{eq:defn-RDE-forcing} and basic algebraic computations, it holds
    \begin{equation}\label{eq:apriori-RDE-forcing-eq3}\begin{split}
        \delta y^\natural_{sut}
        & = \delta \xi(y)_{us}\, \delta Z_{ut} + \Xi(y_s) (\mathbb{Z}_{su} + \mathbb{Z}_{ut} -\mathbb{Z}_{st}) + \delta\Xi(y)_{su} \mathbb{Z}_{ut}\\
        & = \big(\delta \xi(y)_{su} - \Xi(y_s) \delta Z_{su} \big) \delta Z_{ut} + \delta\Xi(y)_{su} \mathbb{Z}_{ut},
    \end{split}\end{equation}
    where in the second step we applied Chen's relation \eqref{eq:chen-relation}.
    Next, let us introduce the notation
    \begin{equation}\label{eq:apriori-RDE-forcing-eq2}
        y^\sharp_{st} := \delta y_{st} - \xi(y_s) \delta Z_{st}= \delta \mu_{st} + \Xi(y_s) \mathbb{Z}_{st} + y^\natural_{st},
    \end{equation}
    where the second identity comes from an application of \eqref{eq:defn-RDE-forcing}, as well as
    \begin{equation*}
        \quad \xi(y)^\sharp_{st}
        := \delta \xi(y)_{su} - \Xi(y_s) \delta Z_{su}
        = \delta \xi(y)_{st} - D \xi(y_s) \xi(y_s) \delta Z_{st}.
    \end{equation*}
    By applying (componentwise) Lemma~\ref{lem:basic-taylor} from Appendix~\ref{app:tech-lem} for $f=\xi\in C^2_b$ and $z=\delta Z_{st}$, it holds
    \begin{equation}\label{eq:apriori-RDE-forcing-eq4}\begin{split}
        |\xi(y)^\sharp_{st}|
        & \lesssim |y^\sharp_{st}|+|\delta y_{st}| |\delta Z_{st}|\\
        & \lesssim w_\mu(s,t) + w_{\bZ}(s,t)^{2/\mathfrak{p}} + w_\natural(s,t)^{3/\mathfrak{p}} + w_y(s,t)^{1/\mathfrak{p}} w_{\bZ}(s,t)^{1/\mathfrak{p}}\\
        & \lesssim w_\mu(s,t) + w_{\bZ}(s,t)^{2/\mathfrak{p}} + w_\natural(s,t)^{3/\mathfrak{p}}
    \end{split}\end{equation}
    where in the last step we applied \eqref{eq:apriori-RDE-forcing-eq1}.
    Observing that by our assumptions $\Xi$ is globally Lipschitz, combining \eqref{eq:apriori-RDE-forcing-eq3} and \eqref{eq:apriori-RDE-forcing-eq4} yields
    \begin{align*}
        |\delta y^\natural_{sut}|
        & \lesssim |\xi(y)^\sharp_{su}| |\delta Z_{ut}| + |\delta y_{su}| |\mathbb{Z}_{ut}|\\
        & \lesssim w_{\bZ}(s,t)^{1/\mathfrak{p}} w_\mu(s,t) + + w_{\bZ}(s,t)^{3/\mathfrak{p}} + w_{\bZ}(s,t)^{1/\mathfrak{p}} w_\natural(s,t)^{3/\mathfrak{p}} + w_{\bZ}(s,t)^{2/\mathfrak{p}} w_y(s,t)^{1/\mathfrak{p}}\\
        & \lesssim w_{\bZ}(s,t)^{1/\mathfrak{p}} w_\mu(s,t) + w_{\bZ}(s,t)^{1/\mathfrak{p}} w_\natural(s,t)^{3/\mathfrak{p}}  + w_{\bZ}(s,t)^{3/\mathfrak{p}}
    \end{align*}
    where in the last passage we used \eqref{eq:apriori-RDE-forcing-eq1}.
    Applying Lemma \ref{lem:sewing} (for $\eta=3/\mathfrak{p}$) we conclude that there exists a constant $K=K(\mathfrak{p},\| \xi\|_{C^2_b}, \| \bZ\|_{\mathfrak{p},T})$ such that, for all $(s,t)\in \Delta_T$, it holds
    \begin{equation}\label{eq:apriori-RDE-forcing-eq5}
        w_\natural(s,t)^{3/\mathfrak{p}} \leq K \big[w_{\bZ}(s,t)^{1/\mathfrak{p}} w_\natural(s,t)^{3/\mathfrak{p}} + w_{\bZ}(s,t)^{1/\mathfrak{p}} w_\mu(s,t) + w_{\bZ}(s,t)^{3/\mathfrak{p}}\big];
    \end{equation}
    in turn, \eqref{eq:apriori-RDE-forcing-eq5} implies that for all $s<t$ such that $w_{\bZ}(s,t)< h:=(2K)^{-\mathfrak{p}}$, it holds
    \begin{equation}\label{eq:apriori-RDE-forcing-eq6}
        |y^\natural_{st}| \leq w_\natural(s,t)^{3/\mathfrak{p}} \leq 2K \big[w_{\bZ}(s,t)^{1/\mathfrak{p}} w_\mu(s,t) + w_{\bZ}(s,t)^{3/\mathfrak{p}}\big].
    \end{equation}

    Having established the desired local bound \eqref{eq:apriori-RDE-forcing-eq6}, we now pass to the global one.
    Reinserting \eqref{eq:apriori-RDE-forcing-eq6} in the first equation in \eqref{eq:apriori-RDE-forcing-eq1}, one finds
    \begin{equation*}
        |\delta y_{st}|
        \lesssim w_\mu(s,t) + w_{\bZ}(s,t)^{1/\mathfrak{p}}
        \lesssim \big( w_\mu(s,t)^\mathfrak{p} + w_{\bZ}(s,t) \big)^{1/\mathfrak{p}}
    \end{equation*}
    whenever $w_{\bZ}(s,t)\leq h$. Observe that by Remark \ref{rem:properties-controls} $\tilde w:= w_\mu^\mathfrak{p} + w_{\bZ}$ is still a control; therefore the hypothesis of Lemma \ref{lem:local-to-global} from Appendix \ref{app:tech-lem} are satisfied for $g=\delta y$, whose application readily yields the desired bound \eqref{eq:apriori-RDE-forcing-2} (modulo reabsorbing several terms coming from $h=(2K)^{-\fkp}$, $K$, etc. in the hidden constant $C$ depending on the aforementioned parameters).
    Estimate \eqref{eq:apriori-RDE-forcing-1} follows from \eqref{eq:apriori-RDE-forcing-2} and the basic bound $|y_t-x|\leq \llbracket y\rrbracket_{\mathfrak{p},[0,t]}$, coming from the definition of $\fkp$-variation.

    Finally, estimate \eqref{eq:apriori-RDE-forcing-eq6} implies that
    \begin{align*}
        |y^\natural_{st}|\lesssim \big[ w_{\bZ}(s,t)^{1/3} w_\mu(s,t)^{\mathfrak{p}/3} + w_{\bZ}(s,t) \big]^{3/\mathfrak{p}} \quad \text{whenever } w_{\bZ}(s,t)\leq h,
    \end{align*}
    where $\tilde w:= w_{\bZ}^{1/3} w_\mu^{\mathfrak{p}/3} + w_{\bZ}$ is again a control by Remark \ref{rem:properties-controls}; applying again Lemma \ref{lem:local-to-global}, this time for $g=y^\natural$ and $\tilde{\mathfrak{p}}=\mathfrak{p}/3$, yields
    \begin{align*}
        \llbracket y^{\natural} \rrbracket_{\mathfrak{p}/3;[s,t]}
        & \lesssim w_{\bZ}(s,t)^{1/\mathfrak{p}}  w_\mu(s,t) + w_{\bZ}(s,t)^{3/\mathfrak{p}} + w_{\bZ}(s,t)^{3/\mathfrak{p}} \sup_{s\leq u\leq v\leq t} |y^{\natural}_{uv}|\\
        & \lesssim  w_{\bZ}(s,t)^{1/\mathfrak{p}}  w_\mu(s,t) + w_{\bZ}(s,t)^{3/\mathfrak{p}} + w_{\bZ}(s,t)^{3/\mathfrak{p}} w_{y}(s,t)^{1/\mathfrak{p}}
    \end{align*}
    where in the last passage we applied the second estimate in \eqref{eq:apriori-RDE-forcing-eq1}. Inserting the bound \eqref{eq:apriori-RDE-forcing-2}  in the above estimate readily yields \eqref{eq:apriori-RDE-forcing-3}.
\end{proof}

While Lemma~\ref{lem:apriori-RDE-forcing} is typically useful to achieve existence of solutions by compactness arguments, the next result can be used to guarantee their uniqueness and continuous dependence on the data of the problem.

\begin{lemma}\label{lem:contraction-RDE-forcing}
Let $\mathfrak{p}\in [2,3)$, $\mathbf{Z}\in \clC^{\mathfrak{p}-\var}$, $\xi\in C^3_b$.
Let $y^i$, $i=1,\,2$, be global solutions to the RDE~\eqref{eq:RDE-with-forcing} with data $(x^i, \mu^i)\in \R^d\times C^{1-\var}$.
Then for any $T>0$ there exists a constant $C=C(\mathfrak{p},T,\|\xi\|_{C^3_b}, \| \mathbf{Z}\|_{\mathfrak{p},T}, \| \mu^i\|_{1,T})$, increasing in all its entries, such that
\begin{equation}\label{eq:contraction-RDE-forcing}
    \sup_{t\in [0,T]} |y^1_t-y^2_t| \leq C\big(\,|x^1-x^2| +  \| \mu^1-\mu^2\|_{1,T}  \big).
\end{equation} 
\end{lemma}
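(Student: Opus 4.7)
The plan is to mimic the strategy used in Lemma \ref{lem:apriori-RDE-forcing}, now applied to the difference $v_t := y^1_t - y^2_t$ viewed itself as a solution of a linearized RDE with forcing $\mu^1 - \mu^2$. Subtracting the two copies of \eqref{eq:defn-RDE-forcing}, one obtains
\begin{equation*}
    \delta v_{st} = \delta(\mu^1 - \mu^2)_{st} + [\xi(y^1_s) - \xi(y^2_s)]\,\delta Z_{st} + [\Xi(y^1_s) - \Xi(y^2_s)]\,\Z_{st} + v^\natural_{st},
\end{equation*}
where $v^\natural := y^{1,\natural} - y^{2,\natural}$. First I would verify that $v^\natural \in C^{\fkp/3-\var}_2$ and set up the analogue of \eqref{eq:apriori-RDE-forcing-eq3}: using Chen's relation,
\begin{equation*}
    \delta v^\natural_{sut} = \big[\xi(y^1)^\sharp_{su} - \xi(y^2)^\sharp_{su}\big]\,\delta Z_{ut} + \big[\delta\Xi(y^1)_{su} - \delta\Xi(y^2)_{su}\big]\,\Z_{ut},
\end{equation*}
with $\xi(y^i)^\sharp_{su} := \delta \xi(y^i)_{su} - D\xi(y^i_s)\xi(y^i_s)\,\delta Z_{su}$.

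The central technical task, and the main obstacle, is to bound both bracketed quantities \emph{linearly} in $\sup_{r}|v_r|$ and in the $\fkp$-variation of $v$ plus the $1$-variation of $\mu^1-\mu^2$. This is where the upgrade from $\xi\in C^2_b$ to $\xi\in C^3_b$ becomes essential: Taylor-expanding $\xi$ to second order around $y^2_s$ allows the $\xi^\sharp$ difference to be rewritten as the sum of an explicit first-order term in $v_s$ plus remainders quadratic in $\delta y^i_{su}$, each controlled by $|v_s| w_{\bZ}(s,t)^{2/\fkp}$ after invoking \eqref{eq:apriori-RDE-forcing-2}. Similarly, since $\xi\in C^3_b$ guarantees $\Xi\in C^2_b$, a first-order Taylor expansion of $\Xi$ around $y^2_s$ yields $|\delta\Xi(y^1)_{su} - \delta\Xi(y^2)_{su}| \lesssim |v_s|\,|\delta y^2_{su}| + |\delta v_{su}|$, again linear in $v$ and $\delta v$. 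Combining with Lemma \ref{lem:apriori-RDE-forcing} to bound $|\delta y^i_{su}|$ and $|y^{i,\natural}_{su}|$ in terms of a control $w$ determined by $\bZ$ and $\mu^i$, one arrives at an estimate of the form
\begin{equation*}
    |\delta v^\natural_{sut}| \leq C_0 \Big(\sup_{s\leq r\leq t}|v_r| + \llbracket v \rrbracket_{\fkp,[s,t]} + \|\mu^1-\mu^2\|_{1,[s,t]}\Big)\, w(s,t)^{3/\fkp}.
\end{equation*}

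Next, the sewing lemma (Lemma \ref{lem:sewing}) produces a local bound on $|v^\natural_{st}|$ with the same linear prefactor. Reinserting it into the equation for $\delta v_{st}$, and applying Lemma \ref{lem:local-to-global} in the spirit of the second half of the proof of Lemma \ref{lem:apriori-RDE-forcing}, one obtains that on any subinterval $[s,t]$ with $w(s,t)\leq h$ (for $h$ small enough depending on the listed parameters),
\begin{equation*}
    \llbracket v \rrbracket_{\fkp,[s,t]} \leq C_1\Big(|v_s| + \|\mu^1-\mu^2\|_{1,[s,t]}\Big).
\end{equation*}

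Finally, I would cover $[0,T]$ by finitely many intervals $[t_k, t_{k+1}]$ on which $w$ is $\leq h$ (the number depending only on $w(0,T)$, hence on the listed parameters), and iterate the local bound discretely: on each such interval $|v_{t_{k+1}}| \leq (1+C_1)|v_{t_k}| + C_1\|\mu^1-\mu^2\|_{1,T}$, yielding by discrete Gr\"onwall the claimed estimate \eqref{eq:contraction-RDE-forcing} with $C$ exponential in the total variation of $\bZ$ and $\mu^i$. As anticipated, the delicate step is ensuring that all remainder terms for $v^\natural$ remain linear in $v$ — any stray quadratic factor in $v$ would ruin the Gr\"onwall iteration and preclude a continuous-dependence statement of the present form.
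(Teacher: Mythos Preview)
Your proposal is correct and follows essentially the same route as the paper: subtract the two expansions, compute $\delta v^\natural_{sut}$ via Chen's relation, use the $C^3_b$ regularity of $\xi$ (hence $C^2_b$ of $\Xi$) to obtain bounds linear in $v$, apply sewing to close locally, then globalize. The only cosmetic difference is that the paper packages the final globalization step as an application of the rough Gr\"onwall lemma (Lemma~\ref{lem:rough-gronwall}) rather than the discrete iteration you describe, and it keeps $w_{v^\natural}$ rather than $\llbracket v\rrbracket_{\fkp,[s,t]}$ as the quantity to absorb via sewing---but these lead to the same conclusion.
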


\begin{proof}
    Let $y^i$ be any such solutions and define
    \begin{equation*}
        v := y^1 - y^2, \quad v^{\sharp} := y^{1,\sharp} - y^{2,\sharp}, \quad v^{\natural} := y^{1,\natural} - y^{2,\natural}, \quad \mu = \mu^1 - \mu^2
    \end{equation*}
    where the terms $y^{i,\sharp}$ are defined as in \eqref{eq:apriori-RDE-forcing-eq2}. Set $w_{*} = w_{\mu^1} + w_{\mu^2} + w_{\bZ}$, so that by the a priori estimates from Lemma \ref{lem:apriori-RDE-forcing} it holds
    \begin{equation}\label{eq:contraction-proof-eq1}
        |\delta y^i_{st}|^\mathfrak{p} + |y_{st}^{i,\sharp}|^{\mathfrak{p}/2}+ |y_{st}^{i,\natural}|^{\mathfrak{p}/3} \lesssim w_{*}(s,t) \quad \forall\, i=1,2,\, (s,t)\in \Delta_T.
    \end{equation}
    Additionally, by the definition of the RDE \eqref{eq:defn-RDE-forcing} and the Lipschitz continuity of $\xi$, $\Xi$, we see that
    \begin{equation}\label{eq:contraction-proof-eq2}\begin{split}
        |\delta v_{st}| & \lesssim w_\mu(s,t) + |v_s| w_{\bZ}(s,t)^{1/\mathfrak{p}} + w_{v^{\natural}}(s,t)^{3/\mathfrak{p}},\\
        |v_{st}^{\sharp}| & \lesssim  w_\mu(s,t) + |v_s| w_{\bZ}(s,t)^{2/\mathfrak{p}} + w_{v^{\natural}}(s,t)^{3/\mathfrak{p}}.
    \end{split}\end{equation}

    As before, we start by looking for local estimates; , we will apply several elementary bounds based on Taylor expansions which are collected in Appendix \ref{app:tech-lem} (cf. Lemmas \ref{lem:basic-taylor}-\ref{lem:basic-taylor-2}), as well as Lemma \ref{lem:sewing}.
    By the same computation as in \eqref{eq:apriori-RDE-forcing-eq3} and the definition of $\xi(y^i)^\sharp$, it holds that
    \begin{align*}
        \delta v^\natural_{sut} = \big[ \xi(y^1)^\sharp_{su} - \xi(y^2)^\sharp_{su} \big] \delta Z_{ut} + \big[ \delta\Xi(y^1)_{su} - \delta\Xi(y^2)_{su} \big] \mathbb{Z}_{ut}.
    \end{align*}
    Next, we apply Lemma \ref{lem:basic-taylor-2} for the choice $f=\xi\in C^3_b$, $z=\delta Z_{su}$ to find
    \begin{align*}
        |\xi(y^1)^\sharp_{su} - \xi(y^2)^\sharp_{su}|
        & \lesssim |v^\sharp_{su}| + |\delta v_{su}| \big(|y^{1,\sharp}_{su}| 
        + |\delta Z_{su}| \big) + |v_s| \big(|y^{1,\sharp}_{su}| + |\delta y^1_{su}| |\delta Z_{su}|\big)\\
        & \lesssim |v^\sharp_{su}| + |\delta v_{su}| w_\ast(s,t)^{1/\mathfrak{p}} + |v_s| w_\ast(s,t)^{2/\mathfrak{p}}\\
        & \lesssim w_\mu(s,t) + |v_s| w_\ast(s,t)^{2/\mathfrak{p}} + w_{z^{\natural}}(s,t)^{3/\mathfrak{p}}
    \end{align*}
    where we used several times the estimates \eqref{eq:contraction-proof-eq1}-\eqref{eq:contraction-proof-eq2}.
    On the other hand, by assumption $\Xi\in C^2_b$ and so by Lemma \ref{lem:four-point-estim} it holds
    \begin{align*}
        |\delta\Xi(y^1)_{su} - \delta\Xi(y^2)_{su}|
        \lesssim |\delta v_{su}| + |v_s| |\delta y^1_{su}|
        \lesssim w_\mu(s,t) + |v_s| w_\ast(s,t)^{1/\mathfrak{p}} + w_{v^\natural}(s,t)^{3/\mathfrak{p}}, 
    \end{align*}
    where in the second passage we applied again \eqref{eq:contraction-proof-eq2}. Combining everything, we arrive at
    \begin{equation}\label{eq:contraction-proof-eq3}
        |\delta v^\natural_{sut}|
        \lesssim w_*(s,t)^{1/\mathfrak{p}}\, w_\mu(s,t) + \|v\|_{\cB_b([s,t])}\, w_{*}(s,t)^{3/\mathfrak{p}} +  w_\ast(s,t)^{1/\mathfrak{p}}\, w_{z^{\natural}}(s,t)^{3/\mathfrak{p}},
    \end{equation}
    where we employ the short-hand notation $\|v\|_{\cB_b([s,t])}:=\sup_{r\in [s,t]} |v_r|$.
    Noticing that the mapping $(s,t) \mapsto \|v\|_{\cB_b([s,t])} w_*(s,t)^{3/\mathfrak{p}}$ is a control by Remark \ref{rem:properties-controls}, by \eqref{eq:contraction-proof-eq3} and Lemma \ref{lem:sewing} there exists $K>0$ such that
    \begin{align*}
        w_{v^{\natural}}(s,t)^{3/\mathfrak{p}} \leq K \big( w_*(s,t)^{1/\mathfrak{p}} w_\mu(s,t) +  \|v\|_{\cB_b([s,t])}\,  w_{*}(s,t)^{3/\mathfrak{p}} +  w_\ast(s,t)^{1/\mathfrak{p}} w_{v^{\natural}}(s,t)^{3/\mathfrak{p}} \big).
    \end{align*}
    Choosing now any $(s,t)\in \Delta_T$ such that $w_\ast(s,t) \leq (2K)^{-\mathfrak{p}}$, we find the desired local estimate
    \begin{equation}\label{eq:contraction-proof-eq4}
        | v_{st}^{\natural}|
        \leq w_{v^{\natural}}(s,t)^{3/\mathfrak{p}}
        \leq 2K  \big( \|v\|_{\cB_b([s,t])}\,  w_{*}(s,t)^{3/\mathfrak{p}}  + w_*(s,t)^{1/\mathfrak{p}} w_\mu(s,t) \big).
    \end{equation}

    It remains to upgrade this estimate to a global one, which we achieve by applying the so called rough Gr\"onwall lemma (Lemma \ref{lem:rough-gronwall} in Appendix \ref{app:tech-lem}). We can reinsert \eqref{eq:contraction-proof-eq4} in the first equation of \eqref{eq:contraction-proof-eq2} to deduce that
    \begin{align*}
        |\delta v_{st}| \lesssim w_*(s,t)^{1/\mathfrak{p}} \|v\|_{\cB_b([s,t])} + w_\mu(s,t) \quad \text{whenever } w_\ast(s,t) \leq (2K)^{-\mathfrak{p}}.
    \end{align*}
    We can then apply Lemma \ref{lem:rough-gronwall} for the choice $w=w_\ast$, $\gamma=w_\mu$ and $G_t=\|v\|_{\cB_b([0,t])}$ to deduce that there exists a constant $\tilde K>0$ such that
    \begin{equation*}
        \sup_{t\in [0,T]} |v_t| \leq \tilde K e^{\tilde K w_*(0,T)}\, (|z_0| + w_\mu(0,T))
    \end{equation*}
    which upon relabelling of constants implies the conclusion \eqref{eq:contraction-RDE-forcing}. 
\end{proof}

We conclude this section with a short digression on \textit{time reversed rough paths} and \textit{time reversed RDEs}, which will be relevant in order to construct inverse flows.
Let us mention that while time reversal techniques are quite common in the case of geometric rough paths, see e.g.~\cite[Theorem 3.3.3]{LyQi2002} or~\cite[Proposition 11.11]{FV2010}, the literature seems much more sparse in the general case, with some key definitions and properties given in~\cite[Exercise 2.6 \& Prop. 5.12]{FH2020} in the case of H\"older rough paths.
However a general statement on time reversal of RDEs, for non geometric $\mathfrak{p}$-variation rough paths, seems to be missing; we fill here this gap in the literature.

\begin{definition}\label{defn:time-reversed-path}
    Let $\mathbf{Z} \in \clC^\mathfrak{p}([0,T];\R^m)$ for $\mathfrak{p}\in [2,3)$. The {\rm time-reversed rough path} (at time $T$) $\overleftarrow{\mathbf{Z}}^T=\overleftarrow{\mathbf{Z}}=(\overleftarrow{Z},\overleftarrow{\Z})$ is defined by\footnote{Given the presence of $T-t$, whenever dealing with two-parameter maps $g$, if needed we will use the notation $g_{s,t}$ in place of $g_{st}$, to avoid any confusion.}
    \begin{equation*}
        \overleftarrow{Z}_{\! t}:= Z_{T-t}, \quad \overleftarrow{\Z}_{\! st}
        := -\Z_{T-t,T-s} + \delta Z_{T-t,T-s}\otimes \delta Z_{T-t,T-s}
        = -\Z_{T-t,T-s} + \delta\overleftarrow{Z}_{\! st} \otimes \delta\overleftarrow{Z}_{\! st}.
    \end{equation*}
\end{definition}

\begin{remark}\label{rem:time-reversal}
    Definition~\ref{defn:time-reversed-path} is in agreement with \cite[Exercise 2.6]{FH2020}; as therein, one can check that $\overleftarrow{\mathbf{Z}}$ is a rough path, in the sense that it satisfies Chen's relation~\eqref{eq:chen-relation}, and that $\overleftarrow{\mathbf{Z}}$ is geometric if and only if $\mathbf{Z}$ is so. Moreover the map $\mathbf{Z}\mapsto \overleftarrow{\mathbf{Z}}$ is continuous in the rough path metric, as one can check that for any pair of rough paths $\mathbf{Z}=(Z,\Z)$, $\mathbf{W}=(W,\mathbb{W})\in \clC^\mathfrak{p}$ it holds
    \begin{align*}
        \| \overleftarrow{\mathbf{Z}}\|_{\mathfrak{p},T}
        & \lesssim \| \mathbf{Z}\|_{\mathfrak{p},T}, \\
        d_{\mathfrak{p},T}(\overleftarrow{\mathbf{Z}},\overleftarrow{\mathbf{W}})
        & \lesssim d_{\mathfrak{p},T}(\mathbf{Z},\mathbf{W}) + \| Z-W\|_{\mathfrak{p},T}^{1/2}( \| Z\|_{\mathfrak{p},T}^{1/2} + \| W\|_{\mathfrak{p},T}^{1/2}). 
    \end{align*}
\end{remark}

Similarly to Definition~\ref{defn:time-reversed-path}, whenever dealing with a path $\mu$, we denote by $\overleftarrow{\mu}^T=\overleftarrow{\mu}$ the path $\overleftarrow{\mu}_{\! t}:= \mu_{T-t}$; similarly for $y$.

\begin{lemma}\label{lem:time-reversal-RDE}
    Let $\mathfrak{p}\in [2,3)$, $\mathbf{Z}\in \clC^{\mathfrak{p}-\var}$, $\mu\in C^{1-\var}$ and $\xi\in C^2_b$ and let $y$ be a solution to \eqref{eq:RDE-with-forcing} on $[0,T]$. Then $\overleftarrow{y}$ is a solution on $[0,T]$ to the same RDE, with $(\mu,\mathbf{Z})$ replaced by $(\overleftarrow{\mu},\overleftarrow{\mathbf{Z}})$ and with new initial condition $\tilde x=y_T$.
\end{lemma}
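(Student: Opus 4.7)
The plan is to directly verify Definition \ref{defn:RDE-forcing} for $\overleftarrow{y}$ by applying the forward RDE identity for $y$ on the time-reversed interval. Concretely, for any $(s,t)\in\Delta_T$, applying \eqref{eq:defn-RDE-forcing} to $y$ on $[T-t,T-s]\subset[0,T]$ and using $\delta y_{T-t,T-s}=-\delta\overleftarrow{y}_{st}$, $\delta\mu_{T-t,T-s}=-\delta\overleftarrow{\mu}_{st}$, $\delta Z_{T-t,T-s}=-\delta\overleftarrow{Z}_{st}$, $y_{T-t}=\overleftarrow{y}_t$ together with $\Z_{T-t,T-s}=-\overleftarrow{\Z}_{st}+\delta\overleftarrow{Z}_{st}\otimes\delta\overleftarrow{Z}_{st}$ from Definition \ref{defn:time-reversed-path}, I obtain
\begin{equation*}
\delta\overleftarrow{y}_{st}=\delta\overleftarrow{\mu}_{st}+\xi(\overleftarrow{y}_t)\,\delta\overleftarrow{Z}_{st}+\Xi(\overleftarrow{y}_t)\,\overleftarrow{\Z}_{st}-\Xi(\overleftarrow{y}_t)\,\delta\overleftarrow{Z}_{st}\otimes\delta\overleftarrow{Z}_{st}-y^\natural_{T-t,T-s}.
\end{equation*}
Since Davie's formulation requires the coefficients at the \emph{left} endpoint $\overleftarrow{y}_s$, I would add and subtract to define
\begin{equation*}
\overleftarrow{y}^\natural_{st}:=[\xi(\overleftarrow{y}_t)-\xi(\overleftarrow{y}_s)]\,\delta\overleftarrow{Z}_{st}+[\Xi(\overleftarrow{y}_t)-\Xi(\overleftarrow{y}_s)]\,\overleftarrow{\Z}_{st}-\Xi(\overleftarrow{y}_t)\,\delta\overleftarrow{Z}_{st}\otimes\delta\overleftarrow{Z}_{st}-y^\natural_{T-t,T-s},
\end{equation*}
so that $\overleftarrow{y}$ automatically satisfies \eqref{eq:defn-RDE-forcing} with data $(\overleftarrow{\mu},\overleftarrow{\mathbf{Z}})$ and initial condition $y_T$.

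It remains to check $\overleftarrow{y}^\natural\in C^{\mathfrak{p}/3-\var}_2$. Working with the control $w:=w_\mu^{\mathfrak{p}}+w_{\mathbf{Z}}$ (a control by Remark \ref{rem:properties-controls}), the a priori estimates of Lemma \ref{lem:apriori-RDE-forcing} applied to $y$, together with the elementary invariance of $\mathfrak{p}$-variation under time reversal, give $|\delta\overleftarrow{y}_{st}|\lesssim w^{1/\mathfrak{p}}$, $|\overleftarrow{\Z}_{st}|\lesssim w^{2/\mathfrak{p}}$ and $|y^\natural_{T-t,T-s}|\lesssim w^{3/\mathfrak{p}}$ (after possibly replacing $w$ by $w(\cdot,\cdot)+w(T-\cdot,T-\cdot)$, which is again a control). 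The fourth summand is thus of the required order, and the second summand is bounded by $w^{1/\mathfrak{p}}\cdot w^{2/\mathfrak{p}}\lesssim w^{3/\mathfrak{p}}$ using the Lipschitz continuity of $\Xi$ (guaranteed by $\xi\in C^2_b$). The genuine obstacle is that the first and third summands are individually only of order $w^{2/\mathfrak{p}}$.

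The key observation is that these two terms are designed to cancel to higher order. From the forward RDE for $y$ one first extracts $\delta\overleftarrow{y}_{st}=\xi(\overleftarrow{y}_s)\delta\overleftarrow{Z}_{st}+O(w^{2/\mathfrak{p}})$; a first-order Taylor expansion of $\xi\in C^2_b$ at $\overleftarrow{y}_s$ then gives
\begin{equation*}
[\xi(\overleftarrow{y}_t)-\xi(\overleftarrow{y}_s)]\,\delta\overleftarrow{Z}_{st}=D\xi(\overleftarrow{y}_s)\,\xi(\overleftarrow{y}_s)\,\delta\overleftarrow{Z}_{st}\otimes\delta\overleftarrow{Z}_{st}+O(w^{3/\mathfrak{p}})=\Xi(\overleftarrow{y}_s)\,\delta\overleftarrow{Z}_{st}\otimes\delta\overleftarrow{Z}_{st}+O(w^{3/\mathfrak{p}}),
\end{equation*}
so that combining with the third summand telescopes to $-[\Xi(\overleftarrow{y}_t)-\Xi(\overleftarrow{y}_s)]\delta\overleftarrow{Z}_{st}\otimes\delta\overleftarrow{Z}_{st}+O(w^{3/\mathfrak{p}})$, which is itself $O(w^{3/\mathfrak{p}})$ again by the Lipschitz continuity of $\Xi$. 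Once this cancellation is verified, all four summands of $\overleftarrow{y}^\natural$ are bounded by $w^{3/\mathfrak{p}}$, hence $\overleftarrow{y}^\natural\in C^{\mathfrak{p}/3-\var}_2$ by Remark \ref{rem:relation-control-variation}, which concludes the proof.
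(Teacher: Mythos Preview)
Your argument is correct and follows essentially the same route as the paper's proof: both start from the forward expansion on $[T-t,T-s]$, substitute the definition of $\overleftarrow{\bZ}$, and then use a first-order Taylor expansion of $\xi$ together with $\delta\overleftarrow{y}_{st}\approx\xi(\overleftarrow{y})\delta\overleftarrow{Z}_{st}$ to cancel the spurious $\Xi\,\delta\overleftarrow{Z}\otimes\delta\overleftarrow{Z}$ term against the shift $[\xi(\overleftarrow{y}_t)-\xi(\overleftarrow{y}_s)]\delta\overleftarrow{Z}_{st}$. The paper packages this using the ``$\approx$'' relation (difference in $C^{\fkp/3-\var}_2$), while you write out the explicit remainder and bound each piece; the content is identical.

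One small imprecision worth noting: with your chosen control $w=w_\mu^{\fkp}+w_{\bZ}$ the claim $\delta\overleftarrow{y}_{st}=\xi(\overleftarrow{y}_s)\delta\overleftarrow{Z}_{st}+O(w^{2/\fkp})$ is not literally true, since the $\delta\overleftarrow{\mu}$ contribution is only $O(w_\mu)=O(w^{1/\fkp})$. This does not affect the conclusion, because after multiplying by $\delta\overleftarrow{Z}_{st}$ the relevant product $w_\mu\, w_{\bZ}^{1/\fkp}$ equals $\tilde w^{\,1+1/\fkp}$ for some control $\tilde w$ (Remark~\ref{rem:properties-controls}), and $1+1/\fkp\geq 3/\fkp$ precisely because $\fkp\geq 2$; hence this term is indeed in $C^{\fkp/3-\var}_2$. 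The paper sidesteps this bookkeeping by working directly with the $\approx$ relation rather than a single control.
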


\begin{proof}
    Throughout the proof, we will adopt the following convention: given any two-parameters maps, $f_{st} \approx g_{st}$ means that their difference belongs to $C^{\mathfrak{p}/3}_2$, so that it can be regarded as negligible remainder in the setting of Definition~\ref{defn:RDE-forcing}.
    By the definition of $\overleftarrow{y}$ and~\eqref{eq:defn-RDE-forcing}, it holds
    \begin{align*}
        \delta \overleftarrow{y}_{\! st}
        & = \delta y_{T-s,T-t} = - \delta y_{T-t,T-s}\\
        & \approx -\delta \mu_{T-t,T-s} - \xi(y_{T-t}) \delta Z_{T-t,T-s} - \Xi(y_{T-t})\Z_{T-t,T-s}\\
        & = \delta \overleftarrow{\mu}_{\! st} + \xi(\overleftarrow{y}_{\! s}) \delta \overleftarrow{Z}_{\! st} + (\xi(y_{T-s})-\xi(y_{T-t})) \delta Z_{T-t,T-s} - \Xi(y_{T-t})\Z_{T-t,T-s}.
    \end{align*}
    Applying Taylor expansion to $\xi(y_{T-s})-\xi(y_{T-t})$ and reinserting~\eqref{eq:defn-RDE-forcing} in it, we find
    \begin{align*}
        (\xi(y_{T-s})-\xi(y_{T-t})) \delta Z_{T-t,T-s}
        & \approx D\xi (y_{T-t}) (\delta y_{T-t,T-s}) \delta Z_{T-t,T-s}\\
        & \approx \Xi(y_{T-t}) \delta Z_{T-t,T-s}\otimes \delta Z_{T-t,T-s}
        = \Xi(y_{T-t}) \delta \overleftarrow{Z}_{\! st}\otimes \delta \overleftarrow{Z}_{\! st}
    \end{align*}
    so that
    \begin{align*}
        \delta \overleftarrow{y}_{\! st}
        & \approx \delta \overleftarrow{\mu}_{\! st} + \xi(\overleftarrow{y}_{\! s}) \delta \overleftarrow{Z}_{\! st} +  \Xi(y_{T-t})(\delta \overleftarrow{Z}_{\! st}\otimes \delta \overleftarrow{Z}_{\! st} - \Z_{T-t,T-s})\\
        & = \delta \overleftarrow{\mu}_{\! st} + \xi(\overleftarrow{y}_{\! s}) \delta \overleftarrow{Z}_{\! st} +  \Xi(y_{T-t})\overleftarrow{\Z}_{\! st}.
    \end{align*}
    By our assumptions, $\Xi$ is Lipschitz, $y\in C^{\mathfrak{p}-\var}$ and $\overleftarrow{\Z}\in C^{\mathfrak{p}/2-\var}$, therefore
    \begin{align*}
        \Xi(y_{T-t})\overleftarrow{\Z}_{\! st} \approx \Xi(y_{T-s})\overleftarrow{\Z}_{\! st} = \Xi(\overleftarrow y_{\! s})\overleftarrow{\Z}_{\! st}
    \end{align*}
    which finally yields the conclusion.
\end{proof}

\subsection{Construction of flows and their properties}\label{subsec:RDE-flow}

We are now ready to pass to the study of actual $\R^d$-valued RDEs with time-dependent drift, of the form
\begin{equation}\label{eq:RDE-drift}
    \dd y_t = b_t(y_t) \dd t + \xi(y_t) \dd \mathbf{Z}_t.
\end{equation}
The concept of solution to~\eqref{eq:RDE-drift} coincides with Definition~\ref{defn:RDE-forcing}, for the choice $\mu_t=\int_0^t b_s(y_s) \dd s$, under the condition that $\int_0^T |b_s(y_s)| \dd s<\infty$ for all $T<\infty$, which makes it a well-defined Lebesgue integral and a bounded variation path.
We will enforce the following condition on the drift.

\begin{assumption}\label{ass:osgood-drift-RDE}
There exist a locally integrable $g:\R_{\geq 0}\to\R_{\geq 0}$ and an Osgood modulus of continuity $h$ such that the measurable map $b:\R_{\geq 0}\times \R^d\to\R^d$ satisfies
\begin{equation}\label{eq:ass-osgood-drift-RDE}
    | b_t(x)| \leq g_t, \quad |b_t(x)-b_t(\tilde x)| \leq g_t\, h(|x-\tilde x|) \quad \forall\, (t,x,\tilde x)\in \R_{\geq 0}\times \R^{2d}.
\end{equation}
\end{assumption}

We are now ready to state and prove a more rigorous version of Theorem \ref{thm:Osgood intro} from the introduction, providing an explicit formula for the function $F$ therein.

\begin{theorem}\label{thm:wellposedness-RDE}  
Let $\mathfrak{p}\in [2,3)$, $\mathbf{Z}\in \clC^{\mathfrak{p}-\var}$, $\xi\in C^3_b$ and $b$ satisfying Assumption~\ref{ass:osgood-drift-RDE}.
Then for any initial condition $x\in \R^d$, there exists a unique global solution to the RDE~\eqref{eq:RDE-drift}.
Moreover the solution map $(x,t)\mapsto \Phi_t(x)$ defines a family of homeomorphisms from $\R^d$ to itself and for any $T>0$ it holds
\begin{equation}\label{eq:modulus-continuity-flow}
    \sup_{t\in [0,T]} |\Phi_t(x)-\Phi_t(\tilde x)|
    \leq M^h\Big(C |x-\tilde x|, C \int_0^T g_s \dd s\Big)
\end{equation}
where $M^h$ is defined as in~\eqref{eq:osgood-lemma-function}, for $h$ as in Assumption~\ref{ass:osgood-drift-RDE}, and the constant $C$ is the same appearing in Lemma~\ref{lem:contraction-RDE-forcing}, with $\| \mu^i\|_{1,T}$ replaced by $\int_0^T g_s \dd s$.
Moreover, estimate~\eqref{eq:modulus-continuity-flow} holds with $\Phi_t(y)$ replaced by its inverse $\Phi_t^{-1}(y)$.
\end{theorem}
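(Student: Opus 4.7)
The plan is to handle uniqueness together with the quantitative estimate \eqref{eq:modulus-continuity-flow} first, then existence, and finally the flow structure and inverse flow.

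For uniqueness and the modulus of continuity bound, take any two solutions $y^1, y^2$ with initial conditions $x^1, x^2$ and set $\mu^i_t := \int_0^t b_s(y^i_s)\,\dd s$. By Assumption \ref{ass:osgood-drift-RDE} one has $\|\mu^i\|_{1,T} \leq \int_0^T g_s\,\dd s$ uniformly in the solutions, so the constant $C$ in Lemma \ref{lem:contraction-RDE-forcing} is a function of $\|\bZ\|_{\fkp,T}$, $\|\xi\|_{C^3_b}$ and $\int_0^T g_s\,\dd s$ only. Applying that lemma on each subinterval $[0,t]$ and introducing $v_t := \sup_{r\in[0,t]}|y^1_r-y^2_r|$, together with the pointwise Osgood bound $|b_s(y^1_s)-b_s(y^2_s)| \leq g_s\, h(v_s)$ (via monotonicity of $h$), yields
\begin{equation*}
v_t \leq C|x^1-x^2| + C\int_0^t g_s\, h(v_s)\,\dd s.
\end{equation*}
The Bihari--Osgood Lemma \ref{lem:bihari} then produces exactly the estimate \eqref{eq:modulus-continuity-flow}, and taking $x^1=x^2$ gives uniqueness.

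For existence, mollify the drift: let $b^n$ be smooth, bounded and globally Lipschitz approximations of $b$ still satisfying \eqref{eq:ass-osgood-drift-RDE} (uniformly in $n$, possibly with $h$ replaced by a constant multiple) and converging to $b$ pointwise. Each RDE with drift $b^n$ admits a unique global solution $y^n$ by the classical Lipschitz theory \cite[Ch.~12]{FV2010}. With $\mu^n_t := \int_0^t b^n_s(y^n_s)\,\dd s$ one again has $\|\mu^n\|_{1,T} \leq \int_0^T g_s\,\dd s$, so Lemma \ref{lem:apriori-RDE-forcing} provides uniform control of $\llbracket y^n\rrbracket_{\fkp,T}$ and $\llbracket y^{n,\natural}\rrbracket_{\fkp/3,T}$. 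Interpolating in variation and invoking the lower semicontinuity from Remark \ref{rem:p-variation-lsc}, a subsequence converges uniformly on $[0,T]$ to a continuous limit $y$ with the correct variation regularity. Dominated convergence (using \eqref{eq:ass-osgood-drift-RDE}) gives $b^n_s(y^n_s)\to b_s(y_s)$ in $L^1_t$, so passing to the limit in the algebraic identity \eqref{eq:defn-RDE-forcing} shows that $y$ is a solution in the sense of Definition \ref{defn:RDE-forcing}.

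The semigroup property of $\Phi$ follows from uniqueness, and \eqref{eq:modulus-continuity-flow} already gives continuity of $x\mapsto \Phi_t(x)$, uniformly in $t\in [0,T]$. For the inverse, we invoke Lemma \ref{lem:time-reversal-RDE}: the time-reversed path $\overleftarrow{y}$ solves an RDE on $[0,T]$ driven by $\overleftarrow{\bZ}$ with forcing $\overleftarrow{\mu}_t = \mu_T - \int_0^t b_{T-s}(\overleftarrow{y}_s)\,\dd s$, which means it is itself a solution to an RDE of the form \eqref{eq:RDE-drift} with new drift $\tilde b_t(\cdot) := -b_{T-t}(\cdot)$. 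Since $\tilde b$ still fulfills Assumption \ref{ass:osgood-drift-RDE} (with $\tilde g_t := g_{T-t}$ and the same modulus $h$), the existence/uniqueness already proved applies and generates a flow $\overleftarrow{\Phi}$ satisfying the analogous estimate \eqref{eq:modulus-continuity-flow}. Uniqueness then forces $\overleftarrow{\Phi}_T\circ \Phi_T = \mathrm{id} = \Phi_T\circ \overleftarrow{\Phi}_T$, so $\Phi_T$ is a homeomorphism with inverse satisfying the same bound.

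The main technical obstacle is the existence step: the uniform bounds from Lemma \ref{lem:apriori-RDE-forcing} must be strong enough both to extract a limit in a topology compatible with the rough path machinery and to pass to the limit in the definition \eqref{eq:defn-RDE-forcing} despite the non-Lipschitz drift. The decisive point, which makes the whole argument go through, is that Lemma \ref{lem:apriori-RDE-forcing} depends on the drift only via $\|\mu^n\|_{1,T}$, which in our setting is dominated by $\int_0^T g_s\,\dd s$ independently of $n$ and of the solution itself.
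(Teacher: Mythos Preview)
Your argument is correct. The one place where you diverge from the paper is the existence step: you use a compactness argument (uniform a priori bounds from Lemma~\ref{lem:apriori-RDE-forcing}, then Arzelà--Ascoli), while the paper shows directly that the approximating flows $\{\Phi^n\}_n$ form a \emph{Cauchy sequence} in $C([0,T]\times\R^d)$, by applying Lemma~\ref{lem:contraction-RDE-forcing} to pairs $(\Phi^n,\Phi^m)$ combined with Bihari's inequality and the uniform convergence $\|b^n-b^m\|_{L^1_t C^0_b}\to 0$. The paper's route yields the extra information that $\Phi^n\to\Phi$ uniformly in $(t,x)$, which feeds directly into the later stability result (Corollary~\ref{cor:stability-RDE-flow}); your compactness route is more elementary and produces the solution pointwise in $x$, with the flow regularity then supplied by the uniqueness estimate you established first. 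One small imprecision: the global bound on $\llbracket y^n\rrbracket_{\fkp,T}$ alone does not give precompactness in $C([0,T])$---you need equicontinuity, which follows from the \emph{local} estimate \eqref{eq:apriori-RDE-forcing-2} together with the observation that the controls $w_{\mu^n}(s,t)\leq\int_s^t g_r\,\dd r$ and $w_{\bZ}$ are $n$-independent; once this is made explicit, Arzelà--Ascoli applies and the rest of your argument goes through.
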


\begin{proof}
    Let $\rho\in C^\infty_c(B_1)$ be a probability density,  $\{\rho^\eps\}_{\eps>0}$ the associated standard mollifiers and set $b^n=\rho^{1/n}\ast b$. It is easy to check that $b^n$ satisfies~\eqref{eq:ass-osgood-drift-RDE} for the same $g$ and $h$ and moreover, for any $t\in\R_{\geq 0}$ such that $g_t<\infty$, it holds
    \begin{align*}
       \sup_{x\in\R^d} |b_t(x)-b^n_t(x)|
       \leq \sup_{x\in\R^d} \int_{B_1} |b_t(x+ y/n)-b(x)|\,\rho(y)\,\dd y
       \leq g_t\, h(1/n) \to 0\text{ as } n\to\infty.
    \end{align*}
    Upon mollifying in time, we can further assume $b^n$ to be smooth in $t$, while keeping a uniform bound in $n$ of the form~\eqref{eq:ass-osgood-drift-RDE}, up to introducing an additional sequence $g^n$ with the properties that
    \begin{equation}\label{eq:flow-proof-eq2}
        \sup_n \int_0^T g^n_s \dd s \leq \int_0^T g_s \dd s,\quad \lim_{n\to\infty} \int_0^T |g^n_s-g_s| \dd s = 0 \quad \forall \, T>0;
    \end{equation}
    furthermore, these approximations can be constructed so that, similarly to the above, we have
    \begin{equation}\label{eq:flow-proof-eq1}
        \lim_{n\to\infty} \int_0^T \|b^n_t-b_t\|_{C^0_b}\, \dd t = 0 \quad \forall \, T\geq 0.
    \end{equation}
    For each smooth $b^n$, since $\xi\in C^3_b$, standard rough path results guarantee the existence of a flow of diffeomorphisms $\Phi^n$, cf. \cite[Proposition 11.11]{FV2010}.
    
    We claim that, for any $T\geq 0$, the sequence $\{\Phi^n\}_n$ is Cauchy w.r.t. uniform convergence in $[0,T]\times \R^d$.
    Indeed, for any $n$, $m\in\N$ and $x\in \R^d$, we can apply Lemma \ref{lem:contraction-RDE-forcing} to $\mu^1_\cdot=\int_0^\cdot b^n_s(\Phi^n_s) \dd s$, $\mu^2_\cdot=\int_0^\cdot b^m_s(\Phi^m_s) \dd s$ to obtain
    \begin{align*}
        I_t(x):= \sup_{r\in [0,t]} |\Phi^n_r(x)-\Phi^m_r(x)|
        \lesssim \bigg\| \int_0^\cdot [b^n_r(\Phi^n_r(x)) -b^m_r(\Phi^m_r(x))] \,\dd r\bigg\|_{1,t}
    \end{align*}
    where the hidden constant does not depend on the given $x$ nor $n$, since by construction it holds
    \begin{align*}
        \max_i \| \mu^i\|_{1,T} 
        & \leq \int_0^T \big(|b^n_t(\Phi^n_t(x))| + |b^m_t(\Phi^m_t(x))|\big) \dd t\\
        & \leq \int_0^T (\| b^n_t\|_{C^0_b} + \| b^m_t\|_{C^0_b}) \dd t 
        \leq 2 \int_0^T g_t\, \dd t.
    \end{align*}
    Thus we obtain
    \begin{align*}
        I_t(x)
        & \lesssim \int_0^t |b^n_r(\Phi^n_r(x)) -b^m_r(\Phi^m_r(x))|\, \dd r\\
        & \lesssim \int_0^t |b^n_r(\Phi^n_r(x)) -b^n_r(\Phi^m_r(x))|\, \dd r + \int_0^T  \| b^n_r - b^m_r\|_{C^0_b} \dd r\\
        & \lesssim \int_0^t g^n_r \, h\big( |\Phi^n_r(x) - \Phi^m_r(x)|\big)\, \dd r + \int_0^t  \| b^n_r - b^m_r\|_{C^0_b} \dd r\\
        & \lesssim \int_0^t g^n_r \, h( I_r(x) )\, \dd r + \int_0^t  \| b^n_r - b^m_r\|_{C^0_b}\, \dd r
    \end{align*}
    where again the hidden constant does not depend on $x$.
    Applying Lemma \ref{lem:bihari} and property \eqref{eq:flow-proof-eq2}, we can find $C>0$ such that
    \begin{equation*}
        I_T(x)
        \leq M^h\Big(C \int_0^T  \| b^n_r - b^m_r\|_{C^0_b}\, \dd r, C \int_0^T g^n_r \dd r\Big)
        \leq M^h \Big(C \int_0^T  \| b^n_r - b^m_r\|_{C^0_b}\, \dd r, C \int_0^T g_r \dd r \Big).
    \end{equation*}
    As the estimate is uniform in $x$ and $M^h$ is continuous and monotone, we get
    \begin{equation*}
       \sup_{n,m\geq N} \sup_{(t,x)\in [0,T]\times \R^d} |\Phi^n_t(x)-\Phi^n_t(x)| 
       \leq M^h\Big(C \sup_{n,m\geq N}  \int_0^T  \| b^n_r - b^m_r\|_{C^0_b}\, \dd r, C \int_0^T g_r \dd r \Big)
    \end{equation*}
    which by virtue of \eqref{eq:flow-proof-eq1} and $M^h(0, \int_0^T g_r \dd r)=0$ shows the claim.

    We deduce that there exist a continuous map $\Phi:\R_{\geq 0}\times \R^d\to \R^d$ such that $\Phi^n\to \Phi$ uniformly in $[0,T]\times\R^d$, for all finite $T$.
    We claim that for each fixed $x$, $y_t:= \Phi_t(x)$ is a solution to the RDE \eqref{eq:RDE-drift} starting at $x$. To see this, define correspondingly $y^n_t:= \Phi^n_t(x)$; by the construction of our approximation sequence and the regularity of $\xi$, $\Xi$,
    for any $s<t$ it holds
    \begin{align*}
        \lim_{n\to\infty} y^{n,\natural}_{st}
        & = \lim_{n\to\infty} \delta y^n_{st} - \int_s^t b^n_r(y^n_r) \dd r - \xi(y^n_s) \delta Z_{st} - \Xi(y^n_s) \mathbb{Z}_{st}\\
        & = \delta y_{st} - \int_s^t b_r(y_r) \dd r - \xi(y_s) \delta Z_{st} - \Xi(y_s) \mathbb{Z}_{st} =: y^{\natural}_{st};
    \end{align*}
    on the other hand, by the a priori estimates of Lemma \ref{lem:apriori-RDE-forcing} and \eqref{eq:flow-proof-eq2}, it holds $\sup_n \llbracket y^{n,\natural} \rrbracket_{\mathfrak{p}/3,T} <\infty$ for any $T>0$; combined with Remark \ref{rem:p-variation-lsc}, this implies that $y^\natural\in C^{\mathfrak{p}/3-\var}_2$ and so that $y$ is a solution.

    Next we show that any two solutions $y^x_t$, $y^{\tilde x}_t$ to \eqref{eq:RDE-drift}, with initial data $x$, $\tilde x\in\R^d$, satisfy estimate \eqref{eq:modulus-continuity-flow}; this checks both uniqueness (viz $y^x_t=\Phi_t(x)$) and the associated estimate for $\Phi_t(x)$.
    The proof is very similar to the previous one for $\{\Phi^n_t\}_n$, so we mostly sketch it. Setting $I_t = \sup_{s\in [0,t]} |X^x_s - X^{\tilde x}_s|$, by Lemma \ref{lem:contraction-RDE-forcing} and Assumption \ref{ass:osgood-drift-RDE} it holds
    \begin{align*}
        I_t
        \leq C |x-\tilde x| + C \int_0^t |b_s(X^x_s) - b_s(X^{\tilde x}_s)| \dd s
        \leq C |x-\tilde x| + C \int_0^t g_s\, h(I_s) \dd s
    \end{align*}
    where as before the constant $C$ is uniform in $x$, $\tilde x$; Lemma \ref{lem:bihari} then readily implies \eqref{eq:modulus-continuity-flow}.

    It remains to show that for each $t$, the map $\Phi_t$ is invertible, with inverse satisfying the same regularity estimate \eqref{eq:modulus-continuity-flow}, which we do by a time-reversal argument.
    For any fixed $t$, consider now the $t$-time reversal of the RDE \eqref{eq:RDE-drift}, which by Lemma \ref{lem:time-reversal-RDE} is the one associated to the rough path $\overleftarrow \bZ$, same $\xi$ and time-reversed drift $\overleftarrow  b_{\! s}(x):= -b_{t-s}(x)$.
    It is easy to check (by invoking Remark \ref{rem:time-reversal}) that $(\overleftarrow \bZ, \xi, \overleftarrow  b)$ satisfies the same properties as $(\bZ, \xi, b)$, so that going through the same procedure as above we can construct the associated solution map $(s,x) \mapsto \Psi_s^t(x)$, which again satisfies \eqref{eq:modulus-continuity-flow} (up to possibly relabelling $C$).
    On the other hand, again by Lemma \ref{lem:time-reversal-RDE}, a solution to the $t$-time reversed RDE with initial datum $\Phi_t(x)$ is given by $s\mapsto \Phi_{t-s}(x)$, which implies that $x=\Phi_0(x)=\Psi_t^t (\Phi_t(x))$ and thus shows that $\Psi_t^t$ is the inverse of $\Phi_t$.
    To show that \eqref{eq:modulus-continuity-flow} also holds for the inverse flow, notice that \eqref{eq:modulus-continuity-flow} applied to the time $t$ reversed solution $\Psi_{\cdot}^t$ gives
    \begin{equation*}
        \sup_{s \in [0,t]} \left| \Psi_s^t(x) - \Psi_s^t(\tilde{x}) \right| \leq M^h\Big(C|x - \tilde{x}|,\int_0^t g_s \rmd s\Big) \leq M^h\Big(C|x - \tilde{x}|,\int_0^T g_s \rmd s\Big).
    \end{equation*}
    Choosing $s = t$ on the left hand side we find
    \begin{equation*}
        \left| \Phi_t^{-1}(x) - \Phi_t^{-1}(\tilde{x}) \right|
        \leq M^h\Big(C|x - \tilde{x}|,\int_0^T g_s \rmd s\Big);
    \end{equation*}
    taking supremum over $t$ on the left hand side gives the conclusion.     
\end{proof}

\begin{remark}
    In order not to make the notation too burdensome, in the statement and proof of Theorem \ref{thm:wellposedness-RDE} we considered the solution map $(t,x)\mapsto \Phi_t(x)$ as depending only on the terminal time $t$.
    Up to small modifications, the proof can be readapted to construct the unique \emph{flow of homeomorphisms} $(s,t,x)\mapsto \Phi_{s\to t}(x)$, which stands for the terminal position at time $t$ of a solution to the RDE starting at position $x$ at time $s$.
    In particular, one can still show that $\Phi_{s\to t}$ is a homemorphism; denoting by $\Phi_{s\leftarrow t}$ its continuous inverse, one then has naturally the \emph{group properties}
    \begin{equation*}
        \Phi_{s\to s}= {\rm Id}, \quad
        \Phi_{s\to t}\circ \Phi_{s\leftarrow t}= {\rm Id}, \quad
        \Phi_{u\to t}\circ \Phi_{s\to u} = \Phi_{s\to t} \quad \forall\, s\leq u\leq t.
    \end{equation*}
    A similar extensions applies to the upcoming Corollary \ref{cor:stability-RDE-flow}, concerning convergence of $\Phi^n_{s\to t}$ to $\Phi_{s\to t}$ uniformly on compact sets.
\end{remark}

\begin{corollary}\label{cor:stability-RDE-flow}
    Let $\mathfrak{p}\in [2,3)$ and consider a sequence $\{(\mathbf{Z}^n, \xi^n, b^n)\}_n$ satisfying the assumptions of Theorem~\ref{thm:wellposedness-RDE} uniformly in $n$, namely such that for all $T>0$
    \begin{equation*}
        \sup_{n \in \N} \big\{ \| \xi^n\|_{C^3_b}  +\| \mathbf{Z}^n\|_{\mathfrak{p},T} \big\}<\infty
    \end{equation*}
    and such that $b^n$ satisfy Assumption~\ref{ass:osgood-drift-RDE} for the same $g$ and $h$.
    Further assume that there exist $(b,\xi,\mathbf{Z})$ such that $(b^n,\xi^n)\to (b,\xi)$ uniformly on compact sets and $\sup_{t\in [0,T]} |\mathbf{Z}^n_{0,t}-\mathbf{Z}_{0,t}|\to 0$ for any $T>0$.
    Then the flows $\Phi^n$ associated to $(b^n,\xi^n,\mathbf{Z}^n)$ converge uniformly on compact sets to the flow $\Phi$ associated to $(b,\xi,\mathbf{Z})$, namely
    \begin{equation*}
        \lim_{n\to\infty} \sup_{t\in [0,T],\, y\in B_R} |\Phi^n_t(y)-\Phi_t(y)| = 0 \quad \forall\, T,\,R\in (0,+\infty).
    \end{equation*}
    A similar statement holds for $\Phi^n_t$ and $\Phi_t$ replaced by their inverses.
\end{corollary}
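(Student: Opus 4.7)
The plan is to run the same approximation scheme as in the proof of Theorem \ref{thm:wellposedness-RDE}, but now uniformly in $n$. First I would introduce standard spatial mollifications $b^{n,k}$ of $b^n$ at scale $1/k$, and let $\Phi^{n,k}$ denote the classical RDE flow associated to $(b^{n,k},\xi^n,\mathbf{Z}^n)$, whose existence follows from the smoothness of $b^{n,k}$ combined with $\xi^n \in C^3_b$ and $\mathbf{Z}^n \in \clC^\mathfrak{p}$ via \cite[Proposition 11.11]{FV2010}. Since every $b^n$ satisfies Assumption \ref{ass:osgood-drift-RDE} with the \emph{same} $(g,h)$, the mollifications obey
\begin{equation*}
    \sup_{n\in\N}\sup_{x\in\R^d} |b^n_t(x) - b^{n,k}_t(x)| \leq g_t\, h(1/k),
\end{equation*}
so that running the Bihari argument of Theorem \ref{thm:wellposedness-RDE} verbatim (now comparing $\Phi^n$ with $\Phi^{n,k}$ via Lemma \ref{lem:contraction-RDE-forcing} and Lemma \ref{lem:bihari}) yields
\begin{equation*}
    \sup_{(t,x)\in[0,T]\times\R^d} |\Phi^n_t(x)-\Phi^{n,k}_t(x)| \leq M^h\Big(C\, h(1/k) \int_0^T g_s\,\rmd s,\; C\int_0^T g_s\,\rmd s\Big) =: \eta_k,
\end{equation*}
with $\eta_k \to 0$ as $k\to\infty$ \emph{independently of $n$}, thanks to the uniform bounds on $\| \xi^n\|_{C^3_b}$, $\| \mathbf{Z}^n\|_{\mathfrak{p},T}$ and on the Osgood data. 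The analogous estimate holds for the flow $\Phi^{\infty,k}$ associated to the smoothed limiting data $(b^{\infty,k},\xi,\mathbf{Z})$.

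Next, for each fixed $k$, the drifts $b^{n,k}$ are Lipschitz in $x$ with bounds uniform in $n$, and $b^{n,k}\to b^{\infty,k}$ uniformly on compact sets of $[0,T]\times\R^d$ (as a consequence of $b^n\to b$ uniformly on compacts). To exploit classical stability of RDE flows for Lipschitz coefficients (e.g.\ \cite[Theorem 11.12]{FV2010}) I also need convergence of the rough paths in the $\mathfrak{p}$-variation rough path metric. Only uniform convergence of $\mathbf{Z}^n_{0,t}$ and a uniform $\mathfrak{p}$-variation bound are assumed, but using Chen's relation \eqref{eq:chen-relation} uniform convergence extends from $\mathbf{Z}^n_{0,t}$ to $\mathbf{Z}^n_{st}$ for all $(s,t)\in\Delta_T$, and then a standard interpolation between the uniform bound in $\|\cdot\|_{\mathfrak{p},T}$ and the uniform pointwise convergence upgrades this to
\begin{equation*}
    \lim_{n\to\infty} d_{\mathfrak{p}',T}(\mathbf{Z}^n,\mathbf{Z}) = 0 \quad \text{for every } \mathfrak{p}'\in(\mathfrak{p},3).
\end{equation*}
Picking any such $\mathfrak{p}'$, the classical rough-flow continuity theorem applies with data $(b^{n,k},\xi^n,\mathbf{Z}^n)$ in the Lipschitz $\mathfrak{p}'$-variation setting, and yields $\Phi^{n,k}\to \Phi^{\infty,k}$ uniformly on $[0,T]\times B_R$ for every $R>0$.

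Combining the previous two steps by a diagonal argument concludes: given $\varepsilon>0$ and $(T,R)$, first choose $k$ large enough that both $\eta_k<\varepsilon/3$ and the analogous bound for $\Phi - \Phi^{\infty,k}$ is below $\varepsilon/3$; for this $k$, pick $n$ large so that the middle term $\sup_{(t,x)\in[0,T]\times B_R}|\Phi^{n,k}_t(x)-\Phi^{\infty,k}_t(x)|<\varepsilon/3$; triangle inequality finishes. For the inverse flows, the same argument applies to the time-reversed RDEs with data $(\overleftarrow{b^n},\xi^n,\overleftarrow{\mathbf{Z}^n})$, using Lemma \ref{lem:time-reversal-RDE} together with the continuity of time-reversal from Remark \ref{rem:time-reversal}, which ensures that $\overleftarrow{\mathbf{Z}^n} \to \overleftarrow{\mathbf{Z}}$ in the same topologies. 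The main obstacle I expect is the classical stability step for fixed $k$: the interpolation upgrade from uniform to $\mathfrak{p}'$-variation convergence of $\mathbf{Z}^n$ is the one place where the full strength of the uniform $\mathfrak{p}$-variation bound is used, and it is essential in order to feed $(b^{n,k},\xi^n,\mathbf{Z}^n)$ into an off-the-shelf Lipschitz rough flow stability result.
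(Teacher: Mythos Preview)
Your approach is correct and takes a genuinely different route from the paper. The paper argues by compactness: it performs the same interpolation step you identify (uniform convergence of $\mathbf{Z}^n_{0,t}$ plus a uniform $\mathfrak{p}$-variation bound yields $d_{\tilde{\mathfrak{p}},T}(\mathbf{Z}^n,\mathbf{Z})\to 0$ for any $\tilde{\mathfrak{p}}\in(\mathfrak{p},3)$, together with equicontinuity of $(s,t)\mapsto\sup_n\llbracket\mathbf{Z}^n\rrbracket_{\tilde{\mathfrak{p}},[s,t]}$), but then uses this, the uniform Osgood modulus estimate \eqref{eq:modulus-continuity-flow}, and Lemma~\ref{lem:apriori-RDE-forcing} to prove equicontinuity of $\{\Phi^n\}_n$ jointly in $(t,x)$. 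Ascoli--Arzel\`a produces a limit $F$ along a subsequence, and the paper identifies $F=\Phi$ by passing to the limit in the Davie expansion~\eqref{eq:defn-RDE-forcing} (exactly as in the proof of Theorem~\ref{thm:wellposedness-RDE}, using lower semicontinuity of $\mathfrak{p}$-variation for $y^{n,\natural}$); uniqueness of the limit RDE forces convergence of the full sequence. For the inverse, the paper again proves equicontinuity directly and repeats the compactness argument.

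Your double-approximation scheme trades this soft compactness argument for an external Lipschitz stability result. This is more modular and avoids identifying limit points, but it imports more: the ``off-the-shelf'' step needs a continuity theorem for RDE flows that simultaneously handles time-dependent Lipschitz drifts $b^{n,k}$, diffusion coefficients $\xi^n$ converging only in $C^2_{\mathrm{loc}}$ (you have $C^0_{\mathrm{loc}}$ convergence plus a uniform $C^3_b$ bound, so interpolation plus the confinement of flows to a fixed ball by Lemma~\ref{lem:apriori-RDE-forcing} is needed to localize), and gives convergence of inverse flows uniformly in the terminal time $t\in[0,T]$. All of this is available with a little care (e.g.\ by augmenting the rough path with a $\mathrm{d}t$ component, or by applying the forward result to the single time-reversed system at terminal time $T$), so the strategy is sound. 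The paper's route is more self-contained; yours is arguably more transparent once the Lipschitz black box is granted.
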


\begin{proof}
    Let us fix any $T>0$. First of all observe that, by the uniform convergence $\mathbf{Z}^n_{0,t}\to \mathbf{Z}_{0,t}$ and Chen's relation \eqref{eq:chen-relation}, it holds $\sup_{s\leq t\leq T} | \mathbb{Z}_{s,t}^n-\mathbb{Z}_{s,t}|=0$ as well, cf. \cite[Exercises 2.4-b) \& 2.9]{FH2020}.
    Combined with the uniform  $\mathfrak{p}$-variation bound, arguing as in \cite[Corollary 5.29]{FV2010}, it follows that for any $\tilde{\mathfrak{p}}\in (\mathfrak{p},3)$ we have $\| \mathbf{Z}^n-\mathbf{Z}\|_{\tilde{\mathfrak{p}},[s,t]}\to 0$. As in \cite[Corollary 5.31]{FV2010}, the associated $\tilde{\mathfrak{p}}$-variation seminorms are equicontinuous, so that there exists a common modulus of continuity $\gamma$ such that
    \begin{equation}\label{eq:stability-flow-proof-eq1}
        \sup_n\, \llbracket \mathbf{Z}^n\rrbracket_{\tilde{\mathfrak{p}}, [s,t]} \leq \gamma (|t-s|) \quad \forall\, (s,t)\in\Delta_T.
    \end{equation}
    For notation simplicity, we will henceforth drop the tilde and assume that \eqref{eq:stability-flow-proof-eq1} holds for $\mathfrak{p}$.

    Interpolating between the $C^3_b$ uniform bound and uniform convergence on compact sets, $\xi^n\to \xi$ also in $C^2_\loc$, therefore $\Xi^n$ converge to $\Xi$ in $C^0_\loc$ as well.

    Next observe that, by combining the uniform assumptions on $(b^n,\xi^n,\bZ^n)$, the uniform estimate \eqref{eq:stability-flow-proof-eq1} and the estimates from Lemma \ref{lem:apriori-RDE-forcing} and Theorem \ref{thm:wellposedness-RDE}, it holds
    \begin{equation*}
       \sup_n | \Phi^n_t(x) - \Phi^n_s(\tilde x)|
       \lesssim \gamma(|t-s|) + M^h\Big(C|x-\tilde x|, C\int_0^T g_s \dd s\Big)
    \end{equation*}
    which shows equicontinuity of $\{\Phi^n\}_n$; in particular, by Ascoli--Arzelà we can extract a (not relabelled) subsequence such that $\Phi^n\to F$ uniformly on compact sets.
    If we show that the unique candidate limit is $F=\Phi$, as the argument holds for any subsequence we can extract, conclusion follows.

    Fix any $x\in\R^d$. Arguing as in the proof of Theorem \ref{thm:wellposedness-RDE}, using the fact that $(b^n,\xi^n,\Xi^n)\to (b,\xi,\Xi)$ uniformly on compacts and $\bZ^n\to\bZ$ in $\clC^{\mathfrak{p}-\var}$, while by Lemma \ref{lem:apriori-RDE-forcing} it holds $\sup_n \llbracket y^{n,\natural}_t\rrbracket_{\mathfrak{p}/3-\var}<\infty$ (for $y^n_t:=\Phi^n_t(x)$), it's easy to check that any limit point of $y^n_t$ must be a solution to the RDE associated to $(b,\xi,\bZ)$. By Theorem \ref{thm:wellposedness-RDE}, solutions to such RDE are unique, thus $F_t(x)=\Phi_t(x)$, implying the conclusion.

    As before, the final claim concerning the convergence of the inverse is established by time reversal. By applying \eqref{eq:modulus-continuity-flow} to the inverse flow, we find that $x \mapsto \Psi_t^{n,t}(x)$ are equicontinuous in $x$, uniformly in $t$.
    To find equicontinuity w.r.t. to $t\in [0,T]$, we fix $x$, choose $0\leq t_1 \leq t_2\leq T$ and write
    \begin{align*}
        \left| \Phi^n_{0 \leftarrow t_2}(x) - \Phi^n_{0\leftarrow t_1}(x)\right|
        = \left|\Phi^n_{0 \leftarrow t_2}(x) - \Phi^n_{0\leftarrow t_2}(\Phi^n_{t_1 \rightarrow t_2}(x)) \right| 
        \leq M^h\Big(|x - \Phi^n_{t_1 \rightarrow t_2}(x)|, \int_0^{t_1}g_s \rmd s\Big) .
    \end{align*}
    By applying \eqref{eq:apriori-RDE-forcing-1} to $\mu^n_t = \int_0^t b_r^n(\Phi_r^n(x)) \rmd r$ and using the hypothesis on $b^n$, we get
    \begin{equation*}
        |\Phi^n_{t_1 \rightarrow t_2}(x) - x| \lesssim \int_{t_1}^{t_2} g_r \dd r;
    \end{equation*}
    the latter quantity can be made arbitrarily small by choosing $|t_2-t_1|$ small, since $g\in L^1([0,T])$. Overall this proves equicontinuity in space-time of $\{\Psi^n\}_n$; the rest of the argument is identical to above.
\end{proof}

In the next lemma, we specialize to flows associated to {\em geometric rough paths}; in this case, under suitable assumptions on $(b,\xi)$, we can prove that the flow leaves the Lebesgue measure quasi-invariant, which will be a crucial property in the study of the corresponding rough PDEs.

\begin{corollary}\label{cor:flow-incompressibility}
    Let $\mathfrak{p}\in [2,3)$, $\xi\in C^3_b$, $\mathbf{Z}\in \clC^{\mathfrak{p}-\var}_g$ and $b$ satisfying Assumption~\ref{ass:osgood-drift-RDE}. Suppose further that $\nabla\cdot \xi_k=0$ for all $k=1,\ldots, m$ and
    \begin{equation*}
        \int_0^T \| \nabla\cdot b_s\|_{L^\infty} \dd s <\infty \quad \forall\, T>0.
    \end{equation*}
    Then the flow $\Phi$ associated to the RDE~\eqref{eq:RDE-drift} is quasi-incompressible, in the sense that for any $t>0$ and any Borel set $A\subset \R^d$ it holds
    \begin{equation}\label{eq:flow-incompressibility}
        \exp\Big(-\int_0^t \| \nabla\cdot b_s\|_{L^\infty} \dd s\Big) \mathscr{L}^d(A) 
        \leq \mathscr{L}^d(\Phi_t (A)) 
        \leq  \exp\Big(+\int_0^t \| \nabla\cdot b_s\|_{L^\infty} \dd s\Big) \mathscr{L}^d(A)
    \end{equation}
    where $\mathscr{L}^d$ denotes the Lebesgue measure on $\R^d$.
    A similar statement holds with $\Phi_t$ replaced by $\Phi^{-1}_t$.    
\end{corollary}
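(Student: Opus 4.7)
The plan is to approximate $(b,\xi,\mathbf{Z})$ by smooth data, use the classical Liouville--Jacobian formula at the approximate level, and pass to the limit via Corollary~\ref{cor:stability-RDE-flow}. As in the proof of Theorem~\ref{thm:wellposedness-RDE}, mollify $b$ in space-time to obtain smooth $b^n$ still verifying Assumption~\ref{ass:osgood-drift-RDE} with the additional property $\|\nabla\cdot b^n_s\|_{L^\infty}\leq \|\nabla\cdot b_s\|_{L^\infty}$, and mollify each $\xi_k$ in space to produce smooth $\xi^n_k$. Since convolution commutes with the divergence, $\nabla\cdot \xi^n_k=\rho^{1/n}\ast(\nabla\cdot \xi_k)\equiv 0$. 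Because $\mathbf{Z}\in \clC^{\mathfrak{p}-\var}_g$ is geometric, Definition~\ref{defn:geom-rough-path} furnishes smooth rough paths $\mathbf{Z}^n$ with $d_{\mathfrak{p},T}(\mathbf{Z}^n,\mathbf{Z})\to 0$, and Corollary~\ref{cor:stability-RDE-flow} then yields convergence on compact sets of the associated flows $\Phi^n\to \Phi$ and their inverses.

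For each smooth approximant, $\Phi^n_t$ is a $C^1$-diffeomorphism whose Jacobian obeys the classical Liouville identity
\begin{equation*}
\det(D\Phi^n_t(x))=\exp\!\left(\int_0^t (\nabla\cdot b^n_s)(\Phi^n_s(x))\,\dd s+\sum_{k=1}^m\int_0^t (\nabla\cdot \xi^n_k)(\Phi^n_s(x))\,\dd Z^{n,k}_s\right).
\end{equation*}
The $Z^n$-driven integrals vanish since $\nabla\cdot \xi^n_k\equiv 0$, so that $\det(D\Phi^n_t(x))$ lies in the interval $\bigl[e^{-\int_0^t\|\nabla\cdot b_s\|_{L^\infty}\dd s},\,e^{\int_0^t\|\nabla\cdot b_s\|_{L^\infty}\dd s}\bigr]$ uniformly in $n$ and $x$. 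By the change of variables formula, for every $\varphi\in C_c(\R^d)$ with $\varphi\geq 0$,
\begin{equation*}
\int_{\R^d}\varphi((\Phi^n_t)^{-1}(y))\,\dd y=\int_{\R^d}\varphi(x)\det(D\Phi^n_t(x))\,\dd x,
\end{equation*}
which is therefore sandwiched between $e^{\mp\int_0^t\|\nabla\cdot b_s\|_{L^\infty}\dd s}\int\varphi\,\dd x$.

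To pass to the limit, \eqref{eq:apriori-RDE-forcing-1} and Assumption~\ref{ass:osgood-drift-RDE} give a uniform bound $\sup_{n,\,t\in[0,T],\,x}|\Phi^n_t(x)-x|\leq C$, so that $(\Phi^n_t)^{-1}(\supp\varphi)$ is contained in a fixed compact set independent of $n$; dominated convergence together with the pointwise convergence $(\Phi^n_t)^{-1}(y)\to \Phi_t^{-1}(y)$ then yields
\begin{equation*}
e^{-\int_0^t\|\nabla\cdot b_s\|_{L^\infty}\dd s}\int\varphi\,\dd x\leq \int\varphi(\Phi_t^{-1}(y))\,\dd y\leq e^{\int_0^t\|\nabla\cdot b_s\|_{L^\infty}\dd s}\int\varphi\,\dd x.
\end{equation*}
Since $A\mapsto \nu(A):=\int\mathbbm{1}_A(\Phi_t^{-1}(y))\,\dd y=\mathscr{L}^d(\Phi_t(A))$ defines a Radon measure on $\R^d$ (locally finite because $|\Phi_t-\mathrm{id}|$ is bounded), outer regularity of both $\nu$ and $\mathscr{L}^d$ allows to extend the above two-sided inequality from non-negative $\varphi\in C_c(\R^d)$ to $\varphi=\mathbbm{1}_A$ for arbitrary Borel $A$, yielding \eqref{eq:flow-incompressibility}. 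The analogous bound for $\Phi_t^{-1}$ follows either by swapping the roles of $\Phi^n_t$ and $(\Phi^n_t)^{-1}$ in the change of variables (using the reciprocal Jacobian bound), or more conceptually by applying the same argument to the time-reversed RDE from Lemma~\ref{lem:time-reversal-RDE}. The main technical point is that the rough-driven contribution to the Jacobian disappears \emph{pointwise} at the smooth level, thanks to $\nabla\cdot\xi^n_k\equiv 0$; this reduces the analysis to a classical ODE computation and avoids any delicate rough integration, leaving only the (standard) measure-theoretic passage from $C_c$-test functions to indicator functions as non-trivial.
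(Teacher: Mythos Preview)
Your proof is correct and follows essentially the same approach as the paper's: approximate by smooth data (mollified $b$, smooth rough paths $\mathbf{Z}^n$ from the geometric assumption), apply the classical Liouville formula where the $\dd Z^{n,k}$-terms drop out thanks to $\nabla\cdot\xi_k=0$, use the change-of-variables formula to get the two-sided bound for compactly supported test functions, pass to the limit via Corollary~\ref{cor:stability-RDE-flow}, and finally extend to indicators and invoke time reversal for $\Phi_t^{-1}$. The only cosmetic differences are that the paper does not bother to mollify $\xi$ (it is already $C^3_b$ and divergence-free), states the divergence bound in time-integrated form $\sup_n\int_0^T\|\nabla\cdot b^n_s\|_{L^\infty}\,\dd s\leq \int_0^T\|\nabla\cdot b_s\|_{L^\infty}\,\dd s$ rather than pointwise in $s$, and cites the monotone class theorem instead of outer regularity for the extension to Borel sets.
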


\begin{proof}
Set $b^n=\rho^{1/n}\ast b$ as in the proof of Theorem \ref{thm:wellposedness-RDE} and let $\bZ^n=(Z^n, \mathbf{Z}^n)$ be a sequence of smooth rough paths such that $d_{\mathfrak{p},T}(\mathbf{Z}, \mathbb{Z}^n)\to 0$.
Observe that, by properties of convolutions, under our assumptions it holds
\begin{equation}\label{eq:incompressibility-proof-eq1}
    \sup_n \int_0^T \| \nabla\cdot b^n_s\|_{L^\infty} \dd s \leq \int_0^T \| \nabla\cdot b_s\|_{L^\infty} \dd s\quad \forall\, T>0.
\end{equation}
For each $n$, due to the regularity of $(b^n,\xi,\bZ^n)$, there exists a flow of diffeomorphisms $\Phi^n$ associated to the corresponding ODE (thus also RDE); denote by $\bJ^n_t(x):=D_x\Phi^n_t(x)$ its Jacobian and by $\Phi^{n;-1}_t$ its inverse.
The validity of the standard chain rule in this regular setting then yields the classical formula
%
%
\begin{equation}\label{eq:incompressibility-proof-eq2}\begin{split}
    \operatorname{det} \bJ^n_t(x)
    & =  \exp\bigg(\int_0^t \Big[ \nabla \cdot b_s^n(\Phi_s^n(x)) + \sum_k \nabla\cdot \xi_k(\Phi_s^n(x))\dot Z^{k;n}_s \Big] \ds \bigg)\\
    & =  \exp\bigg(\int_0^t \nabla \cdot b_s^n(\Phi_s^n(x)) \ds \bigg)\,.
\end{split}\end{equation}
where the second passage is due to the assumption $\nabla\cdot\xi_k=0$.
Combined with \eqref{eq:incompressibility-proof-eq1}, this yields the uniform-in-$n$ two-sided bound
\begin{align*}
    \exp\Big(-\int_0^t \|\nabla \cdot b_s\|_{L^\infty}  \ds  \Big) \le \operatorname{det} \bJ^n_t(x)   \le \exp\Big(+\int_0^t \|\nabla \cdot b_s\|_{L^\infty}  \ds  \Big).
\end{align*}
Correspondingly, for any non negative $\varphi\in C^\infty_c$, the change of variables formula yields
\begin{equation}\label{eq:incompressibility-proof-eq3}\begin{split}
    \exp\Big(-\int_0^t \|\nabla \cdot b_s\|_{L^\infty}  \ds  \Big) \int_{\bbR^d}\varphi(x) \dd x
    & \le \int_{\bbR^d} \varphi(\Phi^{n;-1}_t(x)) \dd x \\
    & \le \exp\Big(+\int_0^t \|\nabla \cdot b_s\|_{L^\infty}  \ds  \Big) \int_{\bbR^d}\varphi(x) \dd x.
\end{split}\end{equation}
Since $\{(b^n, \xi, \bZ^n)\}$ satisfy the hypothesis of Corollary \ref{cor:stability-RDE-flow},  
$\varphi\circ \Phi^{n;-1}_t \rightarrow \varphi\circ \Phi^{-1}_t $
uniformly in $x$, and thus passing to the limit, we find the bound  \eqref{eq:incompressibility-proof-eq3} with $\Phi^{-1}_t$ in place of $\Phi^{n;-1}_t$.
By the monotone class theorem, we deduce that the same estimate holds for $\varphi$ replaced by the indicator function $\mathbf{1}_{A}$ of any Borel set $A$ with $\mathscr{L}^d(A)<\infty$, yielding \eqref{eq:flow-incompressibility}.

The analogous statement for $\Phi^{-1}_t$ follows as before by time reversal.
\end{proof}

\section{Linear rough continuity and transport equations}\label{sec:linear-RPDEs}

The main aim of this section is to develop a meaningful theory of weak solutions to rough continuity equations of the form 
\begin{equation}\label{eq:formal-rough-continuity}
    \dd \rho + \nabla\cdot (b\rho) \dd t + \sum_{k=1}^m \nabla\cdot( \xi_k \rho) \dd \bZ^k = 0
\end{equation}
under minimal regularity requirements on $b$.
The results included here range from existence, uniqueness and renormalizability criteria, as well as flow representation, culminating in Theorem \ref{thm:linear-RPDE-wellposed-lagrangian}.
They constitute an 
extension of the DiPerna--Lions theory to the rough case and will be the basic building block to approach nonlinear PDEs in the upcoming Section \ref{sec:nonlinear-RPDEs}.

We will treat the aforementioned rough PDEs in the \emph{unbounded rough drivers} framework, which is recalled in Section \ref{subsec:unbounded-rough}.
We then pass to examine criteria for existence of solutions and a priori estimates in Section \ref{subsec:linear-existence}; Section \ref{subsec:linear-uniqueness} is devoted to the proof of key product and duality formulas, resulting in the uniqueness, renormalizability and stability results from Section \ref{subsec:linear-stability}.

From here on we will work exclusively with geometric rough paths, recall Definition \ref{defn:geom-rough-path}; in other terms, we will always assume $\bZ\in \clC^\fkp_g$ for some $\fkp\in [2,3)$.
Whenever not specified, we will be implicitly working on a compact time interval $[0,T]$, finite but possibly arbitrarily large; all Bochner-Lebesgue spaces $L^q_T E$ must be interpreted as $L^q([0,T];E)$.
Whenever working with $t\in \R_{\geq 0}$, we will state it explicitly, see for instance the upcoming Definition \ref{defn:solution-rough-continuity} and Proposition \ref{prop:existence-linear-RPDE}; in such cases, we might consider elements in $L^q_\loc E:=\bigcap_{T>0} L^q([0,T];E)$.

\subsection{A primer on unbounded rough drivers}\label{subsec:unbounded-rough}

We recall here some basic facts about \emph{unbounded rough drivers}, a framework first developed in \cite{BaiGub2017} in order to give meaning to a general abstract class of rough PDEs (RPDEs).
The advantage of this theory, originally designed for transport equations, is that it allows to easily derive a priori estimates (cf. the key Lemma \ref{lem:apriori-unbounded} below) which are at the heart of existence results, while also allowing nonlinear operations such as \emph{tensorization} (cf. Proposition \ref{prop:tensorization} in Section \ref{subsec:linear-uniqueness}) paving the way for uniqueness statements.

Our exposition mainly follows \cite{DGHT2019}; see also \cite{hocquet2018energy, HLN2021, HHN2025} for other accounts and applications to RPDEs.
Let us stress that, although the abstract theory is designed in an axiomatic fashion, where the fundamental objects involved (scales of spaces and smoothing, see below) are given, in practical applications part of the problem is also identifying what are the correct choices for this setup.

\begin{definition}
    A tuple $(E_l, \| \cdot\|_l)_{0\leq l\leq 3}$ is a \emph{scale of Banach spaces} if $E_{l+1}$ continuously embeds into $E_l$ for each $l=0,1,2$.
    We denote by $E_{-l}$ the topological dual of $E_l$, so that $E_{-l}$ continuously embeds into $E_{-l-1}$ as well.
\end{definition}

\begin{definition}\label{defn:smoothing}
    A \emph{smoothing} on a scale $(E_l)_{0\leq l\leq 3}$ is a family of operators $(J^\eta)_{\eta\in (0,1]}$ acting on $E_l$ such that, for all $\eta\in (0,1]$, it holds
    \begin{align}
        \| J^\eta - I\|_{\clL(E_l,E_j)}
        & \leq C \eta^{l-j} \quad
        & \text{for } (j,l)\in \{(0,1), (0,2), (1,2)\}\label{eq:defn-smoothing-prop1},\\
        \| J^\eta\|_{\clL(E_j,E_l)}
        & \leq C \eta^{-(l-j)}
        &\text{for } (j,l)\in \{(1,1), (1,2), (2,2), (1,3), (2,3)\}. \label{eq:defn-smoothing-prop2}
    \end{align}
    for some constant $C>0$. In this case, we denote the optimal choice of $C$ by $\interleave J \interleave$.
\end{definition}

\begin{remark}
    Up to redefining the norms $\| \cdot\|_{E_l}$, we can and will assume in the sequel that $\| \varphi\|_{E_l}\leq \| \varphi\|_{E_{l+1}}$ for $l=0,1,2$.
    Correspondingly, by duality $\| \psi\|_{E_{-l-1}}\leq \| \psi\|_{E_{-l}}$ for $l=0,1,2$.

    Although more elastic in its scope, the concept of smoothing is closely related to the idea of performing interpolation estimates on the dual spaces $(E_{-l})_{0\leq l\leq 3}$, and it does indeed imply their validity.
    For instance, for any $1\leq j<l\leq 3$ and any $\psi\in E_{-0}$, we claim that
    \begin{equation}\label{eq:interpolation-smoothing}
        \| \psi\|_{E_{-j}} \leq 2 \interleave\! J\!\interleave \| \psi\|_{E_{-l}}^{j/l} \| \psi\|_{E_{-0}}^{1-j/l}.
    \end{equation}
    To show \eqref{eq:interpolation-smoothing}, by homogeneity we may assume $\| \psi\|_{E_{-0}}=1$.
    In this case, for any $\varphi\in E_j$ with $\| \varphi\|_{E_j}=1$, it holds
    \begin{align*}
        |\langle \psi, \varphi \rangle|
        \leq |\langle \psi, J^\eta  \varphi \rangle| + |\langle \psi, (I-J^\eta) \varphi \rangle|
        \leq \interleave J\! \interleave \big( \eta^{-l+j} \| \psi\|_{E_{-l}} + \eta^j \big)
    \end{align*}
    Taking first supremum over $\varphi$ and then choosing $\eta= \|\psi\|_{E_{-l}}^{1/l}$ (which is allowed since $\|\psi\|_{E_{-l}} \leq \|\psi\|_{E_{-0}}=1$) yields the desired \eqref{eq:interpolation-smoothing}.
    For further discussion, see also \cite[Section 2.1]{HHN2025}.
\end{remark}

The next lemma provides some practical examples of spaces admitting a smoothing, which will be relevant in the sequel; the proof is postponed to Appendix \ref{app:smoothing}.

\begin{lemma}\label{lem:smoothing-examples}
The following hold.
\begin{itemize}
    \item[a)] For any $p\in [1,\infty]$, the scale of spaces $E_l=W^{l,p}=W^{l,p}(\R^d)$ admits a smoothing.
    \item[b)] For a fixed $R\in [1,\infty)$, consider the scale of spaces $\clF_{l,R}$ given by
    \begin{equation}\label{eq:F_k,R}
        \mathcal{F}_{l,R}=\cF_{l,R}(\R^d):=\{\varphi\in W^{l,\infty}(\R^d): {\rm supp }\, \varphi\subset B_R\};
    \end{equation}
    then $\mathcal{F}_{l,R}$ admits a smoothing $(J^\eta)_{\eta\in (0,1]}$, which moreover can be constructed so that $\interleave J \interleave$ does not depend on $R$.
    \item[c)] For a fixed $R\in [1,\infty)$, define
    \begin{align*}
        x_{\pm} := \frac{x \pm y}{2}, \qquad \zeta_R(x,y) := \frac{|x_+|^2}{R^2} + |x_-|^2
    \end{align*}
    and consider the scale of spaces $\clE_{l,R}$ given by
    \begin{equation}\label{eq:E_k,R}
        \mathcal{E}_{l,R}=\mathcal{E}_{l,R}(\R^{2d}) := \left\{ \Phi \in W^{l,\infty}(\R^{2d}) \, : \zeta_R(x,y) \geq 1 \,\,\Rightarrow \,\,\Phi(x,y) = 0 \right\};
    \end{equation}
    then $\clE_{l,R}$ admits a smoothing $(J^\eta)_{\eta\in (0,1]}$, with $\interleave J \interleave$ independent of $R$ as in point b).
\end{itemize}
\end{lemma}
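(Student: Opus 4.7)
The plan is to construct the smoothings via convolution with a mollifier, correcting by a mild compression in parts (b) and (c) so that the prescribed support constraints are preserved with $R$-independent constants. Fix a radially symmetric bump function $\rho\in C^\infty_c(B_1)$ with $\rho\ge 0$ and $\int \rho =1$, and set $\rho^\eta(x)=\eta^{-d}\rho(x/\eta)$.

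For part (a), I would take $J^\eta\varphi := \rho^\eta * \varphi$. Young's inequality gives $\|J^\eta\|_{\cL(W^{l,p},W^{l,p})}\le 1$, and since $\nabla^k(\rho^\eta*\varphi)=(\nabla^k\rho^\eta)*\varphi$ with $\|\nabla^k\rho^\eta\|_{L^1}\lesssim \eta^{-k}$, one obtains $\|\nabla^k J^\eta\varphi\|_{L^p}\lesssim \eta^{-k}\|\varphi\|_{L^p}$, which yields \eqref{eq:defn-smoothing-prop2}. The approximation estimate \eqref{eq:defn-smoothing-prop1} follows from the identity $J^\eta\varphi(x)-\varphi(x)=\int \rho(y)\bigl(\varphi(x-\eta y)-\varphi(x)\bigr)\,\dd y$ and a first-order Taylor expansion (and a second-order one, using $\int y\,\rho(y)\,\dd y=0$ from radial symmetry, for the $(0,2)$ case).

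For part (b), convolution alone fails since $\supp(\rho^\eta*\varphi)\subset B_{R+\eta}$ escapes $B_R$. I would correct this by a preparatory compression: for a small fixed $c\in(0,1)$, let $\tilde\varphi(x):=\varphi\bigl(x/(1-c\eta/R)\bigr)$ and set $J^\eta\varphi:=\rho^{c\eta}*\tilde\varphi$. Since $R\ge 1$ and $\eta\le 1$, the factor $(1-c\eta/R)^{-1}$ lies in $[1,2]$ independently of $R$; moreover $\supp\tilde\varphi\subset B_{R-c\eta}$ guarantees $\supp J^\eta\varphi\subset B_R$, so $J^\eta\varphi\in\cF_{l,R}$. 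The key uniform bound is the pointwise estimate $|\tilde\varphi(x)-\varphi(x)|\lesssim \|\nabla\varphi\|_{L^\infty}|x|\,\eta/(R-c\eta)\lesssim \eta\|\nabla\varphi\|_{L^\infty}$, valid because $|x|\le R$ on the support of $\tilde\varphi-\varphi$; combined with $\|\rho^{c\eta}*\tilde\varphi-\tilde\varphi\|_{L^\infty}\lesssim \eta\|\nabla\tilde\varphi\|_{L^\infty}$ from part (a), this yields \eqref{eq:defn-smoothing-prop1}, while the estimates \eqref{eq:defn-smoothing-prop2} follow by pushing derivatives onto $\rho^{c\eta}$. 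The template adapts to part (c) with an anisotropic compression: in the coordinates $x_\pm=(x\pm y)/2$, the support condition $\zeta_R<1$ reads $|x_+|<R$ and $|x_-|<1$, so I would set $\tilde\Phi(x_+,x_-):=\Phi\bigl(x_+/(1-c\eta/R),\,x_-/(1-c\eta)\bigr)$ and then convolve with a standard isotropic scale-$c'\eta$ mollifier on $\R^{2d}$, with $c,c'$ chosen small enough to absorb the $\eta$-expansion of the support in both directions.

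The main obstacle I anticipate is the bookkeeping rather than any deep idea: one must ensure that every implicit constant is genuinely independent of $R\ge 1$ and $\eta\in(0,1]$. The crucial point in (b) is that the compression ratio $(1-c\eta/R)$ shrinks $\supp\varphi$ by an amount proportional to $R$, exactly matching the support scale, so that the factor $|x|/R\le 1$ in the Taylor remainder neutralizes the apparent $R$ in the denominator. A naive uniform compression $(1-c\eta)$ would fail in (b) and even more so in (c), where the two inherent scales $R$ and $1$ must be respected separately; this is why the anisotropic rescaling is needed in the mixed setting of $\zeta_R$.
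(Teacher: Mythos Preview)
Your approach to part (a) via mollifiers is correct (the paper happens to use Littlewood--Paley projectors instead, but either works). However, your dilation-plus-mollification strategy for parts (b) and (c) has a genuine gap: it fails the $(j,l)=(0,2)$ case of \eqref{eq:defn-smoothing-prop1}, which demands $\|J^\eta\varphi-\varphi\|_{L^\infty}\le C\eta^2\|\varphi\|_{W^{2,\infty}}$. Writing $\mu=(1-c\eta/R)^{-1}-1\sim c\eta/R$, Taylor expansion gives $\tilde\varphi(x)-\varphi(x)=\mu\,x\cdot\nabla\varphi(x)+O(\eta^2\|D^2\varphi\|_{L^\infty})$, and the leading term has size $\mu|x|\,|\nabla\varphi(x)|\sim\eta\|\nabla\varphi\|_{L^\infty}$ at a generic interior point with $|x|\sim R$. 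This is genuinely $O(\eta)$, not $O(\eta^2)$: for instance with $d=1$, $R=1$, $\varphi(x)=\cos^2(\pi x/2)\mathbbm{1}_{[-1,1]}(x)\in\cF_{2,1}$, one computes $\tilde\varphi(1/2)-\varphi(1/2)\sim \eta$ while $\|\varphi\|_{W^{2,\infty}}\sim 1$. The radial-symmetry cancellation you invoke saves the mollification step, but there is no analogous cancellation for the dilation, and the two errors add. The same obstruction carries over to your anisotropic proposal for (c).

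The paper's mechanism is different and exploits the support constraint more sharply: rather than dilating, it multiplies by a smooth cutoff $\Theta^{R,\eta}$ equal to $1$ on $B_{R-3\eta}$ and supported in $B_{R-2\eta}$, then mollifies at scale $\eta$. The derivatives $D^k\Theta^{R,\eta}$ blow up like $\eta^{-k}$, but they live only on a thin annulus near $\partial B_R$; there, since $\varphi\in\cF_{l,R}$ forces $\varphi,D\varphi,\dots,D^{l-1}\varphi$ to vanish on $\partial B_R$, a Taylor expansion from the boundary yields $|D^j\varphi(x)|\lesssim\eta^{l-j}\|\varphi\|_{W^{l,\infty}}$, exactly compensating the blowup in Leibniz's formula. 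This boundary-layer cancellation is the missing idea; your dilation spreads the error uniformly over $B_R$ rather than localizing it where $\varphi$ is already small.
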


\begin{remark}\label{rem:smoothing-examples}
    By definition, $\cF_{l,R}(\R^d)$ is a closed subspace of $W^{l,\infty}(\R^d)$, thus a Banach space when endowed with the norm $\| \cdot\|_{W^{l,\infty}(\R^d)}$;
    similarly, $\cE_{l,R}(\R^{2d})$ is a closed subspace of $W^{l,\infty}(\R^{2d})$, thus Banach with norm $\| \cdot\|_{W^{l,\infty}(\R^{2d})}$.
    By definition of $x_\pm$, for any $R\geq 1$ it holds that
    \begin{align*}
        2(|x|^2 + |y|^2) =|x_+|^2+|x_-|^2
        \leq R^2 \bigg( \frac{|x_+|^2}{R^2} + |x_-|^2\bigg) = R^2\, \zeta_R(x,y), 
    \end{align*}
    so that $\cE_{l,R}(\R^{2d})\subset \cF_{l,\sqrt{2} R}(\R^{2d})$ for every $l\in \{0,\ldots,3\}$.
\end{remark}

\begin{definition}\label{defn:unbounded-rough-driver}
    Let $(E_l)_{0\leq l\leq 3}$ be a scale of spaces. We say that a pair $\bA=(A,\mathbb A)$ of $2$-index maps is a \emph{continuous unbounded $\fkp$-rough driver} w.r.t. $(E_l)_{0\leq l\leq 3}$ if the following hold:
    \begin{enumerate}
        \item $A_{st}\in\clL(E_{-l},E_{-l-1})$ for $l\in \{0,2\}$ and $\mathbb{A}_{st}\in\clL(E_{-l},E_{-l-2})$ for $l\in \{0,1\}$;
        \item there exists a control $w_{\bA}$ on $[0,T]$ such that, for $l$ as above, it holds
        \begin{equation*}
            \| A_{st}\|_{\clL (E_{-l},E_{-l-1})} \leq w_{\bA}(s,t)^{\frac{1}{\fkp}}, \quad
            \| \mathbb A_{st}\|_{\clL (E_{-l},E_{-l-2})} \leq w_{\bA}(s,t)^{\frac{2}{\fkp}} \quad\forall\, (s,t)\in\Delta_T;
        \end{equation*}
        \item finally, \emph{Chen's relation} holds, in the sense that
        \begin{equation}\label{eq:chen-unbounded-drivers}
            \delta A_{sut} =0, \quad \delta \mathbb{A}_{sut} = A_{ut} A_{su} \quad \forall\, (s,u,t)\in\Delta^2_T.
        \end{equation}
    \end{enumerate}
\end{definition}

In analogy to Section \ref{sec:RDEs}, we start by defining solutions to RPDEs in the presence of an additional forcing $\mu$, which is of bounded variation in suitable topologies.

\begin{definition}\label{defn:solution-rough-PDE}
    Let $\fkp\in [2,3)$, $(E_l)_{0\leq l\leq 3}$ be a scale of spaces and $\bA$ be a unbounded $\fkp$-rough driver w.r.t. $(E_l)_{0\leq l\leq 3}$; let $\mu\in C^{1-\var} E_{-3}$.
    A bounded Borel path $\rho\in \cB_b([0,T]; E_{-0})$ is a \emph{solution to the rough PDE}
    \begin{equation}\label{eq:rough-abstract-PDE}
        \dd \rho_t + \mu(\dd t) + \bA(\dd t) \rho_t = 0
    \end{equation}
    if there exists $\rho^\natural\in C^{\fkp/3-\var}_2 E_{-3}$ such that
    \begin{equation}\label{eq:defn-solution-rough-PDE}
        \delta \rho_{st} + \delta \mu_{st} + A_{st} \rho_s = \mathbb{A}_{st} \rho_s+ \rho^{\natural}_{st} \quad \forall\, (s,t)\in\Delta_T.
    \end{equation}
\end{definition}

\begin{remark}\label{rem:weak_cts_solutions}
    By the assumptions and \eqref{eq:defn-solution-rough-PDE}, it holds
    \begin{align*}
        \| \delta\rho_{st}\|_{E_{-3}} \lesssim w_\mu(s,t) + w_{\bA}(s,t)^{\frac{1}{\fkp}}+w_{\natural}(s,t)^{\frac{3}{\fkp}} \quad \forall\, (s,t)\in \Delta_T
    \end{align*}
    where $w_\mu$ and $w_\natural$ are the controls associated respectively to $\mu$, $\rho^\natural$. In particular, $\rho\in C([0,T];E_{-3})$; if additionally $E_3$ densely embeds in $E_0$, then a standard duality argument combined with the uniform boundedness of $\rho$ in $E_{-0}$ implies that $\rho\in C_{w-\ast}([0,T];E_{-0})$.
\end{remark}

The next fundamental result provides a link between unbounded rough drivers and smoothing operators on $(E_l)_{0\leq l\leq 3}$, in the form of conditional a priori bounds for solutions to \eqref{eq:rough-abstract-PDE}; in particular, Lemma \ref{lem:apriori-unbounded} informs us that it suffices to control $\| \rho\|_{\cB_b([0,T];E_{-0})}=\sup_{t\in [0,T]} \| \rho\|_{E_{-0}}$ in order to obtain estimates for all the higher order terms coming from the Davie-type expansion \eqref{eq:defn-solution-rough-PDE}.
This type of result was first established in \cite[Theorem 4.4]{BaiGub2017}; the version below can be seen as an extension of \cite[Proposition 3.1]{hocquet2018energy}.

\begin{lemma}\label{lem:apriori-unbounded}
    Let $\fkp$, $(E_l)_{0\leq l\leq 3}$, $\bA$, $\mu$ be as in Definition \ref{defn:solution-rough-PDE} and $\rho$ be a rough solution to \eqref{eq:rough-abstract-PDE}; suppose that $(E_l)_{0\leq l \leq 3}$ admits a smoothing, in the sense of Definition \ref{defn:smoothing}, and that $\mu\in C^{1-var} E_{-2}$.
    Consider the controls defined by
    \begin{align*}
        w_\mu(s,t):=\llbracket \mu \rrbracket_{1,[s,t];E_{-2}}, \quad
        w_\ast(s,t) := \| \rho\|_{\cB_b([s,t];E_{-0})}^{\frac{\fkp}{3}} w_{\bA}(s,t) + w_\mu(s,t)^{\frac{\fkp}{3}} w_{\bA}(s,t)^{1-\frac{\fkp}{3}}
    \end{align*}
    and set $\rho^\sharp_{st}:= \delta \rho_{st} + A_{st} \rho_s$.
    Then for any $T>0$ there exist a constant $C$, depending on $\fkp$, $\interleave J \interleave$ and $w_\bA(0,T)$, increasing in the last two variables, such that for all $(s,t)\in\Delta_T$ it holds
    \begin{align}
        \llbracket \rho^\natural \rrbracket_{\fkp/3,[s,t];E_{-3}}
        & \leq C w_\ast(s,t)^{\frac{3}{\fkp}}, \label{eq:apriori-unbounded-eq1} \\
        \llbracket \rho^\sharp \rrbracket_{\fkp/2,[s,t];E_{-2}}
        & \leq C \big(1 + \| \rho\|_{\cB_b([s,t];E_{-0})}\big) \Big(w_\bA(s,t)^{\frac{2}{\fkp}} + w_\ast(s,t)^{\frac{2}{\fkp}}\Big) + C w_\mu(s,t) \label{eq:apriori-unbounded-eq2} \\
        \llbracket \rho \rrbracket_{\fkp,[s,t];E_{-1}}
        & \leq C \big( 1 + \| \rho\|_{\cB_b([s,t];E_{-0})}^{3/2}\big) \Big(  w_{\bA}(s,t)^{\frac{1}{\fkp}} + w_\ast(s,t)^{\frac{1}{\fkp}} + w_\mu(s,t)^{\frac{1}{2}}\Big).\label{eq:apriori-unbounded-eq3}
    \end{align}
\end{lemma}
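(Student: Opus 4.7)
The plan is to follow the sewing-and-bootstrap strategy of \cite{BaiGub2017}, using the smoothing family $(J^\theta)_{\theta\in(0,1]}$ to handle loss-of-regularity terms. First I would compute $\delta\rho^\natural_{sut}$: combining Chen's relations \eqref{eq:chen-unbounded-drivers} with the defining expansion \eqref{eq:defn-solution-rough-PDE}, a routine algebraic calculation gives
\begin{equation*}
\delta\rho^\natural_{sut} = -A_{ut}\rho^\sharp_{su} + \mathbb{A}_{ut}\delta\rho_{su},
\end{equation*}
and substituting $\rho^\sharp_{su} = -\delta\mu_{su} + \mathbb{A}_{su}\rho_s + \rho^\natural_{su}$ (which follows from rearranging the defining equation) yields the four-term decomposition $A_{ut}\delta\mu_{su} - A_{ut}\mathbb{A}_{su}\rho_s - A_{ut}\rho^\natural_{su} + \mathbb{A}_{ut}\delta\rho_{su}$.

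Next I would bound each term in $E_{-3}$. The first two admit direct estimates scaling as $w_\bA^{1/\fkp}w_\mu$ and $F\,w_\bA^{3/\fkp}$ respectively, with $F := \|\rho\|_{\cB_b([s,t];E_{-0})}$; both fit the form $w_\ast^{3/\fkp}$ with exponent $3/\fkp > 1$ since $\fkp < 3$. The last two lose one unit of regularity: $\rho^\natural_{su}$ lies only in $E_{-3}$ while $A_{ut}$ naturally needs $E_{-2}$, and $\delta\rho_{su}$ lives in $E_{-0}$ while the sharper operator bound for $\mathbb{A}_{ut}$ needs $E_{-1}$. The idea is to decompose each problematic argument as $J^\theta + (I-J^\theta)$ and optimize $\theta$ as a suitable power of $w_\bA(s,t)$: the $J^\theta$ piece is controlled via the non-trivial smoothing estimate $\|J^\theta\|_{\mathcal{L}(E_{-3},E_{-2})} \lesssim \theta^{-1}$, while the $(I-J^\theta)$ piece exploits the smallness built into Definition \ref{defn:smoothing}. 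After balancing, on intervals with $w_\bA(s,t)$ small the combined bound takes the form $C\, w_\ast(s,t)^{3/\fkp} + \varepsilon\,\llbracket \rho^\natural\rrbracket_{\fkp/3,[s,t];E_{-3}}$ with $\varepsilon < 1$.

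I would then apply the Sewing Lemma \ref{lem:sewing} and absorb the self-referential term into the left-hand side to obtain the local estimate $\|\rho^\natural_{st}\|_{E_{-3}} \leq K\,w_\ast(s,t)^{3/\fkp}$ valid for all $[s,t]$ with $w_\bA(s,t) \le h$, where $h$ depends only on $\fkp$ and $\interleave J\interleave$; Lemma \ref{lem:local-to-global} from Appendix \ref{app:tech-lem} (applied with $\tilde{\fkp} = \fkp/3$) then upgrades this to the global bound \eqref{eq:apriori-unbounded-eq1}. Estimate \eqref{eq:apriori-unbounded-eq2} follows by reading the identity $\rho^\sharp_{st} = -\delta\mu_{st} + \mathbb{A}_{st}\rho_s + \rho^\natural_{st}$ in $E_{-2}$ and bounding each piece. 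For \eqref{eq:apriori-unbounded-eq3}, I would use $\delta\rho_{st} = \rho^\sharp_{st} - A_{st}\rho_s$ to get an $E_{-2}$ bound and then interpolate with the trivial estimate $\|\delta\rho_{st}\|_{E_{-0}} \leq 2F$ via \eqref{eq:interpolation-smoothing} (case $j=1$, $l=2$); the $F^{3/2}$ prefactor arises naturally from combining the $F^{1/2}$ interpolation weight with the $F$ already present in the $E_{-2}$ bound.

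The main technical obstacle is the self-referential appearance of $\rho^\natural$ on the right-hand side of $\delta\rho^\natural$: calibrating the smoothing so as to achieve both a sewing exponent strictly greater than one and a self-referential coefficient strictly less than one on short intervals requires a delicate balancing. This balancing in turn dictates the mixed structure of $w_\ast$, in particular the factor $w_\mu^{\fkp/3}w_\bA^{1-\fkp/3}$, which by Remark \ref{rem:properties-controls} is precisely the right form to make the $A_{ut}\delta\mu_{su}$ contribution fit inside $w_\ast^{3/\fkp}$ as a bona fide control.
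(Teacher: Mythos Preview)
Your treatment of \eqref{eq:apriori-unbounded-eq1} is correct and in fact spells out what the paper simply cites as \cite[Corollary 2.11]{DGHT2019}: compute $\delta\rho^\natural$, split the problematic terms via $J^\theta/(I-J^\theta)$, sew, absorb, and globalise via Lemma \ref{lem:local-to-global}. No objection there.

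The gap is in your handling of \eqref{eq:apriori-unbounded-eq2}. You propose to ``read the identity $\rho^\sharp_{st}=-\delta\mu_{st}+\mathbb A_{st}\rho_s+\rho^\natural_{st}$ in $E_{-2}$ and bound each piece'', but $\rho^\natural_{st}$ only lives in $E_{-3}$: the bound \eqref{eq:apriori-unbounded-eq1} you have just established gives no $E_{-2}$ control, and the interpolation inequality \eqref{eq:interpolation-smoothing} would require an $E_{-0}$ bound on $\rho^\natural$, which is unavailable (the defining expansion only places $\rho^\natural$ in $E_{-2}$ via terms like $\mathbb A_{st}\rho_s$, with no useful decay). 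The paper resolves this by testing $\rho^\sharp_{st}$ against $\varphi\in E_2$, splitting $\varphi=(I-J^\eta)\varphi+J^\eta\varphi$, and pairing the \emph{first} representation $\rho^\sharp=\delta\rho+A\rho$ against $(I-J^\eta)\varphi$ (using $\delta\rho\in E_{-0}$) and the \emph{second} representation $-\delta\mu+\mathbb A\rho+\rho^\natural$ against $J^\eta\varphi$ (using $\|J^\eta\varphi\|_{E_3}\lesssim\eta^{-1}$ to reach $\rho^\natural\in E_{-3}$). Optimising $\eta=w_\ast^{1/\fkp}$ and then applying Lemma \ref{lem:local-to-global} yields \eqref{eq:apriori-unbounded-eq2}. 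This two-sided use of the smoothing is the missing idea.

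A similar issue affects your approach to \eqref{eq:apriori-unbounded-eq3}. Interpolating $\delta\rho$ wholesale between $E_{-0}$ and $E_{-2}$ forces the $A_{st}\rho_s$ contribution (which sits naturally in $E_{-1}$, hence in $E_{-2}$ with bound $Fw_\bA^{1/\fkp}$) through the square root, producing a term $F\,w_\bA(s,t)^{1/(2\fkp)}$ that cannot be absorbed into the stated form $(1+F^{3/2})w_\bA^{1/\fkp}$ for small $w_\bA$. The paper instead tests $\delta\rho=-A\rho+\rho^\sharp$ against $\varphi\in E_1$, keeps $A_{st}\rho_s$ paired with $J^\eta\varphi\in E_1$ directly (no loss), and only uses the $\eta^{-1}$ penalty on the $\rho^\sharp$ piece; choosing $\eta=w_\sharp^{1/\fkp}$ then gives the correct exponent.
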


\begin{proof}
    Our assumptions imply that all the conditions from \cite[Corollary 2.11]{DGHT2019} are satisfied; we deduce the existence of a parameter $L=L(\fkp, \interleave J \interleave)>0$ such that, for all $s<t$ such that $w_{\bA}(s,t)\leq L$, it holds
    \begin{align*}
        \| \rho^\natural_{st}\|_{E_{-3}}
        \lesssim_\fkp \| \rho\|_{\cB_b([s,t];E_{-0})} w_{\bA}(s,t)^{\frac{3}{\fkp}} + w_\mu(s,t) w_{\bA}(s,t)^{\frac{3}{\fkp}-1}
        \lesssim_{\fkp} w_\ast (s,t)^{\frac{3}{\fkp}}.
    \end{align*}
    Applying the first part of Lemma \ref{lem:local-to-global}, for $g=\rho^\natural$ and $\tilde \fkp=\fkp/3$, it then holds
    \begin{equation}\label{eq:apriori-unbounded-proof1}
        \llbracket \rho^\natural\rrbracket_{\fkp/3,[s,t];E_{-3}}
        \lesssim_{\fkp,L} w_\ast(s,t)^{\frac{3}{\fkp}} + w_{\bA}(s,t)^{\frac{3}{\fkp}} \| \rho^\natural\|_{C(\Delta_{[s,t]};E_{-3})}.
    \end{equation}
    On the other hand, since $\rho$ satisfies \eqref{eq:defn-solution-rough-PDE}, we have
    \begin{align*}
        \| \rho^\natural_{st}\|_{E_{-3}}
        & \leq \| \delta\rho_{s,t} \|_{E_{-3}} + \| \delta\mu_{s,t} \|_{E_{-3}}
        + \| A_{st} \rho_s \|_{E_{-3}} + \| \mathbb{A}_{st} \rho_s \|_{E_{-3}}\\
        & \lesssim_{w_\bA} \| \rho\|_{\cB_b([s,t];E_{-0})} + w_\mu(s,t);
    \end{align*}
    combined with \eqref{eq:apriori-unbounded-proof1} and the definition of $w_\ast$, overall this yields the desired bound \eqref{eq:apriori-unbounded-eq1}.

    Next observe that by \eqref{eq:defn-solution-rough-PDE}, it holds
    \begin{equation}\label{eq:apriori-unbounded-proof3}
        \rho^\sharp_{st}=\delta \rho_{st} + A_{st} \rho_s = -\delta\mu_{st} + \mathbb A_{st} \rho_s + \rho^\natural_{st};
    \end{equation}
    testing against any $\varphi\in E_2$ with $\| \varphi\|_{E_2}=1$, by relation \eqref{eq:apriori-unbounded-proof3} and properties of smoothing operators, we find
    \begin{align*}
        |\langle \rho^\sharp_{st}, \varphi \rangle|
        & = |\langle \delta \rho_{st} + A_{st} \rho_s, (I-J^\eta)\varphi \rangle| + |\langle -\delta\mu_{st} + \mathbb A_{st} \rho_s + \rho^\natural_{st}, J^\eta \varphi \rangle|\\
        & \leq \| \delta\rho_{st}\|_{E_{-0}} \|(I-J^\eta)\varphi\|_{E_0} + \| A_{st}\|_{\cL(E_{-0},E_{-1})} \| \rho_s\|_{E_{-0}} + \| \delta\mu_{st}\|_{E_{-2}} \| J^\eta\varphi\|_{E_2}\\
        & \quad + \| \bA_{st}\|_{\cL(E_{-0},E_{-2})} \| \rho_s\|_{E_{-0}} \| J^\eta\varphi\|_{E_2} + \| \rho^\natural_{st}\|_{E_{-3}} \| J^\eta\varphi\|_{E_3}\\
        & \lesssim
        \eta^2 \| \rho\|_{\cB_b([s,t];E_{-0})} + \eta \| \rho\|_{\cB_b([s,t];E_{-0})}\, w_{\bA}(s,t)^{\frac{1}{\fkp}} + w_\mu(s,t) \\
        & \quad + \| \rho\|_{\cB_b([s,t];E_{-0})}\, w_{\bA}(s,t)^{\frac{2}{\fkp}} + \eta^{-1} w_\ast(s,t)^{\frac{3}{\fkp}}.
    \end{align*}
    Taking supremum over $\varphi\in E_2$ with $\| \varphi\|_{E_2}=1$ and applying the basic estimate $2xy\leq x^2 + y^2$, we arrive at
    \begin{align*}
        \| \rho^\sharp_{st}\|_{E_{-2}}
        \lesssim \eta^2 \| \rho\|_{\cB_b([s,t];E_{-0})} + w_\mu(s,t) + \| \rho\|_{\cB_b([s,t];E_{-0})} w_{\bA}(s,t)^{\frac{2}{\fkp}} + \eta^{-1} w_\ast(s,t)^{\frac{3}{\fkp}}.
    \end{align*}
    Whenever $w_\ast(s,t)<1$, choosing $\eta=w_\ast(s,t)^{1/\fkp}$ then yields
    \begin{equation*}
        \| \rho^\sharp_{st}\|_{E_{-2}}
        \lesssim \big(1 + \| \rho\|_{\cB_b([s,t];E_{-0})}\big) \Big(w_\bA(s,t)^{\frac{2}{\fkp}} + w_\ast(s,t)^{\frac{2}{\fkp}}\Big) + w_\mu(s,t);
    \end{equation*}
    applying the first part of Lemma \ref{lem:local-to-global}, for $g=\rho^\sharp$ and $\tilde \fkp=\fkp/2$, we arrive at the global estimate
    \begin{equation}\label{eq:apriori-unbounded-proof2}
        \|\rho^\sharp_{st}\|_{E_{-2}}
        \lesssim \big(1 + \| \rho\|_{\cB_b([s,t];E_{-0})}\big) \Big(w_\bA(s,t)^{\frac{2}{\fkp}} + w_\ast(s,t)^{\frac{2}{\fkp}}\Big) + w_\mu(s,t) + \| \rho^\sharp\|_{C(\Delta_{[s,t]};E_{-2})} w_\ast(s,t)^{\frac{2}{\fkp}}.
    \end{equation}
    By the definition of $\rho_{st}^\sharp$, it holds
    \begin{align*}
        \| \rho^\sharp_{st} \|_{E_{-2}} \lesssim \| \rho_{st}\|_{E_{-1}} + \| A_{st} \rho_s\|_{E_{-1}}
        \lesssim_{w_{\bA}} \| \rho\|_{\cB_b([s,t];E_{-0})}
    \end{align*}
    which combined with \eqref{eq:apriori-unbounded-proof2} readily yields \eqref{eq:apriori-unbounded-eq2}.

    The proof of \eqref{eq:apriori-unbounded-eq3} is very similar, so let us only sketch the key passages. Using the relation $\delta\rho_{st}= -A_{st}\rho_s +\rho^\sharp_{st}$ and smoothing operators, one finds
    \begin{align*}
        \| \delta \rho_{st}\|_{E_{-1}}
        \lesssim \eta \| \rho\|_{\cB_b([s,t];E_{-0})} + w_\bA(s,t)^{\frac{1}{\fkp}} + \eta^{-1} w_{\sharp}(s,t)^{\frac{2}{\fkp}},
        \quad w_{\sharp}(s,t):=\llbracket \rho^\sharp\rrbracket_{\fkp/2,[s,t];E_{-2}}^{\frac{\fkp}{2}};
    \end{align*}
    whenever $w_\sharp(s,t)\leq 1$, we can then choose $\eta= w_\sharp(s,t)^{\frac{1}{\fkp}}$ to get an estimate where all controls appearing have powers $1/\fkp$ or higher.
    From there one applies Lemma \ref{lem:local-to-global} and the available estimate for $w_\sharp$ coming from \eqref{eq:apriori-unbounded-eq2} to finally arrive at \eqref{eq:apriori-unbounded-eq3}.
\end{proof}

\begin{remark}
    It is important to stress that the structure of the spaces $(E_l)_l$ (e.g. their geometry) doesn't play a role in the estimates of Lemma \ref{lem:apriori-unbounded}, only $\interleave J\interleave$ does; this will allows to consider a sequence $\{\rho^n\}_n$ of solutions possibly defined on different scales $(E^n_l)_l$, and pass to the limit in $n$ by exploiting the available uniform bounds. 
\end{remark}

\subsection{Existence of weak solutions}\label{subsec:linear-existence}

From now on, we will work exclusively with divergence free vector fields $\xi_k$, namely $\nabla\cdot\xi_k=0$ for all $k=1,\ldots, m$; to express it, we will just write $\nabla\cdot \xi=0$.
In view of this, whenever convenient, we may (at least formally) rewrite \eqref{eq:formal-rough-continuity} as
\begin{equation}\label{eq:rough-continuity}
    \dd \rho_t + \nabla\cdot (b_t \rho_t) \dd t + \sum_{k=1}^m \xi_k \cdot \nabla \rho_t \dd \bZ_t = 0.
\end{equation}
For simplicity, we just write $\sum_k$ in place of $\sum_{k=1}^m$. The next statement allows us to reinterpret \eqref{eq:formal-rough-continuity}/\eqref{eq:rough-continuity} in an unbounded rough driver fashion.

\begin{lemma}\label{lem:URD_continuity}
    Let $\xi\in C^2_b$, $\nabla\cdot\xi_k=0$, $\bZ\in \clC^{\fkp}_g$ for some $\fkp\in [2,3)$; define $\bA=(A,\bbA)$ by
    \begin{equation}\label{eq:defn-unbounded-continuity}
        A_{st}\varphi := \sum_k \xi_k\cdot\nabla \varphi\, \delta Z^k_{st}, \quad 
        \bbA_{st}\varphi := \sum_{j,k} \xi_k\cdot\nabla(\xi_j\cdot\nabla \varphi) \bbZ^{jk}_{st}.
    \end{equation}
    Then $\bA$ is an unbounded rough driver, in the sense of Definition \ref{defn:unbounded-rough-driver}, on the scales of spaces $E_l= W^{l,\infty}$ and $E_l=\cF_{l,R}$ defined by \eqref{eq:F_k,R}.
    Moreover it holds
    \begin{equation*}
        w_{\bA}(s,t)\lesssim \| \xi\|_{C^2_b}^\fkp\, w_{\bZ}(s,t)
    \end{equation*}
    where the hidden constant is independent of $R$.
    Finally, $\bA$ is \emph{conservative}, in the sense that
    \begin{equation}\label{eq:conservative-driver}
        A_{st}^\ast = - A_{st}, \quad \bbA_{st}+\bbA_{st}^\ast = - A_{st}^\ast A_{st}.
    \end{equation}
\end{lemma}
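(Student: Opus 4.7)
The plan is to verify in sequence the three structural conditions of Definition \ref{defn:unbounded-rough-driver} together with the conservativeness identity \eqref{eq:conservative-driver}. The overarching strategy is to first understand the action of $A_{st}$ and $\bbA_{st}$ as differential operators on test functions $\varphi \in E_l = W^{l,\infty}$ (or $\cF_{l,R}$), and then transfer these estimates to the dual scales $E_{-l}$ using the skew-symmetry property coming from $\nabla \cdot \xi_k = 0$.

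First, I would observe that $A_{st}$ is a first-order differential operator: since $\xi \in C^2_b$, the map $\varphi \mapsto \xi_k \cdot \nabla \varphi$ sends $W^{l+1,\infty} \to W^{l,\infty}$ with norm bounded by $\|\xi\|_{C^2_b}$, for $l \in \{0,1,2\}$. When $\varphi \in \cF_{l+1,R}$, the product $\xi_k \cdot \nabla \varphi$ again has support in $B_R$ (since $\supp(\xi_k\cdot\nabla\varphi) \subset \supp \varphi \subset B_R$), so $A_{st}$ restricts to a bounded operator $\cF_{l+1,R} \to \cF_{l,R}$ with the \emph{same} operator norm, independent of $R$. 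A similar analysis of $\bbA_{st}$, via the expansion $\xi_k^i \partial_i(\xi_j^l \partial_l \varphi) = \xi_k^i (\partial_i \xi_j^l) \partial_l \varphi + \xi_k^i \xi_j^l \partial_i \partial_l \varphi$, shows it maps $E_{l+2} \to E_l$ for $l \in \{0,1\}$ with norm bounded by $\|\xi\|_{C^2_b}^2$. To transfer to the dual scales, I would next establish the skew-adjoint identity $A_{st}^* = -A_{st}$ by integration by parts using $\nabla \cdot \xi_k = 0$; then, by duality, the bound on $A_{st}: E_{-l} \to E_{-l-1}$ reduces to the already-proved bound on $A_{st}^*: E_{l+1} \to E_l$, and similarly for $\bbA_{st}$. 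Choosing the control $w_\bA(s,t) := C \|\xi\|_{C^2_b}^\fkp w_\bZ(s,t)$ and using $|\delta Z_{st}| \leq w_\bZ(s,t)^{1/\fkp}$, $|\bbZ_{st}| \leq w_\bZ(s,t)^{2/\fkp}$ from Remark~\ref{rem:control-rough-path} yields the required estimates.

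Chen's relation $\delta A_{sut} = 0$ is immediate since $\delta(\delta Z)_{sut} = 0$. For $\delta \bbA_{sut} = A_{ut} A_{su}$, I would invoke \eqref{eq:chen-relation} for $\bZ$, namely $\bbZ^{jk}_{st} - \bbZ^{jk}_{su} - \bbZ^{jk}_{ut} = \delta Z^j_{su} \delta Z^k_{ut}$, and observe by direct substitution that this matches $A_{ut} A_{su}\varphi = \sum_{j,k} \xi_k \cdot \nabla(\xi_j \cdot \nabla \varphi)\, \delta Z^j_{su}\, \delta Z^k_{ut}$.

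The main obstacle, and the key place where the geometric assumption enters, is the second conservativeness identity $\bbA_{st} + \bbA_{st}^* = -A_{st}^* A_{st}$. Integrating by parts twice and relabelling indices, I would compute
\begin{equation*}
    (\bbA_{st} + \bbA_{st}^*)\varphi = \sum_{j,k} \bigl(\bbZ^{jk}_{st} + \bbZ^{kj}_{st}\bigr)\, \xi_k \cdot \nabla(\xi_j \cdot \nabla \varphi).
\end{equation*}
For $\bZ \in \clC^\fkp_g$, the symmetric part of $\bbZ_{st}$ is prescribed by $Z$, namely $\bbZ^{jk}_{st} + \bbZ^{kj}_{st} = \delta Z^j_{st}\, \delta Z^k_{st}$; this is the defining geometric identity, obtained by passing to the limit from smooth lifts where it holds via integration by parts in \eqref{eq:canonical-lift}. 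On the other hand, $-A_{st}^* A_{st} = A_{st}^2$ by skew-symmetry, and $A_{st}^2 \varphi = \sum_{j,k} \delta Z^j_{st}\, \delta Z^k_{st}\, \xi_k \cdot \nabla(\xi_j \cdot \nabla \varphi)$, matching the left-hand side.
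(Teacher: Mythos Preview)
Your proposal is correct and follows essentially the same approach as the paper. The paper's proof is very terse---it notes that these are ``classical facts from the unbounded rough drivers framework,'' observes that $A_{st}$ and $\bbA_{st}$ are built from the operators $V_j=\xi_j\cdot\nabla$ satisfying $V_j^*=-V_j$ and $\|V_j\|_{\cL(W^{l+1,\infty},W^{l,\infty})}\lesssim\|\xi_j\|_{C^2_b}$, and then defers the conservativeness identity to \cite{BaiGub2017}---whereas you spell out the duality transfer, Chen's relation, and the geometric symmetric-part computation $\bbZ^{jk}_{st}+\bbZ^{kj}_{st}=\delta Z^j_{st}\,\delta Z^k_{st}$ explicitly.
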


\begin{proof}
    These are all classical facts from the unbounded rough drivers framework \cite{BaiGub2017,DGHT2019}, so let us motivate them shortly.
    Chen's relation \eqref{eq:chen-unbounded-drivers} follows from the definition of $\bA$ and the corresponding Chen's relation \eqref{eq:chen-relation} for $\bZ$.
    Note that $A_{st}$ and $\bbA_{st}$ are defined as sums and compositions of the differential operators $V_j= \xi_j\cdot\nabla$, which by the assumptions $\nabla\cdot\xi_j=0$, $\xi_j\in C^2_b$ satisfy
    \begin{align*}
        V_j^\ast=-V_j, \quad
        \| V_j \|_{\clL(W^{l+1,\infty},W^{l,\infty})} \lesssim \| \xi_j\|_{C^2_b} \quad \text{for } l=0,1,2;
    \end{align*}
    being local operators, they also respect the support of function, thus the scales $\cF_{l,R}$. Conditions i)-ii) from Definition \ref{defn:unbounded-rough-driver} then readily follow by duality.
    Finally, $\bA$ being conservative follows from $\nabla\cdot \xi_j=0$ and $\bZ$ being a geometric rough path, see the discussion right after \cite[Definition 5.1]{BaiGub2017}.
\end{proof}

\begin{definition}\label{defn:solution-rough-continuity}
    Let $\xi\in C^2_b$, $\nabla\cdot\xi=0$, $\bZ\in \clC^{\fkp}_g$ for some $\fkp\in [2,3)$ and $b\in L^1_t L^1_\loc$.
    We say that a map $\rho:[0,T]\to L^1_\loc$ is a \emph{weak solution to the rough continuity equation} \eqref{eq:rough-continuity} on $[0,T]$ if, for any $R\in [1,\infty)$, it is a solution to the rough PDE \eqref{eq:rough-abstract-PDE} on $\clF_{l,R}$, in the sense of Definition \ref{defn:solution-rough-PDE}, for the choice $\dot\mu= \nabla\cdot (b\rho)$ and $\bA$ as given in \eqref{eq:defn-unbounded-continuity}.

    In other words, $\rho$ solves \eqref{eq:rough-continuity} if 
    for all $R\in [1,\infty)$, the following hold: $\rho\in \cB_b([0,T];\cF_{-0,R})$, $\nabla\cdot (b\rho)\in L^1_t \clF_{-2,R}$, and the two-parameter map $\rho^\natural$ defined by
    \begin{equation}\label{eq:solution-rough-continuity}
        \delta \rho_{st} + \int_s^t \nabla\cdot (b_u\rho_u) \dd u + \sum_k \xi_k\cdot\nabla \rho_s \delta Z_{st}^k - \sum_{j,k} \xi_k\cdot\nabla(\xi_j\cdot\nabla \rho_s) \bbZ^{jk}_{st} = \rho^\natural_{st}\quad \forall\, (s,t)\in \Delta_T
    \end{equation}
    satisfies $\rho^\natural\in C^{\fkp/3-\var}_2 \clF_{-3,R}$.

    If $b\in L^1_\loc L^1_x$ and $\bZ\in \mathcal{C}^\mathfrak{p}_g([0,T])$ for every $T\in (0,+\infty)$, we say that $\rho$ is a \em{global weak solution} to \eqref{eq:rough-continuity} it is a solution on $[0,T]$, for every $T\in (0,+\infty)$.
\end{definition}

\begin{remark}\label{rem:defn-solution-rough-continuity}
    Definition \ref{defn:solution-rough-continuity} combines the standard concept of weak solution, based on testing against $\varphi\in C^\infty_c$, with the more quantitative setup from unbounded rough drivers, which requires to work with (a family of) scales of Banach spaces, rather than a locally convex topological vector space like $\cD'$.
    This is the reason why we enforce identity \eqref{eq:solution-rough-continuity} to hold on all scales $\cF_{l,R}$, with arbitrarily large but finite $R$.

    By Remark \ref{rem:weak_cts_solutions}, condition \eqref{eq:solution-rough-continuity} implies the continuity of $t\mapsto \langle \varphi, \rho_t\rangle$ for all $\varphi\in C^\infty_c$. In particular, the map $t\mapsto \rho_t$ is continuous in $\cD'$, which allows to give meaning to an initial (resp. terminal) condition $\rho\vert_{t=0}=\rho_0$ (resp. $\rho\vert_{t=T}=\rho_T$) coupled with the rough PDE.
    
    A sufficient condition for $\nabla\cdot(b \rho)\in L^1_t \cF_{-2,R}$ is to verify that $b\rho\in L^1_t L^1_{\loc}$; indeed, by duality
    \begin{align*}
        |\langle \nabla\cdot(b_t \rho_t), \varphi \rangle|
        = |\langle b_t \rho_t, \nabla\varphi \rangle|
        \leq \| \nabla\varphi\|_{L^\infty_x} \| b_t \rho_t\|_{L^1(B_R)}
        \leq \| \nabla\varphi\|_{\cF_{2,R}} \| b_t \rho_t\|_{L^1(B_R)} \qquad\forall\, \varphi\in \cF_{2,R},
    \end{align*}
    where in the intermediate passages we used the support property of $\varphi$, so that we actually get the stronger outcome that $\nabla\cdot(b \rho)\in L^1_t \cF_{-1,R}$:
    \begin{equation}\label{eq:existence-linear-preliminary}
        \int_0^T \| \nabla\cdot(b_t \rho_t)\|_{\cF_{-1,R}} \dd t
        \leq \int_0^T \| b_t \rho_t\|_{L^1(B_R)} \dd t \quad \forall\, R\geq 0.
    \end{equation}

    Let us finally mention that, if $b$ and $\xi$ are smooth, $\bZ$ is a smooth rough path and $\rho$ is a classical smooth solution to the PDE
    \begin{equation*}
        \partial_t \rho + \nabla \cdot (b \rho) + \sum_k \nabla\cdot (\xi_k \rho)\, \dot Z^k = 0
    \end{equation*}
    then a Taylor expansion readily shows that it is also a solution to the associated rough PDE (similarly to the RDE case from Remark \ref{rem:defn-RDE-forcing}).
\end{remark}

\begin{proposition}\label{prop:existence-linear-RPDE}
    Let $\fkp\in [2,3)$, $\bZ\in \clC^{\fkp}_g$, $\xi\in C^2_b$ with $\nabla\cdot \xi =0$ and $p\in [1,+\infty]$. Let
    \begin{equation}\label{eq:existence-linear-assumption}
        \rho_0\in L^p_x, \quad b\in L^1_t L^{p'}_\loc, \quad \nabla\cdot b\in L^1_t L^\infty_x.
    \end{equation}
    Then there exists a weak solution $\rho\in \cB_b([0,T];L^p_x)$ to the rough continuity equation \eqref{eq:rough-continuity} on $[0,T]$, in the sense of Definition \ref{defn:solution-rough-continuity}, with initial condition $\rho\vert_{t=0}=\rho_0$, which moreover satisfies
    \begin{equation}\label{eq:lq-bound}
        \| \rho_t\|_{L^p_x}
        \leq \exp\bigg( \Big(1-\frac{1}{p} \Big) \int_0^t \| \nabla\cdot b_u\|_{L^\infty_x}\, \dd u\bigg) \| \rho_0\|_{L^p_x} \quad \forall\, t \in [0,T].
    \end{equation}
    If moreover $b$, $\bZ$ are defined for $t\in \R_{\geq 0}$ and the above conditions are satisfied on every compact interval $[0,T]$, then there exists a global weak solution $\rho$, satisfying \eqref{eq:lq-bound} for every $t\geq 0$.
\end{proposition}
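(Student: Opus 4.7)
The strategy is to regularize $(b, \xi, \bZ, \rho_0)$, solve the resulting classical continuity equation via the method of characteristics, establish uniform $L^p$-bounds, and pass to the limit in the rough PDE formulation using the unbounded rough drivers machinery of Section \ref{subsec:unbounded-rough}. Concretely, I would take mollifications $b^n \in C^\infty$, equipped with a mild spatial cutoff ensuring completeness of the classical flow, such that $b^n \to b$ in $L^1_t L^{p'}_\loc$ and $\|\nabla \cdot b^n_t\|_{L^\infty_x} \leq \|\nabla \cdot b_t\|_{L^\infty_x}$ up to a vanishing error; smoothings $\xi^n \in C^\infty_b$ with $\nabla \cdot \xi^n = 0$ and $\xi^n \to \xi$ in $C^2_b$; approximations $\rho_0^n \in C^\infty_c$ with $\|\rho_0^n\|_{L^p} \leq \|\rho_0\|_{L^p}$, converging to $\rho_0$ strongly in $L^p_x$ for $p<\infty$ and weakly-$\ast$ for $p=\infty$; and, exploiting $\bZ \in \clC^\fkp_g$, smooth rough paths $\bZ^n$ with $d_{\fkp,T}(\bZ^n, \bZ)\to 0$.

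For each $n$, Theorem \ref{thm:wellposedness-RDE} yields a global flow of diffeomorphisms $\Phi^n$ for the smooth RDE $\dd y = b^n \dd t + \xi^n \dd \bZ^n$, and the classical continuity equation admits the unique solution $\rho^n_t(x) = \rho_0^n(\Phi^{n,-1}_t(x)) \, |\det D \Phi^{n,-1}_t(x)|$. Since $\nabla \cdot \xi^n = 0$, the determinant identity from the proof of Corollary \ref{cor:flow-incompressibility} gives $\det D \Phi^n_t(y) = \exp(\int_0^t \nabla \cdot b^n_s(\Phi^n_s(y))\, \dd s)$; changing variables in $\int |\rho^n_t|^p \dd x$ produces the uniform bound \eqref{eq:lq-bound} for $\rho^n$ (with $\rho_0^n$ in place of $\rho_0$), with the case $p = \infty$ handled directly from the pointwise density formula.

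Using the compactness criteria from Appendix \ref{app:compactness}, one extracts along a subsequence some $\rho \in \cB_b([0,T]; L^p_x)$ such that $\rho^n \to \rho$ in $C_w([0,T]; L^p_x)$ for $p \in [1,\infty)$, respectively in $C_{w-\ast}([0,T]; L^\infty_x)$ for $p = \infty$; in the reflexive range $p \in (1,\infty)$ this is immediate from Banach--Alaoglu, while for $p = 1$ the necessary equi-integrability follows via Dunford--Pettis from the quasi-incompressibility of $\Phi^n$ and $\rho_0 \in L^1$. Fixing $\varphi \in \cF_{3,R}$ and $(s,t) \in \Delta_T$, the increment $\delta\rho^n_{st}(\varphi)$ and the rough-driver terms $\sum_k \langle \rho^n_s, \xi^n_k \cdot \nabla \varphi\rangle \delta Z^{n,k}_{st}$, $\sum_{j,k} \langle \rho^n_s, \xi^n_j \cdot \nabla (\xi^n_k \cdot \nabla \varphi)\rangle \bbZ^{n,jk}_{st}$ converge by the weak(-$\ast$) continuity of $\rho^n$ combined with $\xi^n \to \xi$ in $C^2_b$ and $\bZ^n \to \bZ$ in $\clC^\fkp$; the drift contribution $\int_s^t \langle b^n_u \rho^n_u, \nabla \varphi\rangle \dd u$ converges to $\int_s^t \langle b_u \rho_u, \nabla \varphi\rangle \dd u$ via the classical strong-weak product, since $b^n \nabla \varphi \to b \nabla \varphi$ strongly in $L^1_t L^{p'}_x$ (test-function supports are compact) while $\rho^n \rightharpoonup \rho$ in the dual scale. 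Hence $\rho^{n,\natural}_{st}(\varphi) \to \rho^\natural_{st}(\varphi)$ pointwise in $(s,t)$; combining the uniform bound on $\rho^{n,\natural}$ in $C^{\fkp/3-\var}_2 \cF_{-3,R}$ from Lemma \ref{lem:apriori-unbounded} (which depends only on the uniform $L^p$-bound of $\rho^n$ and on $w_{\bA}$) with the lower semicontinuity of Remark \ref{rem:p-variation-lsc} transfers the required regularity to $\rho^\natural$. The bound \eqref{eq:lq-bound} for $\rho$ then follows by lower semicontinuity of $\|\cdot\|_{L^p}$, and the global-in-time statement by diagonal extraction in $T$.

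\textbf{Main obstacle.} The borderline case $p = 1$ requires the most care, since $L^1$ is not reflexive and compactness cannot be extracted by duality alone; the resolution goes through the Lagrangian structure, where the explicit density formula and the uniform quasi-incompressibility estimate $|\det D\Phi^n_t|^{\pm 1}\leq \exp(\int \|\nabla\cdot b_s\|_\infty \dd s)$ yield equi-integrability of $\{\rho^n_t\}_{n,t}$, allowing Dunford--Pettis to be invoked. A secondary but nontrivial technical point is engineering the spatial cutoff of $b^n$ so that $\|\nabla\cdot b^n_t\|_{L^\infty_x}$ is not degraded; this is handled by letting the cutoff radius go to infinity together with the mollification scale via a diagonal argument, preserving the conclusion \eqref{eq:lq-bound} in the limit.
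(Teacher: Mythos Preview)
Your approach is essentially the paper's: mollify the data, solve the classical continuity equation via the flow formula, obtain uniform $L^p$-bounds from the Jacobian identity, extract a weak (resp.\ weak-$\ast$) limit using the compactness criteria of Appendix \ref{app:compactness}, and pass to the limit term-by-term, controlling the remainder via Lemma \ref{lem:apriori-unbounded} and the lower semicontinuity of Remark \ref{rem:p-variation-lsc}. The paper keeps $\xi$ fixed rather than approximating it, but this is inessential.

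One technical point deserves care in the $p=1$ case. Your claim that $b^n\nabla\varphi \to b\nabla\varphi$ strongly in $L^1_t L^{p'}_x$ fails when $p'=\infty$, since mollifications of $L^\infty_\loc$ functions need not converge in $L^\infty_\loc$. The paper handles this by requiring only $b^n_t(x)\to b_t(x)$ for a.e.\ $(t,x)$ together with the uniform bound $\|b^n_t\|_{L^\infty(B_R)}\leq \|b_t\|_{L^\infty(B_R)}$, and then passes to the limit in the drift via Egorov's theorem combined with the equi-integrability of $\{\rho^n_t\}$ that you already obtained from quasi-incompressibility. Your identification of equi-integrability as the key obstacle for $p=1$ is correct; it just plays this second role in the drift convergence as well.
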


\begin{proof}
    For simplicity, we only present the proof on a finite interval $[0,T]$; the global existence statement follows similarly, up to an additional Cantor diagonal argument.
    Let us start by treating the case $p\in (1,+\infty)$.
    
    Given $(b,\bZ,\xi)$ as in the assumptions, we can find smooth approximations $(b^n,\bZ^n,\rho^n_0)$ such that
    \begin{equation}\label{eq:existence-linear-approximations}
        b^n\to b\ {\rm in } \ L^1_t L^{p'}_\loc,\quad
        \int_0^T \| b^n_u\|_{L^{p'}(B_R)} \dd u \leq \int_0^T \| b_u\|_{L^{p'}(B_{R+1})} \dd u , \quad
        \| \nabla\cdot b^n\|_{L^1_t L^\infty_x} \leq \| \nabla\cdot b\|_{L^1_t L^\infty_x},
    \end{equation}
    as well as $d_{p,T}(\bZ^n,\bZ)\to 0$ and $\rho^n_0\to\rho_0$ in $L^p_x$.
    We may further assume that $\| \rho^n_0\|_{L^p_x} \leq \| \rho_0\|_{L^p_x}$ for all $n$ and, arguing as in Corollary \ref{cor:stability-RDE-flow}, we can take the associated controls $w_{\bZ^n}$ to be equicontinuous, namely such that $\sup_n w_{\bZ^n}(s,t) \leq \gamma(|t-s|)$ for modulus of continuity $\gamma$.
    For each $n$, the associated PDE
    \begin{equation*}
        \partial_t \rho^n + \nabla\cdot( b^n\, \rho^n) + \sum_k \nabla\cdot( \xi_k \rho^n) \dot Z^{k,n} = 0
    \end{equation*}
    now classically admits a unique solution, which we denote by $\rho^n_t$.
    Since $\nabla\cdot\xi=0$, it has an explicit solution formula given by
    \begin{equation}\label{eq:existence-linear-proof-1}
        \rho_t^n(x) = \rho^n_0(\Phi^n_{0\leftarrow t}(x))\, \exp\Big( -\int_0^t (\nabla\cdot b^n_u)(\Phi_{u\leftarrow t}^n(x)) \dd u\Big),
    \end{equation}
    where $\Phi^n$ is the flow associated to $(b^n,\xi,\bZ^n)$; together with \eqref{eq:incompressibility-proof-eq2} and \eqref{eq:existence-linear-approximations}, this yields
    \begin{equation}\label{eq:existence-linear-proof-2}\begin{split}
        \| \rho^n_t\|_{L^p}^p
        & = \int_{\R^d} |\rho^n_0(x)|^p \exp\Big( (1-p)\int_0^t (\nabla\cdot b^n_u)(\Phi_{0\to u}^n(x)) \dd u\Big) \dd x\\
        & \leq \| \rho^n_0\|_{L^p}^p \exp\Big( (p-1)\int_0^t \| \nabla\cdot b_u\|_{L^\infty_x}\, \dd u \Big),
    \end{split}\end{equation}
    namely the equivalent of \eqref{eq:lq-bound} for the solutions $\rho^n$ to the mollified equations.
    By Remark \ref{rem:defn-solution-rough-continuity}, $\rho^n$ is also a solution to the rough PDE \eqref{eq:rough-abstract-PDE}, with $\bA^n$ defined in terms of $(\xi,\bZ^n)$ by \eqref{eq:defn-unbounded-continuity} and $\mu^n_t=\int_0^t \nabla\cdot (b^n_u \rho^n_u) \dd u$.
    Arguing as in \eqref{eq:existence-linear-preliminary} and using \eqref{eq:existence-linear-proof-2}, for any $R>0$ it holds
    \begin{align*}
        \| \delta \mu^n_{st} \|_{\cF_{-1,R}}
        \leq \int_s^t \| b^n_u \rho^n_u\|_{L^1(B_R)} \dd u
        \lesssim \|\rho^n\|_{L^\infty_t L^p_x} \int_s^t \| b^n_u\|_{L^{p'}(B_R)} \dd u
        \lesssim \| \rho_0\|_{L^p_x}\, w_{b,R}(s,t)
    \end{align*}
    where we define the control $w_{b,R}:= \int_s^t \| b^n_u\|_{L^{p'}(B_R)} \dd u$.
    We are therefore in the position to apply Lemma \ref{lem:apriori-unbounded}, in combination with the above estimates, to find
    \begin{equation}\label{eq:existence-linear-proof-3}\begin{split}
        \| \delta \rho^n_{st}\|_{\cF_{-1,R}}
        & \lesssim (1+\| \rho_0^n\|_{L^p_x}^2) \Big(w_{\bA^n}(s,t)^\frac{1}{\fkp} + w_{b,R} (s,t)^\frac{1}{\fkp} +  w_{b,R} (s,t)^\frac{1}{2}\Big)\\
        & \lesssim (1+\| \rho_0\|_{L^p_x}^2) \Big(\gamma (|t-s|)^{\frac{1}{\fkp}} + w_{b,R} (s,t)^\frac{1}{\fkp}\Big)
    \end{split}\end{equation}
    which shows equicontinuity of $\rho^n$ in $\cF_{-1,R}$ for all $R\geq 1$.
    Thanks to estimates \eqref{eq:existence-linear-proof-2}-\eqref{eq:existence-linear-proof-3}, the assumptions of Proposition \ref{prop:compactness-lp} in Appendix \ref{app:compactness} are met; we can therefore find a (not relabelled for simplicity) subsequence and a function $\rho\in C_w([0,T];L^p_x)$ such that $\rho^n\to \rho$ in $C_w([0,T];L^p_x)$ (recall Definition \ref{defn:uniform_weak_convergence}).
    In particular, $\rho^n_t \rightharpoonup \rho_t$ in $L^p$ for all $t\in [0,T]$.

    We claim that $\rho$ is the desired weak solution to \eqref{eq:rough-continuity}. Properties of weak convergence and the uniform bound \eqref{eq:existence-linear-proof-2} readily imply \eqref{eq:lq-bound}, while by construction $\rho\vert_{t=0}=\rho_0$ as desired. To see that $\rho$ is a weak solution, let us consider for any $\psi\in \cF_{3,R} $ the expansion
    \begin{equation*}
        \langle \delta \rho^n_{st}, \psi\rangle + \langle \delta \mu^n_{st}, \psi\rangle + \langle \rho^n_s, A^{n,\ast}_{st} \psi\rangle - \langle \rho^n_s, \bbA^{n,\ast}_{st} \psi\rangle = \langle \rho^{n,\natural}_{st}, \psi\rangle
    \end{equation*}
    and study convergence of each term.
    By construction, $A^{n,\ast}_{st}$ and $\bbA^{n,\ast}_{st}$ converge to $A^\ast_{st}$ and $\bbA^\ast_{st}$ respectively, in the appropriate strong operator topologies, which combined with $\rho^n_s \rightharpoonup \rho_s$ in $L^p$ and weak-strong convergence implies that
    \begin{align*}
        \langle \rho^n_s, A^{n,\ast}_{st} \psi\rangle \to \langle \rho_s, A^{\ast}_{st} \psi\rangle, \quad 
        \langle \rho^n_s, \bbA^{n,\ast}_{st} \psi\rangle \to \langle \rho_s, \bbA^{\ast}_{st} \psi\rangle \quad \forall\, (s,t)\in\Delta_T.
    \end{align*}
    Since $b^n\to b$ in $L^1_t L^{p'}_\loc$, $\rho^n_t\rightharpoonup \rho_t$ for all $t\in [0,T]$ and $\psi$ is compactly supported, by weak-strong convergence it holds
    \begin{equation}\label{eq:existence-linear-proof-4}
        \lim_{n\to\infty} \sup_{t\in [0,T]} |\langle \mu^n_t-\mu_t,\psi\rangle|
        \leq \lim_{n\to\infty} \int_0^T |\langle \rho^n_u, b^n_u \cdot\nabla\psi \rangle - \langle \rho_u, b_u \cdot\nabla\psi \rangle| \dd u = 0
    \end{equation}
    and clearly $\langle \delta\rho^n_{st},\psi \rangle \to \langle \delta\rho_{st},\psi\rangle$.
    In order to conclude that $\rho$ is a weak solution, it remains to show that the associated two-parameter $\rho^\natural_{st}$ belongs to $C^{\fkp /3-\var}_2 \cF_{-3,R}$; by the previous estimates, $\langle  \rho^\natural_{st}, \psi\rangle = \lim_{n\to\infty} \langle  \rho^{n,\natural}_{st}, \psi\rangle$ for all $\psi\in \cF_{3,R}$, namely $\rho^\natural_{st}$ is the weak\mbox{-}$\ast$ limit of $\rho^{n,\natural}_{st}$ in $\cF_{-3,R}$.
    On the other hand, applying again Lemma \ref{lem:apriori-unbounded} (more precisely \eqref{eq:apriori-unbounded-eq1}), taking into accounts the previous bounds on $\mu^n$, one finds
    \begin{align*}
        \| \rho^{n,\natural}_{st}\|_{\cF_{-3,R}}^{\fkp/3} \lesssim \| \rho_0^n\|_{L^p_x}^{\fkp/3} (w_{\bZ^n}(s,t) + w_{b,R}(s,t)^{\fkp/3} w_{\bZ^n}(s,t)^{1-\fkp/3} ).
    \end{align*}
    By lower-semicontinuity of norms in weak\mbox{-}$\ast$ topologies, passing to the limit it then holds
    \begin{align*}
        \| \rho^{\natural}_{st}\|_{\cF_{-3,R}}^{\fkp/3}
        \leq \liminf_{n\to\infty} \| \rho^{n,\natural}_{st}\|_{\cF_{-3,R}}^{\fkp/3}
        \lesssim \| \rho_0\|_{L^p_x}^{\fkp/3} (w_{\bZ}(s,t) + w_{b,R}(s,t)^{\fkp/3} w_{\bZ}(s,t)^{1-\fkp/3} )
    \end{align*}
    which finally by Remark \ref{rem:relation-control-variation} implies that $\rho^\natural\in C^{\fkp/3-\var}_2 \cF_{-3,R}$ for all $R\geq 1$.

    Next we consider the case $p=\infty$. The proof is almost identical, as one can still develop uniform estimates for $\| \rho^n\|_{\cB_b([0,T]; L^\infty_x)}$ and $\| \delta \rho^n_{s,t}\|_{\cF_{-1,R}}$ which are robust enough to apply compactness arguments (Proposition \ref{prop:compactness-lp}) and pass to the limit.
    The only differences, due to lack of separability and reflexivity of $L^\infty_x$, is that now $\rho^n_0\to \rho_0$ in $L^p_{\loc}$ for any $p<\infty$ and $\rho^n_0\xrightharpoonup{\ast} \rho_0$ in $L^\infty_x$, while $\rho^n\to \rho$ in $C_{w-\ast}([0,T];L^\infty_x)$; the rest of the proof is identical to before, since we can pass to the limit without problems whenever testing against compactly supported functions.

    Finally we deal with $p=1$, which is a bit more delicate. In this case, $\rho^n_0\to \rho_0$ in $L^1$,
    but we can only allow smooth approximations $b^n$ such that $\| \nabla\cdot b^n_t\|_{L^\infty_x} \leq \| \nabla\cdot b_t\|_{L^\infty_x}$ and
    \begin{equation}\label{eq:existence-linear-approximation-p1}
        \| b^n_t\|_{L^\infty(B_R)} \leq \| b_t\|_{L^\infty(B_R)} \ \forall\, R\geq 1,\quad
        b^n_t(x)\to b_t(x) \text{ for Lebesgue a.e. }  (t,x).
    \end{equation}
    We can still derive uniform bounds on $\| \rho^n\|_{L^\infty_t L^1_x}$ and equicontinuity estimates in $\cF_{-1,R}$ as before; in order to get weak compactness, we aim to apply Corollary \ref{cor:compactness-l1-loc} from Appendix \ref{app:compactness}, which requires to verify \emph{local equi-integrability} of $\rho^n$.
    To this end, fix $\eps>0$; since $\rho^n_0\to \rho_0$ in $L^1_x$, there exists $\delta>0$ such that
    \begin{align*}
        \mathscr{L}^d(A)\leq \delta\quad \Rightarrow \quad \sup_n \int_A |\rho^n_0(x)| \dd x\leq \eps.
    \end{align*}
    Now let us set $\tilde\delta := \delta \exp(-\| \nabla\cdot b\|_{L^1_t L^\infty_x})$; observe that for any Borel set $\tilde A$ with $\mathscr{L}^d(\tilde A)\leq\tilde \delta$, by Corollary \ref{cor:flow-incompressibility} and our choice of approximations, for any $t\in [0,T]$ it holds
    \begin{align*}
        \mathscr{L}^d(\Phi^n_{0\leftarrow t}(\tilde A))
        \leq \exp\Big(\int_0^t \| \nabla\cdot b^n_s \|_{L^\infty_x} \dd s\Big)\, \mathscr{L}^d(\tilde A)  \leq \delta;
    \end{align*}
    therefore by the explicit solution formula \eqref{eq:existence-linear-proof-1}, we obtain
    \begin{align*}
        \mathscr{L}^d(\tilde A)\leq \tilde \delta
        \quad \Rightarrow\quad
        \sup_{t\in [0,T]} \int_{\tilde A} |\rho^n_t(x)| \dd x
        = \sup_{t\in [0,T]} \int_{\Phi^n_{0\leftarrow t} (\tilde A)} |\rho^n_0(x)| \dd x\leq \eps
    \end{align*}
    which proves equi-integrability. 
    We can now apply Corollary \ref{cor:compactness-l1-loc} to find a (not relabelled) subsequence such that $\rho^n \to \rho$ in $C_w([0,T];L^1_\loc)$, which in particular implies that $\rho^n_t\psi\rightharpoonup \rho_t\psi$  weakly in $L^1_x$ for all $t\in [0,T]$ and $\psi \in C_c^{\infty}$;
    combining this fact with the convergence \eqref{eq:existence-linear-approximation-p1} and more refined weak-strong convergence results (for instance Egorov's theorem), one can still show that
    \begin{align*}
        \int_0^\cdot \nabla\cdot( \rho^n_s\, b^n_s)\, \dd s \to \int_0^\cdot \nabla\cdot( \rho_s\, b_s)\,\dd s\ \text{ in }  C_{w-\ast}([0,T];\cF_{-2,R})
    \end{align*}
    as well as in fact the stronger estimate \eqref{eq:existence-linear-proof-4}.
    From here, one can pass to the limit as before to find the conclusion.
\end{proof}

\begin{remark}\label{rem:existence-linear RPDE}
    Let us discuss some variants and extensions of Proposition \ref{prop:existence-linear-RPDE}, under the same assumptions on $\bZ$, $\xi$, $b$.
    If $\rho_0 \in L^{p_1}_x \cap L^{p_2}_x$ for some $p_1\leq p_2$, then going through the same proof one can construct a global solution $\rho$ to \eqref{eq:rough-continuity} satisfying
     \begin{equation*}
        \| \rho_t\|_{L^q_x}
        \leq \exp\bigg( \Big(1-\frac{1}{q} \Big) \int_0^t \| \nabla\cdot b_u\|_{L^\infty_x}\, \dd u\bigg) \| \rho_0\|_{L^q_x} \quad \forall\, t \geq 0, \, q\in [p_1,p_2].
    \end{equation*}
    Similarly, for any $p\in [1,\infty]$ and any $\tilde\rho_T\in L^p_x$, one can construct solutions $\tilde\rho$ with terminal condition $\tilde\rho\vert_{t=T}=\tilde\rho_T$ such that
    \begin{align*}
        \| \tilde\rho_t\|_{L^p_x}\leq \exp\bigg( \Big(1-\frac{1}{p} \Big) \int_t^T\| \nabla\cdot b_u\|_{L^\infty_x}\, \dd u\bigg) \| \rho_T\|_{L^p_x} \quad \forall\, t\in [0,T].
    \end{align*}
    Similar considerations apply to the stochastic transport equation
    \begin{equation}\label{eq:rough_transport}
        \dd f_t + b_t\cdot\nabla f_t\, \dd t + \sum_k \xi_k\cdot\nabla f_t\, \dd \bZ_t = 0;
    \end{equation}
    one can define solutions to \eqref{eq:rough_transport} similarly to Definition \ref{defn:solution-rough-continuity}, up to requiring instead that $b\cdot\nabla f\in L^1_t \cF_{-2,R}$.
    This condition is satisfied whenever $b f\in L^1_t L^1_\loc$, $f\in \cB_b([0,T];L^1_\loc)$ and $\nabla\cdot b\in L^1_t L^\infty_x$: indeed by duality, similarly to \eqref{eq:existence-linear-preliminary}, one has
    \begin{equation}\label{eq:transport-preliminary}
        \int_0^T \| b_t\cdot\nabla f_t\|_{\cF_{-1,R}} \dd t
        \leq \int_0^T \| b_t f_t\|_{L^1(B_R)} \dd t + \sup_{t\in [0,T]} \| f_t\|_{L^1(B_R)} \int_0^T \| \nabla\cdot b\|_{L^\infty_x} \dd t \quad \forall\, R\geq 0.
    \end{equation}
    In the case of \eqref{eq:rough_transport}, for any $p\in [1,\infty]$ and any initial $f_0\in L^p_x$ (respectively terminal $\tilde f_T\in L^p_x$), under the same assumptions as in Proposition \ref{prop:existence-linear-RPDE} one can similarly construct a weak solution $f$ (resp. $\tilde f$) satisfying
    \begin{equation*}
        \| f_t\|_{L^p_x} \leq \exp\bigg( \frac{1}{p} \int_0^t\| \nabla\cdot b_u\|_{L^\infty_x}\, \dd u\bigg) \| f_0\|_{L^p_x},\quad
        \| \tilde f_t\|_{L^p_x} \leq \exp\bigg( \frac{1}{p} \int_t^T\| \nabla\cdot b_u\|_{L^\infty_x}\, \dd u\bigg) \| \tilde f_T\|_{L^p_x}.
    \end{equation*}    
\end{remark}

\begin{remark} \label{rmk:weak compactness}
    For future convenience, let us collect here the necessary ingredients from the proof of Proposition \ref{prop:existence-linear-RPDE} guaranteeing compactness of $\{\rho^n\}_n$ in $C_w([0,T];L^1_\loc)$ (cf. Proposition \ref{prop:compactness-l1})  when $p=1$.
    To achieve the necessary a priori estimates, all one needs is: 
    \begin{itemize}
        \item $\sup_n \| \nabla \cdot b^n\|_{L^1_tL^{\infty}_x} < \infty$;
        \item $\sup_n \|  b^n\|_{L^1_tL^{\infty}_x(B_R)} < \infty$ for every $R \geq 1$;
        \item equi-integrability on $\rho_0^n$.
    \end{itemize}
    In particular, this part of the proof would already work under the assumption that $\rho^n_0\rightharpoonup \rho_0$ weakly in $L^1_x$. 
    Furthermore, to show convergence of the drifts $\mu^n$, in addition to the above conditions, one only needs to require $b_t^n(x) \rightarrow b_t(x)$ for Lebesgue a.e. $(t,x)$ (see \eqref{eq:existence-linear-approximation-p1}).

    Similar arguments apply to the transport RPDE \eqref{eq:rough_transport} as well; the only major difference is that in this case, in order to show convergence (in the sense of distributions) of the drifts $\tilde \mu^n=\int_0^\cdot b^n_s\cdot\nabla f^n_s\, \dd s$, one needs to additionally require $\nabla\cdot b_t^n(x) \rightarrow \nabla\cdot b_t(x)$ for Lebesgue a.e. $(t,x)$.
\end{remark}

Proposition \ref{prop:existence-linear-RPDE} only requires local integrability conditions on $b$; under an additional growth assumption, see \eqref{eq:growth-condition-pde} below, one can additionally prove uniform $p$\mbox{-}integrability of solutions.
Condition \eqref{eq:growth-condition-pde} first appeared in the DiPerna--Lions theory, where is usually exploited to set up a Gr\"onwall type argument at the level of uniqueness, cf. \cite[Theorem II.1]{diperna1989ordinary}.
Here instead we employ it at the level of a priori estimates, which are valid also in situations where uniqueness is not known, with an approach reminiscent of \cite[Section 3]{CriDeL2008}.

\begin{lemma}\label{lem:equi-p-integrability}
    Let $b,\,\xi,\, \Z$ and $\rho_0$ be as in Proposition \ref{prop:existence-linear-RPDE}, for some $p\in [1,\infty)$.
    Additionally assume that
    \begin{equation}\label{eq:growth-condition-pde}
        \frac{b(x)}{1+|x|} \in L^1_t L^1_x+ L^1_t L^\infty_x.
    \end{equation}
    Then the solution $\rho$ from Proposition \ref{prop:existence-linear-RPDE} can be further constructed so that it is \emph{uniformly $p$\mbox{-}integrable} on any finite interval $[0,T]$.
    In particular it holds
    \begin{equation}\label{eq:equi-p-integrability}
        \lim_{R\to\infty} \sup_{t\in [0,T]} \int_{|x|>R} |\rho_t(x)|^p \dd x = 0.
    \end{equation}
\end{lemma}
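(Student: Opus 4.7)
The plan is to refine the approximation scheme from Proposition~\ref{prop:existence-linear-RPDE} so that the smooth solutions $\rho^n$ additionally enjoy uniform equi-integrability of $\{|\rho^n_t|^p\}_{n,\,t\in [0,T]}$, and then pass to the limit exploiting lower semicontinuity of $\|\cdot\,\mathbf{1}_{|x|>R}\|_{L^p_x}$ under weak convergence. Decomposing $b = b_1 + b_2$ according to~\eqref{eq:growth-condition-pde} and mollifying each part separately produces $b^n = b^n_1 + b^n_2$ preserving this decomposition uniformly in $n$. The explicit flow formula~\eqref{eq:existence-linear-proof-1} together with the incompressibility bound from Corollary~\ref{cor:flow-incompressibility} gives
\begin{equation*}
    \int_{|x|>R} |\rho^n_t(x)|^p \dd x \leq C \int_{\{|\Phi^n_t(y)|>R\}} |\rho^n_0(y)|^p \dd y,
\end{equation*}
so the goal becomes: for every compact $K \subset \R^d$, show that $\mathscr{L}^d(K \cap \{|\Phi^n_t|>R\}) \to 0$ uniformly in $n$ and $t \in [0,T]$ as $R \to \infty$, then combine with the equi-integrability of $\{|\rho^n_0|^p\}_n$ (which holds since $\rho^n_0 \to \rho_0$ in $L^p_x$).

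The key a priori estimate I would establish is that, for every compact $K$,
\begin{equation*}
    \sup_{n,\,t \in [0,T]} \int_K \log(1 + |\Phi^n_t(y)|^2)^{1/2}\, \dd y < \infty.
\end{equation*}
For the smooth approximations, the chain rule applied to~\eqref{eq:RDE-drift} yields
\begin{equation*}
    \log(1+|\Phi^n_t(y)|^2)^{1/2} - \log(1+|y|^2)^{1/2} = \int_0^t G^n(s,\Phi^n_s(y))\, \dd s + \sum_k \int_0^t F_k(\Phi^n_s(y))\, \dd Z^{n,k}_s,
\end{equation*}
where $G^n(s,x) = x \cdot b^n(s,x)/(1+|x|^2)$ and $F_k(x) = x \cdot \xi_k(x)/(1+|x|^2)$. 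Since $\xi \in C^3_b$ and $x/(1+|x|^2)$ lies in $C^\infty_b$, one has $F_k \in C^2_b$ uniformly in $n$; interpreting $\int_0^t F_k(\Phi^n_s)\, \dd Z^{n,k}_s$ as a rough integral against $\bZ^n$ with controlled integrand (thanks to $\Phi^n$ solving the RDE), standard sewing-type estimates bound it pointwise in $y$ by a constant depending only on $\|F_k\|_{C^2_b}$ and $\|\bZ^n\|_{\fkp,T}$, hence uniform in $n$. For the drift, $|G^n(s,x)| \leq g^n_2(s) + |b^n_1(s,x)|/(1+|x|)$, and integrating over $K$, applying Fubini and changing variables with Jacobian controlled by Corollary~\ref{cor:flow-incompressibility},
\begin{equation*}
    \int_0^t \int_K \frac{|b^n_1(s,\Phi^n_s(y))|}{1+|\Phi^n_s(y)|}\, \dd y\, \dd s \lesssim \int_0^T\!\int_{\R^d} \frac{|b^n_1(s,x)|}{1+|x|}\, \dd x\, \dd s,
\end{equation*}
which is uniformly bounded by assumption. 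Combining yields the claimed log-bound.

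With this estimate, Markov's inequality gives $\mathscr{L}^d(K \cap \{|\Phi^n_t|>R\}) \leq C_K/\log R$. Given $\eps > 0$, I would then first pick $M$ such that $\sup_n \int_{|y|>M} |\rho^n_0|^p \dd y \leq \eps$, and then pick $R$ so large that $C_M/\log R$ is below the equi-integrability modulus of $\{|\rho^n_0|^p\}$ at level $\eps$. Splitting according to $|y| \lessgtr M$ in the flow representation yields $\sup_{n,\,t\in[0,T]} \int_{|x|>R}|\rho^n_t|^p \dd x \lesssim \eps$, and weak-$L^p$ limit passage (via lower semicontinuity of the $L^p$-norm restricted to $\{|x|>R\}$) transfers \eqref{eq:equi-p-integrability} to the constructed $\rho$. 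The main technical obstacle I anticipate is the uniform control of the rough integral $\int_0^t F_k(\Phi^n_s)\, \dd Z^{n,k}_s$: naive $1$-variation bounds through $|\dot{Z}^n|$ blow up as $n \to \infty$, so the estimate must be extracted from the rough path structure of $\Phi^n$ as a solution to the RDE, using that $F_k(\Phi^n)$ is a controlled path.
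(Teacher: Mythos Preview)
Your overall architecture---pass to smooth approximations, use the flow representation to reduce \eqref{eq:equi-p-integrability} to controlling the level sets $\{|\Phi^n_t|>R\}$, establish an integrated logarithmic bound on $|\Phi^n_t|$, then invoke Markov and equi-integrability of $\{|\rho^n_0|^p\}$---is the same as the paper's. The gap is precisely the step you flag as the ``main technical obstacle.'' Your claim that standard sewing estimates bound $\int_0^t F_k(\Phi^n_s)\,\dd\bZ^n_s$ by a constant depending only on $\|F_k\|_{C^2_b}$ and $\|\bZ^n\|_{\fkp,T}$ is not correct: rough integral bounds for $\int F(Y)\,\dd\bZ$ also depend on the controlled-path norm of $Y$, in particular on $\llbracket Y\rrbracket_{\fkp}$ and $\llbracket Y^\sharp\rrbracket_{\fkp/2}$. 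For $Y=\Phi^n_\cdot(y)$ these are governed by Lemma~\ref{lem:apriori-RDE-forcing} and hence by $w_{\mu^n}(0,T)=\int_0^T |b^n_s(\Phi^n_s(y))|\,\dd s$. Since $|b^n_s(z)|$ is only controlled by $(1+|z|)(h_s+g_s(z))$, this reintroduces $|\Phi^n_s(y)|$ on the right-hand side and makes both the pointwise and the $K$-integrated versions of your argument circular.

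The paper sidesteps the rough integral entirely. Instead of applying the chain rule to $\log(1+|\Phi^n_t|^2)^{1/2}$, it invokes estimate~\eqref{eq:apriori-RDE-forcing-1} from Lemma~\ref{lem:apriori-RDE-forcing}, which already absorbs the rough contribution into a constant $\kappa=\kappa(\fkp,\|\xi\|_{C^2_b},\|\bZ\|_{\fkp,T})$: one gets
\[
1+|\Phi_t(x)| \leq 1+|x|+\kappa+\kappa\int_0^t |b_s(\Phi_s(x))|\,\dd s
\leq 1+|x|+\kappa+\kappa\int_0^t \big(h_s+g_s(\Phi_s(x))\big)\big(1+|\Phi_s(x)|\big)\,\dd s.
\]
A \emph{pointwise-in-$x$} Gr\"onwall then yields
\[
\log\big(1+\sup_{t\leq T}|\Phi_t(x)|\big)\leq \log(1+|x|+\kappa)+\kappa\|h\|_{L^1_t}+\kappa\int_0^T g_s(\Phi_s(x))\,\dd s,
\]
and the last term has bounded $L^1_x$-norm by quasi-incompressibility. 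This is the same logarithmic inequality you aim for, obtained without ever isolating a rough integral. The rest of the argument (splitting according to which term exceeds $\tilde R$, Markov, equi-integrability of $\rho_0$, lower semicontinuity under weak limits) then proceeds as in your sketch.
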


\begin{proof}
    We give the proof for $p=1$, the other cases being similar.
    For simplicity, we assume everything smooth and derive estimates which are independent of the smoothness of $(b,\xi,\rho_0)$; the conclusion then follows by passing to the limit in the smooth approximations, as done in Proposition \ref{prop:existence-linear-RPDE}, and using properties of weak convergence in $L^1$, guaranteeing that the bound \eqref{eq:equi-p-integrability} still holds after the limit.
    
    By formula \eqref{eq:existence-linear-proof-1}, it holds
    \begin{equation}\label{eq:equi-p-integrability-proof0}
        \sup_{t\in [0,T]} \int_{|x|>R} |\rho_t(x)| \dd x = \sup_{t\in [0,T]} \int_{|\Phi_t(x)|>R} |\rho_0(x)| \dd x.
    \end{equation}
    We need to derive some bounds on $|\Phi_t(x)|$, for $x\in\R^d$.
    By assumption \eqref{eq:growth-condition-pde}, we can write $|b_t(x)|\leq (1+|x|)(h_t + g_t(x))$ for some $g\in L^1_t L^1_x$ and $h \in L^1_t$.
    Since $\Phi_t(x)$ solves an RDE, we can apply estimate \eqref{eq:apriori-RDE-forcing-1} from Lemma \ref{lem:apriori-RDE-forcing} for $\mu_t= \int_0^t b_s(\Phi_s(x)) \dd s$;
    in particular, we can find a constant $\kappa=\kappa(\|\xi\|_{C^2_b})>0$ such that for any $t\in [0,T]$ it holds
    \begin{align*}
    1 + |\Phi_t(x)|
    & \leq 1 + |x| + \kappa + \kappa \int_0^t |b_s(\Phi_s(x))| \dd s\\
    & \leq  1 + |x| + \kappa + \kappa \int_0^t (h_s + g_s(\Phi_s(x))) (1 + |\Phi_s(x)|) \dd s
    \end{align*}
    An application of Gr\"onwall's lemma then yields
    \begin{equation}\label{eq:equi-p-integrability-proof}\begin{split}
        \log\Big(1 + \sup_{t\in [0,T]} |\Phi_t(x)|\Big)
        & \leq \log(1 + \kappa + |x| ) + \kappa \| h\|_{L^1_t}  + \int_0^T \kappa g_s(\Phi_s(x)) \dd s\\
        & =: \log(1 +\kappa + |x|) +  \kappa \| h\|_{L^1_t}  + G(x).
    \end{split}\end{equation}
    Moreover, by Corollary \ref{cor:flow-incompressibility}, it holds
    \begin{equation}\label{eq:equi-p-integrability-proof2}
        \int_{\R^d} G(x) \dd x = \kappa \int_0^T \int_{\R^d} g_s(\Phi_s(x)) \dd x \dd s
        \leq \kappa \exp(\| \nabla\cdot b\|_{L^1_t L^\infty_x}) \| g\|_{L^1_t L^\infty_x}.
    \end{equation}
    We now go back to the original quantity we want to estimate; let us conveniently choose $R=\exp(1+3\tilde R)$ for $\tilde R$ large enough so that $\tilde R\geq \kappa \| h\|_{L^1_t}$.
    Estimate \eqref{eq:equi-p-integrability-proof} informs us that, whenever $\Phi_t(x)>R$, at least one between $\log(1+\kappa + |x|)$ and $G(x)$ must be larger than $\tilde R$. Therefore, for any fixed $K>0$, we find
    \begin{align*}
        \int_{|\Phi_t(x)|>R} |\rho_0(x)| \dd x
        & \leq  \int_{|\rho_0(x)| > K} |\rho_0(x)| \dd x + \int_{|\Phi_t(x)|>R,\, |\rho_0(x)| \leq K} |\rho_0(x)| \dd x\\
        & \leq \int_{|\rho_0(x)| > K} |\rho_0(x)| \dd x + \int_{\log(1+\kappa + |x|) > \tilde R} |\rho_0(x)| \dd x 
        + K \int_{|G(x)|>R} 1 \dd x\\
        & \leq \int_{|\rho_0(x)| > K} |\rho_0(x)| \dd x + \int_{\log(1+\kappa+|x|) > \tilde R} |\rho_0(x)| \dd x + \frac{K}{\tilde R} \| G\|_{L^1_x};
    \end{align*}
    combined with \eqref{eq:equi-p-integrability-proof0} and \eqref{eq:equi-p-integrability-proof2}, this yields the quantitative estimate
    \begin{equation}\begin{split}\label{eq:equi-p-integrability-proof3}
        & \sup_{t\in [0,T]} \int_{|x|>e^{1+3\tilde R}} |\rho_0(x)| \dd x\\
        & \quad \leq \int_{|\rho_0(x)| > K} |\rho_0(x)| \dd x + \int_{|x|> e^{\tilde R}-1-\kappa} |\rho_0(x)| \dd x + \frac{\kappa K}{\tilde R} e^{\| \nabla\cdot b\|_{L^1_t L^\infty_x}} \| g\|_{L^1_t L^\infty_x}, \quad \forall\, \tilde R\geq \kappa \|h\|_{L^1_x}.
    \end{split}\end{equation}
    This is a quantitative bound, which is stable w.r.t. weak convergence; therefore even though we performed all computations in the smooth case, after passing to the limit in the approximations, estimate \eqref{eq:equi-p-integrability-proof3} remains true for the weak solution $\rho$ constructed in Proposition \ref{prop:existence-linear-RPDE}.
    
    Since $\rho_0\in L^1_x$, we can first take $\lim\sup_{\tilde R\to\infty}$ on both sides to see that the second and third term on the RHS of \eqref{eq:equi-p-integrability-proof3} vanish, and subsequently take $K\to\infty$ (employing again integrability of $\rho_0$) to conclude that \eqref{eq:equi-p-integrability} holds (for $p=1$).
\end{proof}

Notice that, by Lemma \ref{lem:equi-p-integrability}, the solutions $\rho^n$ associated to smooth approximations constructed in Proposition \ref{prop:existence-linear-RPDE} are always uniformly-$p$-integrable, in the sense that $\{|\rho^n_t|^p; t\in [0,T], n\in\N \}$ is a uniformly integrable family (i.e. equi-integrable and tight) whenever $\{|\rho^n_0|^p; n\in\N \}$ is so.

Under suitable integrability requirements, we can additionally show that $L^1_x$-valued solutions to the rough continuity equation preserve mass, as formally expected by integration by parts. Combined with the upcoming product formula from Section \ref{subsec:linear-uniqueness}, Lemma \ref{lem:conservation-mass} below is the key to establish uniqueness results by a duality argument, see the proof of forthcoming Theorem \ref{thm:uniqueness-linear-RPDE}.
By no coincidence, the proof of Lemma \ref{lem:conservation-mass} is similar to that of \cite[Theorem II.1]{diperna1989ordinary}, although here no Sobolev regularity is required.
Note that in the next statement, very few conditions are imposed on $b$, in particular we do not enforce neither \eqref{eq:existence-linear-assumption} nor \eqref{eq:growth-condition-pde}.

\begin{lemma} \label{lem:conservation-mass}
    Let $\fkp\in [2,3)$, $Z\in \clC^{\fkp}_g$, $\xi\in C^2_b$, $\nabla\cdot \xi=0$; let $\rho$ be a solution to \eqref{eq:rough-continuity}, in the sense of Definition \ref{defn:solution-rough-continuity}, additionally satisfying
    \begin{equation}\label{eq:hypothesis-conservation-mass}
        \rho\in \cB_b([0,T]; L^1_x), \quad \frac{b\,\rho}{1+|x|} \in L^1_t L^1_x.
    \end{equation}
    Then it holds $\langle \rho_t, 1\rangle = \langle \rho_0,1\rangle$ for all $t\in [0,T]$.
\end{lemma}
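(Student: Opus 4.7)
The plan is to test the rough continuity equation against a cutoff $\phi_R\in C^\infty_c(\R^d)$ approximating the constant function $1$, and analyze each term in the limit $R\to\infty$. Specifically, fix $\phi\in C^\infty_c(\R^d)$ with $\phi\equiv 1$ on $B_1$, $\supp\phi\subset B_2$, $0\le\phi\le 1$, and set $\phi_R(x):=\phi(x/R)$ for $R\ge 1$. Then $\|\phi_R\|_{\cF_{3,2R}}\le C$ uniformly in $R$, while $|\nabla\phi_R|\lesssim R^{-1}\mathbbm{1}_{\{R\le|x|\le 2R\}}\le C(1+|x|)^{-1}\mathbbm{1}_{\{|x|\ge R\}}$. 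Since $\rho$ solves \eqref{eq:solution-rough-continuity} in the scale $\cF_{l,2R}$, testing the equation against $\phi_R$ and integrating by parts (using $\nabla\cdot\xi_k=0$) yields
\begin{equation*}
\langle\delta\rho_{0t},\phi_R\rangle = \int_0^t \langle b_u\rho_u, \nabla\phi_R\rangle\, \dd u + \sum_k\delta Z^k_{0t}\langle\rho_0,\xi_k\cdot\nabla\phi_R\rangle + \sum_{j,k}\bbZ^{jk}_{0t}\langle\rho_0,\xi_j\cdot\nabla(\xi_k\cdot\nabla\phi_R)\rangle + \langle\rho^\natural_{0t},\phi_R\rangle.
\end{equation*}

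The LHS converges to $\langle\delta\rho_{0t},1\rangle$ by dominated convergence, using $\rho_t,\rho_0\in L^1_x$. On the RHS, the drift is bounded by $C\int_0^t \int_{|x|\ge R}|b\rho|/(1+|x|)\, \dd x\, \dd u$ and vanishes by the growth hypothesis and dominated convergence. The first and second order rough terms are bounded respectively by $C R^{-1}|\delta Z_{0t}|\|\xi\|_{C^1_b}\|\rho_0\|_{L^1_x}$ and $C R^{-1}|\bbZ_{0t}|\|\xi\|_{C^2_b}^2\|\rho_0\|_{L^1_x}$, since $\xi_k\cdot\nabla\phi_R$ and $\xi_j\cdot\nabla(\xi_k\cdot\nabla\phi_R)$ are uniformly $O(R^{-1})$; both vanish as $R\to\infty$. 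It therefore suffices to prove $\langle\rho^\natural_{0t},\phi_R\rangle\to 0$, which is the delicate point: the a priori estimate \eqref{eq:apriori-unbounded-eq1} applied in the scale $\cF_{l,2R}$ only yields an $R$-growing bound on $\|\rho^\natural_{0t}\|_{\cF_{-3,2R}}$, since $w_\mu\lesssim(1+R)\|b\rho/(1+|x|)\|_{L^1_tL^1_x}$ in this scale.

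To circumvent this, we argue via a partition. Let $\pi=\{0=t_0<\cdots<t_N=t\}$ be a partition of $[0,t]$; summing the equation on each subinterval yields
\begin{equation*}
\sum_i\langle\rho^\natural_{t_it_{i+1}},\phi_R\rangle = \langle\delta\rho_{0t},\phi_R\rangle - \int_0^t\langle b_u\rho_u,\nabla\phi_R\rangle\,\dd u + \sum_i\langle A_{t_it_{i+1}}\rho_{t_i},\phi_R\rangle - \sum_i\langle\bbA_{t_it_{i+1}}\rho_{t_i},\phi_R\rangle,
\end{equation*}
with $A,\bbA$ as in Lemma \ref{lem:URD_continuity}. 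For each fixed $R$, the LHS is dominated by $C\sum_i\|\rho^\natural_{t_it_{i+1}}\|_{\cF_{-3,2R}}$; by the $\fkp/3$-variation bound of Lemma \ref{lem:apriori-unbounded} and H\"older (exploiting $\fkp/3<1$), this is at most $(\max_i\|\rho^\natural_{t_it_{i+1}}\|_{\cF_{-3,2R}})^{1-\fkp/3}\sum_i\|\rho^\natural_{t_it_{i+1}}\|_{\cF_{-3,2R}}^{\fkp/3}$, which vanishes as $|\pi|\to 0$ by continuity of the associated control. In the same limit, the compensated sums on the RHS converge, by the sewing lemma applied to the germ $\Xi_{st}:=\langle A_{st}\rho_s,\phi_R\rangle-\langle\bbA_{st}\rho_s,\phi_R\rangle$, to a well-defined rough integral $\clI_R$, giving the identity $\langle\delta\rho_{0t},\phi_R\rangle=\int_0^t\langle b_u\rho_u,\nabla\phi_R\rangle\,\dd u+\clI_R$.

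The main obstacle is to show $\clI_R\to 0$ as $R\to\infty$. By the sewing estimate $|\clI_R|\le|\Xi_{0t}|+Cw(0,t)^{3/\fkp}$ for an appropriate control $w$ bounding $|\delta\Xi_{sut}|$; the germ $\Xi_{0t}$ is $O(R^{-1})$ by the analysis in paragraph two. A direct computation of $\delta\Xi_{sut}$ via Chen's relation and the equation for $\delta\rho$ decomposes it into terms each carrying either an explicit factor $R^{-1}$ from $D^k\phi_R$, or a vanishing factor $\epsilon_R:=\int_0^T\int_{|x|\ge R}|b\rho|/(1+|x|)\,\dd x\,\dd u\to 0$ obtained by exploiting $\supp\nabla\phi_R\subset\{|x|\ge R\}$ together with the growth hypothesis to absorb the $(1+R)$ growth of $w_\mu$ into $\epsilon_R$. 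This yields $w(0,t)=o(1)$ and hence $|\clI_R|=o(1)$ as $R\to\infty$; combining all estimates gives $\langle\delta\rho_{0t},1\rangle=0$, i.e., conservation of mass.
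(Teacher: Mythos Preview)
Your strategy—test against $\phi_R$, let $R\to\infty$, and handle the remainder via sewing—is exactly the paper's approach, and most of your analysis is correct. However, the final paragraph contains a genuine gap: the claim that the sewing control satisfies $w(0,t)=o(1)$ as $R\to\infty$ is not justified, and in fact is false with the tools at hand.

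Unwinding $\delta\Xi_{sut}$ via Chen's relation gives (up to sign) the same expression the paper uses for $\langle\delta\rho^\natural_{sut},\phi_R\rangle$, namely
\[
\delta\Xi_{sut}=-\langle A_{ut}\rho^\sharp_{su},\phi_R\rangle+\langle\bbA_{ut}\delta\rho_{su},\phi_R\rangle,
\qquad \rho^\sharp_{su}:=\delta\rho_{su}+A_{su}\rho_s.
\]
For the sewing lemma you need each term to carry total control-power at least $3/\fkp$. The second term is fine: $\|\bbA^*_{ut}\phi_R\|_{W^{1,\infty}}\lesssim R^{-1}w_{\bZ}^{2/\fkp}$ pairs with $\|\delta\rho_{su}\|_{\cF_{-1,2R}}\lesssim R^{1/2}\tilde w^{1/\fkp}$, giving $O(R^{-1/2})$. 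But the first term is the obstruction. To achieve power $2/\fkp$ on $\rho^\sharp$ you must use $\|\rho^\sharp_{su}\|_{\cF_{-2,2R}}\lesssim R\,\tilde w(s,u)^{2/\fkp}$ from Lemma~\ref{lem:apriori-unbounded}; paired with $\|A^*_{ut}\phi_R\|_{W^{2,\infty}}\lesssim R^{-1}w_{\bZ}^{1/\fkp}$, the $R$-factors cancel \emph{exactly}, leaving an $O(1)$ bound, not $o(1)$. If instead you estimate $\rho^\sharp$ in $\cF_{-1}$ (where the bound is $R$-uniform), you lose the control-power and sewing fails. Your $\epsilon_R$-mechanism only works for terms explicitly containing $\delta\mu$, where one can localize the integral to $\{|x|\ge R\}$; it does not apply to $\rho^\sharp$, which is a genuine distribution.

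The paper resolves this differently: it does \emph{not} claim $\langle\rho^\natural_{st},\varphi^R\rangle\to 0$, only that $|\langle\rho^\natural_{st},\varphi^R\rangle|\le C\,\tilde w(s,t)^{3/\fkp}$ \emph{uniformly in $R$}. Passing $R\to\infty$ in the tested equation (over a general interval $[s,t]$, not just $[0,t]$) then yields $|\langle\delta\rho_{st},1\rangle|\le C\,\tilde w(s,t)^{3/\fkp}$ for all $(s,t)\in\Delta_T$. Since $\fkp/3<1$, this forces $t\mapsto\langle\rho_t,1\rangle$ to be constant. Your argument can be salvaged along exactly these lines, but the conclusion ``$\clI_R\to 0$, hence $\langle\delta\rho_{0t},1\rangle=0$'' does not follow as written.
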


\begin{proof} 
    Consider a function $\varphi\in C^\infty_c$ such that $\varphi(x)=1$ for $|x|\leq 1/2$ and $\varphi(x)=0$ for $|x|\geq 1$ and set $\varphi^R(x):= \varphi(x/R)$, so that $\varphi^R\in \cF_{3,R}$ for all $R$.
    Our aim is to derive uniform-in-$R$ estimates for $\langle \delta \rho_{st},\varphi^R\rangle$, so to pass to the limit as $R\to\infty$.

    Let us define a control by $w_\rho(s,t):= \int_s^t \| (b\rho)_r /(1+|x|) \|_{L^1_x} \dd r$; in the rest of the proof, for notational simplicity, we will allow hidden constants to depend on $w_{\bA}$, $w_\rho$ and $\|\rho\|_{L^\infty_t L^1_x}$ whenever needed.
    By assumption \eqref{eq:hypothesis-conservation-mass} and the definition of $\cF_{l,R}$, for any $R\geq 1$ it holds
    \begin{align*}
        \| \delta \mu_{st}\|_{\cF_{-1,R}} 
        \leq \int_s^t \| \nabla\cdot (b_r \rho_r)\|_{\cF_{-1,R}} \dd r
        \leq \int_s^t \int_{B_R} |(b_r \rho_r)(x)| \dd x\, \dd r
        \lesssim R w_\rho(s,t);
    \end{align*}
    moreover $\| \rho_t\|_{\cF_{-0,R}} \leq \| \rho_t\|_{L^1_x}$ uniformly in $R$, therefore we are in the position to apply Lemma \ref{lem:apriori-unbounded}.
    In particular, upon defining a new control $\tilde w := w_{\bA} + w_{\rho}^{\fkp/3} w_{\bA}^{1-\fkp/3} + w_\rho$, for all $R$ large enough estimates \eqref{eq:apriori-unbounded-eq2}-\eqref{eq:apriori-unbounded-eq3} correspond to
    \begin{equation}\label{eq:mass-conservation-eq1}
        \| \rho^\sharp_{st}\|_{\cF_{-2,R}} \lesssim R \tilde w^{2/\fkp}, \quad
        \| \delta \rho_{st}\|_{\cF_{-1,R}} \lesssim R^{1/2} \tilde w^{1/\fkp}.
    \end{equation}
    In order to estimate $\langle \varphi^R, \rho^\natural_{st}\rangle$, we are going to employ Lemma \ref{lem:sewing}. To this end, first observe that by applying Definition \ref{defn:solution-rough-PDE} and Chen's relation \eqref{eq:chen-unbounded-drivers}, it holds
    \begin{equation}\label{eq:mass-conservation-eq2}
        \delta\rho^\natural_{sut} = A_{ut} \rho^\sharp_{su} + \bbA_{ut} \delta \rho_{su}
    \end{equation}
    for $\rho^\sharp_{su}=\delta\rho_{st}+A_{st}\rho_s$ as in Lemma \ref{lem:apriori-unbounded}.
    Next observe that, by definition \eqref{eq:defn-unbounded-continuity}, the operators $A_{st}^\ast$ and $\bbA_{st}^\ast$ only involve first and second order derivative, thus by scaling it holds
    \begin{equation}\label{eq:mass-conservation-eq3}
        \| A^\ast_{st} \varphi^R\|_{W^{2,\infty}} \lesssim R^{-1} w_{\bA}^{1/\fkp}(s,t), \quad
        \| \bbA^\ast_{st} \varphi^R\|_{W^{1,\infty}} \lesssim R^{-1} w_{\bA}^{2/\fkp}(s,t).
    \end{equation}
    Testing identity \eqref{eq:mass-conservation-eq2} against $\varphi^R$ and using the estimates \eqref{eq:mass-conservation-eq1}-\eqref{eq:mass-conservation-eq3}, we find
    \begin{align*}
        |\langle \delta \rho^\natural_{sut},\varphi^R\rangle|
        \lesssim \| A^\ast_{ut} \varphi^R\|_{W^{2,\infty}} \| \rho^\sharp_{su}\|_{\cF_{-2,R}} + \| \bbA_{ut}^\ast \varphi^R\|_{W^{1,\infty}} \| \delta\rho_{su}\|_{\cF_{-1,R}}
        \lesssim \tilde w(s,t)^{3/\fkp}.
    \end{align*}
    By Lemma \ref{lem:sewing}, we can conclude that for all $R$ large enough it holds
    \begin{equation}\label{eq:mass-conservation-eq4}
        |\langle \rho^\natural_{st}, \varphi^R\rangle| \lesssim \tilde w(s,t)^{3/\fkp} \quad \forall (s,t)\in \Delta_T.
    \end{equation}
    Next we claim that, for $\mu=\nabla\cdot( b\rho)$, under assumption \eqref{eq:hypothesis-conservation-mass} we have
    \begin{equation}\label{eq:mass-conservation-eq5}
        \lim_{R\to\infty} \langle \delta\mu_{st}, \varphi^R\rangle = 0\quad  \forall (s,t)\in \Delta_T.
    \end{equation}
    Indeed, for any fixed $r$ by the definition of $\varphi^R$ it holds
    \begin{align*}
        |\langle b_r\, \rho_r, \nabla\varphi^R\rangle|
        \leq \int_{\R^d} \frac{|(b_r \rho_r)(x)|}{R} \Big|\nabla\varphi\Big(\frac{x}{R}\Big)\Big| \dd x
        \lesssim \| \nabla \varphi\|_{L^\infty} \int_{R/2\leq |x|\leq R} \frac{|(b_r \rho_r)(x)|}{1 + |x|} \dd x;
    \end{align*}
    by assumption, for a.e. $r\in [0,T]$, $b_r\rho_r/(1+|x|)\in L^1_x$ and it is integrated on the domain $B_R\setminus B_{R/2}$ which escapes at $\infty$ as $R\to\infty$; by dominated convergence, it follows that
    \begin{align*}
        \lim_{R\to\infty} \int_0^T |\langle b_r\, \rho_r, \nabla\varphi^R\rangle| \dd r = 0
    \end{align*}
    thus proving the claim \eqref{eq:mass-conservation-eq5}.
    We now have all the ingredients to conclude. Indeed, by Definition \ref{defn:solution-rough-continuity}, testing $\delta\rho$ against $\varphi^R$, we have
    \begin{align*}
        |\langle \delta \rho_{s,t},\varphi^R\rangle|
        \leq |\langle \delta \mu_{s,t},\varphi^R\rangle| + |\langle \rho_s, A^\ast_{st} \varphi^R\rangle| + |\langle \rho_s,\bbA_{st}^\ast \varphi^R\rangle|  + |\langle \rho^\natural_{st}, \varphi^R\rangle|;
    \end{align*}
    applying the assumption \eqref{eq:hypothesis-conservation-mass} and the estimates \eqref{eq:mass-conservation-eq3}, \eqref{eq:mass-conservation-eq4}, \eqref{eq:mass-conservation-eq5}, we find
    \begin{align*}
        |\langle \delta \rho_{s,t},1\rangle|
        = \lim_{R\to\infty} |\langle \delta \rho_{s,t},\varphi^R\rangle|
        \lesssim \tilde w(s,t)^{3/\fkp}.
    \end{align*}
    Namely, $t\mapsto \langle \rho_t,1\rangle$ is of finite $\fkp/3$-variation, with $\fkp/3<1$, thus necessarily constant.
    \end{proof}

\begin{remark}
    In the simplest scenario, condition \eqref{eq:hypothesis-conservation-mass} is satisfied when $\rho\in \cB_b([0,T];L^1_x \cap L^\infty_x)$ and $b$ satisfies the growth assumption \eqref{eq:growth-condition-pde}.
\end{remark}

\subsection{Product formula for Sobolev drifts}\label{subsec:linear-uniqueness}
As in Section \ref{subsec:linear-existence}, in the following we always assume the vector fields $\xi_k$ to be divergence free.
The goal of this section is to show a {\em product formula} between the rough continuity equation
\begin{equation}\label{eq:rc_in_unique}\tag{RCE}
\rmd \rho_t + \nabla \cdot (b_t \rho_t)\, \rmd t  + \sum_k \nabla\cdot (\xi_k \rho_t)\, \rmd \bZ_t^k = 0
\end{equation}
and the rough transport equation
\begin{equation}\label{eq:lt_in_unique}\tag{RTE}
\rmd  f_t + b_t \cdot \nabla  f_t  \rmd t +  \sum_k \xi_k \cdot \nabla f_t\,  \rmd \bZ_t^k = 0.
\end{equation}
To explain what we mean, assume everything to be smooth for the moment; then we have 
\begin{align*}
\rmd   (\rho_t f_t)  & =  f_t \rmd  \rho_t + \rho_t \rmd  f_t \\
&= -  f_t \Big(\nabla \cdot (b_t \rho_t)\rmd t   + \sum_k \nabla\cdot (\xi_k \rho_t) \rmd \bZ_t^k \Big)  - \rho_t \Big(  b_t \nabla \cdot f_t \rmd t   +  \sum_k \xi_k \nabla \cdot f_t  \rmd \bZ_t^k \Big) \\
 & = - \nabla \cdot( \rho_t f_t  b_t ) \rmd t - \sum_k \nabla\cdot (\xi_k \rho_t  f_t   ) \rmd \bZ_t^k.
\end{align*}
In other words,
the product $\rho f$ still solves \eqref{eq:rc_in_unique}.
Integrating the RPDE for $\rho f$ in space, using Lemma \ref{lem:conservation-mass}, we expect to find 
$$
\rmd \bigg( \int_{\R^d} \rho_t(x)f_t(x)\rmd x\bigg) =0 \quad \Rightarrow \quad \langle \rho_t, f_t \rangle = \langle \rho_0, f_0 \rangle \quad \forall\, t\geq 0.
$$

The main goal of this section is to formalize the above heuristics in the rough setting, under the suitable regularity assumptions.

\begin{theorem}[Product formula and duality]\label{thm:product-rule}
    Let $\fkp\in [2,3)$, $\bZ\in \clC^{\fkp}_g$, $\xi\in C^3_b$ with $\nabla\cdot \xi =0$.
    Let $p,q,r\in [1,\infty]$ be parameters such that $1/p+1/q+1/r=1$ and assume that
    \begin{equation}\label{eq:product_formula_assumption}
        b\in L^1_t W^{1,r}_\loc, \quad
        \rho\in \cB_b([0,T];L^p_\loc), \quad f\in \cB_b([0,T];L^q_\loc),
    \end{equation}
    where $\rho$ and $f$ are respectively solutions to \eqref{eq:rc_in_unique} and \eqref{eq:lt_in_unique};
    then the product $\rho f\in \cB_b([0,T];L^{r'}_\loc)$ is a solution to \eqref{eq:rc_in_unique}.
    If additionally
    \begin{equation}\label{eq:duality_formula_assumption}
        \rho f\in \cB_b([0,T]; L^1_x), \quad \frac{b \rho f}{1+|x|}\in L^1_t L^1_x,
    \end{equation}
    then we have the \emph{duality formula}
    \begin{equation}\label{eq:duality formula}
        \langle \rho_t, f_t\rangle=\langle \rho_0,f_0\rangle \qquad \forall\, t\in [0.T].
    \end{equation}
\end{theorem}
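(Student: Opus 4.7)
The strategy is to tensorize the two RPDEs on $\R^{2d}$, collapse back to the diagonal via test functions concentrated near $\{x=y\}$, and handle the drift contribution through a DiPerna--Lions-style commutator estimate. Once the product formula is established, the duality \eqref{eq:duality formula} follows immediately from Lemma~\ref{lem:conservation-mass} applied to $\rho f$: hypothesis \eqref{eq:duality_formula_assumption} provides precisely $\rho f\in\cB_b([0,T];L^1_x)$ and $(b\rho f)/(1+|x|)\in L^1_t L^1_x$, the conditions required for mass conservation.

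For the product formula, I would first invoke the tensorization machinery of Appendix~\ref{app:smoothing} to establish that $U_t(x,y):=\rho_t(x)f_t(y)$ is a weak solution, on each scale $\cE_{l,R}$ of Lemma~\ref{lem:smoothing-examples}, to the tensorized RPDE on $\R^{2d}$
\begin{equation*}
\dd U_t + \bigl[\nabla_x\cdot(b_t(x)U_t) + b_t(y)\cdot\nabla_y U_t\bigr]\,\dd t + \bar\bA(\dd t) U_t = 0,
\end{equation*}
where $\bar A_{st}\Phi := \sum_k\bigl[\xi_k(x)\cdot\nabla_x + \xi_k(y)\cdot\nabla_y\bigr]\Phi\,\delta Z^k_{st}$ and $\bar\bbA$ is the corresponding second-order piece. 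H\"older's inequality with $1/p+1/q+1/r=1$ combined with \eqref{eq:product_formula_assumption} yields $U\in\cB_b([0,T];\cE_{-0,R})$ and the tensorized drift in $L^1_t\cE_{-2,R}$ on each scale.

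Next, for $\varphi\in C^\infty_c(\R^d)$ with $\supp\varphi\subset B_{R/\sqrt{2}}$ and a standard mollifier $\eta^\eps$ on $\R^d$, I would test against the diagonalizing family $\Phi^\eps(x,y):=\varphi(x_+)\eta^\eps(x-y)$ with $x_+:=(x+y)/2$, which lies in $\cE_{l,R}$ for $\eps$ small. Passing to $\eps\to 0$ in the Davie expansion of $U_t$ tested against $\Phi^\eps$ involves four contributions. First, $\langle U_t,\Phi^\eps\rangle\to\langle \rho_t f_t,\varphi\rangle$ since $\rho f\in L^{r'}_\loc$. Second, for the rough driver terms, the smoothness $\xi\in C^3_b$ combined with $\nabla\cdot\xi=0$ allows one to Taylor-expand $\xi_k(x)-\xi_k(y)$ around $x_+$ and apply the identity $\int z\otimes\nabla\eta\,\dd z=-I_d$; the divergence-free assumption cancels the trace that would otherwise survive, giving $\langle U_s,\bar A^\ast_{st}\Phi^\eps\rangle\to\langle \rho_s f_s, A^\ast_{st}\varphi\rangle$ and analogously for $\bar\bbA^\ast$. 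Third, the drift term, after integration by parts, decomposes as
\begin{equation*}
-\int_{\R^{2d}} U\Bigl\{\tfrac12\bigl[b(x)+b(y)\bigr]\cdot\nabla\varphi(x_+)\eta^\eps(x-y) + \bigl[b(x)-b(y)\bigr]\cdot\nabla\eta^\eps(x-y)\,\varphi(x_+) + (\nabla\cdot b)(y)\,\varphi(x_+)\eta^\eps(x-y)\Bigr\}\,\dd x\,\dd y,
\end{equation*}
whose first piece tends to $-\langle b\rho f,\nabla\varphi\rangle$, while the DiPerna--Lions commutator lemma (applicable since $b\in W^{1,r}_\loc$ and $\rho f\in L^{r'}_\loc$) shows the second tends to $-\int(\nabla\cdot b)\rho f\varphi\,\dd x$, exactly cancelling the third. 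Finally, $\langle U^\natural_{st},\Phi^\eps\rangle$ admits a limit of finite $\fkp/3$-variation in $\cF_{-3,R}$ via Lemma~\ref{lem:apriori-unbounded} applied to $U$. Identifying the four limits recovers the Davie expansion \eqref{eq:solution-rough-continuity} for $\rho f$ tested against arbitrary $\varphi\in C^\infty_c$, hence $\rho f$ solves \eqref{eq:rc_in_unique} in the sense of Definition~\ref{defn:solution-rough-continuity}.

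The main obstacle will be the drift commutator: the rough-driver a~priori estimates of Lemma~\ref{lem:apriori-unbounded} demand control of quantities involving $\|\Phi^\eps\|_{\cE_{l,R}}$ with the natural $\eps^{-l}$ blow-up in higher norms, whereas the DiPerna--Lions commutator only vanishes in an averaged $L^1_t$-sense. Reconciling these scales by exploiting the H\"older conjugacy $1/p+1/q+1/r=1$---which guarantees that $\rho f\in L^{r'}_\loc$ has enough regularity for the commutator to converge---while simultaneously keeping the remainder $U^\natural$ under uniform control, and tracking the exact cancellation between the $[b(x)-b(y)]\cdot\nabla\eta^\eps$ and $(\nabla\cdot b)(y)\varphi\eta^\eps$ contributions, is the technical heart of the proof.
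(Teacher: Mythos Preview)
Your overall strategy---tensorize to $\R^{2d}$, test against diagonal-concentrating functions $\Phi^\eps$, and pass to the limit term-by-term---is exactly the paper's approach, and your identification of the duality formula as an immediate consequence of Lemma~\ref{lem:conservation-mass} is correct. Your treatment of the individual drift and driver limits is also on the right track: the paper's Propositions~\ref{proposition:renormalizable drift} and~\ref{proposition:drift converges} carry out precisely the DiPerna--Lions commutator computation you sketch, and Propositions~\ref{proposition:renormalizable noise} and~\ref{proposition:noise converges} handle the rough driver via the Taylor expansion you indicate.

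There is, however, a genuine gap in your control of the remainder. You write that ``$\langle U^\natural_{st},\Phi^\eps\rangle$ admits a limit of finite $\fkp/3$-variation in $\cF_{-3,R}$ via Lemma~\ref{lem:apriori-unbounded} applied to $U$,'' but Lemma~\ref{lem:apriori-unbounded} applied to the \emph{original} tensorized equation only yields $\|U^\natural_{st}\|_{\cE_{-3,R}}\lesssim w_\ast(s,t)^{3/\fkp}$, from which one gets $|\langle U^\natural_{st},\Phi^\eps\rangle|\lesssim w_\ast(s,t)^{3/\fkp}\|\Phi^\eps\|_{\cE_{3,R}}\sim w_\ast(s,t)^{3/\fkp}\eps^{-3}$. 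This blows up as $\eps\to 0$, so the limit need not have finite $\fkp/3$-variation. You recognise the $\eps^{-l}$ blow-up in your final paragraph but do not supply the mechanism that tames it.

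The paper's resolution (following \cite{BaiGub2017,DGHT2019}) is to reformulate the problem before applying the a~priori estimates: one applies the adjoint blow-up $T_\eps^\ast$ to the tensorized equation itself, obtaining a new RPDE for $T_\eps^\ast(\rho\otimes f)$ with $\eps$-dependent driver $\bX^\eps=(T_\eps^\ast X T_\eps^{\ast,-1},\,T_\eps^\ast\bbX T_\eps^{\ast,-1})$ and drift $M^\eps=T_\eps^\ast M$. The decisive input is that $\bX^\eps$ and $M^\eps$ satisfy \emph{uniform-in-$\eps$} operator-norm and $C^{1-\var}\cE_{-1,R}$ bounds respectively (Propositions~\ref{proposition:renormalizable noise} and~\ref{proposition:renormalizable drift}); this is nontrivial and uses exactly the $\xi\in C^3_b$ and $b\in L^1_t W^{1,r}_\loc$ hypotheses. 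With those uniform bounds in hand, Lemma~\ref{lem:apriori-unbounded} applied to the \emph{transformed} equation gives $\sup_\eps\|T_\eps^\ast(\rho\otimes f)^\natural_{st}\|_{\cE_{-3,R}}\leq w(s,t)^{3/\fkp}$, and testing against the \emph{fixed} function $\Psi(x,y)=\phi(x_+)\chi(x_-)$ yields the uniform control you need on $\langle(\rho\otimes f)^\natural_{st},T_\eps\Psi\rangle=\langle(\rho\otimes f)^\natural_{st},\Phi^\eps\rangle$. In short: the missing idea is to push $T_\eps^\ast$ through the equation and verify the renormalizability bounds on the transformed driver and drift, rather than trying to apply the a~priori lemma to the untransformed equation.
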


As in \cite{BaiGub2017}, in order to prove Theorem \ref{thm:product-rule}, we employ a doubling of variables procedure; we start by deriving the RPDE satisfied by the tensor product $(\rho \otimes f)(x,y) : = \rho(x) f(y)$.
More generally, given $\mu,\nu\in \mathcal{D}'(\R^d)$, we denote by $\mu\otimes \nu$ their distributional tensor, which is again a distribution in doubled variables, namely it belongs to $\cD'(\R^{2d})$.
We will adopt the same tensor-notation for operators: if $B_i\in \cL(E_i,F_i)$ for $i=1,2$, then we denote by $B_1 \otimes B_2$ the element of $\cL(E_1\otimes E_2;F_1\otimes F_2)$ which is the unique linear extension of the mapping 
\begin{align*}
    (B_1 \otimes B_2) (e_1 \otimes e_2) := (B_1 e_1) \otimes (B_2 e_2)\quad \forall\, e_1\in E_1,\, e_2\in E_2.
\end{align*}
Given any Banach space, we denote by $I$ the identity operator on it.

With this preparation, we can present a tensorization statement, which is the analogue of \cite[Proposition 5.1]{hocquet2018energy}.
Here, $\mathcal{E}_{l,R}=\cE_{l,R}(\R^{2d})$ denote the spaces introduced in Lemma \ref{lem:smoothing-examples}; by Remark \ref{rem:smoothing-examples}, they are subspaces of $\cF_{l,\sqrt 2 R}(\R^{2d})$.

\begin{proposition}\label{prop:tensorization}
Let $\rho$, $f$ be respectively solutions to \eqref{eq:rc_in_unique} and \eqref{eq:lt_in_unique}.
Let $p,q,r\in [1,\infty]$ be parameters such that $1/p+1/q+1/r=1$ and assume that
\begin{align*}
    \rho \in \cB_b([0,T];L^p_\loc), \quad
    f \in \cB_b([0,T]; L^q_\loc), \quad
    b \in L^1_t L^r_\loc, \quad \nabla\cdot b \in L^1_t L^r_\loc.
\end{align*}
Then $\rho\otimes f\in \cB_b([0,T];L^1_\loc(\R^{2d}))$ is an unbounded rough driver solution of 
\begin{equation} \label{eq:tensored equation}
    \rmd (\rho_t \otimes f_t) + \big( \nabla \cdot ( b_t \rho_t) \otimes f_t + \rho_t \otimes (b_t \cdot \nabla f_t) \big) \rmd t + \big( (\xi_k \cdot \nabla  \rho_t) \otimes f_t + \rho_t \otimes (\xi_k \cdot \nabla f_t) \big) \rmd \bZ^k_t = 0
\end{equation}
in the scale of spaces $\clE_{l,R}$ defined by \eqref{eq:E_k,R}, for every $R\geq 1$. That is to say, we have
\begin{equation*}
    \rmd (\rho \otimes f)_t  + M(\rmd t) + \mathbf{X}(\rmd t) (\rho \otimes f)_t = 0
\end{equation*}
for the unbounded rough driver $\bX$ given by the \emph{second quantization} of $\bA$, namely
\begin{equation} \label{eq:second quantization}
    \bX_{st} = \bX(\bA)_{st} = (X_{st},\mathbb{X}_{st})
    := (A_{st} \otimes I + I \otimes A_{st},  \mathbb{A}_{st} \otimes I + I \otimes \mathbb{A}_{st} + A_{st} \otimes A_{st} ),
\end{equation}
and the forcing $M$ given by
\begin{equation} \label{eq:second quantization drift}
    M_t := \int_0^t \big( [\nabla \cdot ( b_u \rho_u)] \otimes f_u + \rho_u \otimes (b_u \cdot \nabla f_u) \big) \rmd u.
\end{equation}
Moreover $M$ belongs to $C^{1-\textrm{var}} \clE_{-1,R}$ for every $R \geq 1$.
\end{proposition}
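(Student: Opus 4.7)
The plan is to follow the doubling-of-variables approach of \cite{BaiGub2017,hocquet2018energy}. First I would verify that $\bX$ as defined in \eqref{eq:second quantization} is indeed an unbounded rough driver on the scales $\clE_{l,R}$, and that the forcing $M$ lies in $C^{1-\var}\clE_{-1,R}$. Then I would establish the expansion \eqref{eq:defn-solution-rough-PDE} on test functions of tensor-product form $\Phi=\varphi\otimes\psi$ with $\varphi,\psi\in\cF_{3,R}$ by combining the one-variable expansions for $\rho$ and $f$, and finally extend to all $\Phi\in\clE_{3,R}$ by density.

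\textbf{Driver and forcing.} The operators $X_{st}$ and $\mathbb{X}_{st}$ act by sums and compositions of the differential operators $\xi_k(x)\cdot\nabla_x$, $\xi_k(y)\cdot\nabla_y$, which are local and respect the support condition $\zeta_R<1$; their norms $\clE_{l,R}\to\clE_{l-j,R}$ are controlled by $\|\xi\|_{C^3_b}$ uniformly in $R$, exactly as in Lemma \ref{lem:URD_continuity}. Chen's relation for $\bX$ is algebraic: writing $A_{st}=A_{su}+A_{ut}$ one finds $\delta(A\otimes A)_{sut}=A_{su}\otimes A_{ut}+A_{ut}\otimes A_{su}$, which together with $\delta\mathbb{A}_{sut}=A_{ut}A_{su}$ gives $\delta\mathbb{X}_{sut}=X_{ut}X_{su}$. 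For $M$, the Hölder condition $1/p+1/q+1/r=1$ and Fubini yield
\begin{align*}
    |\langle [\nabla\cdot(b_u\rho_u)]\otimes f_u,\Phi\rangle|
    \lesssim \|b_u\rho_u\|_{L^1(B_{\sqrt 2 R})}\|f_u\|_{L^q(B_{\sqrt 2 R})}\|\Phi\|_{\clE_{1,R}}
\end{align*}
for $\Phi\in\clE_{1,R}$, and an analogous bound after integrating by parts in $y$ handles the $\rho_u\otimes(b_u\cdot\nabla f_u)$ piece using $\nabla\cdot b\in L^1_t L^r_\loc$; integrating in $u$ gives $M\in C^{1-\var}\clE_{-1,R}$.

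\textbf{Expansion on tensor products.} Writing
\begin{align*}
    \delta(\rho\otimes f)_{st}=\delta\rho_{st}\otimes f_s+\rho_s\otimes \delta f_{st}+\delta\rho_{st}\otimes \delta f_{st},
\end{align*}
and substituting the one-variable expansions for $\delta\rho$ and $\delta f$, the terms group as follows. Drifts: $-\delta\mu^\rho_{st}\otimes f_s-\rho_s\otimes\delta\mu^f_{st}$, which differ from $-\delta M_{st}$ by remainders $\int_s^t \nabla\cdot(b_u\rho_u)\otimes\delta f_{su}\dd u$ and $\int_s^t\delta\rho_{su}\otimes(b_u\cdot\nabla f_u)\dd u$. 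First-order rough terms: $-A_{st}\rho_s\otimes f_s-\rho_s\otimes A_{st}f_s=-X_{st}(\rho_s\otimes f_s)$. Second-order rough terms: $\mathbb{A}_{st}\rho_s\otimes f_s+\rho_s\otimes\mathbb{A}_{st}f_s+A_{st}\rho_s\otimes A_{st}f_s=\mathbb{X}_{st}(\rho_s\otimes f_s)$, where the crucial cross term is exactly the missing $A_{st}\otimes A_{st}$ piece needed to match the second quantization, and arises naturally from the product $\delta\rho_{st}\otimes\delta f_{st}$. Everything else goes into the remainder $(\rho\otimes f)^\natural_{st}$.

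\textbf{Remainder estimate and density.} The residual $(\rho\otimes f)^\natural$ is a finite sum of bilinear terms in which at least one factor has variation strictly greater than $2/\fkp$ (for example $A_{st}\rho_s\otimes\mathbb{A}_{st}f_s$, $\rho^\natural\otimes f_s$, $A_{st}\rho_s\otimes f^\natural$, and the two drift remainders above). Applying Lemma \ref{lem:apriori-unbounded} separately to $\rho$ on $\cF_{l,R}$ and to $f$ on $\cF_{l,R}$ gives the $\fkp/3$-variation of $\rho^\natural,f^\natural$, the $\fkp/2$-variation of $\rho^\sharp,f^\sharp$, and the $1$-variation of $\mu^\rho,\mu^f$; standard product bounds then yield $|(\rho\otimes f)^\natural_{st}[\varphi\otimes\psi]|\lesssim w(s,t)^{3/\fkp}\|\varphi\|_{\cF_{3,R}}\|\psi\|_{\cF_{3,R}}$ for some control $w$. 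By bilinearity this estimate extends to finite tensor sums, which are dense in $\clE_{3,R}$ (via e.g.\ mollification of compactly supported smooth functions on $\R^{2d}$); the uniform bound allows passing to the limit in every term of the expansion, giving $(\rho\otimes f)^\natural\in C^{\fkp/3-\var}_2\clE_{-3,R}$ and the identity \eqref{eq:defn-solution-rough-PDE} for all $\Phi\in\clE_{3,R}$. The main obstacle is the careful bookkeeping of all residual cross-terms and the verification that each is controlled by $w(s,t)^{3/\fkp}$ uniformly in $R$; the regularity $\xi\in C^3_b$ (rather than $C^2_b$) is what makes the $\mathbb{A}_{st}\otimes I$, $I\otimes\mathbb{A}_{st}$ bounds available on $\cE_{-1,R}$, and recognizing $A_{st}\rho_s\otimes A_{st}f_s$ as part of $\mathbb{X}_{st}$ rather than as a remainder is the conceptual core of the proof.
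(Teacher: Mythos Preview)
Your overall strategy (expand $\delta(\rho\otimes f)_{st}$, identify the $X_{st}$ and $\mathbb X_{st}$ pieces, and put everything else into a remainder with $\fkp/3$-variation) matches the paper's proof, and your identification of the cross term $A_{st}\rho_s\otimes A_{st}f_s$ as part of $\mathbb X_{st}$ is exactly right. The algebraic bookkeeping is essentially the same as in the paper, which packages the remainder slightly differently as
\[
(\rho\otimes f)^\natural_{st}=-\!\int_s^t\!\dot\mu_u\otimes\delta f_{su}\,\dd u-\!\int_s^t\!\delta\rho_{su}\otimes\dot\nu_u\,\dd u+\rho^\natural_{st}\otimes f_s+\rho_s\otimes f^\natural_{st}+\rho^\sharp_{st}\otimes\delta f_{st}-A_{st}\rho_s\otimes f^\sharp_{st}.
\]

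There is, however, a genuine gap in your final step. The density argument does not work: finite tensor sums are \emph{not} dense in $\cE_{3,R}$ for the $W^{3,\infty}$ norm (indeed, smooth functions are not dense in $W^{3,\infty}$), and moreover your bilinear estimate $|(\rho\otimes f)^\natural_{st}[\varphi\otimes\psi]|\lesssim w^{3/\fkp}\|\varphi\|_{\cF_{3,R}}\|\psi\|_{\cF_{3,R}}$ controls only the projective tensor norm of $\varphi\otimes\psi$, which is not comparable to $\|\varphi\otimes\psi\|_{\cE_{3,R}}$. Even more basically, products $\varphi\otimes\psi$ with $\varphi,\psi\in\cF_{3,R}$ need not satisfy the support condition $|x_-|<1$ defining $\cE_{l,R}$, so they do not generally lie in $\cE_{3,R}$.

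The paper circumvents this entirely by proving a direct tensor-product bound (Lemma~\ref{lem:tensor-product}): the map $(g_1,g_2)\mapsto g_1\otimes g_2$ is bounded from $\cF_{-l,R+1}\times\cF_{-j,R+1}$ into $\cE_{-(l+j),R}$. The proof tests $g_1\otimes g_2$ against an arbitrary $\Phi\in\cE_{l+j,R}$ by freezing one variable and showing that $x\mapsto\langle g_2,\Phi(x,\cdot)\rangle$ belongs to $\cF_{l,R+1}$. With this lemma, each of the six remainder terms above is directly seen to lie in $C^{\fkp/3-\var}_2\cE_{-3,R}$ (e.g.\ $\rho^\sharp_{st}\otimes\delta f_{st}\in\cF_{-2,R+1}\otimes\cF_{-1,R+1}\hookrightarrow\cE_{-3,R}$ with variations $\fkp/2$ and $\fkp$), and no density is needed. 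This lemma is the missing ingredient in your argument.
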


Results in the style of Proposition \ref{prop:tensorization} appear in several works across the unbounded rough drivers literature,
albeit on different scales of spaces from the one we are using in the present paper. Thus, we include a self-contained proof which relies on the following lemma, whose proof is postponed to Appendix \ref{app:smoothing}.

\begin{lemma}\label{lem:tensor-product}
    Let $R\geq 1$, $l$, $j\in\N$. Then the distributional tensor product $(f,g)\mapsto f\otimes g$ is a bounded bilinear map from $\cF_{-l,R+1}\times \cF_{-j,R+1}$ to $\clE_{-l-j,R}$.
\end{lemma}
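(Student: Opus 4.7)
The plan is to define the tensor product via iterated pairing and reduce the lemma to a slice-regularity statement. Explicitly, for $\Phi \in \cE_{l+j,R}$, set
\begin{equation*}
\langle f \otimes g, \Phi \rangle := \langle f, F_\Phi \rangle, \qquad F_\Phi(x) := \langle g, \Phi(x,\cdot) \rangle,
\end{equation*}
so the entire task reduces to proving that $F_\Phi \in \cF_{l,R+1}(\R^d)$ with $\|F_\Phi\|_{\cF_{l,R+1}} \lesssim \|g\|_{\cF_{-j,R+1}} \|\Phi\|_{\cE_{l+j,R}}$; pairing with $f$ will then complete the bound. Consistency with the standard distributional tensor on regular inputs follows from Fubini, so this iterated definition is the natural one.

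The first step is to extract the support geometry of $\Phi$. Writing $\zeta_R(x,y) = |x_+|^2/R^2 + |x_-|^2$ with $x_\pm = (x \pm y)/2$, the defining condition of $\cE_{l+j,R}$ forces $|x_+| < R$ and $|x_-| < 1$ wherever $\Phi(x,y) \neq 0$. By the triangle inequality $|x|, |y| \leq |x_+| + |x_-|$, we obtain $\supp \Phi \subset B_{R+1}(\R^d) \times B_{R+1}(\R^d)$. In particular, for each fixed $x \in \R^d$, the slice $y \mapsto \Phi(x,y)$ lies in $\cF_{j,R+1}(\R^d)$, and it vanishes identically once $|x| \geq R+1$, giving $\supp F_\Phi \subset B_{R+1}$.

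Regularity of $F_\Phi$ follows by commuting the pairing $\langle g,\cdot\rangle$ with classical differentiation in $x$. For any multi-index $\alpha$ with $|\alpha| \leq l$, the partial derivative $y \mapsto \partial_x^\alpha \Phi(x,y)$ again belongs to $\cF_{j,R+1}$ with $W^{j,\infty}$-norm uniformly bounded by $\|\Phi\|_{\cE_{l+j,R}}$. One inductively verifies, using that difference quotients in $x$ converge in $\cF_{j,R+1}$ (by dominated convergence, thanks to the uniform bound on one further derivative), that $\partial^\alpha F_\Phi(x) = \langle g, \partial_x^\alpha \Phi(x,\cdot)\rangle$ for all $|\alpha| \leq l$. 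The duality estimate then gives $\|\partial^\alpha F_\Phi\|_{L^\infty} \leq \|g\|_{\cF_{-j,R+1}} \|\Phi\|_{\cE_{l+j,R}}$, so $F_\Phi \in \cF_{l,R+1}$ with the required norm bound, and the final pairing against $f$ yields
\begin{equation*}
|\langle f \otimes g, \Phi\rangle| \leq \|f\|_{\cF_{-l,R+1}} \|F_\Phi\|_{\cF_{l,R+1}} \lesssim \|f\|_{\cF_{-l,R+1}} \|g\|_{\cF_{-j,R+1}} \|\Phi\|_{\cE_{l+j,R}}.
\end{equation*}

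I do not anticipate a genuine obstacle: the only mildly technical point is the differentiation-under-pairing step, which is routine given the uniform control on higher derivatives of $\Phi$ inherited from $W^{l+j,\infty}$. Everything else is bookkeeping with supports, where the $+1$ enlargement of the radius (from $R$ to $R+1$) matches exactly the slack provided by the $|x_-| < 1$ part of the ellipsoidal support condition defining $\cE_{l+j,R}$.
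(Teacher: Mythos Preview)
Your approach is essentially identical to the paper's: define $F_\Phi(x) = \langle g, \Phi(x,\cdot)\rangle$, extract the support inclusion $\supp\Phi \subset B_{R+1}\times B_{R+1}$ from the ellipsoidal condition, and then show $F_\Phi \in \cF_{l,R+1}$ with the right bound.

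There is one small overreach. You claim $\partial^\alpha F_\Phi(x) = \langle g, \partial_x^\alpha \Phi(x,\cdot)\rangle$ for all $|\alpha|\leq l$ via difference quotients, citing ``one further derivative''. But at $|\alpha|=l$ that further derivative is no longer available: with $\Phi\in W^{l+j,\infty}$ and $|\alpha'|=l-1$, the map $x\mapsto \partial_x^{\alpha'}\Phi(x,\cdot)$ is merely Lipschitz into $W^{j,\infty}$, not $C^1$, so the difference quotient need not converge in $\cF_{j,R+1}$ (and ``dominated convergence'' does not help for an $L^\infty$-type norm). The paper handles this by running the difference-quotient argument only up to $|\alpha|\leq l-1$, and then observing that the resulting $(l-1)$-th order derivatives are Lipschitz in $x$ (precisely because $\partial_x^{\alpha'}\Phi\in\cE_{j+1,R}$), which by the $W^{1,\infty}$ characterisation gives $F_\Phi\in W^{l,\infty}$. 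With this adjustment your argument goes through.
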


\begin{proof}[Proof of Proposition \ref{prop:tensorization}]
    Let $R\geq 1$ fixed. As in Lemma \ref{lem:apriori-unbounded}, we introduce the notations
    \begin{align*}
        \mu_t := \int_0^t \nabla \cdot (\rho_u b_u) \rmd u, \quad
        \rho^\sharp_{st}:= \delta\rho_{st}+A_{st}\rho_s,\quad
        \nu_t := \int_0^t b_u  \cdot \nabla f_u \rmd u, \quad
        f^\sharp_{st}:= \delta f_{st}+A_{st}f_s.
    \end{align*}
    so that 
    \begin{align*}
        \delta \rho_{st} & = - \delta \mu_{st}  - A_{st} \rho_s + \mathbb{A}_{st} \rho_s + \rho_{st}^{\natural}, \qquad
        \rho^\sharp_{st} = - \delta \mu_{st}  + \mathbb{A}_{st} \rho_s + \rho_{st}^{\natural}\\
        \delta f_{st} & = - \delta \nu_{st} - A_{st} f_s + \mathbb{A}_{st} f_s + f_{st}^{\natural}, \qquad
        f^\sharp_{st} = - \delta \mu_{st}  + \mathbb{A}_{st} f_s + f_{st}^{\natural}.
    \end{align*}
    Note that by assumption, $\rho,f\in \cB_b([0,T];\cF_{-0,R+1})$; moreover arguing as in \eqref{eq:existence-linear-preliminary}-\eqref{eq:transport-preliminary}, it holds $\mu,\nu \in C^{1-\var} \cF_{-1,R+1}$.
    Now we write
    \begin{align}
        \delta (\rho \otimes f)_{st}
        & = \delta \rho_{st} \otimes  f_{s}  + \rho_{s} \otimes \delta f_{st}  + \delta \rho_{st} \otimes \delta f_{st}  \notag \\
        & = -\delta \mu_{st} \otimes f_s  - A_{st} \rho_s \otimes f_s + \mathbb{A}_{st} \rho_s \otimes f_s + \rho_{st}^{\natural} \otimes f_s \notag \\
        &\quad  -\rho_s \otimes \delta \nu_{st}  - \rho_s \otimes A_{st} f_s  + \rho_s \otimes \mathbb{A}_{st} f_s + \rho_s \otimes f_{st}^{\natural} \notag  \\
        &\quad + \rho_{st}^{\sharp} \otimes \delta f_{st} + A_{st} \rho_s \otimes A_{st} f_s - A_{st} \rho_s \otimes f_{st}^{\sharp} \notag
    \end{align}
    which by the definition of $\bX$ yields
    \begin{equation}\label{eq:tensored equation expansion}
        \delta (\rho \otimes f)_{st}
        = -\int_s^t \left( \dot{\mu}_u \otimes f_u + \rho_u \otimes \dot{\nu}_u \right) \rmd u  - X_{st} (\rho_s \otimes f_s) + \mathbb{X}_{st} (\rho_s \otimes f_s) + (\rho \otimes f)_{st}^{\natural},
    \end{equation}
    where we define the remainder 
    \begin{align*}
        (\rho \otimes f)_{st}^{\natural}
        & := - \int_s^t \dot{\mu}_u \otimes \delta f_{su} \rmd u - \int_s^t \delta \rho_{su} \otimes \dot{\nu}_u \rmd u 
        + \rho_{st}^{\natural} \otimes f_s + \rho_s \otimes f_{st}^{\natural} + \rho_{st}^{\sharp} \otimes \delta f_{st} - A_{st} \rho_s \otimes f_{st}^{\sharp} \\
        &\, \in \big(\cF_{-1,R} \otimes \cF_{-1,R} \big) + \big(\cF_{-1,R} \otimes \cF_{-1,R} \big) 
        + \big(\cF_{-3,R} \otimes \cF_{-0,R} \big)\\
        & \qquad + \big( \cF_{-0,R} \otimes \cF_{-3,R} \big) + \big( \cF_{-2,R} \otimes \cF_{-1,R} \big) + \big( \cF_{-1,R} \otimes \cF_{-2,R} \big)
    \end{align*}
    Using the assumptions and Lemmas \ref{lem:apriori-unbounded} and \ref{lem:tensor-product}, we get that each term above has finite $\fkp/3$\mbox{-}variation in $\mathcal{E}_{-3,R}$.
    
    In order to conclude, it remains to show that $M$ defined by \eqref{eq:second quantization drift} belongs to $C^{1-\textrm{var}}\clE_{-1,R}$. Similarly to \eqref{eq:existence-linear-preliminary}-\eqref{eq:transport-preliminary}, we have
    \begin{align*}
        \| \nabla\cdot (b_t \rho_t)\|_{\cF_{-1,R+1}}
        &\leq \| b_t \rho_t\|_{L^1(B_{R+1})}
        \lesssim_R \| b_t\|_{L^r(B_{R+1})} \| \rho_t\|_{L^p(B_{R+1})},\\
        \| b_t\cdot\nabla f_t\|_{\cF_{-1,R+1}}
        & \leq \| b_t f_t\|_{L^1(B_{R+1})} + \| \nabla\cdot b_t f_t\|_{L^1(B_{R+1})}\\
        & \lesssim_R \big(\| b_t\|_{L^r(B_{R+1})} + \| \nabla \cdot b_t\|_{L^r(B_{R+1})}\big) \| f_t\|_{L^q(B_{R+1})}.
    \end{align*}
    Combined with Lemma \ref{lem:tensor-product} and the assumptions, this yields the estimate
    \begin{align*}
        \| M\|_{C^{1-\var} \cE_{-1,R}}
        & = \int_0^T \big\| [\nabla \cdot ( b_t \rho_t)] \otimes f_t + \rho_t \otimes (b_t \cdot \nabla f_t) \big\|_{\cE_{-1,R}} \rmd t\\
        & \lesssim_R \big(\| b\|_{L^1_t L^r(B_{R+1})} + \| \nabla \cdot b\|_{L^1_t L^r(B_{R+1})}\big) \| \rho\|_{\cB_b([0,T];L^p(B_{R+1}))} \| f \|_{\cB_b([0,T];L^q(B_{R+1}))}
    \end{align*}
    and thus the conclusion.
 \end{proof}

The next step is to derive an equation for the product $\rho_t(x) f_t(x)$, which formally amounts to testing \eqref{eq:tensored equation} against the distribution $\delta_{x=y}$.
To achieve this, we follow \cite{BaiGub2017} by employing a so-called \emph{blow-up transformation}: for $\eps\in (0,1)$ and $\Psi\in L^\infty(\R^{2d})$, define 
$$
T_{\eps} \Psi(x,y) : = \eps^{-d}\, \Psi\Big( x_+ + \frac{x_-}{\eps}, x_+ - \frac{x_-}{\eps}\Big),
$$
where as before $x_{\pm} = (x \pm y)/2$.
It's easy to see that $T_{\eps}$ is a linear isomorphism from $W^{l,\infty}(\R^{2d})$ to itself, for every $l\in\N$.

Recall the function $\zeta_R(x,y)=|x_+|^2/R^2+|x_-|^2$ from Lemma \ref{lem:smoothing-examples}; note that
\begin{align*}
    \zeta_R\Big( x_+ + \frac{x_-}{\eps}, x_+ - \frac{x_-}{\eps}\Big)
    = \frac{|x_+|^2}{R^2} + \frac{|x_-|^2}{\eps^2}
    \geq \zeta_R(x,y)\quad \forall\, \eps\in (0,1),
\end{align*}
so that $T_{\eps}$ leaves the spaces $\clE_{l,R}$ invariant.
$\tilde \cE_{l,\eps,R}:=T_\eps(\cE_{l,R})$ in general is strictly smaller that $\cE_{l,R}$, and can be similarly characterized as
\begin{align*}
        \tilde{\cE}_{l,\eps,R}= \left\{ \Phi \in W^{l,\infty}(\R^{2d}) \, : \frac{|x_+|^2}{R^2} + \frac{|x_-|^2}{\eps^2} \geq 1 \,\,\Rightarrow \,\,\Phi(x,y) = 0 \right\};
\end{align*}
given the above formula, it's immediate to check that $\tilde \cE_{l,\eps,R}$ is a closed subspace of $W^{l,\infty}(\R^{2d})$ (thus Banach with induced norm) and that local operators (like differential ones) map $\tilde{\cE}_{l,\eps,R}$ into $\tilde{\cE}_{k,\eps,R}$ for suitable $l$ and $k$, since they respect properties of supports.
Moreover, $T_\eps$ is a bijection from $\clE_{l,R}$ to $\tilde{\cE}_{l,\eps,R}$, thus admits an inverse $T^{-1}_\eps\in \cL(\cE_{l,R},\tilde{\cE}_{l,\eps,R})$.

The reason for introducing $T_\eps$, as we will see more in detail later (see eq. \eqref{eq:approx_dirac_delta}), is that at least formally
\begin{align*}
    \rho(x)f(x)
    = \langle \rho\otimes f, \delta_{x=y}\rangle
    = \lim_{\eps\to 0} \langle \rho\otimes f, T_\eps \Psi \rangle
    =  \lim_{\eps\to 0} \langle T_\eps^\ast (\rho\otimes f), \Psi \rangle
\end{align*}
for some appropriately chosen $\Psi$. In the last passage, we denoted by $T_{\eps}^{\ast}$ the dual operator of $T_\eps$, which belongs to $\cL(\cE_{-l,R},\cE_{-l,R})$ for every $l$ and $R\geq 1$. 
W.r.t. the $L^2(\bbR^{2d})$-pairing, whenever the argument is regular enough, $T_{\eps}^{\ast}$ admits the pointwise representation
$$T_{\eps}^*\Psi(x,y) = \Psi( x_+ + \eps x_-,x_+ -  \eps x_- ).$$
Consequently, by applying $T^*_{\eps}$ to \eqref{eq:tensored equation expansion}, our candidate approximation $T_{\eps}^* (\rho_t \otimes f_t)$ of the product $\rho_t(x) f_t(x)$ satisfies
\begin{equation} \label{eq:approx product}
    \rmd T_{\eps}^* (\rho_t \otimes f_t) + \rmd M_t^{\eps}  +  \mathbf{X}^{\eps}(\rmd t) T_{\eps}^{*}(\rho_t \otimes f_t)  = 0
\end{equation}
on the scales $(\mathcal{E}_{l,R})_l$, where we set
\begin{align}
    \bX^{\eps} & := (T_{\eps}^* X T_{\eps}^{*,-1}, T_{\eps}^* \mathbb{X} T_{\eps}^{*,-1}) := ( (T_\eps^{-1} X T_\eps)^\ast, (T_\eps^{-1} \bbX T_\eps)^\ast)\label{eq:approx of product URD}\\
    M_t^{\eps} & := T^\ast_\eps M_t
    = \int_0^t T_{\eps}^* \left(\nabla \cdot  (\rho_r b_r) \otimes f_r + \rho_r \otimes  (b_r \cdot \nabla f_r) \right) \rmd r \label{eq:approx of product drift}
\end{align}
for $(X,\mathbb{X})$, $M$ as defined in \eqref{eq:second quantization}-\eqref{eq:second quantization drift}.

Since the operators $X_{st}$ (resp. $\bbX_{st}$) consist of sums and compositions of differential operators, it's easy to check that they map $\tilde{\cE}_{l,\eps,R}$ into $\tilde{\cE}_{k,\eps,R}$ for suitable $l$ and $k$; consequently, $T_\eps^{-1} X_{st} T_\eps$ (resp. $T_\eps^{-1} \bbX_{st} T_\eps$) map $\cE_{l,R}$ into $\cE_{k,R}$.
Since $(\cE_{l,R})_l$ is a scale of spaces, arguing as in Lemma \ref{lem:URD_continuity}, we get the following.

\begin{lemma}
    Let $\xi\in C^2_b$, $\nabla\cdot\xi_k=0$, $\bZ\in \clC^{\fkp}_g$ for some $\fkp\in [2,3)$; let $\eps\in (0,1)$ and $\bX^{\eps}$ be defined by \eqref{eq:approx of product URD}.
    Then $\bX^\eps$ is an unbounded rough driver on the scales of spaces $\cE_{l,R}$ defined by \eqref{eq:E_k,R} and it holds
    \begin{equation*}
        w_{\bX^\eps}(s,t)\lesssim \| \xi\|_{C^2_b}^\fkp\, w_{\bZ}(s,t)
    \end{equation*}
    where the hidden constant is independent of $R$, but may depend on $\eps$.
\end{lemma}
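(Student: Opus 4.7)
My approach is to split the proof into two stages: first, verify that the second quantization $\bX=\bX(\bA)$ defined in \eqref{eq:second quantization} is itself an unbounded rough driver on the scale $(\cE_{l,R})_{0\le l\le 3}$ with a control bounded by $\|\xi\|_{C^2_b}^{\fkp}\,w_{\bZ}$ uniformly in $R$; second, transfer this property through the conjugation by $T_\eps^{*}$ to obtain the same structure for $\bX^\eps$, with constants that may now depend on $\eps$ but still not on $R$.

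For the first stage, boundedness of $X_{st}$ and $\bbX_{st}$ on the scale $(\cE_{l,R})_l$ follows from the fact that each summand in \eqref{eq:second quantization} is either (i) a first- or second-order differential operator in only the $x$-variable, (ii) the analogous operator in only the $y$-variable, or (iii) a mixed first-order operator in both variables, with coefficients $\xi_k$ or $\xi_j\cdot\nabla\xi_k$. Such operators are local, hence respect the support condition defining $\cE_{l,R}$, and by duality combined with Lemma~\ref{lem:URD_continuity} and Lemma~\ref{lem:tensor-product} they yield estimates of the form
\begin{equation*}
    \|X_{st}\|_{\cL(\cE_{-l,R},\cE_{-l-1,R})}\lesssim \|\xi\|_{C^2_b}\,w_{\bZ}(s,t)^{1/\fkp},\qquad
    \|\bbX_{st}\|_{\cL(\cE_{-l,R},\cE_{-l-2,R})}\lesssim \|\xi\|_{C^2_b}^{2}\,w_{\bZ}(s,t)^{2/\fkp},
\end{equation*}
with constants independent of $R$. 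Chen's relation is then a direct algebraic computation using $\delta A_{sut}=0$ and $\delta\bbA_{sut}=A_{ut}A_{su}$: one obtains $\delta X_{sut}=0$ and, expanding
$\delta(A\otimes A)_{sut}=A_{su}\otimes A_{ut}+A_{ut}\otimes A_{su}$, the identity
\begin{equation*}
    \delta\bbX_{sut}=(A_{ut}A_{su})\otimes I+I\otimes(A_{ut}A_{su})+A_{su}\otimes A_{ut}+A_{ut}\otimes A_{su}=X_{ut}X_{su}.
\end{equation*}

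For the second stage, I would set $\tilde X_{st}:=T_\eps^{-1}X_{st}T_\eps$ and $\tilde\bbX_{st}:=T_\eps^{-1}\bbX_{st}T_\eps$, so that $\bX^\eps=(\tilde X^*,\tilde\bbX^*)$. Since $T_\eps$ and $T_\eps^{-1}$ are bounded isomorphisms of $W^{l,\infty}(\R^{2d})$ whose operator norms depend only on $\eps$ and $l$ (the $1/\eps$ prefactor appearing only through derivative scaling, which is independent of $R$), the norm bounds of stage one transfer to $(\tilde X,\tilde\bbX)$ at the cost of an $\eps$-dependent multiplicative constant. Chen's relation for $\bX^\eps$ then transfers multiplicatively: applying $T_\eps^{-1}(\cdot)T_\eps$ to the identity $\delta\bbX_{sut}=X_{ut}X_{su}$ and using that conjugation is multiplicative, one obtains $\delta\tilde\bbX_{sut}=\tilde X_{ut}\tilde X_{su}$, and $\delta\tilde X_{sut}=0$ is immediate; taking duals gives the corresponding identities for $\bX^\eps$. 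The main obstacle here is a subtle bookkeeping issue: $T_\eps^{-1}$, viewed as an operator on $W^{l,\infty}(\R^{2d})$, does \emph{not} preserve $\cE_{l,R}$ in general (it enlarges supports in the $x_-$ direction by a factor $1/\eps$). However, in the composition $T_\eps^{-1}X_{st}T_\eps$, the first application of $T_\eps$ lands inside the smaller subspace $\tilde\cE_{l,\eps,R}$; since $X_{st}$ is a local operator it preserves this condition, and $T_\eps^{-1}$ acts as the honest inverse on $\tilde\cE_{l-1,\eps,R}$, sending it back into $\cE_{l-1,R}$. One must therefore carefully verify that $T_\eps^{-1}$ is only ever applied on the image of $T_\eps$ throughout; once this is done the claimed URD structure follows, yielding the bound $w_{\bX^\eps}(s,t)\lesssim\|\xi\|_{C^2_b}^{\fkp}\,w_{\bZ}(s,t)$ with $\eps$-dependent but $R$-independent implicit constant.
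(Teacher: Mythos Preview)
Your proposal is correct and follows essentially the same approach as the paper. The paper itself does not give a separate proof of this lemma: the justification appears in the text immediately preceding it, observing that $X_{st}$ and $\bbX_{st}$ are local differential operators hence preserve the intermediate spaces $\tilde\cE_{l,\eps,R}$, so that $T_\eps^{-1}X_{st}T_\eps$ maps $\cE_{l,R}$ into $\cE_{k,R}$, and then pointing to Lemma~\ref{lem:URD_continuity} for the rest. Your write-up is in fact more explicit, working out Chen's relation for the second quantization and spelling out exactly why the composition $T_\eps^{-1}X_{st}T_\eps$ is well-defined on $\cE_{l,R}$ despite $T_\eps^{-1}$ alone not preserving it; the reference to Lemma~\ref{lem:tensor-product} is not really needed here (the operator bounds follow directly from $\xi\in C^2_b$ and locality), but this is a minor point.
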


Our next goal, under additional regularity assumptions on $(b,\xi)$ is to obtain uniform-in-$\eps$ estimates for the unbounded rough driver $\mathbf{X}^{\eps}$ and the forcing $M^{\eps}$. 

\begin{proposition}[Proposition 3.4 from \cite{DGHT2019}]\label{proposition:renormalizable noise}
Let $\fkp\in [2,3)$, $\bZ\in \clC^{\fkp}_g$, $\xi\in C^3_b$ and $\nabla \cdot \xi = 0$; let $\mathbf{X}^{\eps}$ be defined by \eqref{eq:approx of product URD}. Then we have
\begin{equation*}\begin{split}
     \| X^{\eps}_{st} \|_{\mathcal{L}( \mathcal{E}_{l,R}, \mathcal{E}_{l-1,R})} 
     & \leq C w_{\bZ}(s,t)^{1/\fkp} \qquad \text{for } l \in \{-0,-2\},\\
     \| \mathbb{X}^{\eps}_{st} \|_{\mathcal{L}( \mathcal{E}_{l,R}, \mathcal{E}_{l-2,R})}
     & \leq C w_{\bZ}(s,t)^{2/\fkp} \qquad \text{for } l \in \{-0,-1\},
\end{split}\end{equation*}
where the constant $C$ depends on $\|\xi\|_{C^3_b}$ but is uniform in $\eps$ and $R \geq 1$. 
\end{proposition}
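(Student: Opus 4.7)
The plan is to compute the conjugated operators $T_\eps^{-1} X_{st} T_\eps$ and $T_\eps^{-1}\bbX_{st} T_\eps$ explicitly as differential operators in the $(x_+, x_-)$ coordinates and verify that their coefficients, together with sufficiently many derivatives, are bounded uniformly in $\eps\in(0,1)$ and $R\geq 1$. Since the blow-up $T_\eps$ acts essentially by rescaling $x_-\mapsto x_-/\eps$ (while leaving $x_+$ fixed), the chain rule produces nominal factors $\eps^{-1}$ and $\eps^{-2}$ on derivatives in the $x_-$ direction; the crucial point is that these singular prefactors come multiplied by \emph{symmetric differences} of $\xi_k$ evaluated at the two points $x_+\pm\eps x_-$, which by Taylor expansion are of order $\eps$ and $\eps^2$ respectively, thereby cancelling the singularities. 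The $R$-uniformity is automatic because the coefficients obtained this way depend only on $\xi$ and not on the support size, and because $|x_-|\leq 1$ on the support of any $\Phi\in\mathcal{E}_{l,R}$ (by definition of $\zeta_R$).

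For the first-order part, using $\nabla_x=\tfrac12(\nabla_{x_+}+\nabla_{x_-})$ and $\nabla_y=\tfrac12(\nabla_{x_+}-\nabla_{x_-})$, I would rewrite
\begin{equation*}
X_{st}=\sum_k\big(\xi_k(x)\cdot\nabla_x+\xi_k(y)\cdot\nabla_y\big)\delta Z^k_{st}
=\sum_k\big(\sigma_k^+(x_+,x_-)\cdot\nabla_{x_+}+\sigma_k^-(x_+,x_-)\cdot\nabla_{x_-}\big)\delta Z^k_{st},
\end{equation*}
with $\sigma_k^\pm=\tfrac12(\xi_k(x_++x_-)\pm\xi_k(x_+-x_-))$. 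Conjugating by $T_\eps$ replaces $x_-$ by $\eps x_-$ in the coefficients and multiplies the $\nabla_{x_-}$-coefficient by $\eps^{-1}$; the resulting $\eps^{-1}\sigma_k^-(x_+,\eps x_-)=\tfrac12\int_0^1[D\xi_k(x_++\eps t x_-)+D\xi_k(x_+-\eps t x_-)]x_-\,\dd t$ is bounded by $\|D\xi_k\|_\infty|x_-|\leq\|\xi\|_{C^1_b}$ on the support of any $\Phi\in\mathcal{E}_{l,R}$. Differentiating this expression in $(x_+,x_-)$ up to order two costs only one more derivative on $\xi$ and keeps the $\eps$-uniform bound by $\|\xi\|_{C^3_b}$, yielding $\|X^\eps_{st}\|_{\mathcal{L}(\mathcal{E}_{-l,R},\mathcal{E}_{-l-1,R})}\lesssim_{\|\xi\|_{C^3_b}}|\delta Z_{st}|\lesssim w_{\bZ}(s,t)^{1/\fkp}$ for $l\in\{0,2\}$, by duality from the corresponding bound on the adjoint first-order differential operator.

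For $\bbX_{st}=\bbA\otimes I+I\otimes\bbA+A\otimes A$, the analogous expansion produces a second-order differential operator whose coefficients are quadratic combinations of $\xi_k(x_+\pm\eps x_-)$ and $D\xi_k(x_+\pm\eps x_-)$ multiplied by $\bbZ^{jk}_{st}$ (or by $\delta Z^k_{st}\delta Z^j_{st}$ from the cross term $A\otimes A$). The $\nabla_{x_-}^2$-part carries the naive factor $\eps^{-2}$ but is accompanied by a second symmetric difference
\begin{equation*}
\frac{\xi_k(x_++\eps x_-)\otimes\xi_j(x_++\eps x_-)-\xi_k(x_++\eps x_-)\otimes\xi_j(x_+-\eps x_-)-\xi_k(x_+-\eps x_-)\otimes\xi_j(x_++\eps x_-)+\xi_k(x_+-\eps x_-)\otimes\xi_j(x_+-\eps x_-)}{\eps^2},
\end{equation*}
which by Taylor expansion to second order is $O(|x_-|^2\|\xi\|_{C^1_b}^2)$ uniformly in $\eps$. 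The mixed $\nabla_{x_+}\nabla_{x_-}$-part carries a single $\eps^{-1}$ compensated by a first symmetric difference in the same way; the zeroth-order parts are automatically bounded. Controlling $l-2$ further derivatives of these coefficients requires up to $\|\xi\|_{C^3_b}$, yielding $\|\bbX^\eps_{st}\|_{\mathcal{L}(\mathcal{E}_{-l,R},\mathcal{E}_{-l-2,R})}\lesssim_{\|\xi\|_{C^3_b}}|\bbZ_{st}|+|\delta Z_{st}|^2\lesssim w_{\bZ}(s,t)^{2/\fkp}$ for $l\in\{0,1\}$.

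The main obstacle is the bookkeeping in the second-order step: one must identify the exact combinatorial structure coming from the tensor sum $\bbA\otimes I+I\otimes\bbA+A\otimes A$ in the $(x_+,x_-)$ frame so that each singular prefactor $\eps^{-k}$ lines up with a $k$-th order symmetric difference in $\eps x_-$. It is precisely here that the hypothesis $\xi\in C^3_b$ (rather than $C^2_b$, which suffices for the unbounded rough driver $\bA$ itself via Lemma \ref{lem:URD_continuity}) becomes sharp: to cancel $\eps^{-2}$ via two Taylor orders while still retaining one bounded derivative of the coefficient (needed to map $\mathcal{E}_{-1,R}\to\mathcal{E}_{-3,R}$), three bounded derivatives of $\xi$ are required.
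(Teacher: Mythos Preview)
Your plan is correct and matches the argument in \cite{DGHT2019}, which the paper cites without reproducing. The computation in the $(x_+,x_-)$ frame, with $\eps^{-k}$ factors compensated by $k$-th order symmetric differences of $\xi$ at $x_+\pm\eps x_-$, is exactly the mechanism used there, and your identification of why $\xi\in C^3_b$ (rather than $C^2_b$) is needed is on point.

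One structural point you gloss over: the second symmetric difference you display for the $\nabla_{x_-}^2$-coefficient does \emph{not} arise from $\bbA\otimes I+I\otimes\bbA+A\otimes A$ term-by-term. The contributions from $\bbA\otimes I+I\otimes\bbA$ are weighted by $\bbZ^{jk}_{st}$, while $A\otimes A$ is weighted by $\delta Z^k_{st}\delta Z^j_{st}$; these combine into your factored form only after invoking geometricity, namely $\bbZ^{jk}_{st}+\bbZ^{kj}_{st}=\delta Z^j_{st}\delta Z^k_{st}$. Equivalently, geometricity gives $\bbA_{st}=\tfrac12 A_{st}^2+\tfrac12\sum_{j,k}[V_k,V_j]\bbZ^{jk}_{st}$ with $V_k=\xi_k\cdot\nabla$, so that $\bbX_{st}=\tfrac12 X_{st}^2+\tilde X_{st}$ where $\tilde X_{st}$ is a \emph{first-order} operator built from commutators. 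The second-order part of $\bbX^\eps_{st}$ is then $\tfrac12(X^\eps_{st})^2$, whose $\nabla_{x_-}^2$-coefficient is exactly the product $\sigma_k^-\otimes\sigma_j^-$ you want, and $\tilde X^\eps_{st}$ is handled by the first-order argument. Without geometricity the $\eps^{-2}$ cancellation genuinely fails, so this is where the hypothesis $\bZ\in\clC^\fkp_g$ enters; it is worth flagging explicitly in your ``bookkeeping'' step.
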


\begin{proposition} \label{proposition:renormalizable drift}
Let $\rho$, $f$ be respectively solutions to \eqref{eq:rc_in_unique} and \eqref{eq:lt_in_unique}.
Let $p,q,r\in [1,\infty]$ be parameters such that $1/p+1/q+1/r=1$; assume that
\begin{align*}
    \rho \in \cB_b([0,T];L^p_\loc), \quad
    f \in \cB_b([0,T]; L^q_\loc), \quad
    b \in L^1_t W^{1,r}_\loc
\end{align*}
and let $M^{\eps}$ be defined by \eqref{eq:approx of product drift}.
Then there exists a constant $C$, independent of $R$ and $\eps$, such that 
$$
\| \delta M_{st}^{\eps} \|_{\mathcal{E}_{-1,R}} \leq C \|\rho\|_{\cB_b([0,T]; L^p(B_{R+1}))} \|f\|_{\cB_b([0,T]; L^q(B_{R+1}))}\int_s^t \|b_u\|_{W^{1,r}(B_{R+1})} \rmd u.
$$
\end{proposition}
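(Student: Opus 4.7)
The goal is to bound $\|\delta M^\eps_{st}\|_{\cE_{-1,R}}$ uniformly in $\eps\in(0,1)$ and $R\geq 1$, which by duality amounts to controlling $\langle \delta M^\eps_{st},\Psi\rangle$ for every $\Psi\in\cE_{1,R}$ with $\|\Psi\|_{\cE_{1,R}}\leq 1$. Using the adjoint relation $\langle T_\eps^* G,\Psi\rangle = \langle G, T_\eps\Psi\rangle$ and the definition \eqref{eq:approx of product drift} of $M^\eps$, I would rewrite
\begin{equation*}
\langle \delta M^\eps_{st},\Psi\rangle = \int_s^t\!\int_{\R^{2d}}\big[\nabla_x\!\cdot(\rho_r b_r)(x)\, f_r(y) + \rho_r(x)\, b_r(y)\!\cdot\!\nabla_y f_r(y)\big]\,T_\eps\Psi(x,y)\,\dd x\,\dd y\,\dd r,
\end{equation*}
and integrate by parts in $x$ and $y$ so as to move the derivatives off $\rho_r$ and $f_r$ (which have only integrability) onto $T_\eps\Psi$. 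This transforms the integrand into
\begin{equation*}
-\rho_r(x)f_r(y)\Big[\,b_r(x)\!\cdot\!\nabla_x T_\eps\Psi + b_r(y)\!\cdot\!\nabla_y T_\eps\Psi + (\nabla\!\cdot b_r)(y)\,T_\eps\Psi\,\Big].
\end{equation*}

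A direct application of the chain rule to $T_\eps\Psi(x,y)=\eps^{-d}\Psi(x_+ + x_-/\eps,\,x_+ - x_-/\eps)$ yields, at the point $(z,w)=(x_+ + x_-/\eps,\,x_+ - x_-/\eps)$,
\begin{equation*}
b(x)\!\cdot\!\nabla_x T_\eps\Psi + b(y)\!\cdot\!\nabla_y T_\eps\Psi = \eps^{-d}\Big[\tfrac{b(x)+b(y)}{2}\!\cdot\!(\nabla_z+\nabla_w)\Psi \;+\; \tfrac{b(x)-b(y)}{2\eps}\!\cdot\!(\nabla_z-\nabla_w)\Psi\Big].
\end{equation*}
The symmetric piece involving $b(x)+b(y)$ is harmless; the antisymmetric piece is multiplied by the apparently singular factor $1/\eps$. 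This is the \emph{key step} and the place where the Sobolev assumption on $b$ must enter: by the fundamental theorem of calculus,
\begin{equation*}
\frac{b(x)-b(y)}{2\eps}=\frac{x_-}{2}\int_{-1}^{1}\nabla b\big(x_+ + t\eps x_-\big)\,\dd t,
\end{equation*}
so the $1/\eps$ is exactly absorbed at the cost of one derivative on $b$ and a factor $|x_-|\leq 1$ on $\supp(T_\eps\Psi)$.

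The last step is the change of variables $(x,y)\mapsto(u,v)=(x_+ + x_-/\eps,\,x_+ - x_-/\eps)$, whose inverse is $(x,y)=(u_+ + \eps u_-,\,u_+ - \eps u_-)$ with Jacobian $\eps^d$. This cancels the $\eps^{-d}$ coming from $T_\eps$, and the integration domain reduces to the support of $\Psi$; in particular, because $|u_+|\leq R$ and $|u_-|\leq 1$ there, every occurrence of $\rho_r$, $f_r$, $b_r$, $\nabla b_r$, $\nabla\!\cdot b_r$ is evaluated at points in $B_{R+1}$. Applying Hölder's inequality with exponents $(p,q,r)$ satisfying $1/p+1/q+1/r=1$ to each of the three resulting integrals, together with the elementary bound
\begin{equation*}
\int_{\supp\Psi}\big|h(u_+ \pm \eps u_-)\big|^s\,\dd u\,\dd v \leq C\,\|h\|_{L^s(B_{R+1})}^s \qquad (\eps\in(0,1),\;R\geq 1),
\end{equation*}
obtained via a further linear change of variables of $\eps$-independent Jacobian, then yields
\begin{equation*}
|\langle \delta M^\eps_{st},\Psi\rangle|\leq C\,\|\Psi\|_{\cE_{1,R}}\,\|\rho\|_{\cB_b([0,T];L^p(B_{R+1}))}\,\|f\|_{\cB_b([0,T];L^q(B_{R+1}))}\int_s^t\|b_r\|_{W^{1,r}(B_{R+1})}\dd r,
\end{equation*}
with $C$ independent of $\eps$ and $R$. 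Taking the supremum over $\Psi$ completes the proof. The main obstacle, as indicated, is taming the $1/\eps$ singularity arising in the antisymmetric part; it is precisely here that isolating that term and rewriting it via the fundamental theorem of calculus is essential, and where $b\in W^{1,r}_{\loc}$ (rather than merely $L^r_{\loc}$) is indispensable.
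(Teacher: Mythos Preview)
Your proof is correct and follows essentially the same route as the paper: test against $\Psi\in\cE_{1,R}$, integrate by parts, split $b(x)\!\cdot\!\nabla_x T_\eps\Psi + b(y)\!\cdot\!\nabla_y T_\eps\Psi$ into a symmetric piece, an antisymmetric piece carrying the $1/\eps$, and the divergence term, then kill the $1/\eps$ via the fundamental theorem of calculus on $b$ (this is exactly the DiPerna--Lions commutator mechanism), and finish with the change of variables and H\"older. One notational slip: your displayed identity $\frac{b(x)-b(y)}{2\eps}=\frac{x_-}{2}\int_{-1}^{1}\nabla b(x_+ + t\eps x_-)\,\dd t$ is not literally correct in the original $(x,y)$ variables (there one gets $\frac{x_-}{2\eps}\int_{-1}^1\nabla b(x_+ + t x_-)\,\dd t$, and the cancellation comes from $|x_-|\leq\eps$ on $\supp T_\eps\Psi$, not $|x_-|\leq 1$); your formula becomes correct only \emph{after} the change of variables $(x_+,x_-)\mapsto(u_+,\eps u_-)$, so the two displayed steps are written in different coordinate systems without this being flagged. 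The argument itself is sound.
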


\begin{proof}
We begin by noticing that
\begin{align*}
    \nabla_x T_{\eps} & = \frac12 ( T_{\eps} \nabla_x  + T_{\eps} \nabla_y)  + \frac{1}{2 \eps} ( T_{\eps} \nabla_x  - T_{\eps} \nabla_y)  \\ 
    \nabla_y T_{\eps} & = \frac12 ( T_{\eps} \nabla_x  + T_{\eps} \nabla_y)  - \frac{1}{2 \eps} ( T_{\eps} \nabla_x  - T_{\eps} \nabla_y);
\end{align*}
therefore for any $\Psi\in \cE_{1,R}$, it holds that
\begin{align}
    - \langle \delta M^{\eps}_{st}, \Psi \rangle 
    & = - \int_s^t \langle \nabla_x \cdot  ((\rho_u b_u) \otimes f_u) + \nabla_y \cdot  (\rho_u \otimes (b_u f_u)) - \rho_u \otimes ((\nabla \cdot b_u) f_u) , T_{\eps} \Psi \rangle \rmd u \notag \\ 
    & = \int_s^t \langle (\rho_u b_u) \otimes f_u, \nabla_x T_{\eps} \Psi \rangle  + \langle \rho_u  \otimes (b_u f_u), \nabla_y T_{\eps} \Psi \rangle \notag + \langle \rho_u  \otimes ((\nabla \cdot b_u) f_u), T_{\eps} \Psi \rangle \rmd u \\
    & = \int_s^t \frac12 \langle (\rho_u b_u )\otimes f_u + \rho_u  \otimes (b_u f_u), T_{\eps} \nabla_x \Psi + T_{\eps} \nabla_y \Psi \rangle \notag  \rmd u \\
    & \quad + \int_s^t \frac{1}{2 \eps} \langle (\rho_u b_u) \otimes f_u - \rho_u  \otimes (b_u f_u), T_{\eps} \nabla_x \Psi - T_{\eps} \nabla_y \Psi \rangle \notag \rmd u \\
    & \quad + \int_s^t \langle \rho_u  \otimes ((\nabla \cdot b_u) f_u),  T_{\eps}\Psi \rangle  \rmd u \notag \\
    & = \int_s^t\frac12 \langle T_{\eps}^* \big[(\rho_u b_u) \otimes f_u + \rho_u  \otimes (b_u f_u)\big],  \nabla_x \Psi +  \nabla_y \Psi \rangle \rmd u \label{eq:blow-up of drift1}\\
    & \quad + \int_s^t\frac{1}{2 \eps} \langle T_{\eps}^* \big[(\rho_u b_u) \otimes f_u - \rho_u  \otimes (b_u f_u)\big], \nabla_x \Psi -\nabla_y \Psi \rangle \rmd u \label{eq:blow-up of drift2}  \\
    & \quad + \int_s^t \langle T_{\eps}^* \big[\rho_u  \otimes ((\nabla \cdot b_u) f_u)\big],  \Psi \rangle \rmd u  \label{eq:blow-up of drift3}\\
    & =: I^\eps_1 + I^\eps_2 + I^\eps_3. \notag
\end{align}
To bound $I^\eps_1$ and $I^\eps_3$, we use integrability of $b$, $\nabla\cdot b$ as follows.
For $I^\eps_1$, by H\"older's inequality
\begin{align*}
    I_1^\eps
    & \lesssim  \int_s^t \int_{B_{R}} \int_{B_{1}} \big|\rho_u(x_+ + \eps x_-) \{ b_u(x_+ + \eps x_-) + b_u(x_+ - \eps x_-) \} f_u(x_+ - \eps x_-) \\
    & \qquad\quad \big( \nabla_x \Psi +  \nabla_y  \Psi \big)(x_+ + x_-, x_+ - x_-) \big| \rmd x_- \rmd x_+  \rmd u \\
    & \leq  \int_s^t  \int_{B_1} \| \rho_u( \cdot + \eps x_-)\|_{L^p(B_{R})} ( \| b_u( \cdot + \eps x_-)\|_{L^r(B_{R})} + \| b_u( \cdot - \eps x_-)\|_{L^r(B_{R})} )\\
    & \qquad\quad \| f_u( \cdot - \eps x_-)\|_{L^q(B_{R})}  \rmd x_-  \| \nabla_x \Psi + \nabla_y \Psi\|_{L^{\infty}(B_{R})} \rmd u  \\
    & \lesssim \int_s^t \|\rho_u\|_{L^p(B_{R+1})} \|b_u\|_{L^{r}(B_{R+1})} \|f_u\|_{L^q(B_{R+1})} \|\Psi\|_{\mathcal{E}_{1,R}} \rmd u\\
    & \lesssim \|\rho\|_{\cB_b([0,T]; L^p(B_{R+1}))} \|f\|_{\cB_b([0,T]; L^q(B_{R+1}))}\int_s^t \|b_u\|_{L^r(B_{R+1})} \rmd u \, \|\Psi\|_{\mathcal{E}_{1,R}}
\end{align*}
similarly, for $I^\eps_3$ we have
\begin{align*}
    I^\eps_3
    & \lesssim \int_s^t \|\rho_u\|_{L^p(B_{R+1})} \| \nabla \cdot b_u\|_{L^{r}(B_{R+1})} \|f_u\|_{L^q(B_{R+1})} \rmd u\, \|\Psi\|_{\mathcal{E}_{0,R}}\\
    & \lesssim \|\rho\|_{\cB_b([0,T]; L^p(B_{R+1}))} \|f\|_{\cB_b([0,T]; L^q(B_{R+1}))}\int_s^t \|\nabla\cdot b_u\|_{L^r(B_{R+1})} \rmd u \, \|\Psi\|_{\mathcal{E}_{0,R}}.
\end{align*}
To bound $I^\eps_2$, we need Sobolev regularity of $b$ to cancel out the factor $\eps^{-1}$; this term behaves similarly to a DiPerna--Lions commutator.
For Lebesgue a.e. $u$ it holds that $\rho_u \in L^p(B_{R+1}), f_u \in L^q(B_{R+1})$ and $b_u \in W^{1,r}(B_{R+1})$; for such $u$ and a.e. $x_+ \in B_R$, $x_- \in B_1$, we have
\begin{equation}\label{eq:estimates_drift_proof}\begin{split}
    \frac{1}{2\eps}T_{\eps}^* \big[ (\rho_u b_u)& \otimes f_u - \rho_u  \otimes (b_u f_u)\big](x,y)\\
    & = \rho_u(x_+ + \eps x_-) f_u(x_+ - \eps x_-) \frac{ b_u(x_+ + \eps x_-) - b_u(x_+ - \eps x_-)  }{2 \eps} \\
    & = \frac12 \rho_u(x_+ + \eps x_-) f_u(x_+ - \eps x_-) \Big( \int_{-1}^1 Db_u(x_+ + \theta \eps x_-) \rmd \theta\Big) x_- .
\end{split}\end{equation}
Integrating w.r.t. $u$ and using that $\Psi\in \cE_{1,R}$ is compactly supported in the $x_-$-direction, we find 
\begin{equation}\label{eq:epsilon cancellation in the drift2}\begin{split}
    I^\eps_2
    & \lesssim \int_s^t  \int_{B_{R}} \int_{B_{1}} |\rho_u(x_+ + \eps x_-)| |f_u(x_+ - \eps x_-)| \int_{-1}^1 |Db_u(x_+ + \theta \eps x_-)| \dd \theta\, \rmd x_- \rmd x_+\\
    & \qquad \quad \| \nabla_x \Psi -  \nabla_y \Psi \|_{L^{\infty}} \rmd u\\
    & \lesssim  \int_s^t  \|\rho_u \|_{L^{p}(B_{R+1})}  \|f_u\|_{L^q(B_{R+1})} \| Db_u\|_{L^r(B_{R+1})} \rmd u\,  \|\Psi\|_{\mathcal{E}_{1,R}}\\
    &  \lesssim \|\rho\|_{\cB_b([0,T]; L^p(B_{R+1}))} \|f\|_{\cB_b([0,T]; L^q(B_{R+1}))}\int_s^t \|D b_u\|_{L^r(B_{R+1})} \rmd u \, \|\Psi\|_{\mathcal{E}_{1,R}}
\end{split}\end{equation}
Overall, this proves the claim.  
\end{proof}

We now have all the ingredients to derive the RPDE for the product $\rho f$.
Towards this end, in the rest of this section, we fix a radially symmetric probability density $\chi \in C^{\infty}_c$, such that $\supp (\chi) \subset B(0,1/2)$. Let $\{\chi^\eps\}_\eps$ denote the associated family of mollifiers, $\chi^\eps(x)=\eps^{-d}\chi(\eps^{-1}x)$.
Then for any $\phi\in \cF_{3,R}$, we may define
\begin{equation}\label{eq:defn_test_doublevariables}
    \Psi(x,y):= \phi(x_+) \chi(x_-) \in \cE_{3,2R};
\end{equation}
by the definition of $T_\eps$ and properties of mollifiers, we have
\begin{equation}\label{eq:approx_dirac_delta}
    T_{\eps} \Psi(x,y) = \phi\Big(\frac{x+y}{2}\Big) \chi^\eps(x-y)\rightarrow \phi(x) \delta_{x=y}
\end{equation}
as $\eps\to 0$, where convergence holds in the sense of distributions.

The goal is now to test \eqref{eq:approx product} against $\Psi$ as defined above, send $\eps \rightarrow 0$, and identify the limits of $(\bX^\eps,M^\eps)$ to find that
\begin{equation*}
\rmd \langle \rho f, \phi \rangle -  \langle \rho b f, \nabla \phi \rangle  \rmd t - \langle \rho \xi_k f, \nabla \phi \rangle \rmd \bZ_t^k = 0
\end{equation*}
holds for any $\phi \in \cF_{3,R}$, which is to say that $\rho f$ satisfies the rough continuity equation \eqref{eq:rc_in_unique}, since $\xi_k$ is divergence free.  

The next result guarantees pointwise convergence of the unbounded rough drivers $\mathbf{X}^\eps$ as $\eps\to 0$.

\begin{proposition}[Proposition 5.13 from \cite{DGHT2019}]\label{proposition:noise converges}
Let $\fkp\in [2,3)$, $\bZ\in \clC^{\fkp}_g$, $\xi\in C^3_b$ and $\nabla \cdot \xi = 0$.
Let $\mathbf{X}^{\eps}$ be defined by \eqref{eq:approx of product URD}, $\rho$ and $f$ be measurable functions such that $\rho$, $f$, $\rho f\in L^1_\loc$, $\chi$ as given above.
Then for every $R\geq 1$ and $\phi \in \cF_{3, R}$, defining $\Psi$ as in \eqref{eq:defn_test_doublevariables}, we have that
\begin{equation*}
    \lim_{\eps \rightarrow 0} \langle  \rho \otimes f, X_{st}^{\eps,\ast} \Psi \rangle =  \langle  \rho f, A_{st}^*  \phi\rangle, \qquad
    \lim_{\eps \rightarrow 0} \langle  \rho \otimes f, \mathbb{X}_{st}^{\eps,\ast} \Psi \rangle =  \langle  \rho f, \mathbb{A}_{st}^*  \phi
    \rangle\qquad \forall\, (s,t)\in \Delta_T.
\end{equation*}
\end{proposition}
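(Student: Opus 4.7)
The plan is to reduce both convergence statements to explicit identities via Taylor expansion of $\xi_k$, exploiting the mollifier structure implicit in $T_\eps\Psi(x,y) = \phi(x_+)\chi^\eps(x_-)$, the radial symmetry of $\chi$, and the divergence-free condition $\nabla\cdot\xi = 0$. The strategy mirrors the proof of \cite[Proposition 5.13]{DGHT2019}, so I only sketch the key steps.

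For the first identity, using $\nabla_x = \tfrac12(\nabla_{x_+}+\nabla_{x_-})$ and $\nabla_y = \tfrac12(\nabla_{x_+}-\nabla_{x_-})$, a direct computation based on \eqref{eq:approx of product URD} gives
\begin{align*}
    X^*_{st} T_\eps\Psi(x,y) = \sum_k\delta Z^k_{st}\Big\{ & \tfrac12[\xi_k(x)+\xi_k(y)]\cdot\nabla\phi(x_+)\,\chi^\eps(x_-) \\
    & + \tfrac12[\xi_k(x)-\xi_k(y)]\cdot\phi(x_+)\,\nabla\chi^\eps(x_-)\Big\}.
\end{align*}
After pairing with $\rho\otimes f$, performing the change of variables $(x,y)\to (x_+, x_-)$, and rescaling $x_- = \eps z$ (so $\chi^\eps(x_-)\,\dd x_- = \chi(z)\,\dd z$ and $\nabla\chi^\eps(x_-)\,\dd x_- = \eps^{-1}\nabla\chi(z)\,\dd z$), Taylor expansion of $\xi_k$ produces a symmetric limit $\tfrac12[\xi_k(x_++\eps z)+\xi_k(x_+-\eps z)]\to \xi_k(x_+)$ and an antisymmetric limit $\tfrac{1}{2\eps}[\xi_k(x_++\eps z)-\xi_k(x_+-\eps z)]\to D\xi_k(x_+)\,z$, with remainders controlled by $\|D^2\xi_k\|_{L^\infty}$ and $\|D^3\xi_k\|_{L^\infty}$ respectively. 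Since $\chi$ and $\nabla\chi$ have compact support and $\rho f\in L^1_\loc$, dominated convergence applies.

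The symmetric limit, via the concentration of $\chi^\eps$ onto the diagonal $\{x=y\}$ and $\int\chi=1$, recovers precisely $\langle \rho f, A^*_{st}\phi\rangle$. The antisymmetric limit yields the quantity $\int D\xi_k(x_+)\,z\cdot\nabla\chi(z)\,\dd z$; writing this as $\sum_{i,j}[D\xi_k]_{ij}(x_+)\int z^j\partial_i\chi(z)\,\dd z$ and integrating by parts using the radial symmetry of $\chi$ (which implies $\int z^j\partial_i\chi(z)\,\dd z = -\delta_{ij}$) collapses it to $-\nabla\cdot\xi_k(x_+)$, which vanishes by assumption. This closes the first identity.

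The identity for $\bbX^{\eps,*}_{st}\Psi$ follows the same template, but requires a second-order Taylor expansion of $\xi_k$ — which is why we impose $\xi\in C^3_b$ — applied to each piece of $\bbX = \bbA\otimes I + I\otimes\bbA + A\otimes A$. After differentiating $T_\eps\Psi$ twice, the three summands generate contributions at orders $\eps^{-2}$, $\eps^{-1}$, and $\eps^0$ in the scaled variable $z$. The main obstacle is the careful verification of the following cancellations: the potentially singular $\eps^{-2}$ contributions from $A\otimes A$ cancel those from $\bbA\otimes I + I\otimes\bbA$ after exploiting Schwarz's theorem and the specific algebraic structure of the second quantization \eqref{eq:second quantization}; the $\eps^{-1}$ contributions, which combine second-order Taylor expansion of $\xi_k$ against $\nabla\chi$ or $D^2\chi$, vanish in the limit by the same radial-symmetry argument as above combined with $\nabla\cdot\xi_k = 0$; finally, the surviving $\eps^0$ contribution identifies precisely with $\langle \rho f, \bbA^*_{st}\phi\rangle$ by the concentration argument used for $X$.
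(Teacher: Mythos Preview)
The paper does not give its own proof of this proposition; it is quoted verbatim from \cite[Proposition~5.13]{DGHT2019}. Your sketch follows precisely that reference's blow-up/Taylor-expansion strategy, and the algebraic mechanism you describe --- reducing the antisymmetric first-order contribution to $-\nabla\cdot\xi_k$ via $\int z^j\partial_i\chi(z)\,\dd z=-\delta_{ij}$, and at second order cancelling the $\eps^{-2}$ pieces of $A\otimes A$ against those of $\bbA\otimes I+I\otimes\bbA$ --- is the correct one. (Incidentally, $\int z^j\partial_i\chi=-\delta_{ij}$ follows from integration by parts and $\int\chi=1$ alone; radial symmetry is only needed later, for the second-order moments of $\chi$.)

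The one step that is too quick is the justification ``since $\rho f\in L^1_\loc$, dominated convergence applies''. After the change of variables and rescaling, the integrand carries the off-diagonal factor $\rho(x_{+}+\eps z)\,f(x_{+}-\eps z)$; the hypothesis $\rho f\in L^1_\loc$ controls only the diagonal value $\rho(x_{+})f(x_{+})$ and gives no $\eps$-uniform integrable majorant for the shifted product. Indeed, in $d=1$ with $\rho=x^{-1/2}\mathbbm{1}_{(0,1)}$ and $f=(-x)^{-1/2}\mathbbm{1}_{(-1,0)}$ one has $\rho,f\in L^1$, $\rho f\equiv0$, yet a direct scaling computation shows $\langle\rho\otimes f,T_\eps\Psi\rangle$ does not tend to $\langle\rho f,\phi\rangle=0$; the same obstruction hits the symmetric piece of $X^{\ast}_{st}T_\eps\Psi$. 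In the paper's actual use of the proposition (the proof of Theorem~\ref{thm:product-rule}) one has $\rho\in L^p_\loc$, $f\in L^q_\loc$ with $1/p+1/q\le 1$, and under that stronger integrability the passage to the limit follows from H\"older's inequality and continuity of translations in $L^p$ and $L^q$. You should either invoke that stronger hypothesis explicitly or supply a finer convergence argument.
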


Next, we show that the forcing $M^\eps$ converges as well.

\begin{proposition}\label{proposition:drift converges}
Let $\rho$, $f$, $b$ satisfy the assumptions of Proposition \ref{proposition:renormalizable drift} and let $\phi$, $\chi$ and $\Psi$ as in Proposition \ref{proposition:noise converges}.
Then for any $(s,t)\in \Delta_T$ we have
$$
\lim_{\eps \rightarrow 0} \langle \delta M_{st}^{\eps}, \Psi \rangle  = - \int_s^t \langle \rho_u f_u b_u , \nabla \phi \rangle \dd u .
$$
\end{proposition}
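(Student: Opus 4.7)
The plan is to start from the decomposition
\[
-\langle \delta M^\eps_{st}, \Psi\rangle = I_1^\eps + I_2^\eps + I_3^\eps
\]
established in the proof of Proposition \ref{proposition:renormalizable drift}, plug in the specific choice $\Psi(x,y) = \phi(x_+)\chi(x_-)$, and identify the limit of each piece as $\eps\downarrow 0$. The crucial feature of this $\Psi$ is that
\[
\nabla_x\Psi + \nabla_y\Psi = \nabla\phi(x_+)\chi(x_-), \qquad \nabla_x\Psi - \nabla_y\Psi = \phi(x_+)\nabla\chi(x_-),
\]
so the antisymmetric combination feeding the (singular) commutator term $I_2^\eps$ is supported only in the $x_-$-direction, precisely where the $\eps$-blow-up takes place.

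For the two regular pieces $I_1^\eps$ and $I_3^\eps$, I would perform the change of variables $(x,y)\mapsto(x_+,x_-)$ and observe that the integrands read, respectively,
\[
\rho_u(x_++\eps x_-)\big\{b_u(x_++\eps x_-)+b_u(x_+-\eps x_-)\big\}f_u(x_+-\eps x_-),\quad \rho_u(x_++\eps x_-)(\nabla\cdot b_u)(x_+-\eps x_-)f_u(x_+-\eps x_-),
\]
tested against $\nabla\phi(x_+)\chi(x_-)$ and $\phi(x_+)\chi(x_-)$. Pointwise a.e., these converge to $2\rho_u f_u b_u(x_+)$ and $\rho_u f_u (\nabla\cdot b_u)(x_+)$ times the corresponding test function. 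The same H\"older estimates used in the proof of Proposition \ref{proposition:renormalizable drift}, exploiting the compact supports of $\phi$ and $\chi$, furnish a uniform-in-$\eps$ $L^1$ dominant, so Lebesgue's dominated convergence applies. With the normalization of $\chi$ chosen so that the Jacobian factors from the change of variables are absorbed, this gives
\[
I_1^\eps \,\longrightarrow\, \int_s^t\langle \rho_u f_u b_u,\nabla\phi\rangle\,\rmd u, \qquad I_3^\eps \,\longrightarrow\, \int_s^t\langle \rho_u f_u\,\nabla\cdot b_u,\phi\rangle\,\rmd u.
\]

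The heart of the matter is the commutator term $I_2^\eps$, where the formally singular prefactor $1/\eps$ has to be tamed by the Sobolev regularity of $b$. Using identity \eqref{eq:estimates_drift_proof}, its integrand reads
\[
\rho_u(x_++\eps x_-)f_u(x_+-\eps x_-)\phi(x_+)\,\bigg[\tfrac{1}{2}\!\int_{-1}^1 Db_u(x_++\theta\eps x_-)\,\rmd\theta\bigg]\,x_-\cdot\nabla\chi(x_-).
\]
The classical DiPerna--Lions-type convergence of the symmetric difference quotient, namely that for $b_u\in W^{1,r}_\loc$
\[
\frac{b_u(\cdot+\eps x_-) - b_u(\cdot-\eps x_-)}{2\eps} \,\longrightarrow\, Db_u(\cdot)\,x_- \quad \text{in } L^r_\loc, \text{ uniformly in } x_-\in\supp\chi,
\]
combined with the $(p,q,r)$-H\"older integrability of the remaining factors and the compact supports of $\phi,\chi$, yields
\[
I_2^\eps \,\longrightarrow\, \int_s^t\!\!\int_{\R^{2d}} \rho_u(x_+) f_u(x_+)\phi(x_+)\,\big[Db_u(x_+)\,x_-\big]\cdot\nabla\chi(x_-)\,\rmd x_+\rmd x_-\,\rmd u.
\]
Integration by parts in $x_-$ against the compactly supported $\chi$, using the elementary identity $\int (x_-)_j\,\partial_i\chi(x_-)\,\rmd x_- = -\delta_{ij}\int\chi$, collapses the inner integral to $-\nabla\cdot b_u(x_+)$ (times the same normalization constant as before), so that $I_2^\eps \to -\int_s^t\langle \rho_u f_u\,\nabla\cdot b_u,\phi\rangle\,\rmd u$. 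Summing the three limits, $I_2^\eps$ and $I_3^\eps$ cancel exactly and only $I_1^\eps$ survives, yielding the claim. The main obstacle is thus the analysis of $I_2^\eps$: it both requires the DiPerna--Lions convergence of the symmetric difference quotient under the sole assumption $b\in L^1_tW^{1,r}_\loc$, and relies on the delicate structural fact that the divergence of $b$ produced by this limit exactly cancels the divergence term coming from $I_3^\eps$.
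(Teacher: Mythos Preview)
Your proposal is correct and follows essentially the same route as the paper: the same decomposition $-\langle\delta M^\eps_{st},\Psi\rangle=I_1^\eps+I_2^\eps+I_3^\eps$, the same limits for each piece via continuity of translations and dominated convergence, and the same cancellation $\lim I_2^\eps=-\lim I_3^\eps$ obtained through the integration-by-parts identity $\int (x_-)_j\,\partial_i\chi\,\rmd x_-=-\delta_{ij}$. The only minor differences are cosmetic: the paper handles $I_2^\eps$ by first writing the difference quotient via \eqref{eq:estimates_drift_proof} and then invoking continuity of translations on $Db_u$ pointwise in $(x_-,\theta)$ (so no uniformity in $x_-$ is needed, the uniform $L^1$ dominant from \eqref{eq:epsilon cancellation in the drift2} suffices), and it keeps track of the Jacobian factor $2^d$ from the $(x,y)\mapsto(x_+,x_-)$ change of variables explicitly rather than absorbing it into the normalization of $\chi$.
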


\begin{proof}

Note that, since $1/p + 1/q + 1/r = 1$, we have that $\rho f b,\, \rho f Db \in L^1_t L^1_\loc$.
As in Proposition \ref{proposition:renormalizable drift}, developing the expression for $\langle \delta M^\eps_{st},\Psi\rangle$, we obtain the terms $\{I^\eps_i\}_{i=1}^3$ defined by \eqref{eq:blow-up of drift1}-\eqref{eq:blow-up of drift3}.
Recall formula \eqref{eq:approx_dirac_delta} for $T_\eps\Psi$ and further notice that 
\begin{align*}
    \nabla_x \Psi(x,y) & = \frac12 \nabla \phi \left( \frac{x+y}{2} \right) \chi(x-y) +  \phi \left( \frac{x+y}{2} \right) \nabla \chi(x-y), \\
    \nabla_y \Psi(x,y) & = \frac12 \nabla \phi \left( \frac{x+y}{2} \right) \chi(x-y) -  \phi \left( \frac{x+y}{2} \right) \nabla \chi(x-y),\\
    T_{\eps} \Big[ (\nabla_x + \nabla_y) \Psi \Big] (x,y) & = \nabla \phi \left( \frac{x+y}{2} \right) \chi^\eps(x-y).
\end{align*}
Plugging these identities into $I^\eps_1$ defined by \eqref{eq:blow-up of drift1}, we find
\begin{align*}
    I^\eps_1
    & = \frac12 \int_s^t \int_{B_{2R}} \int_{B_{2R}} \rho_u(x) [b_u(x) + b_u(y) ] f_u(y) \nabla \phi \left( \frac{x+y}{2} \right) \chi^{\eps}(x-y) \rmd x\rmd y \rmd u \\
    & \to \int_s^t\int_{\R^d} \rho_u(x)b_u(x) f_u(x) \nabla \phi(x)\rmd x \rmd u
\end{align*}
where convergence follows from the integrability assumptions and properties of mollifiers $\{\chi^\eps\}_\eps$.
For $I^\eps_3$, since $\rho_u (\nabla \cdot b_u) f_u \in L^1_t L^1_\loc$, we similarly get 
$$
    \lim_{\eps\to 0} I^\eps_3 =
    \int_s^t \langle \rho_u (\nabla \cdot b_u) f_u, \phi \rangle \rmd u.
$$
Arguing as in \eqref{eq:estimates_drift_proof}, changing the area of integration into $(x_+, x_-)$-variables and plugging in $\Psi(x,y) = \phi(x_+) \psi(2 x_-)$, we can write $I^\eps_3$ from \eqref{eq:blow-up of drift2} as
\begin{equation}\label{eq:blow-up of drift3 with blow up transformation}\begin{split}
    I^\eps_3 & = 2^{2d} \int_s^t \int_{-1}^1 \int_{B_1} \int_{B_R} \rho_u(x_+ + \eps x_-) f_u(x_+ - \eps x_-)  \phi(x_+)\\
    &\qquad\qquad D b_u(x_+ + \theta \eps x_-)  x_- \cdot \nabla \chi(2 x_-) \rmd x_+ \rmd x_- \rmd \theta \, \dd u.
\end{split}\end{equation}
For any $x_- \in B_1$, $\theta \in [-1,1]$ and almost every $u \in [s,t]$, as $1/p + 1/q + 1/r = 1$, it follows that 
$$
\lim_{\eps \rightarrow 0} \rho_u(\cdot  + \eps x_-) f_u(\cdot  - \eps x_-) Db_u(\cdot  + \theta \eps x_-) = \rho_u f_u Db_u  \textrm{ in } L^1(B_{R+1})  
$$
since translations are continuous operators on $L^1_\loc$. Thus, for Lebesgue a.e. $\theta$ and $x_-$, we find
$$
\lim_{\eps \rightarrow 0}  \int_{B_R} \rho_u(x_+ + \eps x_-) f_u(x_+ - \eps x_-) \phi(x_+)  D b_u(x_+ + \theta \eps x_-)    \rmd x_+  = \int_{B_R} \rho_u(x_+) f_u(x_+)  \phi(x_+)  D b_u(x_+)    \rmd x_+ .
$$
Using the bound \eqref{eq:epsilon cancellation in the drift2} and dominated convergence, by the above and identity \eqref{eq:blow-up of drift3 with blow up transformation}, we find
\begin{align*}
    \lim_{\eps\to 0} I^\eps_2
    & = 2^{2d+1} \int_s^t \int_{B_1} \int_{B_R}  \rho_u(x_+)  f_u(x_+) \phi(x_+)  \big[ D b_u(x_+ )  x_-\big] \cdot \nabla \chi(2 x_-) \rmd x_+ \rmd x_-  \rmd u \\
    & =  \int_s^t \int_{B_R} \rho_u(x_+)  f_u(x_+) \phi(x_+)  D b_u(x_+ ):\bigg( 2^{2d+1}  \int_{B_1} x_-  \otimes \nabla \chi(2 x_-) \rmd x_-\bigg)  \rmd x_+ \rmd u
\end{align*}
where we employed the notation $A:B=\sum_{i,j} A_{ij} B_{ij}$ for the Frobenius product of matrices. By integration by parts, we have 
\begin{align*}
    2^{2d+1} \int_{B_1} x_-  \otimes \nabla \chi(2 x_-) \rmd x_- 
    = 2^{2d}  \int_{B_1} x_-  \otimes \nabla  ( \chi(2 x_-) ) \rmd x_-
    = - I_d\, 2^d \int_{B_1} \chi(2x_-) \rmd x_-
    = - I_d.
\end{align*}
Overall, we deduce that
\begin{align*}
    \lim_{\eps\to 0} I^\eps_2
    = - \int_s^t \int_{\R^d} \rho_u(x_+)  f_u(x_+)  \nabla \cdot b_u(x_+ ) \phi(x_+) \rmd x_+  \rmd u
    = - \lim_{\eps\to 0} I^\eps_3
\end{align*}
which proves the claim. 
\end{proof}

We are finally ready to present the

\begin{proof}[Proof of Theorem \ref{thm:product-rule}]
    Thanks to assumption \eqref{eq:product_formula_assumption}, we can apply all the results presented in this section.
    Let $R\geq 1$ be fixed. Let $\phi \in \cF_{3,R}$ and define $\Psi$ as in \eqref{eq:defn_test_doublevariables};
    testing \eqref{eq:approx product} against $\Psi$, we have
    \begin{equation} \label{eq:before taking the limit} 
         \langle \delta(\rho \otimes f)_{st}, T_{\eps} \Psi \rangle  + \langle \delta M_{st}^{\eps}, \Psi \rangle  +  \langle \rho_s \otimes f_s, X^{\eps,\ast}_{st} \Psi \rangle - \langle \rho_s \otimes f_s, \bbX^{\eps,\ast}_{st} \Psi \rangle = \langle T_{\eps}^* (\rho \otimes f)_{st}^{\natural}, \Psi \rangle.
    \end{equation}
    By Propositions \ref{proposition:noise converges} and \ref{proposition:drift converges}, as well as identity \eqref{eq:approx_dirac_delta} and standard properties of mollifiers, we see that the LHS of \eqref{eq:before taking the limit} converges to
    $$
        \langle \delta(\rho f)_{st}, \phi \rangle  - \int_s^t \langle \rho_u f_u b_u, \nabla \phi\rangle  \rmd u  + \langle \rho_s f_s, A_{st}^\ast \phi \rangle -\langle \rho_s f_s, \bbA_{st}^{\ast} \phi \rangle.
    $$
    Thus, also the RHS of \eqref{eq:before taking the limit} is converging, and we denote its limit by $\langle (\rho f)^{\natural}_{st}, \phi \rangle$. It is clear that $(\rho f)^{\natural}_{st} \in \clF_{-2,R}$; in order to verify that $\rho f$ satisfies Definition $\rho f$, it only remains to show that $(\rho f)^{\natural}\in C^{\fkp/3-\var}_2 \cF_{-3,R}$.

    From Proposition \ref{proposition:renormalizable noise}, we see that the unbounded rough drivers $\bX^\eps$ satisfy uniform-in-$\eps$ bounds in operator norms; moreover it is clear that $\rho\otimes f\in \cB_b([0,T];\cE_{-0,2R})$, and Proposition \ref{proposition:renormalizable drift} ensures uniform-in-$\eps$ bounds for $\| M^\eps\|_{C^{1-\var} \cE_{-1,2R}}$.
    Therefore we can apply Lemma \ref{lem:apriori-unbounded} (for the RPDE satisfied by $\rho\otimes f$ on the scale $(\cE_{l,2R})_l$) to deduce that the remainders $T_{\eps}^* (\rho \otimes f)^{\natural}$ remain uniformly bounded in $\eps$, i.e. there exists a control $w_{\otimes, R, \natural}$ such that 
    \begin{equation*}
        \sup_{\eps>0} \| T_{\eps}^* (\rho \otimes f)^{\natural}_{st}\|_{\mathcal{E}_{-3,2R}} \leq w_{\otimes, R, \natural}(s,t)^{3/\fkp}.
    \end{equation*}
    Since $\|\Psi\|_{\mathcal{E}_{3,2R}} \lesssim \|\phi\|_{\cF_{3,R}}$ it follows that 
    $$
        |\langle (\rho f)^{\natural}_{st}, \phi \rangle | 
        = \lim_{\epsilon \rightarrow 0} | \langle T_{\eps}^* (\rho \otimes f)^{\natural}_{st} , \Psi\rangle | \lesssim w_{\otimes, R, \natural}(s,t)^{3/\fkp} \| \phi \|_{\cF_{3,R}}
    $$
    which overall shows that $(\rho f)^{\natural}\in C^{\fkp/3-\var}_2 \cF_{-3,R}$. As the argument holds for any $R\geq 1$, we conclude that $\rho f$ solves the rough continuity equation \eqref{eq:rc_in_unique}.

    Finally, since $\rho f$ solves \eqref{eq:rc_in_unique}, the duality formula \eqref{eq:duality formula} follows from assumption \eqref{eq:duality_formula_assumption} and Lemma \ref{lem:conservation-mass} applied to $\tilde \rho=\rho f$.
\end{proof}

\begin{remark}
    If the vector fields $\xi_k$ are not divergence free, in light of the heuristics computations presented at the beginning of Section \ref{subsec:linear-uniqueness}, we still expect the product and duality formulas from Theorem \ref{thm:product-rule} to be true.
    However at a technical level, obtaining a priori estimates for $L^p_x$-norms of solutions becomes more challenging, since we do not have a clean expression like \eqref{eq:incompressibility-proof-eq2} anymore; moreover handling commutators estimates in the style of Propositions \ref{proposition:renormalizable noise}-\ref{proposition:noise converges} gets harder, since we cannot rely anymore on cancellations coming from conservativity \eqref{eq:conservative-driver}.
    Since we are mostly interested in the incompressible case, in light of applications to fluid dynamics, we leave this extension for future investigations.
\end{remark}

\subsection{Uniqueness, stability, renormalizability and compactness}\label{subsec:linear-stability}

With Theorem \ref{thm:product-rule} at hand, we are finally ready to prove uniqueness of solutions. We will keep referring to the rough PDEs \eqref{eq:rc_in_unique}-\eqref{eq:lt_in_unique} introduced in the previous section.

\begin{theorem}\label{thm:uniqueness-linear-RPDE}
    Let $\fkp\in [2,3)$, $Z\in \clC^{\fkp}_g$, $\xi\in C^3_b$ with $\nabla\cdot \xi =0$.
    Let $b$ satisfy
    \begin{equation}\label{eq:condition-uniqueness}
        \frac{b}{1+|x|}\in L^1_t L^1_x + L^1_t L^\infty_x, \quad
        \nabla\cdot b\in L^1_t L^\infty_x, \quad
        b\in L^1_t W^{1,1}_\loc.
    \end{equation}
    Then for any $\rho_0\in L^1_x\cap L^\infty_x$ there exists a unique solution $\rho\in   \cB_b([0,T];L^1_x\cap L^\infty_x)$ to \eqref{eq:rc_in_unique}.

    Similarly, for any $p\in [1,\infty]$ and any $\rho\in L^p_x$, there exists a unique solution $\rho\in \cB_b([0,T]; L^p_x)$ to \eqref{eq:rc_in_unique} as soon as
    \begin{equation}\label{eq:condition-uniqueness-variant}
        \frac{b}{1+|x|}\in L^1_t L^{p'}_x, \quad
        \nabla\cdot b\in L^1_t L^\infty_x, \quad
        b\in L^1_t W^{1,p'}_\loc.
    \end{equation}
\end{theorem}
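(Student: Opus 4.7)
My plan is to deduce uniqueness by a duality argument based on Theorem \ref{thm:product-rule}: test the difference of two solutions against a backward transport solution with smooth terminal datum and invoke the duality identity \eqref{eq:duality formula}. Existence in both settings is essentially a direct consequence of Proposition \ref{prop:existence-linear-RPDE} (whose hypothesis $b\in L^1_t L^{p'}_\loc$ is implied by $b\in L^1_t W^{1,p'}_\loc$), complemented in the first case by the intersection-space extension of Remark \ref{rem:existence-linear RPDE} together with the equi-integrability bound of Lemma \ref{lem:equi-p-integrability}, whose growth hypothesis is exactly the first item of \eqref{eq:condition-uniqueness}.

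For uniqueness in the first setting, I will take two solutions $\rho^1,\rho^2\in\cB_b([0,T]; L^1_x\cap L^\infty_x)$ with common initial datum and let $\rho:=\rho^1-\rho^2$, which solves \eqref{eq:rc_in_unique} with $\rho_0=0$ by linearity; it suffices to show $\langle \rho_{t_0},\phi\rangle=0$ for every $t_0\in(0,T]$ and every $\phi\in C^\infty_c$. The key step is to construct $f\in\cB_b([0,t_0]; L^1_x\cap L^\infty_x)$ solving the backward rough transport equation \eqref{eq:lt_in_unique} on $[0,t_0]$ with $f_{t_0}=\phi$. Via the change of variables $g_s:=f_{t_0-s}$ and the time-reversed rough path $\overleftarrow{\bZ}^{t_0}$ of Definition \ref{defn:time-reversed-path} (still geometric by Remark \ref{rem:time-reversal}), this reduces to a forward transport problem with coefficients $(-b_{t_0-\cdot},\xi,\overleftarrow{\bZ}^{t_0})$ and initial datum $\phi\in C^\infty_c\subset L^1_x\cap L^\infty_x$; the hypotheses \eqref{eq:condition-uniqueness} are invariant under $b\mapsto -b_{t_0-\cdot}$, so the transport variant of Proposition \ref{prop:existence-linear-RPDE} together with Remark \ref{rem:existence-linear RPDE} will yield the required $g$, hence $f$.

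With $\rho$ and $f$ at hand, I will apply Theorem \ref{thm:product-rule} with exponents $p=q=\infty$, $r=1$: hypothesis \eqref{eq:product_formula_assumption} is immediate, while \eqref{eq:duality_formula_assumption} holds because $\rho f\in\cB_b([0,t_0]; L^1_x\cap L^\infty_x)$ and, splitting $b/(1+|x|)=b_1/(1+|x|)+b_\infty/(1+|x|)$ into summands in $L^1_t L^1_x$ and $L^1_t L^\infty_x$, H\"older gives $b\rho f/(1+|x|)\in L^1_t L^1_x$. The duality identity \eqref{eq:duality formula} will then yield
\begin{equation*}
    \langle \rho_{t_0},\phi\rangle=\langle \rho_{t_0},f_{t_0}\rangle=\langle \rho_0,f_0\rangle=0,
\end{equation*}
and arbitrariness of $\phi$ and $t_0$ forces $\rho\equiv 0$. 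The $L^p_x$ variant proceeds identically with $r=p'$ and $q=\infty$: the backward transport solution $f\in\cB_b([0,t_0]; L^1_x\cap L^\infty_x)$ is constructed as before from $\phi\in C^\infty_c$, so that by interpolation $f\in L^{p'}_x$, giving $\rho f\in L^1_x$ by H\"older; then $b\rho f/(1+|x|)\in L^1_t L^1_x$ follows from $b/(1+|x|)\in L^1_t L^{p'}_x$ paired with $\rho f\in L^p_x$ (via $\rho\in L^p_x$ and $f\in L^\infty_x$).

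The main expected obstacle is a rigorous time-reversal statement for the unbounded-rough-driver framework, since Lemma \ref{lem:time-reversal-RDE} is phrased only for pathwise RDEs. The PDE analog should however follow transparently from Section \ref{subsec:unbounded-rough}: the driver $\bA$ associated via \eqref{eq:defn-unbounded-continuity} to $(\xi,\overleftarrow{\bZ}^{t_0})$ satisfies Chen's relation by construction of $\overleftarrow{\bZ}^{t_0}$ and is still conservative by preservation of geometricity (Remark \ref{rem:time-reversal} combined with \eqref{eq:conservative-driver}); consequently the a priori estimates of Lemma \ref{lem:apriori-unbounded} and the compactness argument of Proposition \ref{prop:existence-linear-RPDE} should carry over verbatim to the reversed problem.
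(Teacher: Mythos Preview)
Your proposal is correct and follows essentially the same approach as the paper: existence is read off from Proposition \ref{prop:existence-linear-RPDE} and Remark \ref{rem:existence-linear RPDE}, while uniqueness is obtained by linearity and the duality argument of Theorem \ref{thm:product-rule} with $p=q=\infty$, $r=1$, testing the zero-initial-datum solution against a backward transport solution with terminal datum $\varphi\in C^\infty_c$. The paper simply invokes Remark \ref{rem:existence-linear RPDE} for the existence of such a backward $f$, whereas you spell out the time-reversal via $\overleftarrow{\bZ}^{t_0}$; this is the same mechanism underlying that remark and your sketch is sound.
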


\begin{remark}\label{rem:uniqueness-RTE}
    As in Remark \ref{rem:existence-linear RPDE}, one can similarly prove uniqueness statements for \eqref{eq:lt_in_unique}, as well as both rough PDEs \eqref{eq:rc_in_unique}-\eqref{eq:lt_in_unique} with terminal conditions $\rho\vert_{t=T}=\rho_t$.
\end{remark}

\begin{proof}
We give the proof of the first statement, the second one being similar.
By linearity, it suffices to show that any solution $\rho\in \cB_b([0,T];L^1_x\cap L^\infty_x)$ with $\rho_0=0$ is necessarily $\rho\equiv 0$.
Fix any $\tau \in(0,T]$ and any $\varphi\in C^\infty_c$; let $f\in \cB_b([0,\tau]; L^1_x\cap L^\infty_x)$ be a solution to \eqref{eq:lt_in_unique} with terminal condition $f\vert_{t=\tau}=\varphi$, whose existence follows by Remark \ref{rem:existence-linear RPDE}.
By Theorem \ref{thm:product-rule} (with $p=q=\infty$, $r=1$), $\rho f$ is still a solution to \eqref{eq:rc_in_unique}, which by the above properties satisfies $\rho f \in \cB_b([0,\tau]; L^1_x\cap L^\infty_x)$; combined with the first condition in \eqref{eq:condition-uniqueness}, this implies that
$$\frac{b \rho f}{1+|x|}\in L^1_t L^1_x.$$
Again by Theorem \ref{thm:product-rule} it then holds
\begin{align*}
    \langle \rho_\tau, \varphi\rangle = \langle \rho_\tau, f_\tau\rangle = \langle \rho_0,f_0\rangle = 0.
\end{align*}
As the reasoning holds for all $\varphi\in C^\infty_c$, we conclude that $\rho_\tau=0$; by the arbitrariness of $\tau\in (0,T]$, conclusion follows.
\end{proof}

Having established uniqueness, we pass to study the continuous dependence of solutions on the data of the problem.
From now on, for simplicity of exposition, we mostly focus on $L^p_x$-valued solutions, which are well-posed under condition \eqref{eq:condition-uniqueness-variant}; his however doesn't play any major role in the statements and proofs, which up to minor modifications hold can be rephrased to accommodate the setting of \eqref{eq:condition-uniqueness}.

\begin{corollary}\label{cor:linear-RPDE-stability-1}
    Let $\fkp\in [2,3)$, $p\in [1,\infty]$ and assume that
    \begin{equation}\label{eq:condition-stability}
       \frac{b}{1+|x|}\in L^1_t L^{p'}_x, \quad
        \nabla\cdot b\in L^1_t L^\infty_x, \quad
        b\in L^1_t W^{1,p'}_\loc, \quad
        \xi\in C^3_b, \quad \nabla\cdot\xi =0,
        \quad \bZ\in \clC^\fkp_g.
    \end{equation}
    Given any $\rho^1_0$, $\rho^2_0\in L^p_x$, the associated unique solutions $\rho^i\in \cB_b([0,T]; L^p_x)$ to \eqref{eq:rc_in_unique} satisfy
    \begin{equation}\label{eq:linear-RPDE-stability}
        \| \rho^1-\rho^2\|_{\cB_b([0,T];L^p_x)}
        = \sup_{t\in [0,T]} \| \rho^1_t - \rho^2_t\|_{L^p_x}
        \leq \exp\bigg( \int_0^T \| \nabla\cdot b_u\|_{L^\infty_x} \dd u \bigg) \| \rho^1_0 - \rho^2_0\|_{L^p_x}.
    \end{equation}
    Consider now a sequence $(b^n,\xi^n,\bZ^n)$ satisfying condition \eqref{eq:condition-stability} uniformly in $n$, namely such that $\nabla\cdot \xi^n =0$ for all $n$ and 
    \begin{equation}\label{eq:hypothesis-unifboundedness-linear-RPDE-stability}
        \sup_{n\in\N} \big\{ \| \xi^n\|_{C^3_b} + \| \bZ^n\|_{\fkp,T} + \|  \nabla \cdot b^n\|_{L^1_t L^\infty_x} + \Big\| \frac{b^n}{1+|x|} \Big\|_{L^1_t L^{p'}_x} + \| \nabla b^n\|_{L^1_t L^{p'}(B_R)} \big\}<\infty \quad \forall\, R\geq 1.
    \end{equation}
    Further assume that there exist $(b,\xi,\bZ)$ such that
    \begin{equation}\label{eq:hypothesis-stability-linear-RPDE}
        b^n\to b \ {\rm in} \ L^1_t L^1_\loc, \quad
        \xi^n\to \xi \ {\rm in} \ C^0_\loc, \quad
        \sup_{t\in [0,T]} |\bZ^n_{0,t}-\bZ_{0,t}| \to 0;
    \end{equation}
    then for any sequence $\rho^n_0\rightharpoonup \rho_0$ in  $L^p_x$ for $p\in [1,\infty)$ (resp. $\rho^n_0 \xrightharpoonup{\ast} \rho_0$ for $p=\infty$), denoting by $\rho^n$, resp. $\rho$, the solutions to \eqref{eq:rc_in_unique} associated to $(b^n,\xi^n,\bZ^n)$, resp. $(b,\xi,\bZ)$, it holds $\rho^n\to \rho$ in $C_w([0,T];L^p_x)$ (resp. in $C_{w-\ast}([0,T];L^\infty_x)$ for $p=\infty$).
\end{corollary}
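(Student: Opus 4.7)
The stability estimate \eqref{eq:linear-RPDE-stability} follows directly from linearity of \eqref{eq:rc_in_unique}: the difference $\rho^1 - \rho^2$ is a weak solution to the same RCE with initial datum $\rho^1_0 - \rho^2_0$, and by uniqueness (Theorem \ref{thm:uniqueness-linear-RPDE}) must coincide with the solution constructed via Proposition \ref{prop:existence-linear-RPDE}, hence satisfies the a priori bound \eqref{eq:lq-bound}, of which \eqref{eq:linear-RPDE-stability} is an immediate consequence.

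For the continuous dependence statement, the plan is to mimic the compactness argument used in the proof of Proposition \ref{prop:existence-linear-RPDE}, now applied to the sequence $\{\rho^n\}$ itself (rather than to a smoothed version). Step one: the uniform bound \eqref{eq:hypothesis-unifboundedness-linear-RPDE-stability} on $\|\nabla\cdot b^n\|_{L^1_t L^\infty_x}$ combined with \eqref{eq:lq-bound} and the uniform boundedness of $\{\rho^n_0\}$ (which holds by Banach--Steinhaus since $\rho^n_0 \rightharpoonup \rho_0$) yields $\sup_n \|\rho^n\|_{\mathcal{B}_b([0,T];L^p_x)} < \infty$. Step two: arguing as in Corollary \ref{cor:stability-RDE-flow}, the uniform $\fkp$-variation of $\bZ^n$ and pointwise convergence $\bZ^n_{0,\cdot} \to \bZ_{0,\cdot}$ upgrade to $d_{\tilde\fkp,T}(\bZ^n,\bZ) \to 0$ for any $\tilde \fkp \in (\fkp,3)$, with an equicontinuous modulus for $w_{\bZ^n}$. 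Combined with the uniform bounds on $b^n$ from \eqref{eq:hypothesis-unifboundedness-linear-RPDE-stability} and Lemma \ref{lem:apriori-unbounded} applied to each $\rho^n$ (as in \eqref{eq:existence-linear-proof-3}), this yields equicontinuity of $\{\rho^n\}$ in $\mathcal{F}_{-1,R}$ for every $R \geq 1$.

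Step three: for $p \in (1,\infty)$ we apply Proposition \ref{prop:compactness-lp} to extract a subsequence (not relabelled) converging in $C_w([0,T];L^p_x)$, while for $p=\infty$ the same proposition yields convergence in $C_{w-\ast}([0,T];L^\infty_x)$. For $p=1$, equi-integrability of $\{|\rho^n_0|\}$ (which follows from $\rho^n_0 \rightharpoonup \rho_0$ in $L^1_x$ by Dunford--Pettis) propagates to $\{\rho^n\}$ via Lemma \ref{lem:equi-p-integrability} applied uniformly in $n$, giving convergence in $C_w([0,T];L^1_\loc)$ through Corollary \ref{cor:compactness-l1-loc}. Step four: identification of the subsequential limit. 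By interpolation between the $C^3_b$-bound and $C^0_\loc$-convergence in \eqref{eq:hypothesis-stability-linear-RPDE}, $\xi^n \to \xi$ in $C^2_\loc$, so $A^{n,\ast}_{st}\psi \to A^{\ast}_{st}\psi$ and $\bbA^{n,\ast}_{st}\psi \to \bbA^{\ast}_{st}\psi$ in $L^\infty$ for every $\psi \in \mathcal{F}_{3,R}$. Testing the Davie expansion \eqref{eq:solution-rough-continuity} for $\rho^n$ against $\psi$ and passing to the limit term by term exactly as in the proof of Proposition \ref{prop:existence-linear-RPDE}, the subsequential limit $\tilde\rho$ solves \eqref{eq:rc_in_unique} with data $(b,\xi,\bZ)$ and initial condition $\rho_0$. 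By Theorem \ref{thm:uniqueness-linear-RPDE}, $\tilde\rho = \rho$; since the extraction argument applies to every subsequence of $\{\rho^n\}$, the full sequence converges.

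The main obstacle is passing to the limit in the drift $\mu^n_t = \int_0^t \nabla\cdot(b^n_u \rho^n_u)\, \mathrm{d}u$, i.e.\ verifying $\int_s^t \langle \rho^n_u, b^n_u \cdot \nabla \psi \rangle \mathrm{d}u \to \int_s^t \langle \rho_u, b_u \cdot \nabla \psi \rangle \mathrm{d}u$. For $p\in(1,\infty]$ this is routine weak-strong convergence in $L^1_tL^1_\loc$, but for $p=1$ the product $b^n_u \rho^n_u$ only converges weakly in $L^1$ provided $\{\rho^n\}$ is equi-integrable; this is where the combined use of Lemma \ref{lem:equi-p-integrability}, a.e.\ convergence $b^n \to b$ (extracted from $L^1_t L^1_\loc$ convergence along a subsequence) and Egorov's theorem becomes essential, mirroring the $p=1$ portion of the proof of Proposition \ref{prop:existence-linear-RPDE}.
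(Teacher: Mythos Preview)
Your proof is correct and follows essentially the same approach as the paper: linearity plus the a priori bound \eqref{eq:lq-bound} for the first part, then compactness via Proposition \ref{prop:existence-linear-RPDE}-style arguments combined with uniqueness for the second. One small slip in the $p=1$ case: you invoke Lemma \ref{lem:equi-p-integrability} (which gives uniform tightness of $\{\rho^n_t\}$) but then only conclude convergence in $C_w([0,T];L^1_\loc)$ via Corollary \ref{cor:compactness-l1-loc}; since you have both local equi-integrability and tightness, you should instead apply Proposition \ref{prop:compactness-l1} to obtain convergence in $C_w([0,T];L^1_x)$, which is what the statement actually asserts. The paper makes this point explicitly, noting that the tightness condition iii.\ of Proposition \ref{prop:compactness-l1} is what upgrades $L^1_\loc$ to $L^1_x$.
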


\begin{proof}
    The first claim is a direct consequence of linearity: $\rho^1-\rho^2$ is the unique solution associated to initial condition $\rho^1_0-\rho^2_0$, which by Proposition \ref{prop:existence-linear-RPDE} must satisfy \eqref{eq:lq-bound}, yielding \eqref{eq:linear-RPDE-stability}.

    To show the second part, notice that as in the proof of Corollary \ref{cor:stability-RDE-flow} we get the convergence $\bZ^n \rightarrow \bZ$ in $\clC^{\tilde{p}}$ for any $\tilde{p} \in (p,3)$ and equicontinuity of $(s,t) \mapsto   \sup_n\, \llbracket \mathbf{Z}^n\rrbracket_{\tilde{\mathfrak{p}}, [s,t]}.$
    Arguing as in the proof of Proposition \ref{prop:existence-linear-RPDE}, we can show that the sequence $\{\rho^n\}_n$ is compact in $C_w([0,T];L^p_x)$ ($C_{w-\ast}([0,T];L^\infty_x)$ if $p=\infty$).
    In particular, in the case $p=1$, using the fact that $\{\rho^n_0\}_n$ are uniformly integrable (by the Dunford-Pettis theorem, since $\rho^n_0\rightharpoonup \rho_0$) we can argue as in Lemma \ref{lem:equi-p-integrability} (by combining estimate \eqref{eq:equi-p-integrability-proof3} with assumption \eqref{eq:hypothesis-unifboundedness-linear-RPDE-stability}) to deduce that
    \begin{align*}
        \lim_{R\to\infty} \sup_{n\in\N}\sup_{t\in [0,T]} \int_{|x|>R} |\rho^n_t(x)|\dd x = 0;
    \end{align*}
    in particular, the tightness condition from Proposition \ref{prop:compactness-l1}-iii. is satisfied, hence we get compactness in $C_w([0,T];L^1_x)$ and not just $C_w([0,T];L^1_\loc)$.
    
    Arguing as in Proposition \ref{prop:existence-linear-RPDE}, it follows that $\{\rho^n\}_n$ admits a convergent subsequence in $C_w([0,T];L^p_x)$ ($C_{w-\ast}([0,T];L^\infty_x)$ for $p=\infty$), and that its limit point is a solution to the rough continuity \eqref{eq:rc_in_unique} associated to $(b,\xi,\bZ)$, belonging to $C_w([0,T];L^p_x)$. Since uniqueness holds for \eqref{eq:rc_in_unique} in this class, and the argument holds for any subsequence we can extract, we conclude that the full sequence converges.
\end{proof}

A similar stability result holds for \eqref{eq:lt_in_unique} as well; notice however the difference between condition \eqref{eq:hypothesis-stability-linear-RPDE} and \eqref{eq:hypothesis-stability-variant} below.

\begin{corollary}\label{cor:stability-1-rte}
    Let $\fkp\in [2,3)$, $p\in [1,\infty]$ and $(b,\xi,\bZ)$ satisfying \eqref{eq:condition-stability}. Then for any $f_0^1$, $f^2_0\in L^p_x$, the associated unique solutions $f^i\in \cB_b([0,T]; L^p_x)$ to \eqref{eq:lt_in_unique} satisfy
    \begin{equation*}
        \| f^1-f^2\|_{\cB_b([0,T];L^p_x)}
        = \sup_{t\in [0,T]} \| f^1_t - f^2_t\|_{L^p_x}
        \leq \exp\bigg( \int_0^T \| \nabla\cdot b_u\|_{L^\infty_x} \dd u \bigg) \| f^1_0 - f^2_0\|_{L^p_x}.
    \end{equation*}
    Consider now a sequence $(b^n,\xi^n,\bZ^n)$ satisfying condition \eqref{eq:hypothesis-unifboundedness-linear-RPDE-stability}, $\nabla\cdot \xi^n=0$ for all $n$, and further assume that there exist $(b,\xi,\bZ)$ such that
    \begin{equation}\label{eq:hypothesis-stability-variant}
        b^n\to b \ {\rm in} \ L^1_t L^1_\loc, \quad
        \nabla\cdot b^n\to \nabla\cdot b\ {\rm in} \ L^1_t L^1_\loc, \quad
        \xi^n\to \xi \ {\rm in} \ C^0_\loc, \quad
        \sup_{t\in [0,T]} |\bZ^n_{0,t}-\bZ_{0,t}| \to 0;
    \end{equation}
    then for any sequence $f^n_0\rightharpoonup f_0$ in  $L^p_x$ for $p\in [1,\infty)$ (resp. $f^n_0 \xrightharpoonup{\ast} f_0$ for $p=\infty$), denoting by $f^n$, resp. $f$, the solutions to \eqref{eq:lt_in_unique} associated to $(b^n,\xi^n,\bZ^n)$, resp. $(b,\xi,\bZ)$, it holds $f^n\to f$ in $C_w([0,T];L^p_x)$ (resp. in $C_{w-\ast}([0,T];L^\infty_x)$ for $p=\infty$).
\end{corollary}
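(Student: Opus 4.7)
The first claim is immediate from linearity: $f^1-f^2$ solves \eqref{eq:lt_in_unique} with initial datum $f^1_0-f^2_0$, so by uniqueness (Theorem \ref{thm:uniqueness-linear-RPDE} combined with Remark \ref{rem:uniqueness-RTE}) it coincides with the solution constructed in Remark \ref{rem:existence-linear RPDE}; the $L^p$-bound therein, whose exponent $1/p\leq 1$, then yields the stated estimate. For the continuous dependence statement, the plan is to mirror the compactness strategy of Corollary \ref{cor:linear-RPDE-stability-1}, the only substantive change being the treatment of the transport drift. First, combining the stability bound just proved (with $f^2\equiv 0$) and \eqref{eq:hypothesis-unifboundedness-linear-RPDE-stability}, I obtain $\sup_n\|f^n\|_{\cB_b([0,T];L^p_x)}<\infty$. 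Interpreting each $f^n$ as solving an unbounded rough driver equation on the scale $(\cF_{l,R})_l$ with URD $\bA^n$ associated to $(\xi^n,\bZ^n)$ and forcing $\mu^n_t=\int_0^t b^n_u\cdot\nabla f^n_u\,\dd u$, estimate \eqref{eq:transport-preliminary} together with \eqref{eq:hypothesis-unifboundedness-linear-RPDE-stability} gives a uniform bound on $\|\mu^n\|_{L^1_t\cF_{-1,R}}$, and Lemma \ref{lem:apriori-unbounded} then yields equicontinuity of $f^n$ in $\cF_{-1,R}$. Applying the compactness criterion from Appendix \ref{app:compactness} (Proposition \ref{prop:compactness-lp}, or Proposition \ref{prop:compactness-l1} for $p=1$, combined with the local equi-integrability argument of Lemma \ref{lem:equi-p-integrability}) then extracts a subsequence converging to some $\tilde f$ in $C_w([0,T];L^p_x)$ (resp.\ in $C_{w-\ast}([0,T];L^\infty_x)$ when $p=\infty$).

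Verifying that $\tilde f$ solves \eqref{eq:lt_in_unique} with data $(b,\xi,\bZ)$ is the core of the argument. Convergence of the rough drivers $\bA^n$ to $\bA$ in the requisite sense proceeds exactly as in Corollary \ref{cor:linear-RPDE-stability-1}, exploiting $\xi^n\to\xi$ in $C^0_\loc$ and $\bZ^n\to\bZ$ in $\clC^{\tilde{\fkp}}_g$ for any $\tilde\fkp\in(\fkp,3)$. The essential subtlety, and precisely the reason for strengthening \eqref{eq:hypothesis-stability-linear-RPDE} to \eqref{eq:hypothesis-stability-variant} in the RTE case, is showing $\mu^n\to\mu$ in the sense of distributions. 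Using the distributional identity $b\cdot\nabla f=\nabla\cdot(bf)-(\nabla\cdot b)f$, this reduces to the two weak convergences $b^n f^n\rightharpoonup bf$ and $(\nabla\cdot b^n)f^n\rightharpoonup(\nabla\cdot b)f$ against $C^\infty_c$-test functions. The first follows from $b^n\to b$ in $L^1_tL^1_\loc$ together with the weak (resp.\ weak-$\ast$) convergence of $f^n$ via a standard weak-strong argument (using Vitali's theorem, and Egorov's theorem in the delicate $p=\infty$ case); the second is analogous but crucially requires the strong convergence $\nabla\cdot b^n\to\nabla\cdot b$ in $L^1_tL^1_\loc$ supplied by \eqref{eq:hypothesis-stability-variant}, exactly as flagged in Remark \ref{rmk:weak compactness}. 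Once $\mu^n\to\mu$ is established, the remainders $f^{n,\natural}$ inherit uniform $\fkp/3$-variation bounds from Lemma \ref{lem:apriori-unbounded}, which pass to the limit $\tilde f^{\natural}$ by lower semicontinuity in $\cF_{-3,R}$, so $\tilde f$ is a bona fide solution in the sense of Definition \ref{defn:solution-rough-continuity} adapted to the transport equation, cf.\ Remark \ref{rem:existence-linear RPDE}.

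To conclude, the uniform bounds \eqref{eq:hypothesis-unifboundedness-linear-RPDE-stability} pass to the limit via weak lower semicontinuity, so $(b,\xi,\bZ)$ satisfies \eqref{eq:condition-uniqueness-variant}, and the RTE version of Theorem \ref{thm:uniqueness-linear-RPDE} forces $\tilde f=f$; since the extraction argument works along every subsequence, the full sequence converges to $f$. The main technical obstacle I anticipate is the $p=1$ case, where achieving compactness in $C_w([0,T];L^1_x)$ (rather than only in $C_w([0,T];L^1_\loc)$) requires propagating the uniform integrability of $\{f^n_0\}$, guaranteed by the Dunford--Pettis theorem from the weak convergence $f^n_0\rightharpoonup f_0$ in $L^1_x$, to the whole family $\{f^n_t\}_{n,t}$. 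I plan to handle this by replicating the quantitative tightness estimate \eqref{eq:equi-p-integrability-proof3} along with the analogous equi-integrability bound, propagated through the flow representation exactly as in the proof of Proposition \ref{prop:existence-linear-RPDE}.
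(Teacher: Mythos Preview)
Your proposal is correct and follows essentially the same approach as the paper: linearity plus the $L^p$-estimate from Remark~\ref{rem:existence-linear RPDE} for the first part, then the compactness-and-identification argument of Corollary~\ref{cor:linear-RPDE-stability-1}, with the key new ingredient being the distributional identity $b\cdot\nabla f=\nabla\cdot(bf)-(\nabla\cdot b)f$ to pass the transport drift to the limit, which is exactly why the extra hypothesis $\nabla\cdot b^n\to\nabla\cdot b$ in $L^1_tL^1_\loc$ is needed. Your treatment is in fact more explicit than the paper's (which simply says ``almost identical to that of Corollary~\ref{cor:linear-RPDE-stability-1}'' and points to Remark~\ref{rmk:weak compactness}), particularly regarding the $p=1$ tightness step.
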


\begin{proof}
    Existence follows from Remark \ref{rem:existence-linear RPDE}, uniqueness from Remark \ref{rem:uniqueness-RTE}. The stability estimate then follows again by linearity of the equation and the estimates from Remark \ref{rem:existence-linear RPDE}.
    The rest of the proof is almost identical to that of Corollary \ref{cor:linear-RPDE-stability-1}, so let us only highlight the difference coming from assumption \eqref{eq:hypothesis-stability-variant}.
    In order to pass to the limit as $n\to\infty$ and show that $f$ is a solution to \eqref{eq:lt_in_unique}, one must now show that
    \begin{align*}
        b^n\cdot\nabla f^n = \nabla\cdot (b^n f^n)-(\nabla\cdot b^n) f^n
        \to \nabla\cdot (b f)-(\nabla\cdot b) f = b\cdot\nabla f
    \end{align*}
    in the sense of distributions; this is where, in order to employ weak-strong convergence arguments, we additionally need $\nabla\cdot b^n\to\nabla\cdot b$ in $L^1_t L^1_\loc$ (cf. also Remark \ref{rmk:weak compactness}).
\end{proof}

\begin{proposition}\label{prop:absolute-continuity-Lp}
    Let $\fkp\in [2,3)$, $p\in [1,\infty)$, $(b,\xi,\bZ)$ satisfying \eqref{eq:condition-stability} and $\rho_0\in L^p_x$. Then the associated solution $\rho$ to \eqref{eq:rc_in_unique} satisfies $\rho\in C([0,T];L^p_x)$ and
    \begin{equation}\label{eq:evolution_Lp_norm}
        \frac{\dd }{\dd t} \| \rho_t\|_{L^p_x}^p
        = \int_{\R^d} (1-p) |\rho_t(x)|^p (\nabla\cdot b)(x) \dd x.
    \end{equation}
    As a consequence, for all $(s,t)\in\Delta_T$ we have
    \begin{equation}\label{eq:estimate-lp-continuity}
        \big|\, \| \rho_t\|_{L^p_x} - \| \rho_s\|_{L^p_x} \big|
        \leq \frac{1}{p'}\exp\bigg(\frac{\| \nabla\cdot b\|_{L^1_t L^\infty_x}}{p'} \bigg) \bigg( \int_s^t \| \nabla\cdot b_r\|_{L^\infty_x}\, \dd u\bigg) \| \rho_0\|_{L^p_x}.
    \end{equation}
\end{proposition}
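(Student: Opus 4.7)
The plan is to derive the evolution identity \eqref{eq:evolution_Lp_norm} via a renormalization argument showing that $|\rho|^p$ solves a rough continuity equation with source $-(p-1)|\rho|^p\,\nabla\cdot b$; from the integrated form of this identity the continuity of $\rho$ in $L^p_x$ and the estimate \eqref{eq:estimate-lp-continuity} will both follow. The heuristic: rewriting $\nabla\cdot(b\rho)=b\cdot\nabla\rho+\rho\,\nabla\cdot b$ displays $\rho$ as a (formal) solution to the rough transport equation with source $-\rho\,\nabla\cdot b$, so applying the chain rule to $\beta(\rho)$ yields that $\beta(\rho)$ satisfies the rough continuity equation with source $-(\beta'(\rho)\rho-\beta(\rho))\,\nabla\cdot b$; when $\beta(r)=|r|^p$ this source is exactly $-(p-1)|\rho|^p\,\nabla\cdot b$, giving
$$\dd\,|\rho_t|^p + \nabla\cdot(b_t|\rho_t|^p)\,\dd t + (p-1)|\rho_t|^p\,\nabla\cdot b_t\,\dd t + \sum_k\nabla\cdot(\xi_k|\rho_t|^p)\,\dd\bZ^k_t = 0.$$

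To make this rigorous, I would first prove the statement for bounded renormalizers $\beta\in C^1_b$ via spatial mollification: set $\rho^\eps:=\rho\ast\chi^\eps$ for a standard mollifier $\chi^\eps$. Since convolution commutes with $\bA$, $\rho^\eps$ satisfies the rough continuity equation with an additional DiPerna--Lions drift commutator $R^\eps:=\chi^\eps\ast\nabla\cdot(b\rho)-\nabla\cdot(b\rho^\eps)$ as source. Being smooth in $x$, $\rho^\eps$ obeys the classical chain rule, producing a rough PDE for $\beta(\rho^\eps)$ with additional source $\beta'(\rho^\eps)R^\eps$ plus rough commutators in $\xi$. As $\eps\to 0$: the classical DiPerna--Lions estimate gives $R^\eps\to 0$ in $L^1_tL^1_\loc$ under $b\in L^1_tW^{1,p'}_\loc$ and $\rho\in L^\infty_tL^p_\loc$, so $\beta'(\rho^\eps)R^\eps\to 0$ in $L^1_tL^1_\loc$ using boundedness of $\beta'$; the rough $\xi$-commutators cancel in the limit via a tensorization/blow-up argument analogous to Propositions \ref{proposition:renormalizable noise}--\ref{proposition:noise converges}, exploiting geometricity of $\bZ$, $\nabla\cdot\xi=0$, and the conservativity \eqref{eq:conservative-driver}. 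Hence $\beta(\rho)$ solves the claimed equation. To pass from $\beta\in C^1_b$ to the unbounded $\beta(r)=|r|^p$ (respectively $\beta(r)=|r|$ when $p=1$), one approximates by $\beta_N(r)=|r|^p\varphi(|r|/N)\in C^1_b$ with $\varphi$ a smooth cutoff (and an additional regularization $\sqrt{r^2+\delta}$ at the origin when $p=1$), then sends $N\to\infty$ and $\delta\to 0^+$, applying dominated convergence on each piece via the a priori bound \eqref{eq:lq-bound}.

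A minor extension of Lemma \ref{lem:conservation-mass} to rough continuity equations with integrable source then yields, for all $(s,t)\in\Delta_T$,
$$\|\rho_t\|_{L^p_x}^p - \|\rho_s\|_{L^p_x}^p = (1-p)\int_s^t\!\!\int_{\R^d}|\rho_r(x)|^p(\nabla\cdot b_r)(x)\,\dd x\,\dd r.$$
The hypotheses of this extended lemma hold under \eqref{eq:condition-stability}: $|\rho|^p\in\cB_b([0,T];L^1_x)$ by \eqref{eq:lq-bound}, while $b|\rho|^p/(1+|x|)\in L^1_tL^1_x$ follows from H\"older combined with $b/(1+|x|)\in L^1_tL^{p'}_x$ and $\rho\in\cB_b([0,T];L^p_x)$. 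In particular $t\mapsto\|\rho_t\|_{L^p_x}^p$ is absolutely continuous with derivative \eqref{eq:evolution_Lp_norm}. Strong continuity $\rho\in C([0,T];L^p_x)$ for $p\in(1,\infty)$ then follows by combining the weak-$\ast$ continuity from Remark \ref{rem:weak_cts_solutions} with continuity of the norm through the Radon--Riesz property of uniformly convex $L^p_x$; the case $p=1$ is handled similarly using the uniform integrability provided by Lemma \ref{lem:equi-p-integrability}. Finally, \eqref{eq:estimate-lp-continuity} is obtained by computing $\tfrac{d}{dt}\|\rho_t\|_{L^p_x}=\tfrac{1-p}{p}\|\rho_t\|_{L^p_x}^{1-p}\int|\rho_t|^p\,\nabla\cdot b_t$, bounding $\bigl|\tfrac{d}{dt}\|\rho_t\|_{L^p_x}\bigr|\leq\tfrac{1}{p'}\|\rho_t\|_{L^p_x}\|\nabla\cdot b_t\|_{L^\infty_x}$, inserting \eqref{eq:lq-bound}, and integrating in time.

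The main obstacle is the rough-commutator cancellation in the mollification step: this is not a classical DiPerna--Lions ingredient and requires the tensorization/blow-up machinery developed in Section \ref{subsec:linear-uniqueness} for the product formula. A secondary (though routine) difficulty is extending Lemma \ref{lem:conservation-mass} to include an integrable source term, which can be done by simply carrying the extra $\int_s^t\langle F_r,\varphi^R\rangle\,\dd r$ through the same cut-off argument used there.
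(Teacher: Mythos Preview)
Your renormalization approach is viable in principle but genuinely different from the paper's, and it contains one inaccuracy worth flagging. The claim ``convolution commutes with $\bA$'' is false: since the $\xi_k$ are non-constant, mollifying the equation also produces a rough commutator $[\chi^\eps\ast,\xi_k\cdot\nabla]\rho$ (and a second-level one from $\bbA$), which must be treated alongside $R^\eps$ --- you seem to realise this two sentences later, but the phrasing is inconsistent. Controlling these rough commutators is precisely where the $C^3_b$ regularity on $\xi$ and estimates in the style of Proposition~\ref{proposition:renormalizable noise} enter. A second glossed-over point: $\rho^\eps$ is smooth in $x$ but $\fkp$-rough in $t$, so the ``classical chain rule'' must be read as the It\^o formula for controlled rough paths applied pointwise in $x$, with geometricity of $\bZ$ ensuring no correction terms.

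The paper takes a different route that avoids renormalization entirely. It approximates the \emph{coefficients} $(b,\bZ,\rho_0)$ by smooth data, for which \eqref{eq:evolution_Lp_norm} is an elementary computation on the associated classical PDE; the entire difficulty is then upgrading the weak convergence $\rho^n\to\rho$ from Corollary~\ref{cor:linear-RPDE-stability-1} to strong pointwise-in-$t$ convergence, so as to pass to the limit in $\|\rho^n_t\|^p_{L^p_x}$. This is achieved by a duality trick: writing $\|\rho^n_\tau\|^2_{L^2_x}=\langle\rho^n_\tau,f^n_\tau\rangle=\langle\rho^n_0,f^n_0\rangle$ via \eqref{eq:duality formula}, where $f^n$ solves the backward transport equation with terminal datum $\rho^n_\tau$, one exploits that $\rho^n_0\to\rho_0$ strongly while $f^n_0\rightharpoonup f_0$ weakly by Corollary~\ref{cor:stability-1-rte}. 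The paper's approach thus recycles Theorem~\ref{thm:product-rule} as a black box and never reopens commutator arguments; yours is more self-contained but essentially reproves renormalizability (Corollary~\ref{cor:renormalized}) before the paper does --- in the paper's logical order that corollary is derived \emph{after} this proposition, via the strong stability just established.
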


\begin{proof}
    Let us first show how to derive \eqref{eq:estimate-lp-continuity} from \eqref{eq:evolution_Lp_norm}. We have
    \begin{align*}
        \frac{\dd }{\dd t} \| \rho_t\|_{L^p_x}
        = \frac{1}{p}  \| \rho_t\|_{L^p_x}^{1-p}\, \frac{\dd }{\dd t} \| \rho_t\|_{L^p_x}^p
        \leq \bigg(1-\frac{1}{p}\bigg) \| \rho_t\|_{L^p_x} \| \nabla\cdot b_t\|_{L^\infty_x};
    \end{align*}
    applying estimate \eqref{eq:lq-bound}, one then finds
    \begin{align*}
        \frac{\dd }{\dd t} \| \rho_t\|_{L^p_x}
        \leq \frac{1}{p'}\exp\bigg(\frac{\| \nabla\cdot b\|_{L^1_t L^\infty_x}}{p'} \bigg)  \| \nabla\cdot b_t\|_{L^\infty_x} \| \rho_0\|_{L^p_x}
    \end{align*}
    from which \eqref{eq:estimate-lp-continuity} follows upon integrating in time.

    Recall that $\rho\in C_w([0,T];L^p_x)$; it follows from \eqref{eq:estimate-lp-continuity} that, whenever $t_n\to t$, $\rho_{t_n}\rightharpoonup \rho_t$ and $\| \rho_{t_n}\|_{L^p_x}\to \| \rho_t\|_{L^p_x}$; for $p\in (1,\infty)$, by uniform convexity of $L^p_x$, this implies that $\rho_{t_n}\to \rho_t$ strongly in $L^p_x$ and therefore $\rho\in C([0,T];L^p_x)$.
    
    To handle the case $p=1$, we need an extra argument: first assume that $\rho_0\in L^1_x\cap L^\infty_x$; then by the above, $\rho\in C([0,T];L^1(B_R))$ for any $R<\infty$, and moreover $\{\rho_t\}_{t\in [0,T]}$ is tight by \eqref{eq:equi-p-integrability} (for $p=1)$, therefore $\rho\in C([0,T];L^1_x)$. For general $\rho_0\in L^1_x$, consider a sequence $\{\rho_0^n\}_n\subset L^1_x\cap L^\infty_x$ such that $\rho_0^n\to \rho_0$ in $L^1_x$; then by the above the associated solutions $\rho^n$ belong to $C([0,T];L^1_x)$ and $\rho^n\to \rho$ in $\cB_b([0,T];L^1_x)$ by \eqref{eq:linear-RPDE-stability}, therefore $\rho\in C([0,T];L^1_x)$ as well. 

    It only remains to prove formula \eqref{eq:evolution_Lp_norm}; for simplicity, we show it for $\rho_0\in L^1_x\cap L^\infty_x$, as the general case follows from an approximation procedure like the one outlined above, thanks to the stability property \eqref{eq:linear-RPDE-stability}. We split the proof of \eqref{eq:evolution_Lp_norm} in several steps.

    \textit{Step 1: smooth approximations.} Consider a nice sequence of smooth approximations $(\rho^n_0,\xi^n,\bZ^n)$ of $(\xi,\bZ)$, e.g. like the ones from Proposition \ref{prop:existence-linear-RPDE}; we may further construct it in such a way that $\nabla\cdot b^n\to \nabla \cdot b$ in $L^1_t L^q_\loc$ for all $q\in [1,\infty)$ and
    \begin{equation}\label{eq:absolute-continuity-proof0}
        \sup_n \| \rho^n_0\|_{L^q_x} \leq \| \rho_0\|_{L^q_x} \quad \forall\, q\in [1,\infty], \quad
        \sup_n \| \nabla\cdot b^n_t\|_{L^\infty_x} \leq \| \nabla\cdot b_t\|_{L^\infty_x} \quad \text{for Leb. a.e. } t\in [0,T].
    \end{equation}
    Let $\rho^n$ be the solution to \eqref{eq:rc_in_unique} associated to $(\xi^n,\bZ^n)$, with initial condition $\rho^n_0$. Since $\rho^n$ now solves a classical PDE and $\nabla\cdot\xi^n=0$, for any $q\in [1,\infty)$ it holds that
    \begin{equation}\begin{split}\label{eq:absolute-continuity-proof1}
        \frac{\dd }{\dd t} \| \rho^n_t\|_{L^q_x}^q
        & = q \int_{\R^d} |\rho^n_t|^{q-2} \text{sgn} (\rho^n_t) \partial_t \rho^n_t \dd x\\
        & = - q \int_{\R^d} |\rho^n_t|^{q-2} \text{sgn} (\rho^n_t) \nabla\cdot \big(b\rho^n_t + \xi^n \rho^n_t \bZ^n_t\big) \dd x\\
        & = \int_{\R^d} (1-q) |\rho^n_t(x)|^q (\nabla\cdot b^n)(x) \dd x
    \end{split}\end{equation}
    where in the last passage we used integration by parts and the fact that $\nabla\cdot \xi^n=0$. This is exactly \eqref{eq:evolution_Lp_norm} for $(\rho^n,b^n)$ and $q=p$; therefore arguing as above and using \eqref{eq:absolute-continuity-proof0}, we find
    \begin{equation}\label{eq:absolute-continuity-proof2}\begin{split}
        \big|\, \| \rho^n_t\|_{L^q_x} - \| \rho^n_s\|_{L^q_x} \big|
        & \leq \frac{1}{q'}\exp\bigg(\frac{\| \nabla\cdot b^n\|_{L^1_t L^\infty_x}}{q'} \bigg) \bigg( \int_s^t \| \nabla\cdot b^n_r\|_{L^\infty_x}\, \dd u\bigg) \| \rho^n_0\|_{L^q_x}\\
        & \leq \frac{1}{q'}\exp\bigg(\frac{\| \nabla\cdot b\|_{L^1_t L^\infty_x}}{q'} \bigg) \bigg( \int_s^t \| \nabla\cdot b_r\|_{L^\infty_x}\, \dd u\bigg) \| \rho^n_0\|_{L^q_x}
    \end{split}\end{equation}

    \textit{Step 2: reduction to pointwise convergence of $\| \rho^n_t\|_{L^2_x}$.}
    We claim that, in order to conclude, it suffices to show that $\| \rho^n_t\|_{L^2_x}\to \| \rho_t\|_{L^2_x}$ for all $t\in [0,T]$. Indeed, arguing by compactness as in Corollary \ref{cor:linear-RPDE-stability-1}, we know that $\rho^n\to \rho$ in $C_w([0,T];L^q_x)$ for all $q\in [1,\infty)$, which combined with the claim and uniform convexity of $L^2_x$ implies that $\rho^n_t\to \rho_t$ strongly in $L^2_x$. Since the sequence is bounded in $L^1_x\cap L^\infty_x$, convergence in $L^q_x$ for $q\in (1,\infty)$ then follows by interpolation. By \eqref{eq:absolute-continuity-proof2}, the maps $\{t\mapsto \| \rho^n_t\|_{L^q_x}\}$ are equicontinuous; for $q\in (1,\infty)$, together with strong pointwise convergence and convergence in $C_w([0,T];L^q_x)$, this implies that $\rho^n\to \rho$ in $C([0,T];L^q_x)$ by Corollary \ref{cor:compactness-lp}.
    In the case $q=1$, as before we can use convergence in $C([0,T];L^2(B_R))$ and tightness (coming from in Corollary \ref{cor:linear-RPDE-stability-1}) to deduce convergence in $C([0,T];L^1_x)$ as well.
    Overall this shows that $\rho\in C([0,T];L^q_x)$ for all $q\in [1,\infty)$, being a limit of smooth functions in this topology; moreover with strong convergence in $C([0,T];L^q_x)$ at hand, we can pass to the limit in \eqref{eq:absolute-continuity-proof1} (using that $\nabla\cdot b^n\to \nabla \cdot b$ in $L^1_t L^q_\loc$ for all $q\in [1,\infty)$) to deduce the validity of \eqref{eq:evolution_Lp_norm}.

    \textit{Step 3: duality.} Let us fix any $\tau\in (0,T]$  and let $f^n$ (resp. $f$) denote the unique solution to \eqref{eq:lt_in_unique} on $[0,\tau]$ with associated terminal condition $f^n\vert_{t=\tau}=\rho^n_\tau$ (resp. $f\vert_{t=\tau}=\rho_\tau$).
    Since $\rho^n_\tau\rightharpoonup \rho_\tau$ in $L^2_x$ and the $b^n$ satisfy \eqref{eq:absolute-continuity-proof0}, by Corollary \ref{cor:stability-1-rte} (and time reversal) $f^n_0\rightharpoonup f_0$ in $L^2_x$.
    On the other hand, $\rho^n_0\to \rho_0$ in $L^2_x$, so by weak-strong convergence and the duality formula \eqref{eq:duality formula} we find
    \begin{align*}
        \lim_{n\to\infty} \| \rho^n_\tau\|_{L^2_x}^2
        = \lim_{n\to\infty} \langle \rho^n_0, f^n_0 \rangle
        = \langle \rho_0, f_0  \rangle = \| \rho_\tau\|_{L^2_x}^2.
    \end{align*}
    As the argument holds for any $\tau\in (0,T]$, this verifies the claim from Step 2 and concludes the proof.
\end{proof}

\begin{remark}
    Formula \eqref{eq:evolution_Lp_norm} is only possible thanks to the assumption $\nabla\cdot \xi=0$, otherwise some ``rough component'' would appear on the r.h.s. when computing $\dd \| \rho_t\|_{L^p_x}^p$.
\end{remark}

We can now actually strengthen Corollary \ref{cor:linear-RPDE-stability-1} to get stability results in stronger topologies.

\begin{corollary}\label{cor:linear-RPDE-stability-2}
    Let $\fkp\in [2,3)$, $p\in [1,\infty)$; let $(b^n,\xi^n,\bZ^n)$ be a sequence such that $\nabla\cdot \xi^n=0$ for all $n$, \eqref{eq:hypothesis-unifboundedness-linear-RPDE-stability} is satisfied and there exist $(b,\xi,\bZ)$ such that \eqref{eq:hypothesis-stability-variant} holds.
    Let $\rho^n_0\to \rho_0$ strongly in $L^p_x$ and denote by $\rho^n$, $\rho$, the associated solutions to \eqref{eq:rc_in_unique}.
    Then it holds $\rho^n\to \rho$ in $C([0,T];L^p_x)$.
\end{corollary}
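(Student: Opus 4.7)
The plan is to bootstrap the weak convergence from Corollary \ref{cor:linear-RPDE-stability-1} to strong convergence by combining the duality formula of Theorem \ref{thm:product-rule} with the uniform convexity of $L^p_x$, in the spirit of Step 3 of the proof of Proposition \ref{prop:absolute-continuity-Lp}.

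First I would reduce to the case of bounded initial data. Given $\tilde\rho_0 \in L^1_x \cap L^\infty_x$ approximating $\rho_0$ in $L^p_x$, let $\tilde\rho^n,\tilde\rho$ be the solutions with initial data $\tilde\rho_0$ and coefficients $(b^n,\xi^n,\bZ^n),(b,\xi,\bZ)$ respectively. The linear stability \eqref{eq:linear-RPDE-stability}, whose constant is uniform in $n$ thanks to \eqref{eq:hypothesis-unifboundedness-linear-RPDE-stability}, controls $\|\rho^n-\tilde\rho^n\|_{\cB_b([0,T];L^p_x)}$ by $\|\rho^n_0-\tilde\rho_0\|_{L^p_x}$ and similarly for $\rho-\tilde\rho$; via a triangle inequality, it suffices to prove $\tilde\rho^n\to \tilde\rho$ in $C([0,T]; L^p_x)$ for each fixed such $\tilde\rho_0$, with the general statement then following by letting $\tilde\rho_0\to \rho_0$ in $L^p_x$. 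For such $\tilde\rho_0$ both families are uniformly bounded in $\cB_b([0,T]; L^1_x\cap L^\infty_x)$ by \eqref{eq:lq-bound}, and $\tilde\rho^n\to\tilde\rho$ in $C_w([0,T];L^q_x)$ for every $q\in [1,\infty)$ follows from Corollary \ref{cor:linear-RPDE-stability-1} together with these uniform bounds.

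The key step is pointwise strong convergence in $L^2_x$. Fixing $\tau\in (0,T]$, let $g^n$, $g$ denote the solutions to \eqref{eq:lt_in_unique} on $[0,\tau]$ with terminal data $\tilde\rho^n_\tau$, $\tilde\rho_\tau$ respectively. An argument analogous to Corollary \ref{cor:stability-1-rte}, applied to the RTE with uniform $L^1_x\cap L^\infty_x$-bounds, yields $g^n_0 \rightharpoonup g_0$ in $L^2_x$; since $\tilde\rho^n_0=\tilde\rho_0$ is constant in $n$, the duality formula \eqref{eq:duality formula} from Theorem \ref{thm:product-rule} (with $p=q=2$, $r=\infty$) gives
\begin{equation*}
    \|\tilde\rho^n_\tau\|_{L^2_x}^2 = \langle\tilde\rho_0, g^n_0\rangle \to \langle \tilde\rho_0, g_0\rangle = \|\tilde\rho_\tau\|_{L^2_x}^2.
\end{equation*}
Combined with weak $L^2_x$ convergence, uniform convexity of $L^2_x$ delivers strong pointwise convergence $\tilde\rho^n_\tau \to \tilde\rho_\tau$ in $L^2_x$ for each $\tau$. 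Interpolating against the uniform $L^\infty_x$-bound, strong pointwise convergence extends to every $L^q_x$ with $q\in (1,\infty)$; for $q=1$ one further invokes Lemma \ref{lem:equi-p-integrability}, which under \eqref{eq:hypothesis-unifboundedness-linear-RPDE-stability} provides uniform-in-$n$ tightness of $\{\tilde\rho^n_\tau\}$, allowing a standard splitting argument.

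Finally, to upgrade pointwise strong $L^p_x$ convergence to uniform convergence in $t$, I would exploit the equicontinuity of $t\mapsto\|\tilde\rho^n_t\|_{L^p_x}$ provided by \eqref{eq:estimate-lp-continuity}. Arguing by contradiction, if there existed $\eps>0$, $n_k\to\infty$ and $t_k\to t^\ast$ with $\|\tilde\rho^{n_k}_{t_k}-\tilde\rho_{t_k}\|_{L^p_x}>\eps$, then Remark \ref{rem:uniform_weak_convergence} would give $\tilde\rho^{n_k}_{t_k}\rightharpoonup \tilde\rho_{t^\ast}$, equicontinuity combined with pointwise norm convergence at $t^\ast$ would give $\|\tilde\rho^{n_k}_{t_k}\|_{L^p_x}\to\|\tilde\rho_{t^\ast}\|_{L^p_x}$, and uniform convexity (for $p\in (1,\infty)$) would deliver strong $L^p_x$ convergence $\tilde\rho^{n_k}_{t_k}\to \tilde\rho_{t^\ast}$, contradicting $\tilde\rho\in C([0,T];L^p_x)$ guaranteed by Proposition \ref{prop:absolute-continuity-Lp}; the case $p=1$ is handled analogously, by reducing to some $q>1$ via the uniform tightness from Lemma \ref{lem:equi-p-integrability}. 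The main technical obstacle is coordinating the duality argument (which naturally produces only $L^2_x$ norm convergence), uniform convexity (which requires $p>1$), and tightness (needed for $p=1$) across the full range $p\in [1,\infty)$.
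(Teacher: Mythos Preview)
Your overall strategy matches the paper's: reduce to $L^1_x\cap L^\infty_x$ initial data, establish pointwise strong convergence in $L^2_x$ via the duality argument from Step~3 of Proposition~\ref{prop:absolute-continuity-Lp}, interpolate, then upgrade to uniform-in-time convergence using equicontinuity of $t\mapsto\|\tilde\rho^n_t\|_{L^p_x}$ (the paper packages this last step via Corollary~\ref{cor:compactness-lp}, whose proof is essentially your contradiction argument).

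There is, however, a genuine gap in your equicontinuity step. You invoke \eqref{eq:estimate-lp-continuity}, which bounds $\big|\|\tilde\rho^n_t\|_{L^p_x}-\|\tilde\rho^n_s\|_{L^p_x}\big|$ by a constant times $\int_s^t\|\nabla\cdot b^n_r\|_{L^\infty_x}\,\dd r$. But the hypotheses only give $\sup_n\|\nabla\cdot b^n\|_{L^1_tL^\infty_x}<\infty$, which does \emph{not} force $\int_s^t\|\nabla\cdot b^n_r\|_{L^\infty_x}\,\dd r\to 0$ uniformly in $n$ as $|t-s|\to 0$; one can have e.g.\ $\nabla\cdot b^n_t(x)=n\,\mathbbm{1}_{[0,1/n]}(t)\,\psi_n(x)$ with $\|\psi_n\|_{L^\infty_x}=1$ and $\|\psi_n\|_{L^1(B_R)}\to 0$, which satisfies all hypotheses including $\nabla\cdot b^n\to 0$ in $L^1_tL^1_\loc$ yet defeats this bound. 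In particular you never use the convergence $\nabla\cdot b^n\to\nabla\cdot b$ in $L^1_tL^1_\loc$ from \eqref{eq:hypothesis-stability-variant}, which is precisely the ingredient the paper exploits.

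The paper's fix is to use the \emph{identity} \eqref{eq:evolution_Lp_norm} rather than the estimate \eqref{eq:estimate-lp-continuity}: one shows that the integrands
\[
(1-p)\,|\tilde\rho^n_t(x)|^p\,(\nabla\cdot b^n_t)(x)\ \longrightarrow\ (1-p)\,|\tilde\rho_t(x)|^p\,(\nabla\cdot b_t)(x)\quad\text{in }L^1_tL^1_x,
\]
combining the pointwise-in-$t$ strong convergence $\tilde\rho^n_t\to\tilde\rho_t$ in $L^q_x$ (which you have already established), the uniform $L^\infty_x$ bounds, tightness, the $L^1_tL^1_\loc$ convergence of $\nabla\cdot b^n$, and Vitali's theorem. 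Convergence of the derivatives in $L^1_t$ then yields the needed equicontinuity. Once you replace your appeal to \eqref{eq:estimate-lp-continuity} by this argument, the rest of your proof goes through.
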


\begin{proof}
    Since $\rho^n_0\to \rho_0$ in $L^p_x$, for any $\eps>0$, by mollifications and cutoffs we can construct another sequence $\{\tilde\rho^n_0\}_n\subset L^1_x\cap L^\infty_x$, $\tilde \rho_0\in L^1_x\cap L^\infty_x$ such that
    \begin{align*}
        \tilde\rho^n_0\to \tilde\rho_0 \text{ in } L^q_x \quad
        \forall\, q\in [1,\infty), \quad \sup_n \| \tilde\rho^n_0-\rho^n_0\|_{L^p_x}\leq \eps, \quad \| \tilde\rho_0-\rho_0\|_{L^p_x}\leq \eps.
    \end{align*}
    As a consequence, it suffices to give the proof for $\{\rho^n_0\}_n$ bounded in $L^1_x\cap L^\infty_x$ and converging in $L^1_x\cap L^q_x$ for $q\in [1,\infty)$; the general case then follows by triangular inequality, the stability estimate \eqref{eq:linear-RPDE-stability} and the arbitrariness of $\eps>0$.

    The majority of the proof, up to minor details, is similar to that of Proposition \ref{prop:absolute-continuity-Lp}, so let us mostly sketch it. We can assume $p\in (1,\infty)$, the case $p=1$ then following by tightness. Running the same duality argument as in Step 3 above, one can show that $\rho^n_t\to \rho_t$ in $L^q_x$ for all $t\in [0,T]$ and $q\in [1,\infty)$; by Corollary \ref{cor:compactness-lp}, in order to achieve convergence in $C([0,T];L^p_x)$, it then suffices to show equicontinuity of $\{t\mapsto \| \rho^n_t\|_{L^p_x}\}_n$. By formula \eqref{eq:evolution_Lp_norm}, in order to do so, it suffices to show that
    \begin{align*}
        (1-p) |\rho^n_t(x)|^p (\nabla\cdot b^n)(x) \to (1-p) |\rho_t(x)|^p (\nabla\cdot b)(x) \quad \text{in } L^1_t L^1_x.
    \end{align*}
    This now follows from a combination of assumptions \eqref{eq:hypothesis-unifboundedness-linear-RPDE-stability}, \eqref{eq:hypothesis-stability-variant}, the pointwise-in-time strong convergence $\rho^n_t\to\rho_t$ in $L^q_x$ and Vitali's convergence theorem. 
\end{proof}

A consequence of Corollary \ref{cor:linear-RPDE-stability-2}, in the context of transport equations, is the property of \emph{renormalizability}.
In the spirit of \cite{diperna1989ordinary}, we will say that a solution $f$ to the rough transport equation \eqref{eq:lt_in_unique} is \emph{renormalized} if, for any $\beta\in C^1_b$, $\beta(f)$ also solves \eqref{eq:lt_in_unique}.

\begin{corollary}\label{cor:renormalized}
    Let $\fkp\in [2,3)$, $p\in [1,\infty)$ and $(b,\bZ,\xi)$ satisfy \eqref{eq:condition-stability} for these parameters.
    Then for any $f_0\in L^p_x$, the unique solution $f\in C([0,T];L^p_x)$ to \eqref{eq:lt_in_unique} is renormalized.
\end{corollary}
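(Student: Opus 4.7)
The plan is to proceed by smooth approximation, exploiting the stability results from this section. Since constant functions trivially solve \eqref{eq:lt_in_unique}, by replacing $\beta$ with $\beta(\cdot) - \beta(0)$ we may assume $\beta(0) = 0$, so that $|\beta(r)| \leq \|\beta'\|_\infty |r|$ and $\beta(f) \in \cB_b([0,T];L^p_x)$. Choose smooth approximations $(b^n, \xi^n, \bZ^n, f_0^n)$ of the data satisfying the uniform bounds \eqref{eq:hypothesis-unifboundedness-linear-RPDE-stability} and the convergences \eqref{eq:hypothesis-stability-variant}, with $f_0^n \to f_0$ strongly in $L^p_x$. For such smooth data, the transport PDE admits a unique classical solution $f^n$, given by $f^n_t = f_0^n \circ (\Phi^n_t)^{-1}$; by the classical chain rule, $\beta(f^n)$ solves the same PDE pointwise with initial datum $\beta(f_0^n)$, and hence is a weak rough solution of \eqref{eq:lt_in_unique} (with coefficients $b^n, \xi^n, \bZ^n$) on every scale $\cF_{l,R}$ in the sense of Definition \ref{defn:solution-rough-continuity}.

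Next I would upgrade the weak convergence $f^n \to f$ in $C_w([0,T];L^p_x)$ provided by Corollary \ref{cor:stability-1-rte} to strong convergence in $C([0,T];L^p_x)$. The key ingredient is the transport analogue of Proposition \ref{prop:absolute-continuity-Lp}: since $\nabla \cdot \xi = 0$, one expects the identity
\begin{equation*}
    \frac{\dd}{\dd t}\|f_t\|_{L^p_x}^p = \int_{\R^d} |f_t(x)|^p\, \nabla\cdot b_t(x)\, \dd x,
\end{equation*}
which in particular gives $f \in C([0,T];L^p_x)$ and equicontinuity of $\{t \mapsto \|f^n_t\|_{L^p_x}\}_n$. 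Pointwise convergence $\|f^n_\tau\|_{L^2_x}^2 \to \|f_\tau\|_{L^2_x}^2$ at fixed $\tau$ (after reducing to $f_0 \in L^1_x \cap L^\infty_x$ as in Step 1 of the proof of Proposition \ref{prop:absolute-continuity-Lp}) is obtained by a duality argument: letting $\rho^n$ be the backward continuity solution on $[0,\tau]$ with terminal $\rho^n_\tau = f^n_\tau$, Theorem \ref{thm:product-rule} yields $\|f^n_\tau\|_{L^2_x}^2 = \langle \rho^n_0, f^n_0\rangle$, and since $\rho^n_0 \rightharpoonup \rho_0$ weakly (by Corollary \ref{cor:linear-RPDE-stability-1} applied to the backward continuity equation) while $f^n_0 \to f_0$ strongly in $L^2_x$, weak-strong convergence and duality on the limiting problem give $\|f^n_\tau\|_{L^2_x}^2 \to \langle \rho_0, f_0\rangle = \|f_\tau\|_{L^2_x}^2$. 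Combined with uniform convexity for $p > 1$ (and tightness for $p = 1$, handled as in Corollary \ref{cor:linear-RPDE-stability-2}), this yields $f^n \to f$ in $C([0,T];L^p_x)$; the Lipschitz bound $|\beta(a)-\beta(b)| \leq \|\beta'\|_\infty |a-b|$ then propagates to $\beta(f^n) \to \beta(f)$ in $C([0,T];L^p_x)$, with the additional uniform bound $\|\beta(f^n)\|_{L^\infty_{t,x}} \leq \|\beta\|_\infty$.

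Finally I would pass to the limit in the weak rough transport equation satisfied by $\beta(f^n)$. Fixing $R\geq 1$ and $\varphi \in \cF_{3,R}$, integration by parts rewrites the finite-variation drift as $-\int_s^t \langle \beta(f^n_u),\, b^n_u\cdot \nabla \varphi + (\nabla\cdot b^n_u)\varphi \rangle\, \dd u$; this converges to its counterpart with $(b, \beta(f))$ by Vitali's theorem, using the strong convergences $b^n \to b$ and $\nabla \cdot b^n \to \nabla \cdot b$ in $L^1_t L^1_\loc$, the uniform $L^\infty$ bound on $\beta(f^n)$ and the compact support of $\varphi$. The rough terms $\langle \beta(f^n_s), (A^n_{st})^*\varphi\rangle$ and $\langle \beta(f^n_s), (\bbA^n_{st})^*\varphi\rangle$ pass by strong operator convergence of the adjoints (coming from $\xi^n \to \xi$ in $C^2_\loc$ via interpolation with the uniform $C^3_b$ bound, and $\bZ^n \to \bZ$ in $\clC^{\tilde\fkp}$ for some $\tilde\fkp \in (\fkp,3)$) combined with strong $L^p_x$ convergence of $\beta(f^n_s)$. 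The remainder $\beta(f)^\natural$ inherits finite $\fkp/3$-variation in $\cF_{-3,R}$ from the uniform a priori bound \eqref{eq:apriori-unbounded-eq1} and lower semicontinuity (Remark \ref{rem:p-variation-lsc}). The main obstacle is the upgrade to strong convergence of $f^n$; the rest is a standard limit passage in the unbounded rough drivers framework.
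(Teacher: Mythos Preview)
Your proposal is correct and follows essentially the same strategy as the paper: approximate by smooth data, use that $\beta(f^n)$ solves the smooth transport equation classically, upgrade to strong convergence $f^n\to f$ in $C([0,T];L^p_x)$, and pass to the limit. The only difference is one of economy: the paper does not reprove the strong convergence but directly invokes Corollary~\ref{cor:linear-RPDE-stability-2} (its \eqref{eq:lt_in_unique} analogue), and instead of passing to the limit term-by-term it appeals to the weak stability result (Corollary~\ref{cor:stability-1-rte}) applied to the sequence $\beta(f^n)$ with initial data $\beta(f^n_0)\xrightharpoonup{\ast}\beta(f_0)$, identifying the limit with $\beta(f)$ via the strong convergence already obtained.
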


\begin{proof}
    Consider a family of smooth approximants $(b^n,\bZ^n, \rho^n_0)$ as in Proposition \ref{prop:existence-linear-RPDE}, $f^n$ be the associated solutions to \eqref{eq:lt_in_unique}.
    By Remark \ref{rem:existence-linear RPDE}, these are also solutions to the corresponding classical PDE and are therefore renormalized, namely $\beta(f^n)$ solves the same equation with initial datum $\beta(f^n_0)$.
    On the other hand, by Corollary \ref{cor:linear-RPDE-stability-2} $f^n_t\to f_t$ in $C([0,T]; L^p_x)$ and so $\beta(f^n_t) \xrightharpoonup{\ast} \beta(f_t)$ for all $t\geq 0$;
    by the weak stability from Corollary \ref{cor:linear-RPDE-stability-1}, we deduce that $\beta(f)$ is the unique solution associated to $(b,\bZ, \beta(f_0))$.
\end{proof}

We are now ready to complete the

\begin{proof}[Proof of Theorem \ref{thm:wellposedness-linear-RPDE-intro}]
    Existence and uniqueness of solutions come from Theorem \ref{thm:uniqueness-linear-RPDE} and Remark \ref{rem:uniqueness-RTE}; product formula and duality from Theorem \ref{thm:product-rule}. Renormalizability for the transport equation follows from Corollary \ref{cor:renormalized} and the fact that solutions belong to $C([0,T];L^p_x)$ from Proposition \ref{prop:absolute-continuity-Lp} (for simplicity stated only for \eqref{eq:rc_in_unique}, with almost identical proof for \eqref{eq:lt_in_unique}).  
\end{proof}

Under suitable assumptions on the drift, the above stability results readily imply compactness of solutions in strong topologies. Such results are often achieved by working in the Lagrangian framework, where convergence is stated at the level of flows $\Phi^n_t(x)$, cf. \cite{CriDeL2008}. Here instead we get it at the Eulerian level, up to requiring an additional compactness condition on $\nabla\cdot b^n$; notice that this is automatically satisfied when working with incompressible fluids, since $\nabla\cdot b^n\equiv 0$.
For simplicity, we state the result in the case of autonomous coefficients; generalizations to the time-dependent case can be achieved easily, up to enforcing additional sufficient conditions to guarantee time-compactness as well (like Aubin-Lions-Simon's compactness criteria from \cite{Simon1987}).

\begin{corollary}\label{cor:linear-compactness}
    Let $\fkp\in [2,3)$, $p\in [1,\infty)$, $\bZ\in \clC^\fkp_g$ and $\rho_0\in L^p_x$ be fixed.
    Consider a bounded sequence $\{(b^n,\xi^n)\}$ in $(L^{p'}_x\cap W^{1,p'}_\loc)\times C^3_b$ such that:
    \begin{itemize}
        \item $\nabla\cdot\xi^n=0$ for all $n$;
        \item $\{\nabla\cdot b^n\}_n$ is bounded in $L^\infty_x$;
        \item $\{\nabla\cdot b^n\}_n$ is compact in $L^1_\loc$;
    \end{itemize}
    denote by $\rho^n$ the solution associated to $(b^n,\xi^n,\bZ,\rho_0)$. Then the sequence $\{\rho^n\}_n$ is compact in $C([0,T];L^p_x)$.
\end{corollary}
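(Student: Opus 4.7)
My plan is to deduce the statement from Corollary \ref{cor:linear-RPDE-stability-2} by extracting convergent subsequences of the data $(b^n, \xi^n)$. Given an arbitrary subsequence of $\{\rho^n\}_n$, I will extract a further subsequence along which the hypotheses \eqref{eq:hypothesis-unifboundedness-linear-RPDE-stability} and \eqref{eq:hypothesis-stability-variant} of Corollary \ref{cor:linear-RPDE-stability-2} are satisfied, with $\rho^n_0 \equiv \rho_0$ trivially converging strongly in $L^p_x$; this immediately yields relative compactness of $\{\rho^n\}_n$ in $C([0,T]; L^p_x)$.

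The main task is therefore to extract convergent subsequences of the coefficients. The vector fields $\{\xi^n\}_n$ are bounded in $C^3_b$; by Arzelà--Ascoli and a standard diagonal argument over balls $B_R$, I may assume $\xi^n \to \xi$ in $C^2_\loc$ (in particular in $C^0_\loc$) with $\xi \in C^3_b$ and $\nabla \cdot \xi = 0$. For the drifts, $\{b^n\}_n$ is bounded in $W^{1,p'}(B_R)$ for every $R \geq 1$; Rellich--Kondrachov when $p \in (1,\infty)$, or Arzelà--Ascoli when $p = 1$ (so $p' = \infty$ and the $b^n$ are uniformly locally Lipschitz), together with a diagonal extraction, yield a further subsequence converging to some $b$ in $L^1_\loc$; lower semicontinuity places $b$ in $L^{p'}_x \cap W^{1,p'}_\loc$. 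By the third hypothesis of the statement, a further subsequence achieves $\nabla \cdot b^n \to g$ in $L^1_\loc$; since $b^n \to b$ in $L^1_\loc$ forces $\nabla \cdot b^n \to \nabla \cdot b$ distributionally, uniqueness of distributional limits gives $g = \nabla \cdot b$, and the uniform $L^\infty_x$-bound on $\{\nabla \cdot b^n\}_n$ passes to the limit as well.

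With these extractions in hand, verifying the hypotheses of Corollary \ref{cor:linear-RPDE-stability-2} is essentially bookkeeping. Autonomy of the coefficients yields
\begin{equation*}
    \Big\| \frac{b^n}{1+|x|} \Big\|_{L^1_t L^{p'}_x} \leq T \| b^n\|_{L^{p'}_x}, \quad
    \| \nabla \cdot b^n\|_{L^1_t L^\infty_x} \leq T \| \nabla \cdot b^n\|_{L^\infty_x}, \quad
    \| \nabla b^n \|_{L^1_t L^{p'}(B_R)} \leq T \| b^n\|_{W^{1,p'}(B_R)},
\end{equation*}
and $\| \bZ^n\|_{\fkp,T} = \| \bZ\|_{\fkp,T}$, so \eqref{eq:hypothesis-unifboundedness-linear-RPDE-stability} holds uniformly; the convergences in \eqref{eq:hypothesis-stability-variant} are precisely those achieved above, with $\sup_{t \in [0,T]} |\bZ^n_{0,t} - \bZ_{0,t}| \equiv 0$. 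Corollary \ref{cor:linear-RPDE-stability-2} therefore gives $\rho^{n_k} \to \rho$ in $C([0,T]; L^p_x)$, where $\rho$ is the solution associated to $(b, \xi, \bZ, \rho_0)$. Since the argument applies to every subsequence of the original sequence, $\{\rho^n\}_n$ is relatively compact. The only mildly delicate step is the consistency check $g = \nabla \cdot b$, which is nonetheless immediate from uniqueness of distributional limits; the rest is a straightforward pullback of the strong stability theorem through compactness.
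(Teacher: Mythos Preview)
Your proof is correct and follows essentially the same approach as the paper: extract convergent subsequences of $(b^n,\xi^n)$ via compact embeddings (Rellich--Kondrachov and Arzel\`a--Ascoli) and a diagonal argument, then invoke Corollary~\ref{cor:linear-RPDE-stability-2}. You are in fact slightly more careful than the paper in distinguishing the case $p=1$ (where $p'=\infty$) and in explicitly identifying the $L^1_\loc$-limit of $\nabla\cdot b^n$ with $\nabla\cdot b$, but the overall structure is identical.
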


\begin{proof}
    By Rellich-Kondrakov and Ascoli-Arzelà theorems, $W^{1,p}(B_R)\hookrightarrow L^p(B_R)$ and $C^3(\overline{B_R})\hookrightarrow C^2(\overline{B_R})$ with compact embeddings.
    By a Cantor diagonal argument, we can therefore find a (not relabelled) subsequence such that $b^n\to b$ in $L^p_\loc$, $\xi^n\to \xi$ in $C^2_\loc$; by lower-semicontinuity, the limits $(b,\xi)$ still satisfy $\nabla\cdot b\in L^\infty_x$, $\nabla\cdot\xi=0$.
    Corollary \ref{cor:linear-RPDE-stability-2} then ensures the convergence $\rho^n\to \rho$ in $C([0,T];L^p_x)$, where $\rho$ is the unique solution associated to $(b,\xi,\bZ,\rho_0)$. Overall this proves compactness of $\{\rho^n\}_n$ in $C([0,T];L^p_x)$.
\end{proof}

Finally, we can combine the RDE theory developed in Section \ref{sec:RDEs} with the RPDE one from this section to deduce an explicit Lagrangian representation formula for solutions, whenever $b$ satisfies all the relevant assumptions.

The next result is a more detailed version of Theorem \ref{thm:flow_repr_intro} from the introduction.

\begin{theorem}\label{thm:linear-RPDE-wellposed-lagrangian}
    Let $\fkp\in [2,3)$, $Z\in \clC^{\fkp}_g$, $\xi\in C^3_b$ with $\nabla\cdot \xi =0$.
    Suppose that
    \begin{align*}
        \frac{b}{1+|x|}\in L^1_t L^1_x + L^1_t L^\infty_x, \quad \nabla\cdot b\in L^1_t L^\infty_x, \quad b\in L^1_t W^{1,1}_\loc,
    \end{align*}
    and additionally assume that $b$ is an Osgood drift, in the sense that it satisfies Assumption \ref{ass:osgood-drift-RDE}.
    Then for any $\rho_0\in L^1_x\cap L^\infty_x$ there exists a unique solution $\rho\in \cB_b([0,T];L^1_x\cap L^\infty_x)$ to \eqref{eq:rc_in_unique}, which is given by the formula $\rho_t = (\Phi_t)_\sharp \rho_0$, namely
    \begin{equation}\label{eq:flow_representation_RCE}
        \langle  \rho_t, \varphi \rangle
        = \int_{\R^d} \varphi(\Phi_t(x)) \rho_0(x) \dd x\quad \forall\, \varphi\in C^\infty_c;
    \end{equation}
    here $\Phi$ is the flow associated to the underlying RDE, given by Theorem \ref{thm:wellposedness-RDE}, which satisfies the quasi-incompressibility condition \eqref{eq:flow-incompressibility} from Corollary \ref{cor:flow-incompressibility}.
    Moreover, $\rho\in C([0,T];L^p_x)$ for all $p\in[1,\infty)$, $\rho\in C_{w-\ast}([0,T];L^\infty_x)$ and
    \begin{equation*}
        \frac{\dd }{\dd t} \| \rho_t\|_{L^p_x}^p
        = \int_{\R^d} (1-p) |\rho_t(x)|^p (\nabla\cdot b)(x) \dd x\quad \forall\, p\in [1,\infty).
    \end{equation*}
    Furthermore, for any $f_0\in L^1_x\cap L^\infty_x$, there exists a unique solution $f\in \cB_b([0,T];L^1_x\cap L^\infty_x)$ to \eqref{eq:lt_in_unique}; for any $t\in [0,T]$, it holds
    \begin{equation}\label{eq:flow_representation_RTE}
        f_t(x)=f_0(\Phi_t^{-1}(x)) \quad \text{ for Lebesgue a.e. }x\in\R^d.
    \end{equation}
\end{theorem}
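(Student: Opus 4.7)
The plan is to assemble the theorem from earlier results of this section, with the bulk of the work concentrated on the two flow representation formulas \eqref{eq:flow_representation_RCE} and \eqref{eq:flow_representation_RTE}.

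First, existence and uniqueness of $\rho\in\cB_b([0,T];L^1_x\cap L^\infty_x)$ will follow directly from Theorem~\ref{thm:uniqueness-linear-RPDE} (the Osgood hypothesis is not needed at this stage); strong continuity $\rho\in C([0,T];L^p_x)$ for $p\in[1,\infty)$ and the $L^p$-norm evolution identity come from Proposition~\ref{prop:absolute-continuity-Lp}, while weak-$\ast$ continuity in $L^\infty_x$ is a consequence of the distributional continuity of Remark~\ref{rem:weak_cts_solutions} combined with the uniform bound \eqref{eq:lq-bound} and density of $C^\infty_c$ in $L^1_x$. The flow $\Phi$ is supplied by Theorem~\ref{thm:wellposedness-RDE} and its quasi-incompressibility by Corollary~\ref{cor:flow-incompressibility}; analogous well-posedness for the transport equation in $\cB_b([0,T];L^1_x\cap L^\infty_x)$ is covered by Remarks~\ref{rem:existence-linear RPDE} and~\ref{rem:uniqueness-RTE}.

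For the continuity equation representation, I would approximate by smooth data $(b^n,\xi^n,\bZ^n,\rho_0^n)$ chosen to satisfy simultaneously the uniform hypotheses of Corollary~\ref{cor:stability-RDE-flow} and Corollary~\ref{cor:linear-RPDE-stability-1}, with $b^n\to b$ in $L^1_tL^1_\loc$, $\rho_0^n\to\rho_0$ strongly in $L^1_x$, and $d_{\fkp,T}(\bZ^n,\bZ)\to 0$. For smooth data the solution is explicit via \eqref{eq:existence-linear-proof-1}, which by the Jacobian identity \eqref{eq:incompressibility-proof-eq2} is precisely $\rho^n_t=(\Phi^n_t)_\sharp\rho_0^n$, i.e.\ $\langle\varphi,\rho^n_t\rangle=\int\varphi(\Phi^n_t(x))\rho_0^n(x)\,\dd x$ for every $\varphi\in C^\infty_c$. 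The left-hand side converges to $\langle\varphi,\rho_t\rangle$ by Corollary~\ref{cor:linear-RPDE-stability-1}; the right-hand side converges to $\int\varphi(\Phi_t(x))\rho_0(x)\,\dd x$ by combining uniform convergence $\Phi^n_t\to\Phi_t$ on compacts (Corollary~\ref{cor:stability-RDE-flow}), the fact that the preimages $(\Phi^n_t)^{-1}(\supp\varphi)$ remain in a common compact set, and $L^1_x$-convergence of $\rho_0^n$, yielding \eqref{eq:flow_representation_RCE}.

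For the transport formula, the candidate $\tilde f_t(x):=f_0(\Phi^{-1}_t(x))$ is well-defined pointwise and belongs to $\cB_b([0,T];L^1_x\cap L^\infty_x)$ thanks to quasi-incompressibility. By uniqueness it suffices to show that $\tilde f$ solves \eqref{eq:lt_in_unique}; I would again use smooth approximation, for which $\tilde f^n_t=f_0^n\circ(\Phi^n_t)^{-1}$ solves the classical transport equation, hence also the rough one in the sense of Definition~\ref{defn:solution-rough-continuity}. Writing $\tilde f^n_t-\tilde f_t=[f_0^n-f_0]\circ(\Phi^n_t)^{-1}+[f_0\circ(\Phi^n_t)^{-1}-f_0\circ\Phi^{-1}_t]$, the first term is controlled in $L^1_x$ uniformly in $t$ by quasi-incompressibility of $\Phi^n$ (uniform in $n$), and the second by approximating $f_0$ in $L^1_x$ by continuous compactly supported functions and exploiting uniform convergence of the inverse flows (again Corollary~\ref{cor:stability-RDE-flow}); this gives $\tilde f^n\to\tilde f$ in $C([0,T];L^1_\loc)$. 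Combining with Corollary~\ref{cor:stability-1-rte}, which identifies the rough limit of $\tilde f^n$ with the unique RTE solution $f$, we conclude $\tilde f=f$. The main hurdle I anticipate is here: Corollary~\ref{cor:stability-1-rte} demands $\nabla\cdot b^n\to\nabla\cdot b$ in $L^1_tL^1_\loc$, which is strictly stronger than what is required for \eqref{eq:rc_in_unique}, so the approximation $b^n$ must be designed carefully using $b\in L^1_tW^{1,1}_\loc$ to preserve this extra condition along with all other uniform bounds.
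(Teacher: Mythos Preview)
Your proposal is correct, and the treatment of existence, uniqueness, strong $L^p$-continuity, weak-$\ast$ $L^\infty$ continuity, and the continuity equation representation \eqref{eq:flow_representation_RCE} matches the paper's proof essentially line by line (the paper cites Corollary~\ref{cor:linear-RPDE-stability-2} rather than Corollary~\ref{cor:linear-RPDE-stability-1} for the convergence $\rho^n\to\rho$, but weak convergence is all that is used in the limit passage, so your choice is equally valid).

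The genuine difference lies in the transport representation \eqref{eq:flow_representation_RTE}. You argue by direct approximation: showing $\tilde f^n_t=f^n_0\circ(\Phi^n_t)^{-1}$ converges both to $\tilde f_t=f_0\circ\Phi_t^{-1}$ (via explicit flow estimates) and to the unique RTE solution $f$ (via Corollary~\ref{cor:stability-1-rte}), hence $f=\tilde f$. The paper instead deduces \eqref{eq:flow_representation_RTE} from the already-established \eqref{eq:flow_representation_RCE} via the duality formula of Theorem~\ref{thm:product-rule}: once \eqref{eq:flow_representation_RCE} is extended to test functions in $L^p_x$ by quasi-incompressibility, duality gives $\int f_0\rho_0=\int f_t\rho_t=\int f_t(\Phi_t(x))\rho_0(x)\,\dd x$ for every $\rho_0\in L^1_x\cap L^\infty_x$, forcing $f_0=f_t\circ\Phi_t$ almost everywhere. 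The paper's route is shorter and sidesteps precisely the hurdle you flag --- it never needs a second approximation or the condition $\nabla\cdot b^n\to\nabla\cdot b$ in $L^1_tL^1_\loc$ from Corollary~\ref{cor:stability-1-rte}. Your approach, on the other hand, is more self-contained and would work equally well in settings where a duality pairing is not readily available; the extra hypothesis you worry about is indeed harmless here since mollification of $b\in L^1_tW^{1,1}_\loc$ delivers it.
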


\begin{proof}
    Existence and uniqueness of $\rho\in \cB_b([0,T];L^1_x \cap L^\infty_x)$ comes from Theorem \ref{thm:uniqueness-linear-RPDE}.
    Similarly to Corollary \ref{cor:linear-RPDE-stability-1}, consider suitable smooth approximations of $(b,\xi,\bZ,\rho)$ and let $\rho^n$ be the associated solutions.
    It classically holds $\rho^n_t = (\Phi^n_t)_\sharp \rho^n_0$, where $\Phi^n$ are the associated flows.
    For any $t\geq 0$, by Corollary \ref{cor:stability-RDE-flow}, $\Phi^n_t \to \Phi_t$ uniformly on compact sets, while by Corollary \ref{cor:linear-RPDE-stability-2}, $\rho^n\to \rho$ in $C_w([0,T];L^p_x)$ for any $p\in [1,\infty)$. Combining these facts with weak-strong convergence and passing to the limit, one readily concludes that \eqref{eq:flow_representation_RCE} holds.
    The formula for $\frac{\dd}{\dd t} \|\rho_t\|_{L^p_x}^p$ can then be proved going through the same arguments as in Proposition \ref{prop:absolute-continuity-Lp}.

    Existence and uniqueness of $f\in \cB_b([0,T];L^1_x \cap L^\infty_x)$ again comes from Theorem \ref{thm:uniqueness-linear-RPDE}. In order to prove \eqref{eq:flow_representation_RTE}, first notice that thanks to the quasi-incompressibility of the flow $\Phi$, the integrability of $\rho_0$ and standard density arguments, formula \eqref{eq:flow_representation_RCE} extends to any $\varphi\in L^p_x$, for any $p\in [1,\infty]$.
    In light of this, by applying the duality formula, for any fixed $t\in [0,T]$ we find
    \begin{align*}
        \int_{\R^d} f_0(x) \rho_0(x) \dd x = \int_{\R^d} f_t(x) \rho_t(x) \dd x = \int_{\R^d} f_t(\Phi_t(x)) \rho_0(x) \dd x.
    \end{align*}
    As the argument works for any choice of $\rho_0\in L^1_x\cap L^\infty$, we deduce that $f_0(x)=f_t(\Phi_t(x))$ for Lebesgue a.e. $x$, from which conclusion follows by applying $\Phi_t^{-1}$ on both sides (by quasi-incompressibility, $\Phi_t^{-1}$ preserves negligible sets).
\end{proof}

\begin{remark}
    In line with the Di Perna--Lions theory \cite{diperna1989ordinary}, it is reasonable to expect the Lagrangian representation formulas to hold true under the sole assumption \eqref{eq:condition-uniqueness}, without the need for Osgood regularity (Assumption \ref{ass:osgood-drift-RDE}).
    This would require to introduce a suitable concept of \emph{(rough) generalized Lagrangian flow}, in analogy to the deterministic theory \cite{diperna1989ordinary,Ambrosio2004,CriDeL2008}; we leave this problem for future investigations.
\end{remark}

\section{Nonlinear rough continuity equations}\label{sec:nonlinear-RPDEs}

Like in the majority of Section \ref{sec:linear-RPDEs}, we will enforce throughout this section that $\nabla\cdot \xi=0$. Here, if not stated otherwise, we work on a fixed finite time interval $[0,T]$.

Having established a robust wellposedness theory for linear rough PDEs, we now pass to the study a class of nonlinear continuity equations on $\R^d$ of the form
\begin{equation}\label{eq:nonlinear-RPDE}
    \rmd \rho_t + \nabla\cdot [(K_t\oast \rho_t) \rho_t]\rmd t  + \sum_{k=1}^m \nabla \cdot (\xi_k \rho_t)\rmd \bZ_t^k=0.
\end{equation}
Here $d\geq 2$ and $K:[0,T]\times\R^d\times\R^d\to \R^d$, $K=K_t(x,y)$, is a given measurable kernel satisfying suitable assumptions (see below); for $f:\R^d\to\R$, we define
\begin{equation*}
(K_t\oast f)(x)=\int_{\R^d} K_t(x,y) f(y) \dd y, \quad (K_t\oast^T f)(x):=\int_{\R^d} K_t(y,x) f(y) \dd y.
\end{equation*}
The notation is meant to stress that, at least formally, $(K\oast)^\ast=K\oast^T$: whenever everything is sufficiently integrable, it holds $\langle K\oast f,g \rangle=\langle f, K\oast^T g\rangle$.
As before, the RPDE \eqref{eq:nonlinear-RPDE} is coupled with an initial condition $\rho\vert_{t=0}=\rho_0$.
We will impose three relevant conditions on the kernel $K$, which play different roles in the proofs and cover an interesting class of examples; combined together, they guarantee wellposedness of \eqref{eq:nonlinear-RPDE} for $\rho_0\in L^1_x\cap L^\infty_x$, see Theorem \ref{thm:nonlinear-wellposedness2} for more details.

\begin{assumption}\label{ass:abstract-kernel-1}
There exist a constant $C_K>0$ and an Osgood modulus $h$ such that
\begin{align}
& \int_{\R^d} |K_t(x,y)| |f(y)| \dd y \leq C_K \| f\|_{L^1_x\cap L^\infty_x}, \label{eq:abstract-kernel-eq1} \\
& \int_{\R^d} |K_t(x,y)-K_t(x',y)| |f(y)| \dd y
\leq C_K\, h(|x-x'|) \| f\|_{L^1_x\cap L^\infty_x}, \label{eq:abstract-kernel-eq2}
\end{align}
for all $t\in [0,T]$ and $f\in L^1_x\cap L^\infty_x$.
Moreover, the same estimates hold with $K_t(x,y)$ replaced by $\tilde K_t(x,y)=K_t(y,x)$.
\end{assumption}

\begin{assumption}\label{ass:abstract-kernel-2}
There exists a constant $C_K>0$ such that
\begin{equation*}
\|\nabla\cdot (K_t\oast f)\|_{L^\infty_x} \leq C_K \| f\|_{L^1_x} \quad \forall\, t\in [0,T],\, f\in L^1_x.
\end{equation*}
\end{assumption}

\begin{assumption}\label{ass:abstract-kernel-3}
For any $R>0$, it holds that
\begin{equation*}
\| \nabla (K_t \oast f)\|_{L^1(B_R)} \leq C_{K,R} \| f\|_{L^1_x\cap L^\infty_x} \quad \forall\, t\in [0,T],\,  f\in L^1_x\cap L^\infty_x. 
\end{equation*}
for a suitable constant $C_{K,R}>0$.
\end{assumption}

\begin{remark}\label{rem:abstract-ass-1}
Assumption \ref{ass:abstract-kernel-1} guarantees that, given any $f\in L^1_x\cap L^\infty_x$ and any $t\in [0,T]$, $K_t\oast f\in C^0_b$ with Osgood modulus of continuity proportional to $h$; moreover the same holds for $K\oast^T f$.
Assumption \ref{ass:abstract-kernel-2} provides a control on the quasi-incompressibility properties of $K\oast f$, in a way which only depends on $\| f\|_{L^1_x}$.
Finally, Assumption \ref{ass:abstract-kernel-3} implies that $K_t\oast f\in W^{1,1}_{\loc}$.
\end{remark}

\subsection{Existence}

In agreement with Definition \ref{defn:solution-rough-continuity}, we adopt the following terminology.

\begin{definition}\label{defn:solution_nonlinear_RPDE}
    We say that a map $\rho\in \cB_b([0,T];L^1_\loc)$ is a \emph{weak solution to the nonlinear RPDE \eqref{eq:nonlinear-RPDE}} if, for all $R\in [1,\infty)$, the following hold:
    \begin{itemize}
        \item[i.] $\nabla\cdot ([K\oast \rho] \rho)$ is a well-defined distribution, belonging to $L^1_t \cF_{-2,R}$;
        \item[ii.] there exists $\rho^\natural$ beloging to $C^{\fkp/3-\var}_2 \cF_{-3,R}$ such that
    \begin{equation*}
        \delta \rho_{st} + \int_s^t \nabla\cdot ([K_r\oast \rho_r] \rho_r) \dd r + A_{st} \rho_s = \bbA_{st} \rho_s + \rho^\natural_{st}\quad \forall\, (s,t)\in \Delta_T
    \end{equation*}
    where $\bA=(A,\bbA)$ is the unbounded rough driver associated to $(\xi,\bZ)$ given by \eqref{eq:defn-unbounded-continuity}.
    \end{itemize}   
\end{definition}

\begin{remark}
    Under condition \eqref{eq:abstract-kernel-eq1} on $K$, if $\rho\in L^2_t (L^1_x\cap L^\infty_x)$, then necessarily $\nabla\cdot ([K\oast \rho] \rho) \in L^1_t \cF_{-1,R}\subset L^1_t \cF_{-2,R}$, thanks to the pointwise estimate
    \begin{align*}
        \| \nabla\cdot ([K_r\oast \rho_r] \rho_r)\|_{\cF_{-1,R}}
        \lesssim \| [K_r\oast \rho_r] \rho_r\|_{L^1(B_R)}
        \leq \| K_r\oast \rho_r\|_{L^\infty_x} \| \rho_r\|_{L^1_x} 
        \lesssim C_K \| \rho_r\|_{L^1_x\cap L^\infty_x}^2.
    \end{align*}
    There are however interesting situations where less integrability on $\rho$ is needed and bounds in $\cF_{-2,R}$ become crucial, cf. Section \ref{subsec:nonlinear-euler-delort}.   
\end{remark}

\begin{proposition}\label{prop:nonlinear-existence}
    Let $\fkp\in [2,3)$, $\bZ\in C^\fkp_g$, $\xi\in C^2_b$ with $\nabla\cdot\xi=0$, and let $K$ be a kernel satisfying Assumptions \ref{ass:abstract-kernel-1}-\ref{ass:abstract-kernel-2}.
    Then for any $\rho_0\in L^1_x\cap L^\infty_x$, there exists a solution $\rho\in \cB_b([0,T];L^1_x\cap L^\infty_x)$ to \eqref{eq:nonlinear-RPDE}, which moreover satisfies the estimate
    \begin{equation} \label{eq:nonlinear-existence estimate}
       \sup_{t \in [0,T]} \|\rho_t\|_{L^p_x} \leq \exp\left( \left(1-\frac{1}{p} \right) T C_K\|\rho_0\|_{L^1_x} \right) \|\rho_0\|_{L^p_x} \quad \forall\, p\in [1,\infty].
    \end{equation}
\end{proposition}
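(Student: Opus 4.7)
The plan is to construct a solution by approximation: regularize the problem, solve the regularized nonlinear equation by classical means, and pass to the limit using the rough-driver machinery of Section~\ref{sec:linear-RPDEs}. Specifically, I would fix smooth divergence-free approximations $\xi^n\in C^3_b$ of $\xi$, smooth rough paths $\bZ^n\in\clC^\fkp_g$ converging to $\bZ$, smooth compactly supported approximations $\rho^n_0\to\rho_0$ in $L^1_x\cap L^\infty_x$, and smooth kernels $K^n$ obtained by convolving $K$ in both variables with a mollifier, arranged so that (i) $K^n$ satisfies Assumptions~\ref{ass:abstract-kernel-1}--\ref{ass:abstract-kernel-2} with constants and Osgood modulus independent of $n$, and (ii) $K^n\to K$ pointwise Lebesgue-a.e. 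For each $n$, the resulting smoothed nonlinear equation
\begin{equation*}
    \partial_t \rho^n + \nabla\cdot\big((K^n_t \oast \rho^n_t)\rho^n_t\big) + \sum_k \xi^n_k\cdot\nabla \rho^n_t\, \dot Z^{n,k}_t = 0,\qquad \rho^n\vert_{t=0}=\rho^n_0,
\end{equation*}
is a classical nonlinear transport PDE; its velocity field $v^n_t = K^n_t\oast \rho^n_t + \sum_k \xi^n_k \dot Z^{n,k}_t$ is Lipschitz in $x$, so a global smooth solution $\rho^n$ can be produced via the method of characteristics coupled with a Picard iteration on the flow, using the uniform $L^\infty$ bound on the velocity (from Assumption~\ref{ass:abstract-kernel-1}) to prevent blow-up.

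Next, I would derive estimates uniform in $n$. Viewing $\rho^n$ as a solution to a \emph{linear} rough continuity equation with drift $b^n_t:=K^n_t\oast\rho^n_t$, Assumption~\ref{ass:abstract-kernel-2} yields $\|\nabla\cdot b^n_t\|_{L^\infty_x}\leq C_K\|\rho^n_t\|_{L^1_x}$; the linear $L^p$ bound~\eqref{eq:lq-bound} applied with $p=1$ thus gives $\|\rho^n_t\|_{L^1_x}\leq \|\rho^n_0\|_{L^1_x}\leq \|\rho_0\|_{L^1_x}$, and consequently, for general $p\in[1,\infty]$,
\begin{equation*}
    \sup_{t\in[0,T]} \|\rho^n_t\|_{L^p_x} \leq \exp\big((1-1/p)\,T C_K\|\rho_0\|_{L^1_x}\big)\,\|\rho^n_0\|_{L^p_x},
\end{equation*}
uniformly in $n$. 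Combined with the unbounded rough-driver a priori bounds of Lemma~\ref{lem:apriori-unbounded} applied to the forcing $\mu^n_t=\int_0^t \nabla\cdot(b^n_s \rho^n_s)\,\rmd s$, this provides uniform-in-$n$ control of $\rho^n$ in the $\fkp$-variation seminorm on $\cF_{-1,R}$ for every $R\geq 1$. An application of Proposition~\ref{prop:compactness-lp} (combined, for $p=1$, with the equi-integrability argument from the proof of Proposition~\ref{prop:existence-linear-RPDE}) then yields a subsequence along which $\rho^{n_k}\to\rho$ in $C_w([0,T];L^p_x)$ for every $p\in[1,\infty)$ and in $C_{w-\ast}([0,T];L^\infty_x)$.

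The main obstacle is passing to the limit in the nonlinear drift $\nabla\cdot(b^n\rho^n)$; here Assumption~\ref{ass:abstract-kernel-1} is decisive. The uniform $L^1_x\cap L^\infty_x$-bound on $\rho^n$, combined with~\eqref{eq:abstract-kernel-eq1}, shows that $\{b^n\}$ is equibounded in $L^\infty_{t,x}$, while~\eqref{eq:abstract-kernel-eq2} together with the uniform convergence $K^n\to K$ implies that $\{b^n_t(\cdot)\}$ is equicontinuous on $\R^d$ with a common Osgood modulus independent of $n$ and $t$. Using that $\rho^{n_k}_t \rightharpoonup \rho_t$ in $L^1_x$ and that $K^n_t(x,\cdot) \to K_t(x,\cdot)$ Lebesgue a.e., together with the uniform bounds of Assumption~\ref{ass:abstract-kernel-1} to invoke dominated convergence, one obtains the pointwise convergence $b^{n_k}_t(x) \to (K_t \oast \rho_t)(x)$ for each $(t,x)$. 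An Arzelà--Ascoli argument then upgrades this to uniform convergence of $b^{n_k}$ on compact subsets of $[0,T] \times \R^d$. Pairing this strong convergence of $b^{n_k}$ with the weak convergence of $\rho^{n_k}$ in $L^1_\loc$ allows one to identify the distributional limit of $\nabla\cdot(b^{n_k}\rho^{n_k})$ as $\nabla\cdot((K\oast\rho)\rho)$, while the linear rough-driver machinery of Section~\ref{sec:linear-RPDEs} handles the convergence of the rough-noise contributions (the remainder $\rho^{n,\natural}$ passes to the limit by lower semicontinuity in $\cF_{-3,R}$, as in the proof of Proposition~\ref{prop:existence-linear-RPDE}). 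Thus $\rho$ is a weak solution to~\eqref{eq:nonlinear-RPDE} in the sense of Definition~\ref{defn:solution_nonlinear_RPDE}, and~\eqref{eq:nonlinear-existence estimate} is inherited from the uniform bound on $\rho^n$ by lower semicontinuity under weak (resp. weak-$\ast$) convergence.
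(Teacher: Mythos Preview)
Your overall strategy—mollify the data and the kernel, solve the regularized nonlinear equation classically, derive uniform $L^p$ bounds via Assumption~\ref{ass:abstract-kernel-2} and the linear estimate~\eqref{eq:lq-bound}, extract a weakly convergent subsequence by Proposition~\ref{prop:compactness-lp}, and pass to the limit—is exactly the paper's, and your handling of the uniform bounds and compactness is correct.

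The gap is in your identification of the limit of $b^{n_k}_t := K^{n_k}_t \oast \rho^{n_k}_t$. You claim that weak $L^1$-convergence of $\rho^{n_k}_t$, a.e.\ convergence of $K^{n_k}_t(x,\cdot)$, and ``the uniform bounds of Assumption~\ref{ass:abstract-kernel-1} to invoke dominated convergence'' yield $b^{n_k}_t(x) \to (K_t\oast\rho_t)(x)$ for each $(t,x)$. This does not work: dominated convergence needs a.e.\ convergence of the \emph{integrand} $K^{n_k}_t(x,y)\rho^{n_k}_t(y)$, but $\rho^{n_k}_t$ converges only weakly, so the product has no reason to converge a.e. Nor does a.e.\ convergence of $K^{n_k}_t(x,\cdot)$ give strong convergence in $L^\infty_y$ (the dual of $L^1_y$)—think of the Biot--Savart kernel, for which $K_t(x,\cdot)\notin L^\infty_y$. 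So neither side of the pairing converges in the right mode to conclude directly.

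The paper circumvents this by exploiting the specific mollification structure $K^\eps_t\oast f = \chi^\eps * (K_t \oast (\chi^\eps * f))$ together with \emph{duality}: for $\varphi\in C^\infty_c$,
\[
\langle \varphi,\, K^\eps_t\oast \rho^\eps_t\rangle
= \langle K_t\oast^T(\chi^\eps * \varphi),\, \chi^\eps * \rho^\eps_t\rangle.
\]
Here $\chi^\eps * \varphi \to \varphi$ strongly in $L^1_x\cap L^\infty_x$, and the \emph{second half} of Assumption~\ref{ass:abstract-kernel-1} (the hypothesis on $\tilde K_t(x,y)=K_t(y,x)$, which you never invoke) says precisely that $K_t\oast^T \in \cL(L^1_x\cap L^\infty_x; C^0_b)$, whence $K_t\oast^T(\chi^\eps * \varphi)\to K_t\oast^T\varphi$ strongly in $C^0_b\subset L^\infty_x$. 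Pairing this with $\chi^\eps * \rho^\eps_t \rightharpoonup \rho_t$ weakly in $L^1_x$ is now a clean weak--strong argument. Ascoli--Arzel\`a (from the equicontinuity bound~\eqref{eq:abstract-kernel-eq2}, which already gives equicontinuity of $b^{n}_t$ without any convergence of $K^n$) then upgrades distributional convergence to $C^0_\loc$, after which your weak--strong pairing with $\rho^{n_k}_t$ goes through. In short: identify the limit of $b^{n_k}_t$ by testing against $\varphi$ and moving the kernel to the other side via $K_t\oast^T$, rather than by attempting pointwise convergence of $b^{n_k}_t(x)$ directly.
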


\begin{proof}
    %
    The proof is based on a priori estimates and compactness arguments, similar to the ones already presented in Proposition \ref{prop:existence-linear-RPDE} and Lemma \ref{lem:equi-p-integrability}.
    For this reason, we will sketch several passages and mostly focus on the ones related to the nonlinear kernel $K$, which require a different treatment.
    
    Let $\{\chi^\eps\}_{\eps>0}$ be a family of standard mollifiers on $\R^d$ associated to a radial $\chi\in C^\infty_c$. Correspondingly, consider the mollified kernels $K^\eps$ given by
    \begin{align*}
        K_t^\eps(x,y):=\int_{\R^d\times \R^d} \chi^\eps(x-\tilde x) K_t(\tilde x,\tilde y) \chi^\eps(\tilde y- y) \dd \tilde x\, \dd \tilde y;
    \end{align*}
    observe that they act on functions $f$ by the relation
    \begin{align*}
        K_t^\eps \oast f = \chi^\eps \ast (K_t\oast (\chi^\eps \ast f)).
    \end{align*}
    Using the above relations, it's easy to verify that the family $\{K^\eps\}_{\eps>0}$ still satisfies Assumptions \ref{ass:abstract-kernel-1}-\ref{ass:abstract-kernel-2} with same modulus $h$ and uniform-in-$\eps$ constants.
    Given $\rho_0\in L^1_x\cap L^\infty_x$, $\bZ\in \clC^{\mathfrak{p}}_g$, consider suitable smooth approximations $\{\rho_0^\eps\}_\eps$ and $\bZ^\eps$, such that
    \begin{align*}
        \bZ^\eps\to \bZ \text{ in } C^\fkp_g, \quad
        \rho^\eps_0\to \rho_0 \text{ in } L^p_x \quad \forall\,p\in [1,\infty), \quad
        \| \rho^\eps_0\|_{L^q}\leq \| \rho_0\|_{L^q} \quad \forall\, q\in [1,\infty]
    \end{align*}
    and define the mollified nonlinear PDEs
    \begin{equation}\label{eq:nonlinear-existence-eq1}
        \partial_t \rho_t^\eps + \nabla \cdot [(K_t^\eps\oast \rho_t^\eps) \rho_t^\eps] + \sum_k \nabla\cdot(\xi_k \rho^\eps_t)  \,\dot Z^{k,\eps}_t = 0.
    \end{equation}
    Since for $\eps>0$ the kernel $K^\eps$ is smooth and $\xi\in C^2_b$, standard results (dating back at least to Dobrushin \cite{Dobrushin1979}) ensure existence and uniqueness of solutions to \eqref{eq:nonlinear-existence-eq1}.
    By Remark \ref{rem:defn-solution-rough-continuity}, such solutions solve the nonlinear RPDE associated to $\bZ^\eps$ and $u^\eps:= K^\eps\oast \rho^\eps$; moreover $\rho^\eps_t = (\Phi^\eps_t)_\sharp \rho^\eps_0$ where $\Phi^\eps_t$ is the flow associated to $(u^\eps, \xi, \bZ^\eps)$.
    Observe that, by properties of the push-forward map, it holds
    \begin{equation}\label{eq:nonlinear-uniqueness-eq2}
        |\rho^\eps_t|=(\Phi^\eps_t)_\sharp |\rho^\eps_0|,
    \end{equation}
    where $|\mu|=\mu^+ +\mu^-$ denotes the variation of a signed measure $\mu$;
    in particular, $\| \rho^\eps_t\|_{L^1_x} = \| \rho^\eps_0\|_{L^1_x} \leq \| \rho_0\|_{L^1_x}$.
    By Assumption \ref{ass:abstract-kernel-2}, this implies that
    \begin{equation*}
        \sup_{\eps>0,\, t\geq 0} \| \nabla\cdot u^\eps_t\|_{L^\infty_x} \leq C_K \| \rho^\eps_t\|_{L^1_x} \leq C_K \|\rho_0\|_{L^1_x}
    \end{equation*}
    which in turn by estimate \eqref{eq:lq-bound} and the choice of approximations yields
   \begin{equation} \label{eq:uniform infty bound}
        \sup_{\eps>0,t\in [0,T]} \| \rho^\eps_t\|_{L^p_x} \leq \exp \bigg( \Big(1-\frac{1}{p}\Big) C_K T \| \rho_0\|_{L^1_x}\bigg) \| \rho_0\|_{L^p_x} \quad \forall\, p\in [1,\infty].
    \end{equation}
    Combining this estimate (for $p\in{1,\infty}$) together with assumptions \eqref{eq:abstract-kernel-eq1}-\eqref{eq:abstract-kernel-eq2}, we get
    \begin{equation}\label{eq:nonlinear-existence-eq3}
        \| u^\eps_t\|_{L^\infty_x} + \sup_{x\neq y} \frac{|u^\eps_t(x)-u^\eps_t(y)|}{h(|x-y|)} \lesssim \| \rho_0\|_{L^1_x\cap L^\infty_x}
    \end{equation}
    uniformly in $t\in [0,T]$ and $\eps>0$.
    Arguing as in Proposition \ref{prop:existence-linear-RPDE}, Lemma \ref{lem:equi-p-integrability} and Corollary \ref{cor:linear-RPDE-stability-1}, one can then check that the assumptions of Propositions \ref{prop:compactness-l1}-\ref{prop:compactness-lp} are satisfied;
    therefore we can extract a (not relabelled) subsequence such that $\rho^\eps \rightarrow \rho$ in $C_w([0,T];L^1_x)$.
    Since $\rho^{\eps}$ is bounded in $\cB_b([0,T];L^1_x \cap L^\infty_x)$ by \eqref{eq:uniform infty bound}, we also get convergence in $C_w([0,T];L^p_x)$ for $p \in (1,\infty)$ and in $C_{w-\ast}([0,T];L^\infty_x)$.
    
    It remains to show that $\rho_t$ is a weak solution to the nonlinear RPDE, in the sense of Definition \ref{defn:solution_nonlinear_RPDE}; we will only show convergence of the nonlinear term, namely that for all $t\geq 0$ it holds
    \begin{equation}\label{eq:nonlinear-existence-eq4}
        \langle \varphi, \nabla (u^\eps_t \rho^\eps_t)\rangle
        = - \langle \nabla \varphi, (K^\eps_t\oast \rho^\eps_t) \rho_t^\eps\rangle
        \to - \langle \nabla \varphi, (K_t \oast  \rho_t) \rho_t\rangle
        = \langle \varphi, \nabla ([K_t\oast \rho_t] \rho_t)\rangle
    \end{equation}
    since the other terms can be treated as in Proposition \ref{prop:existence-linear-RPDE}.

    Towards this end, recall that $K^\eps_t\oast \rho^\eps_t = \chi^\eps \ast (K_t\oast (\chi^\eps \ast \rho^\eps_t) )$ and $\rho^\eps_t \rightharpoonup \rho_t$ in $L^1_x$; by strong continuity of convolutions, it holds $\chi^\eps\ast \rho^\eps_t \rightharpoonup \rho_t$ in $L^1_x$ as well.
    On the other hand, for any $\varphi\in C^\infty_c$ it holds $\chi^\eps\ast \varphi\to \varphi$ in $L^1_x\cap L^\infty_x$ and by Assumption \ref{ass:abstract-kernel-1}, the dual operator $K\oast^T\in \cL(L^1_x\cap L^\infty_x; C^0_b)$. Combining all these observations, by weak-strong convergence, for any $\varphi\in C^\infty_c$ it holds that
    \begin{align*}
        \langle \varphi, K^\eps_t\oast \rho^\eps_t\rangle
        = \langle K_t\oast^T (\chi^\eps \ast \varphi), \chi^\eps\ast \rho^\eps_t\rangle
        \to \langle K_t\oast^T \varphi, \rho_t\rangle.
    \end{align*}
    For fixed $t$, by \eqref{eq:nonlinear-existence-eq3} and Ascoli-Arzelà, the convergence of $K_t^{\eps} \oast \rho_t^{\eps}$ can be updated to uniform on compact sets, i.e. $K^\eps_t\oast \rho^\eps_t \to K_t\oast \rho_t$ in $C^0_\loc$.
    Again by weak-strong convergence, it then follows that $(K^\eps_t\oast \rho^\eps_t) \rho_t^\eps \rightharpoonup (K_t\oast \rho_t) \rho_t$ in $L^1_\loc$, yielding \eqref{eq:nonlinear-existence-eq4}.

    To see that \eqref{eq:nonlinear-existence estimate} holds, it suffices to pass to the limit in \eqref{eq:uniform infty bound}, using the lower semicontinuity of $\| \cdot\|_{L^p_x}$ w.r.t. weak convergence (weak-$\ast$ convergence for $p=\infty$).
\end{proof}

\subsection{Uniqueness and stability}\label{subsec:nonlinear-uniqueness}

\begin{proposition}\label{prop:nonlinear-uniqueness}
    Let $\fkp\in [2,3)$, $\bZ\in C^\fkp_g$, $\xi\in C^3_b$ with $\nabla\cdot\xi=0$, and let $K$ be a kernel satisfying Assumptions \ref{ass:abstract-kernel-1}-\ref{ass:abstract-kernel-2}-\ref{ass:abstract-kernel-3}.
    Then for any $\rho_0\in L^1_x\cap L^\infty_x$, there can be at most one solution $\rho\in \cB_b([0,T]; L^1_x\cap L^\infty_x)$ to \eqref{eq:nonlinear-RPDE}. Moreover, $\rho\in C([0,T];L^p_x)\cap C_{w-\ast}([0,T];L^\infty_x)$ for all $p\in [1,\infty)$ and
    $\rho$ is given by 
    $$
        \rho_t = (\Phi_t)_{\sharp} \rho_0
    $$
    where $\Phi$ is the flow generated by the RDE
    $$
        \rmd y_t = u_t(y_t) \rmd t + \xi(y_t) \rmd \bZ_t
    $$
    for $u_t = K_t \oast \rho_t$. 
\end{proposition}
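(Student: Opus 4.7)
My plan is to prove uniqueness by leveraging the linear theory from Theorem \ref{thm:linear-RPDE-wellposed-lagrangian}, which reduces the problem to comparing two Lagrangian flows, and then to close a Bihari--Osgood argument on a suitably weighted distance.

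Given two solutions $\rho^1,\rho^2\in\cB_b([0,T];L^1_x\cap L^\infty_x)$ sharing initial datum $\rho_0$, I first set $u^i_t := K_t\oast\rho^i_t$. By Assumption \ref{ass:abstract-kernel-1}, each $u^i$ is bounded on $[0,T]\times\R^d$ by $C_K M$ with $M:=\sup_{s,i}\|\rho^i_s\|_{L^1_x\cap L^\infty_x}$, and is Osgood continuous in $x$ with modulus $C_K M h$; Assumption \ref{ass:abstract-kernel-2} gives $\nabla\cdot u^i\in L^\infty_t L^\infty_x$ via $\|\nabla\cdot u^i_s\|_{L^\infty_x}\leq C_K\|\rho^i_s\|_{L^1_x}$; and Assumption \ref{ass:abstract-kernel-3} yields $u^i\in L^\infty_t W^{1,1}_\loc$. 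Therefore all hypotheses of Theorem \ref{thm:linear-RPDE-wellposed-lagrangian} are met for the linear RPDE with drift $u^i$, so $\rho^i$ is the unique $L^1_x\cap L^\infty_x$-valued solution of that linear equation and admits the Lagrangian representation $\rho^i_t = (\Phi^i_t)_\sharp\rho_0$, where $\Phi^i$ is the flow of the RDE $\dd y_t = u^i_t(y_t)\,\dd t + \xi(y_t)\,\dd\bZ_t$ from Theorem \ref{thm:wellposedness-RDE}. The continuity $\rho^i\in C([0,T];L^p_x)\cap C_{w-\ast}([0,T];L^\infty_x)$ comes from Proposition \ref{prop:absolute-continuity-Lp}.

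The plan is to control the weighted flow distance
\begin{equation*}
\tilde L_t := \int_{\R^d} |\Phi^1_t(x) - \Phi^2_t(x)|\, |\rho_0(x)|\,\dd x.
\end{equation*}
Applying Lemma \ref{lem:contraction-RDE-forcing} to the forcings $\mu^i_t:=\int_0^t u^i_s(\Phi^i_s(x))\,\dd s$, whose $1$-variation on $[0,T]$ is uniformly bounded in $x$ by $T\,C_K M$ (thanks to $\|u^i\|_{L^\infty}\leq C_K M$), yields a constant $C>0$ independent of $x$ and $t\in[0,T]$ such that
\begin{equation*}
|\Phi^1_t(x)-\Phi^2_t(x)| \leq C\int_0^t |u^1_s(\Phi^1_s(x)) - u^2_s(\Phi^2_s(x))|\,\dd s.
\end{equation*}
Integrating against $|\rho_0(x)|\,\dd x$, applying Fubini, and splitting
\begin{equation*}
|u^1_s(\Phi^1_s(x)) - u^2_s(\Phi^2_s(x))| \leq |u^1_s(\Phi^1_s(x)) - u^1_s(\Phi^2_s(x))| + |(u^1_s-u^2_s)(\Phi^2_s(x))|,
\end{equation*}
reduces the problem to estimating two terms in $s$.

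For the first term, Osgood continuity of $u^1_s$ together with Jensen's inequality for the probability measure $|\rho_0|/\|\rho_0\|_{L^1_x}$ (recall $h$ may be assumed concave by Remark \ref{rem:modulus-1}) gives
\begin{equation*}
\int |u^1_s(\Phi^1_s(x))-u^1_s(\Phi^2_s(x))|\,|\rho_0(x)|\,\dd x \leq C_K M\,\|\rho_0\|_{L^1_x}\, h\!\left(\tilde L_s/\|\rho_0\|_{L^1_x}\right).
\end{equation*}
The more delicate step, which I expect to be the main obstacle, is the second term: here I change variables via $(\Phi^2_s)_\sharp |\rho_0|=|\rho^2_s|$ (valid thanks to quasi-incompressibility of $\Phi^2$), expand
\begin{equation*}
(u^1_s-u^2_s)(y)=\int[K_s(y,\Phi^1_s(w))-K_s(y,\Phi^2_s(w))]\rho_0(w)\,\dd w,
\end{equation*}
and apply Fubini so that the $y$-integration happens with weight $|\rho^2_s(y)|$. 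This is precisely the setting in which the $\tilde K$-version of \eqref{eq:abstract-kernel-eq2} applies with fixed endpoints $\Phi^1_s(w),\Phi^2_s(w)$; combined with Jensen, it yields the same bound $C_K M\,\|\rho_0\|_{L^1_x}\,h(\tilde L_s/\|\rho_0\|_{L^1_x})$. Setting $g(s):=\tilde L_s/\|\rho_0\|_{L^1_x}$, one obtains $g(t)\leq C'\int_0^t h(g(s))\,\dd s$, and the Bihari--Osgood inequality (Lemma \ref{lem:bihari}) with $K=0$ forces $g\equiv 0$. Hence $\Phi^1_t(x)=\Phi^2_t(x)$ for $|\rho_0|$-a.e.\ $x$, and the pushforward representation immediately gives $\rho^1_t=\rho^2_t$ for all $t\in[0,T]$, concluding uniqueness together with the claimed flow representation.
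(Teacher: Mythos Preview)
Your proof is correct and follows essentially the same approach as the paper: reduce to the linear theory via Theorem \ref{thm:linear-RPDE-wellposed-lagrangian} to obtain the Lagrangian representations $\rho^i_t=(\Phi^i_t)_\sharp\rho_0$, control the weighted flow distance $\int|\Phi^1_t-\Phi^2_t|\,|\rho_0|$ using Lemma \ref{lem:contraction-RDE-forcing}, split the drift increment into an Osgood term and a kernel-difference term handled by the $\tilde K$ version of Assumption \ref{ass:abstract-kernel-1}, and close via Bihari--Osgood. One cosmetic point: for the continuity statement $\rho\in C([0,T];L^p_x)\cap C_{w-\ast}([0,T];L^\infty_x)$ you should cite Theorem \ref{thm:linear-RPDE-wellposed-lagrangian} (whose hypotheses you have already verified) rather than Proposition \ref{prop:absolute-continuity-Lp}, since the latter is stated under the stronger growth condition \eqref{eq:condition-stability} and does not directly give the $C_{w-\ast}$ part.
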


\begin{proof}
    Existence of a solution $\rho\in \cB_b([0,T];L^1_x\cap L^\infty_x)$ comes from Proposition \ref{prop:nonlinear-existence}.
    By Remark \ref{rem:abstract-ass-1}, for $u:=K\oast \rho$, we have
    \begin{equation}\label{eq:nonlinear-uniqueness-eq0}
        u\in L^\infty_t L^\infty_x, \quad
        \nabla\cdot u\in L^\infty_t L^\infty_x, \quad
        u\in L^\infty_t W^{1,1}_\loc
    \end{equation}
    and $u$ is Osgood continuous; therefore the flow representation and continuity properties
    of $\rho$ follow from Theorem \ref{thm:linear-RPDE-wellposed-lagrangian}. It only remains to show uniqueness of solutions.
    
    Let $\rho_0\in L^1_x\cap L^\infty_x$ be fixed and assume we are given two distinct solutions $\rho^i\in L^{\infty}_t (L^1_x\cap L^\infty_x)$, $i=1,2$.
    As above, $\rho^i$ are solutions to the linear RPDEs with drifts $u^i_t := K_t\oast \rho^i_t$, which are Osgood continuous and satisfy \eqref{eq:nonlinear-uniqueness-eq0}, therefore $\rho^i= (\Phi^i_t)_\sharp \rho_0$, where $\Phi^i$ are the flows associated to the RDEs
    \begin{equation}\label{eq:nonlinear-uniqueness-eq1}
        \dd \Phi^i_t(x) = u^i_t(\Phi^i_t(x)) \dd t + \xi (\Phi^i_t(x)) \dd \bZ_t.
    \end{equation}
    Moreover arguing as in \eqref{eq:nonlinear-uniqueness-eq2}, it holds $|\rho^i_t|=(\Phi^i_t)_\sharp |\rho_0|$. 
    Define the quantity
    \begin{align*}
        I_t  := \int_{\R^d} |\Phi^1_t(x)-\Phi^2_t(x)| |\rho_0(x)| \dd x;
    \end{align*}
    in order to conclude, it suffices to show that $I_t\equiv 0$ for all $t\in [0,T]$. Indeed if this is the case, then for any $\varphi\in C^\infty_c$ it holds
    \begin{align*}
        |\langle \varphi, \rho^1_t-\rho^2_t\rangle|
        = \bigg|\int_{\R^d} \big[ \varphi(\Phi^1_t(x)) -  \varphi(\Phi^2_t(x)) \big] \rho_0(x) \dd x\bigg|
        \lesssim \int_{\R^d} |\Phi^1_t(x) - \Phi^2_t(x)| |\rho_0(x)| \dd x = 0
    \end{align*}
    implying that $\rho^1_t = \rho^2_t$ for all $t\in [0,T]$.

    In order to estimate $I_t$, we need to manipulate the RDEs \eqref{eq:nonlinear-uniqueness-eq1}.
    First observe that each of them has associated forcing terms $\mu^i_t(x)=\int_0^t u^i_s(\Phi^i_s(x)) \dd s$ satisfying
    \begin{align*}
        \| \mu^i(x)\|_{1,T} \leq \| u^i\|_{L^1_t L^\infty_x} \lesssim \| \rho^i\|_{\cB_b([0,T];L^1_x\cap L^\infty_x)} \quad\forall\, x\in\R^d.
    \end{align*}
    Therefore we can apply Lemma \ref{lem:contraction-RDE-forcing} to deduce that
    \begin{align*}
        |\Phi^1_t(x)-\Phi^2_t(x)| \lesssim \int_0^t |u^1_s(\Phi^1_s(x))-u^2_s(\Phi^2_s(x))| \dd s 
    \end{align*}
    where the hidden constant is uniform in $t\in [0,T]$ and $x\in \R^d$.
    Integrating in space and applying Fubini, we find
    \begin{align*}
        I_t
        & \lesssim \int_0^t \int_{\R^d} |u^1_s(\Phi^1_s(x))-u^2_s(\Phi^2_s(x))| |\rho_0(x)|\dd x\\
        & \lesssim \int_0^t \int_{\R^d} \big[ |u^1_s(\Phi^1_s(x))-u^1_s(\Phi^2_s(x))| |\rho_0(x)| \big]
        + \big[ |(K_s\oast \rho^1_s)(\Phi^2_s(x))-(K_s\oast \rho^2_s)(\Phi^2_s(x))| |\rho_0(x)|\big] \dd x\, \dd s\\
        & =: \int_0^t (I_s^1 + I_s^2) \dd s.
    \end{align*}
    By Assumption \ref{ass:abstract-kernel-1}, $u^1_t$ is Osgood with modulus proportional to $\| \rho_t\|_{L^1\cap L^\infty} h$; by Remark \ref{rem:modulus-1}, we can assume $h$ to be concave. Then by Jensen's inequality we find
    \begin{align*}
        I^1_s
        \lesssim \| \rho^1_s\|_{L^1_x\cap L^\infty_x} \int_{\R^d} h(|\Phi^1_s(x))-\Phi^2_s(x)|) |\rho_0(x)|\dd x
        \lesssim \| \rho^1_s\|_{L^1_x\cap L^\infty_x}\, h( I_s ) .
    \end{align*}
    On the other hand, by applying the relations $\rho^i_s = (\Phi^i_s)_\sharp \rho_0$, $|\rho^2_s|=(\Phi^2_s)_\sharp |\rho_0|$ and assumption \eqref{eq:abstract-kernel-eq2}  (for $\tilde K$ in place of $K$), it holds
    \begin{align*}
        I^2_s
        & = \int_{\R^d} |(K_s\oast \rho^1_s)(x)-(K_s\oast \rho^2_s)(x))| |\rho^2_s(x)|\dd x\\
        & = \int_{\R^d} \bigg|\int_{\R^d} [K_s(x,\Phi^1_s(y)) - K_s(x,\Phi^2_s(y))] \dd y \bigg|\,|\rho^2_s(x)|\dd x\\
        & \leq \| \rho^2_s\|_{L^\infty_x} \int_{\R^d} \int_{\R^d} |K_s(x,\Phi^1_s(y)) - K_s(x,\Phi^2_s(y))| \dd x\, \dd y\\
        &  \lesssim \| \rho^2_s\|_{L^\infty_x} \int_{\R^d} h( |\Phi^1_s(y)) - \Phi^2_s(y)) |) \dd y
        \lesssim \| \rho^2_s\|_{L^\infty_x}  h(I_s).
    \end{align*}
    Combining everything, we arrive at
    \begin{align*}
        I_t \lesssim \int_0^t (\| \rho^1_s\|_{L^1_x\cap L^\infty_x} + \| \rho^2_s\|_{L^1_x\cap L^\infty_x}) h(I_s) \dd s
    \end{align*}
    which implies the conclusion by Lemma \ref{lem:bihari}.
\end{proof}

We are now ready to present the

\begin{proof}[Proof of Theorem \ref{thm:intro_wellposed_nonlinear}]
    In light of Proposition \ref{prop:nonlinear-uniqueness}, we only need to verify that the convolutional kernel $K$ satisfies Assumptions \ref{ass:abstract-kernel-1}-\ref{ass:abstract-kernel-2}-\ref{ass:abstract-kernel-3}.
    Let $K=K^1+K^2$ with $K^1\in L^1_x$, $K^2\in L^\infty_x$, as granted by assumption \textit{i.} Then by Young's inequalities
    \begin{align*}
        & \big\| |K|\ast |f| \big\|_{C^0_b}
        \leq \big\| |K^1|\ast |f| \big\|_{C^0_b} + \leq \big\| |K^2|\ast |f| \big\|_{C^0_b}
        \leq \| K^1\|_{L^1_x} \| f\|_{L^\infty_x} + \| K^1\|_{L^2_x} \| f\|_{L^1_x},\\
        & \| \nabla\cdot (K\ast f)\|_{L^\infty_x}
        \leq \| \nabla\cdot K\|_{L^\infty_x} \| f\|_{L^1_x},
    \end{align*}
    which verifies \eqref{eq:abstract-kernel-eq1} and Assumption \ref{ass:abstract-kernel-2}.
    Moreover by \textit{ii.} and Mihlin's multiplier theorem \cite[Theorem 6.2.7]{Grafakos.classical}, for any $p\in (1,\infty)$ we have
    \begin{align*}
        \| \nabla (K\ast f)\|_{L^p_x}
        = \| (\nabla K)\ast f\|_{L^p_x}
        \lesssim_p \| f\|_{L^p_x} \leq \| f\|_{L^1_x\cap L^\infty_x}
    \end{align*}
    verifying Assumption \ref{ass:abstract-kernel-3}.
    It remains to check property \eqref{eq:abstract-kernel-eq2}; to this end, using Young's inequality as above, it suffices to show that $K=\tilde K^1+\tilde K^2$, where $\tilde K^1\in C^1_b$ and
    \begin{equation*}
        \| \tilde K^2(x+\cdot)-\tilde K^2\|_{L^1_x} \lesssim h(|x|) \quad \forall\, x\in \R^d,
    \end{equation*}
    for some Osgood modulus of continuity $h$.
    We claim that we can achieve the above decomposition with $\tilde K^2\in L^1_x$ and
    \begin{align*}
        h(r) = r(1-\ln r) \mathbbm{1}_{(0,1)}(r) + r \mathbbm{1}_{[1,\infty)}(r);
    \end{align*}
    since $\tilde K^2\in L^1_x$, only the behaviour of $h$ for small enough $r$ matters, so it suffices to show that
    \begin{equation}\label{eq:thm_intro_nonlinear_proof2}
        \| \tilde K^2(x+\cdot)-\tilde K^2\|_{L^1_x} \lesssim -|x| \log |x| \quad \forall\, x\in B_{1/2}.
    \end{equation}
    To achieve the desired decomposition, we work with inhomogeneous Littlewood-Paley blocks $\{\Delta_j\}_{j\geq -1}$, see \cite{BCD}.
    Since $K\in L^1_x+L^\infty_x$, $\Delta_j K\in C^\infty_b$ for all $j\geq -1$, therefore we can set
    \begin{align*}
        \tilde K^1=\Delta_{-1} K + \Delta_0 K, \quad \tilde K^2=\sum_{j\geq 1} \Delta_j K.
    \end{align*}
    Thanks to assumption \textit{ii.} and the action of multipliers on functions whose Fourier transform is supported in an annulus \cite[Lemma 2.2]{BCD}, for every $j\geq 1$ it holds
    \begin{align*}
        \| \Delta_j \tilde K^2\|_{L^1_x}
        \leq \| \Delta_{j-1} K\|_{L^1_x} + \| \Delta_{j} K\|_{L^1_x} + \| \Delta_{j+1} K\|_{L^1_x}
        \lesssim 2^{-j}
    \end{align*}
    so that $\tilde K^2$ belongs to the Besov space $B^1_{1,\infty}$ (thus also in $L^1_x$).
    
    To conclude, it suffices to show that any $f\in B^1_{1,\infty}$ satisfies \eqref{eq:thm_intro_nonlinear_proof2}; by homogeneity, we may assume $\|f\|_{B^1_{1,\infty}}=1$.
    Let $x\in\R^d$, $|x|\leq 1/2$ be fixed; then by Bernstein inequalities and properties of translations in $W^{1,1}$, for any $N\in \N$ it holds that
    \begin{align*}
        \| f(x+\cdot) - f\|_{L^1_x}
        & \leq \sum_{j=-1}^\infty \| (\Delta_j f)(x+\cdot) - \Delta_j f\|_{L^1_x}\\
        & \lesssim \sum_{j<N} |x| \| \nabla \Delta_j f\|_{L^1_x} + \sum_{j \geq N} 2 \| \Delta_j f\|_{L^1_x} \\
        &\lesssim N |x| + \sum_{j \geq N} 2^{-j}
        \sim N |x| + 2^{-N}.
    \end{align*}
    Taking $N$ such that $2^{-N} \sim |x|$ (which is allowed since $|x|\leq 1/2$), conclusion follows.
\end{proof}

Combining Propositions \ref{prop:nonlinear-existence}-\ref{prop:nonlinear-uniqueness} and the stability results from Section \ref{subsec:linear-stability}, we obtain a comprehensive well-posedness statement (in the sense of Hadamard) for \eqref{eq:nonlinear-RPDE} in the class of initial conditions $\rho_0\in L^1_x\cap L^\infty_x$.

\begin{theorem}\label{thm:nonlinear-wellposedness2}
    Let $\fkp \in [2,3)$. Consider a sequence $\{(\mathbf{Z}^n, \xi^n, K^n)\}_n$ satisfying the assumptions of Proposition~\ref{prop:nonlinear-uniqueness} uniformly in $n$, namely such that:
    \begin{itemize}
        \item[i)] $\nabla\cdot \xi^n=0$ and $\bZ^n\in \mathcal{C}^\fkp_g$ for all $n$;
        \item[ii)] $\sup_{n \in \N} \big\{ \| \xi^n\|_{C^3_b}  +\| \mathbf{Z}^n\|_{\mathfrak{p},T} \big\}<\infty$;
        \item[iii)] the kernels $K^n$ satisfy Assumptions \ref{ass:abstract-kernel-1}-\ref{ass:abstract-kernel-2}-\ref{ass:abstract-kernel-3} with the same constants $C_K, C_{K,R}$ independent of $n$.
    \end{itemize} 
    Further assume that there exist $(\xi,\mathbf{Z},K)$ such that:
    \begin{itemize}
        \item[1)] $\xi^n\to \xi$ uniformly on compact sets;
        \item[2)] $\sup_{t\in [0,T]} |\mathbf{Z}^n_{0,t}-\mathbf{Z}_{0,t}|\to 0$;
        \item[3)] for all $\varphi\in C^\infty_c$, $K^n_t\oast^T \varphi\to K_t\oast^T \varphi$ strongly in $L^\infty_x$, for all $t\in [0,T]$.
    \end{itemize}
    Let $\{\rho^n_0\}_n$ be a bounded sequence in $L^1_x\cap L^\infty_x$, resp. $\rho_0\in L^1_x\cap L^\infty_x$, and denote by $\rho^n$, resp. $\rho$, the associated unique solution to \eqref{eq:nonlinear-RPDE} associated to $(\mathbf{Z}^n, \xi^n, K^n)$, resp. $(\mathbf{Z}, \xi, K)$, coming from Proposition \ref{prop:nonlinear-uniqueness}. Then:
    \begin{itemize}
        \item[a)] if $\rho^n_0\rightharpoonup \rho_0$ in $L^1_x$, then $\rho^n\to \rho$ in $C_w([0,T];L^q_x)\cap C_{w-\ast}([0,T];L^\infty_x)$ for all $q\in [1,\infty)$;
        \item[b)] if either $\rho^n_0\xrightharpoonup{\ast} \rho_0$ in $L^{\infty}_x$, or $\rho_0^n \rightharpoonup \rho_0$ in $L^p_x$ for some $p \in (1,\infty)$, then $\rho^n\to \rho$ in $C_w([0,T];L^q_x)\cap C_{w-\ast}([0,T];L^\infty_x)$ for all $q\in (1,\infty)$.
    \end{itemize}
    If in addition to 1)-2)-3) we also have
    \begin{itemize}
        \item[4)] for any $t\in [0,T]$, the linear operators $g\mapsto \nabla\cdot( K^n_t \oast g)$ are uniformly compact from $L^1_x\cap L^\infty_x$ to $L^1_\loc$, in the following sense: if $\{g^n\}_n$ is a bounded sequence in $L^1_x\cap L^\infty_x$ such that $g^n\xrightharpoonup{\ast} g$ in $L^{\infty}_x$, then $\nabla\cdot (K^n_t\oast g^n) \to \nabla\cdot (K_t\oast g)$ in $L^1_\loc$,
    \end{itemize}
    then:
    \begin{itemize}
        \item[c)] if $\rho^n_0\to \rho_0$ in $L^p_x$ for some $p\in [1,\infty)$, then $\rho^n\to \rho$ in $C([0,T]; L^q_x)$ for all $q\in \{p\}\cup (1,\infty)$.
    \end{itemize}
\end{theorem}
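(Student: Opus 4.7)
Setting $M := \sup_n \|\rho^n_0\|_{L^1_x \cap L^\infty_x}$ and $u^n_t := K^n_t \oast \rho^n_t$, Proposition \ref{prop:nonlinear-existence} applied with the uniform constants granted by hypotheses i)--iii) yields
\[
\sup_{n,\, t\in[0,T]} \|\rho^n_t\|_{L^p_x} \lesssim 1 \qquad \forall\, p\in[1,\infty],
\]
together with uniform-in-$n$ bounds, via Assumptions \ref{ass:abstract-kernel-1}--\ref{ass:abstract-kernel-3}, on $\|u^n\|_{L^\infty_{t,x}}$, $\|\nabla\cdot u^n\|_{L^\infty_{t,x}}$, $\|\nabla u^n\|_{L^\infty_t L^1(B_R)}$ and on the Osgood modulus of continuity of $u^n_t$ with a common rate controlled by $h$, as in \eqref{eq:nonlinear-existence-eq3}--\eqref{eq:uniform infty bound}. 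Viewing each $\rho^n$ as a solution to the \emph{linear} RPDE \eqref{eq:rc_in_unique} with drift $b = u^n$, Lemma \ref{lem:apriori-unbounded} then provides equicontinuity of $\{\rho^n\}$ in $\cF_{-1,R}$ for every $R\geq 1$.

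\textbf{Weak compactness.} In case (a), weak convergence of $\{\rho^n_0\}$ in $L^1_x$ implies uniform integrability \emph{and} tightness by the Dunford--Pettis theorem. Propagating this tightness forward in $t$ via a uniform-in-$n$ version of Lemma \ref{lem:equi-p-integrability} (whose estimates depend only on $\|u^n\|_{L^\infty_{t,x}}$ and $\|\nabla\cdot u^n\|_{L^\infty_{t,x}}$, both uniformly controlled by step 1) and combining with the $\cF_{-1,R}$-equicontinuity, Proposition \ref{prop:compactness-l1} furnishes a (not relabelled) subsequence $\rho^n \to \tilde\rho$ in $C_w([0,T]; L^1_x)$. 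The uniform $L^\infty$ bound upgrades convergence to $C_w([0,T]; L^q_x)\cap C_{w-\ast}([0,T]; L^\infty_x)$ for every $q\in[1,\infty)$. In case (b), Proposition \ref{prop:compactness-lp} gives analogous convergence for $q\in(1,\infty)$, directly from $L^p$- or $L^\infty$-weak-$\ast$ boundedness.

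\textbf{Identification of the limit (main obstacle).} This is the key technical step. For each fixed $t$, Arzel\`a--Ascoli applied to $\{u^n_t\}$ (uniformly bounded and uniformly Osgood-continuous) yields subsequential uniform-on-compacts convergence to some $v_t$. To identify $v_t = K_t \oast \tilde\rho_t =: u_t$, I would test against $\varphi\in C^\infty_c$ using
\[
\langle \varphi, u^n_t\rangle = \langle K^n_t \oast^T \varphi, \rho^n_t\rangle;
\]
Assumption 3) gives $K^n_t \oast^T \varphi \to K_t \oast^T \varphi$ strongly in $L^\infty_x$, and in case (a) the weak $L^1$-convergence of $\rho^n_t$ together with the uniform tightness from step 2 lets the pairing pass to the limit via a weak--strong splitting between $B_{R'}$ and its complement. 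For case (b) the identification uses instead the pushforward representation $\rho^n_t = (\Phi^n_t)_{\sharp} \rho^n_0$, the convergence $\Phi^n_t \to \Phi_t$ on compacts (Corollary \ref{cor:stability-RDE-flow}), and the uniform propagation $\Phi^n_t(B_R) \subset B_{R + t\|u^n\|_\infty}$ which converts the pairing into an integral of a function supported in a fixed compact set against $\rho^n_0$. In either case, uniqueness of the limit forces $u^n_t \to u_t$ uniformly on compacts, whence $u^n \nabla\varphi \to u \nabla\varphi$ strongly in $L^1_t L^{q'}_x$ for every $q'\in[1,\infty]$ (the quantity being compactly supported with uniform $L^\infty$ bound). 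Pairing with the weak convergence $\rho^n \rightharpoonup \tilde\rho$ gives $(K^n \oast \rho^n) \rho^n \nabla\varphi \to (K \oast \tilde\rho) \tilde\rho \nabla\varphi$ after integration, so the passage to the limit in Definition \ref{defn:solution_nonlinear_RPDE} proceeds as in the proof of Proposition \ref{prop:existence-linear-RPDE}, with the $\fkp/3$-variation bound on the remainder $\tilde\rho^\natural$ inherited via lower semicontinuity from the uniform bound on $\rho^{n,\natural}$ in Lemma \ref{lem:apriori-unbounded}. Uniqueness of the nonlinear equation (Proposition \ref{prop:nonlinear-uniqueness}) gives $\tilde\rho = \rho$, and arbitrariness of the extraction yields full convergence, proving (a) and (b).

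\textbf{Strong convergence under Assumption 4).} In case (c), applying (a) or (b) first produces weak convergence $\rho^n \to \rho$ together with the uniform-on-compacts convergence $u^n \to u$ just established. Dominated convergence against the uniform $L^\infty_{t,x}$ bound upgrades this to $u^n \to u$ strongly in $L^1_t L^1_\loc$. Assumption 4), paired with $\rho^n \xrightharpoonup{\ast} \rho$ in $L^\infty_{t,x}$, gives $\nabla\cdot u^n \to \nabla\cdot u$ strongly in $L^1_t L^1_\loc$. The hypotheses of Corollary \ref{cor:linear-RPDE-stability-2} are then satisfied for the linear RPDEs with drifts $u^n$, yielding strong convergence $\rho^n \to \rho$ in $C([0,T]; L^q_x)$ for every $q\in(1,\infty)$. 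The endpoint $q = p = 1$ is handled as at the start of the proof of Corollary \ref{cor:linear-RPDE-stability-2}: reducing to bounded sequences in $L^1_x\cap L^\infty_x$ via the linear stability estimate \eqref{eq:linear-RPDE-stability}, and combining the already-established convergence in $C([0,T]; L^2_\loc)$ with the tightness from step 2 (available here because strong $L^1$-convergence of $\rho_0^n$ implies tightness via Vitali) to conclude in $C([0,T]; L^1_x)$.
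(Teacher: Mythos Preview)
Your overall architecture matches the paper's: uniform a priori bounds, weak compactness via Propositions \ref{prop:compactness-l1}/\ref{prop:compactness-lp}, identification of the limit drift $u^n_t \to u_t$ through the duality $\langle u^n_t,\varphi\rangle = \langle \rho^n_t, K^n_t\oast^T\varphi\rangle$ combined with Ascoli--Arzel\`a, and finally Corollary \ref{cor:linear-RPDE-stability-2} for part (c). Parts (a) and (c) are handled essentially as in the paper.

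There is, however, a genuine gap in your treatment of case (b). You invoke Corollary \ref{cor:stability-RDE-flow} to obtain $\Phi^n_t\to\Phi_t$ on compacts, but that corollary requires as a hypothesis that the drifts $b^n=u^n$ converge to $b=u$ uniformly on compact sets---which is precisely the statement you are trying to establish at this stage. The argument is therefore circular. Moreover, even granting flow convergence along a sub-subsequence, your proposed rewriting $\langle K^n_t\oast^T\varphi,\rho^n_t\rangle = \int (K^n_t\oast^T\varphi)(\Phi^n_t(y))\,\rho^n_0(y)\,\dd y$ does not produce an integrand supported in a fixed compact set (the propagation bound $\Phi^n_t(B_R)\subset B_{R+C}$ controls where $B_R$ is sent, not the support of $(K^n_t\oast^T\varphi)\circ\Phi^n_t$), so you would still need tightness of $\rho^n_0$ to control the tail---and that is exactly what is unavailable in case (b).

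The paper avoids this detour entirely: it uses the \emph{same} duality argument for (b) as you use for (a). The splitting $\langle \rho^{n_k}_t, K^{n_k}_t\oast^T\varphi\rangle - \langle \tilde\rho_t, K_t\oast^T\varphi\rangle = \langle \rho^{n_k}_t, (K^{n_k}_t-K_t)\oast^T\varphi\rangle + \langle \rho^{n_k}_t-\tilde\rho_t, K_t\oast^T\varphi\rangle$ has its first term controlled by the uniform $L^1_x$-bound on $\rho^{n_k}_t$ paired with Assumption 3), regardless of whether weak $L^1$-convergence holds; the second term is then handled via the weak convergence already extracted. No flow representation is needed at the identification stage.
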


\begin{proof}
We start by proving b).
First notice that, since $\{\rho_0^n\}_n$ is bounded in $L^1_x \cap L_x^{\infty}$, weak convergence in $L^p_x$ for some $p\in (1,\infty)$ is equivalent to weak convergence in any other $L^{\tilde{p}}_x$ with $\tilde p\in (1,\infty)$, as well as to weak-$\ast$ convergence in $L^\infty_x$.
Moreover, since $\{\rho_0^n\}_n$ is bounded in $L^1_x \cap L_x^{\infty}$, it follows from \eqref{eq:nonlinear-existence estimate} that $\{\rho^n\}_{n \geq 1}$ is bounded in $\mathcal{B}_b([0,T];L^p_x)$ for any $p \in [1,\infty]$.
Arguing similarly to the proof of Proposition \ref{prop:nonlinear-existence}, we can extract a subsequence $\{\rho^{n_k}\}_{k \geq 1}$ which converges in $C_{w-\ast}([0,T];L^{\infty}_x)$ and $C_w([0,T];L^q_x)$ for all $q\in (1,\infty)$ to some limit $\rho\in \cB_b([0,T];L^1_x\cap L^\infty_x)$.\footnote{Since in b) we are not assuming $\rho^n_0\rightharpoonup \rho_0$ weakly in $L^1_x$, in general we do not have tightness estimates, so that we cannot invoke Lemma \ref{lem:equi-p-integrability} and Proposition \ref{prop:compactness-l1} to deduce compactness in $C_w([0,T];L^1_x)$.}
Once we show that $\rho$ solves the nonlinear RPDE, uniqueness implies convergence of the full sequence $\{\rho^n\}_{n \geq 1}$, thus proving c). 

We focus on the convergence of the drift terms; setting $u^n := K^n \oast \rho^n$, $u := K \oast \rho$, we want to show that $u^{n_k}\to u$ in $L^1_t C^0_\loc$.
Once this is shown, the rest of the argument is almost identical to that of Corollary \ref{cor:linear-RPDE-stability-1}.
We start by showing that, for any fixed $t\in [0,T]$, $u^{n_k}_t\to u_t$ in the sense of distributions.
Indeed, for any $\varphi \in C_c^{\infty}$ and $t \in [0,T]$ we have
$$
    \langle u_t^{n_k}, \varphi \rangle  = \langle K^{n_k}_t \oast \rho_t^{n_k}, \varphi \rangle  = \langle  \rho_t^{n_k}, K^{n_k}_t\oast^T\varphi \rangle \rightarrow \langle  \rho_t, K_t\oast^T\varphi \rangle = \langle  K_t \oast \rho_t, \varphi \rangle = \langle u_t, \varphi \rangle
$$
where we used weak-strong convergence and assumption 3). 
Next, notice that thanks to the uniform bounds in Assumptions \ref{ass:abstract-kernel-1}-\ref{ass:abstract-kernel-2}- \ref{ass:abstract-kernel-3} and the uniform boundedness of $\rho^n$ in $\cB_b([0,T];L^1_x\cap L^\infty_x)$, we have
\begin{align*}
    \| u^n_t\|_{L^\infty_x} + \sup_{x\neq y} \frac{|u^n_t(x)-u^n_t(y)|}{h(|x-y|)} <\infty.
\end{align*}
Therefore by Ascoli--Arzelà, for any fixed $t$, $\{u^{n_k}_t\}$ is compact in $C^0_\loc$ and weakly converging to $u_t$; thus $u^{n_k}_t\to u_t$ in $C^0_\loc$ as well.
As the argument holds for any $t$, $u^{n_k}\to u$ in $L^1_t C^0_\loc$ by dominated convergence, which overall concludes the proof of c).

The proof of a) is almost identical, up to the fact that now since $\rho^n_0\rightharpoonup \rho_0$ in $L^1_x$, we can successfully apply Proposition \ref{prop:compactness-l1} to deduce convergence in $C_w([0,T];L^1_x)$ as well.

It remains to prove $c)$, under the additional condition 4). Noting that under c) the assumptions of b) are also satisfied, by the previous proof we know that $\rho^n\to \rho$ in $C_w([0,T];L^q_x)$ for all $q\in (1,\infty)$; if moreover $p=1$, then the assumptions of a) are satisfied as well, so that $\rho^n\to \rho$ in $C_w([0,T];L^1_x)$.
In what follows, let us only consider $p\in (1,\infty)$, the case $p=1$ being similar.
By part b), we know that $\rho^n$ are solutions to the rough continuity equations associated to $(u^n,\xi^n,\bZ^n)$, where $(\xi^n,\bZ^n)$ satisfy \textit{i)}-\textit{ii)} and 1)-2), and for every $R\in (0,+\infty)$ we have
\begin{align*}
    \sup_n \Big\{ \| u^n\|_{L^\infty_t L^\infty_x} + \| \nabla u^n \|_{L^\infty_t L^1(B_R)}\Big\}<\infty, \quad u^n\to u \text{ in } L^1_t C^0_\loc.
\end{align*}
In order to conclude, it remains to show that $\nabla\cdot u^n\to \nabla\cdot u$ in $L^1_t L^1_\loc$, as in that case we can go through the same linear stability arguments as in Corollary \ref{cor:linear-RPDE-stability-2}.
Since $\rho^n\to \rho$ in $C_{w-\ast}([0,T];L^\infty_x)$, we can apply assumption 4) to deduce that
\begin{align*}
    \nabla\cdot u^n_t = \nabla\cdot (K^n_t \oast \rho^n_t) \to \nabla\cdot (K_t \oast \rho_t)
    = \nabla\cdot u_t \text{ in } L^1_\loc,\quad \forall\, t\in [0,T].
\end{align*}
By dominated convergence, $\nabla\cdot u^n\to \nabla\cdot u$ in $L^1_t L^1_\loc$, yielding the conclusion.
\end{proof}

\subsection{Yudovich theory for rough $2$D Euler and random dynamical systems}\label{subsec:nonlinear-euler-yudovich}

We now apply the results from the previous section to the specific case of the rough stochastic Euler equations on $\R^2$, in vorticity form, given by
\begin{equation}\label{eq:rough-euler}
    \dd \omega_t + \nabla\cdot [(K\ast \omega_t)\cdot \omega_t]\, \rmd t + \sum_{k=1}^m \nabla\cdot (\xi_k\, \omega_t)\, \rmd \bZ^k_t = 0
\end{equation}
where $K$ is the Biot-Savart kernel, $K(x)=c x^\perp/|x|^2$ for $x^\perp=(-x_2,x_1)$.

It is well-known that $K$ satisfies the assumptions of Theorem \ref{thm:intro_wellposed_nonlinear}.
Indeed, as a Fourier multiplier $K=\nabla^\perp \Delta^{-1}$, so that $\nabla\cdot K=0$ in the sense of distributions, and
\begin{align*}
    \widehat{\nabla K}(\eta)= -\frac{\eta\otimes \eta}{|\eta|^2}
\end{align*}
which is $0$-homogeneous. Finally, by its explicit formula in real space variables, it's easy to check that $K=K\mathbbm{1}_{|x|\leq 1} + K \mathbbm{1}_{|x|> 1}\in L^1_x+L^\infty_x$.

Therefore by the proof of Theorem \ref{thm:intro_wellposed_nonlinear} we deduce that $K$ satisfies Assumptions \ref{ass:abstract-kernel-1}-\ref{ass:abstract-kernel-2}-\ref{ass:abstract-kernel-3}, with Osgood modulus of continuity $h$ (cf. \eqref{eq:thm_intro_nonlinear_proof2}) given by
$$
    h(r) = r(1-\ln r) \mathbbm{1}_{(0,1)}(r) + r \mathbbm{1}_{[1,\infty)}(r), \quad h(0)=0.
$$

Since $K$ does not depend on time, it makes sense here to directly state results for global solutions, namely indexed over $t\in \R_{\geq 0}$.
In light of forthcoming applications to random dynamical systems, it makes sense to introduce some definitions.
We set
\begin{align*}
    \mathcal{C}^\fkp_g(\R_{\geq 0};\R^m) := \bigcap_{T > 0} \mathcal{C}_g^{\mathfrak{p}}([0,T] ; \R^m)
\end{align*}
which is a complete metric space when endowed with the distance 
\begin{align*}
    d_{\fkp,\infty} (\bZ,\tilde\bZ) := \sum_{n=1}^\infty \min\big\{2^{-n}, d_{\fkp,n}(\bZ,\tilde\bZ)\big\}
\end{align*}
where $d_{\fkp,T}$ was given by \eqref{eq:metric-p-rough}.
In particular, $\bZ^n\to \bZ$ in $\mathcal{C}^\fkp_g(\R_{\geq 0};\R^m)$ if and only if $\bZ^n\to \bZ$ in $\mathcal{C}^\fkp_g([0,T];\R^m)$ for all $T\in (0,+\infty)$.
We regard $\mathcal{C}^\fkp_g(\R_{\geq 0};\R^m)$ as measurable space when endowed with the Borel $\sigma$-algebra induced by the metric $d_{\fkp,\infty}$.

Notice that, for any $u\geq 0$, the {\em translations} $\tau_u$ given by
\begin{align*}
    \tau_u\bZ:= (\tau_u Z,\tau_u \bbZ), \quad
    \tau_u Z_t := Z_{t+u}, \quad
    \tau_u \bbZ_{s,t}=\bbZ_{s+u,t+u}
\end{align*}
form a family of continuous mappings from $\mathcal{C}^\fkp_g(\R_{\geq 0};\R^m)$ to itself, such that $\tau_u\tau_v=\tau_{u+v}$ for all $u,v\geq 0$.

We are now ready to state a rough version of Yudovich's theorem.

\begin{theorem}\label{thm:yudovich-euler}
    Let $\fkp \in [2,3)$, $\bZ\in\mathcal{C}^\fkp_g(\R_{\geq 0};\R^m)$, $\xi\in C^3_b$ with $\nabla\cdot\xi=0$.
    Then for any $\omega_0\in L^1_x\cap L^\infty_x$, there exists a unique global solution $\omega \in \cB_b(\R_{\geq 0};L^1_x\cap L^\infty_x)$ to \eqref{eq:rough-euler}, which moreover belongs to $C(\R_{\geq 0}; L^p_x)\cap C_{w-\ast}(\R_{\geq 0};L^\infty_x)$ for any $p\in [1,\infty)$. The solution is renormalized and it is of the form
    \begin{equation}\label{eq:euler_flow_representation}
        \omega_t(x) = \omega_0(\Phi_t^{-1}(x))
    \end{equation}
    where $\Phi$ is the flow generated by the RDE
    $$
        \rmd y_t = u_t(y_t) \rmd t + \xi(y_t) \rmd \bZ_t
    $$
    for $u_t = K \ast \omega_t$, $K$ being the Biot-Savart kernel. Moreover, we have
    \begin{equation} \label{eq:euler lp bound}
        \|\omega_t \|_{L^p_x} = \|\omega_0 \|_{L^p_x}, \qquad \forall\, t\geq 0, \ \forall\, p \in [1,\infty].
    \end{equation}
\end{theorem}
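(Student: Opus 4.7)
The plan is to apply the nonlinear well-posedness machinery of Section~\ref{sec:nonlinear-RPDEs} to the Biot-Savart kernel $K$, and then upgrade its output using the extra structure $\nabla\cdot K=0$. As noted in the paragraph preceding the theorem, $K$ satisfies Assumptions~\ref{ass:abstract-kernel-1}--\ref{ass:abstract-kernel-3} with Osgood modulus $h(r)=r(1-\ln r)\mathbbm{1}_{(0,1)}(r)+r\mathbbm{1}_{[1,\infty)}(r)$. Propositions~\ref{prop:nonlinear-existence} and~\ref{prop:nonlinear-uniqueness} then yield, on every finite interval $[0,T]$, a unique $\omega^{(T)}\in\cB_b([0,T];L^1_x\cap L^\infty_x)$ of the form $\omega^{(T)}_t=(\Phi^{(T)}_t)_\sharp\omega_0$, with $\Phi^{(T)}$ the flow generated by the RDE with drift $u_t=K\ast\omega_t$. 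Because uniqueness forces $\omega^{(T')}\vert_{[0,T]}=\omega^{(T)}$ for $T\leq T'$, the local solutions paste into a unique global $\omega\in\cB_b(\R_{\geq 0};L^1_x\cap L^\infty_x)$ with associated global flow $\Phi$; the continuity $\omega\in C(\R_{\geq 0};L^p_x)\cap C_{w-\ast}(\R_{\geq 0};L^\infty_x)$ for every $p\in[1,\infty)$ is inherited from Proposition~\ref{prop:nonlinear-uniqueness}.

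Next I would exploit incompressibility. Since $K$ is distributionally divergence-free, $\nabla\cdot u_t=(\nabla\cdot K)\ast\omega_t=0$ for a.e.\ $t$, so Corollary~\ref{cor:flow-incompressibility} applies with both exponential bounds equal to $1$, making $\Phi_t$ Lebesgue measure-preserving for every $t\geq 0$. For such a homeomorphism the distributional pushforward of a function in $L^1_x\cap L^\infty_x$ coincides with the pullback, which promotes $(\Phi_t)_\sharp\omega_0$ to the pointwise identity $\omega_t=\omega_0\circ\Phi_t^{-1}$ announced in~\eqref{eq:euler_flow_representation}. The $L^p$-identity~\eqref{eq:euler lp bound} then follows from a change of variables when $p\in\{1,\infty\}$, while for $p\in(1,\infty)$ I would appeal to Proposition~\ref{prop:absolute-continuity-Lp} applied to the linear rough continuity equation with drift $b=u$. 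Its hypotheses~\eqref{eq:condition-stability} are met for any $p\in(1,2)$: $u\in L^\infty_tL^\infty_x$ gives $u/(1+|x|)\in L^1_tL^{p'}_x$ since $p'>2$ in $\R^2$; $\nabla\cdot u=0$ handles the divergence condition; and $u\in L^\infty_tW^{1,p'}_\loc$ follows from the Calder\'on-Zygmund bound $\|\nabla u\|_{L^{p'}_x}\lesssim\|\omega\|_{L^{p'}_x}$ implicit in Assumption~\ref{ass:abstract-kernel-3}. Formula~\eqref{eq:evolution_Lp_norm} then reduces to $\frac{\dd}{\dd t}\|\omega_t\|_{L^p_x}^p\equiv 0$.

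Finally, for the renormalization claim, I would observe that since $\nabla\cdot u=\nabla\cdot\xi_k=0$, the linear continuity equation with drift $u$ coincides with the linear transport equation; hence $\omega$ is the unique $L^p_x$-solution of the latter for each $p\in(1,2)$. With the same verification of~\eqref{eq:condition-stability} as above, Corollary~\ref{cor:renormalized} applies and yields that $\beta(\omega)$ solves the transport equation with drift $u=K\ast\omega$ for every $\beta\in C^1_b$, which is precisely the renormalization statement. The whole argument is mostly an assembly of results from Sections~\ref{sec:RDEs}--\ref{sec:nonlinear-RPDEs}, and I do not anticipate a serious technical hurdle; the most delicate point is the promotion of the abstract pushforward identity $\omega_t=(\Phi_t)_\sharp\omega_0$ to the pointwise identity $\omega_t=\omega_0\circ\Phi_t^{-1}$, which genuinely needs the exact incompressibility coming from $\nabla\cdot K=0$ and which, in turn, powers the borderline $p=\infty$ case of $L^p$-conservation inaccessible through Proposition~\ref{prop:absolute-continuity-Lp}.
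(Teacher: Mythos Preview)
Your proposal is correct and follows essentially the same route as the paper: invoke Propositions~\ref{prop:nonlinear-existence}--\ref{prop:nonlinear-uniqueness} on each $[0,T]$, glue by uniqueness, then exploit $\nabla\cdot u=0$ to pass from pushforward to the pointwise formula~\eqref{eq:euler_flow_representation} and read off~\eqref{eq:euler lp bound}. The only difference is packaging: the paper observes that, since $\nabla\cdot u=0$, the continuity and transport equations coincide, and then invokes Theorem~\ref{thm:linear-RPDE-wellposed-lagrangian} once to obtain the pointwise flow representation~\eqref{eq:flow_representation_RTE} and renormalizability simultaneously, whereas you unpack that theorem into its ingredients (Corollary~\ref{cor:flow-incompressibility}, Corollary~\ref{cor:renormalized}). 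Your separate appeal to Proposition~\ref{prop:absolute-continuity-Lp} for $p\in(1,\infty)$ is harmless but redundant: once $\omega_t=\omega_0\circ\Phi_t^{-1}$ with $\Phi_t^{-1}$ measure-preserving is in hand, the change of variables gives $\|\omega_t\|_{L^p_x}=\|\omega_0\|_{L^p_x}$ for every $p\in[1,\infty]$ at once, which is exactly how the paper concludes.
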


\begin{proof}
    Existence and uniqueness on fixed intervals $[0,T]$ follow from Propositions \ref{prop:nonlinear-existence} and \ref{prop:nonlinear-uniqueness}.
    By uniqueness, all maps defined in this way are consistent with each another; since we can take $T$ arbitrarily large, this defines a unique global solution, given by formula $\omega_t = (\Phi_t)_\sharp \omega_0$.
    Note that $\omega$ is also a solution of the linear equation with drift $u$, and $\nabla \cdot u_t = 0$ since $\nabla \cdot K = 0$ in the sense of distributions;
    therefore $\omega$ solves both a continuity and a transport equation at the same time, and the representation \eqref{eq:euler_flow_representation} (as well as renormalizability) follow from Theorem \ref{thm:linear-RPDE-wellposed-lagrangian}.
    Equality \eqref{eq:euler lp bound} now follows from \eqref{eq:euler_flow_representation} and incompressibility of $\Phi_t^{-1}$.
\end{proof}

As before, also in the Euler case we obtain stability results, w.r.t. both weak and strong topologies. For simplicity, here we keep the Biot-Savart kernel $K$ fixed, but we could approximate it by smooth kernels as well (e.g. by mollifications, or multiplications by a cutoff close to 0 $0$).

\begin{theorem}\label{thm:yudovich stability}
    Let $\fkp \in [2,3)$. Consider a sequence $\{(\mathbf{Z}^n, \xi^n)\}_n$ satisfying the assumptions of Theorem~\ref{thm:yudovich-euler} uniformly in $n\in\N$ and $T\in (0,+\infty)$, namely such that:
    \begin{itemize}
        \item[i)] $\nabla\cdot \xi^n=0$ and $\mathcal{C}^\fkp_g(\R_{\geq 0};\R^m)$ for all $n$;
        \item[ii)] $\sup_{n \in \N} \big\{ \| \xi^n\|_{C^3_b}  +\| \mathbf{Z}^n\|_{\mathfrak{p},T} \big\}<\infty$ for all $T>0$.
    \end{itemize} 
    Further assume that there exist $(\xi,\mathbf{Z})$ such that
    \begin{itemize}
        \item[1)] $\xi^n\to \xi$ uniformly on compact sets;
        \item[2)] $\sup_{t\in [0,T]} |\mathbf{Z}^n_{0,t}-\mathbf{Z}_{0,t}|\to 0$, for all $T>0$.
    \end{itemize}
    Let $\{\omega^n_0\}_n$ be a bounded sequence in $L^1_x\cap L^\infty_x$, resp. $\omega_0\in L^1_x\cap L^\infty_x$, and denote by $\omega^n$, resp. $\omega$, the associated unique solution to \eqref{eq:rough-euler} associated to $(\mathbf{Z}^n, \xi^n)$, resp. $(\mathbf{Z}, \xi)$.
    Then, for all $T\in (0,+\infty)$, it holds that:
    \begin{itemize}
        \item[a)] if $\omega^n_0\rightharpoonup \omega_0$ in $L^1_x$, then $\omega^n\to \omega$ in $C_w([0,T];L^q_x)\cap C_{w-\ast}([0,T];L^\infty_x)$ for all $q\in [1,\infty)$;
        \item[b)] if either $\omega^n_0\xrightharpoonup{\ast} \omega_0$ in $L^{\infty}_x$, or $\omega_0^n \rightharpoonup \omega_0$ in $L^p_x$ for some $p \in (1,\infty)$, then $\omega^n\to \omega$ in $C_w([0,T];L^q_x)\cap C_{w-\ast}([0,T];L^\infty_x)$ for all $q\in (1,\infty)$.
        \item[c)] if $\omega^n_0\to \omega_0$ in $L^p_x$ for some $p\in [1,\infty)$, then $\omega^n\to \omega$ in $C([0,T]; L^q_x)$ for all $q\in \{p\}\cup (1,\infty)$.
    \end{itemize}
\end{theorem}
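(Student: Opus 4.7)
The plan is to reduce Theorem \ref{thm:yudovich stability} to a direct application of the general nonlinear stability result, Theorem \ref{thm:nonlinear-wellposedness2}, on each compact interval $[0,T]$ (since by the consistency of the unique global solutions from Theorem \ref{thm:yudovich-euler}, convergence on arbitrary finite intervals is exactly what the statement requires). Concretely, I would take the constant sequence $K^n = K$ equal to the Biot--Savart kernel. As noted just before the statement of Theorem \ref{thm:yudovich-euler}, $K$ satisfies Assumptions \ref{ass:abstract-kernel-1}--\ref{ass:abstract-kernel-2}--\ref{ass:abstract-kernel-3} with explicit Osgood modulus $h(r) = r(1-\ln r)\mathbbm{1}_{(0,1)}(r) + r\,\mathbbm{1}_{[1,\infty)}(r)$, so hypothesis \emph{iii)} of Theorem \ref{thm:nonlinear-wellposedness2} is met uniformly in $n$. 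Hypotheses \emph{i)} and \emph{ii)} are precisely the assumptions \emph{i)}--\emph{ii)} of the present statement, restricted to $[0,T]$.

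Next I would verify the convergence conditions 1), 2), 3) of Theorem \ref{thm:nonlinear-wellposedness2}. Conditions 1) and 2) are identical to the assumptions 1) and 2) here. Condition 3) demands $K^n_t \oast^T \varphi \to K_t \oast^T \varphi$ strongly in $L^\infty_x$ for every $\varphi \in C^\infty_c$ and every $t \in [0,T]$; since $K^n = K$ does not depend on $n$ or $t$, this holds trivially. Hence parts \emph{a)} and \emph{b)} of Theorem \ref{thm:nonlinear-wellposedness2} apply and yield parts \emph{a)} and \emph{b)} of the present theorem.

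For part \emph{c)}, I need the additional condition 4) of Theorem \ref{thm:nonlinear-wellposedness2}, namely the uniform compactness of $g \mapsto \nabla\cdot(K \oast g)$ from $L^1_x \cap L^\infty_x$ to $L^1_\loc$. This is where the specific structure of the Biot--Savart kernel trivializes matters: one has $K = \nabla^\perp \Delta^{-1}$, so that $\nabla \cdot K = 0$ as a tempered distribution, and consequently
\begin{equation*}
    \nabla \cdot (K \ast g) = 0 \qquad \forall\, g \in L^1_x \cap L^\infty_x.
\end{equation*}
Therefore condition 4) holds in the trivial way: both sides are identically zero. Applying part \emph{c)} of Theorem \ref{thm:nonlinear-wellposedness2} yields part \emph{c)} of Theorem \ref{thm:yudovich stability}.

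There is essentially no analytical obstacle here: the entire content of the statement has been absorbed into Theorem \ref{thm:nonlinear-wellposedness2}, and the role of the Biot--Savart kernel is to make the ``compatibility'' hypotheses 3) and 4) trivially satisfied. The only mild point worth stressing is that Theorem \ref{thm:nonlinear-wellposedness2} is formulated on fixed finite intervals, so one must note that the global solutions from Theorem \ref{thm:yudovich-euler} restrict consistently to each $[0,T]$, which follows at once from the uniqueness statement therein.
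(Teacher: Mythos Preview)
Your proposal is correct and follows essentially the same approach as the paper's own proof: both reduce directly to Theorem \ref{thm:nonlinear-wellposedness2} with the constant sequence $K^n = K$, noting that conditions \emph{i)}--\emph{iii)} and 1)--3) are inherited, and that condition 4) holds trivially because $\nabla\cdot(K\ast g) = (\nabla\cdot K)\ast g = 0$ for the Biot--Savart kernel.
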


\begin{proof}
    The statement is a direct consequence of Theorem \ref{thm:nonlinear-wellposedness2}, since conditions i)-ii)-iii) and 1)-2)-3) are satisfied for any finite interval $[0,T]$. Notice moreover that 4) is automatically satisfies, since $K^n=K$ for all $n$ and $\nabla\cdot (K\ast g)=(\nabla\cdot K)\ast g=0$ for all functions $g\in L^1_x$.
\end{proof}

Next, we show how the above results can be used to generate a {\em random dynamical system} for the rough $2$D Euler system \eqref{eq:rough-euler}, when $\bZ$ is a {\em random driver}.
To this end, we need to introduce some notation and definitions; we follow \cite{BRS}.
Given a probability space $\Omega$, we will denote its elements by $\varpi$, so to avoid notational conflict with the vorticity $\omega$.

\begin{definition}\label{defn:meas_metric_DS}
    Let $(\Omega, \mathcal{F}, \bbP)$ be a probability space and let $\theta = ( \theta_t)_{t \geq 0}$ be a family of measurable mappings from $(\Omega,\cF)$ to itself. The say that the tuple $(\Omega,\cF,\bbP,\theta)$ is a \emph{measurable metric dynamical system}\footnote{Contrary to what the name suggests, there is no metric space structure involved in Definition \ref{defn:meas_metric_DS}.} provided that:
    \begin{itemize}
        \item $(t,\varpi) \mapsto \theta_t \varpi$ is $\mathcal{B}(\R_{\geq 0}) \otimes \mathcal{F}/\mathcal{F}$ measurable;
        \item $\theta_0 = Id_\Omega$;
        \item $\theta_{s+t} = \theta_t \circ \theta_s$ for all $s,t\geq 0$;
        \item $\bbP \circ \theta_t^{-1} = \bbP$ for all $t\geq 0$.
\end{itemize}
\end{definition}

\begin{definition}
    Let $(\Omega,\cF,\bbP,\theta)$ be a measurable metric dynamical system.
    A \emph{geometric $\mathfrak{p}$-rough path cocycle} is a random variable $\bZ:\Omega \to \mathcal{C}^\fkp_g(\R_{\geq 0};\R^m)$ such that, for every $\varpi \in \Omega$, we have the cocycle property
    \begin{equation} \label{eq:rough cocycle property}
        \delta Z_{s+h,t+h}(\varpi) = \delta Z_{st}( \theta_h \varpi), \qquad \bbZ_{s+h,t+h}(\varpi) = \bbZ_{st}( \theta_h \varpi).
    \end{equation}
\end{definition}

Note that, given a geometric $\mathfrak{p}$-rough path cocycle $\bZ$, we can always realize it as a canonical process, namely take $\Omega=\mathcal{C}^\fkp_g(\R_{\geq 0};\R^m)$, $\mathbb{P}$ the law of $\bZ$ and $\theta_h=\tau_h$.
The cocycle property \eqref{eq:rough cocycle property} then amounts to the stochastic process having stationary rough path increments.

It is shown in \cite[Section 2]{BRS} that a large class of Gaussian stochastic processes, including fractional Brownian motion for $H \in (1/3, 1)$, can be lifted to such a geometric $\mathfrak{p}$-rough path cocycle with $\fkp\in [2,3)$.

\begin{definition}
    Let $(\Omega,\cF,\bbP,\theta)$ be a measurable metric dynamical system and let $(\mathcal{X},\tau)$ be a topological space. A mapping
    $$
    \Phi : [0,\infty) \times \Omega \times \mathcal{X} \rightarrow \mathcal{X}
    $$
    a \emph{continuous random dynamical system} (continuous RDE for short) on $\mathcal{X}$ provided that:
    \begin{itemize}
        \item $\Phi(t,\varpi, \cdot) :\mathcal{X} \rightarrow \mathcal{X}$ is continuous for every $(t,\varpi) \in \R_{\geq 0} \times \Omega$;
        \item $\Phi(0,\varpi,\cdot) = \textrm{Id}_\mathcal{X}$ for every $\varpi \in \Omega$;
        \item $\Phi(t+s,\varpi,\cdot) = \Phi(t,\theta_s \varpi,\cdot) \circ \Phi(s,\varpi,\cdot)$ for all $(s,t,\varpi) \in \R_{\geq 0}^2 \times \Omega$. 
\end{itemize}
\end{definition}

In light of Theorem \ref{thm:yudovich-euler} for any $R\in (0,+\infty)$, the set
\begin{align*}
    \mathcal{X}_R := \left\{ \omega_0 \in L^1_x \cap L_x^{\infty} : \|\omega_0\|_{L^1_x \cap L^{\infty}_x} \leq R \right\}
\end{align*}
is invariant under the dynamics \eqref{eq:rough-euler}.
Notice that $\cX_R$ is a closed, convex, bounded subset of $L^p_x$ (by Fatou's lemma) and therefore it is also closed w.r.t. to the weak topology of $L^p_x$, for any $p\in [1,\infty)$; similarly, it is closed under the weak-$\ast$ topology of $L^\infty_x$.
Being closed and bounded, by the Banach-Alaoglu theorem, $\cX_R$ is compact in the weak-$\ast$ topology, with topology induced by a metric. Moreover, since $\cX_R$ is bounded in $L^p_x$ for every $p$, it is also weakly compact in $L^p_x$ for every $p\in (1,\infty)$; in fact, for such $p$, the weak-$\ast$ topology of $L^\infty_x$ and weak topology of $L^p_x$ coincide on $\cX_R$, as they are induced by the same metric.
Similarly, it is clear that given a sequence $\{f^n\}_n\subset \cX_R$, $f^n\to f$ in $L^p_x$ for some $p\in (1,\infty)$ if and only if $f^n\to f$ in $L^q_x$ for all $q\in (1,\infty)$, and that different $L^p_x$ norms induce the same metric.

To sum up, for any $R\in (0,+\infty)$, we have two main candidate topologies we can endow $\cX_R$ with:
\begin{itemize}
    \item The topology induced by the weak-$\ast$ topology of $L^\infty_x$, equivalently induced by the weak topology of $L^p_x$ for any $p\in (1,\infty)$, which we denote by $\tau^{weak}$. $(\cX_R,\tau^{weak})$ is then a compact metric space.
    \item The topology induced by $\| \cdot\|_{L^p_x}$ for any $p\in (1,\infty)$, which we denote by $\tau^{strong}$. $(\cX_R,\tau^{strong})$ is then a separable metric space.
\end{itemize}

\begin{theorem}\label{thm:euler_RDS}
    Let $\fkp\in [2,3)$, $R\in (0,+\infty)$, $\xi \in C^3_b$ with $\nabla \cdot \xi = 0$ and $\bZ$ be a geometric $\mathfrak{p}$-rough path cocycle.
    Then \eqref{eq:rough-euler} generates a continuous random dynamical system on $(\cX_R,\tau^{weak})$ and on $(\cX_R,\tau^{strong})$.
\end{theorem}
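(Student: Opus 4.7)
The plan is to define
\begin{equation*}
    \Phi(t,\varpi,\omega_0) := \omega_t(\varpi,\omega_0),
\end{equation*}
where $\omega(\varpi,\omega_0)$ is the unique global solution of \eqref{eq:rough-euler} with driver $\bZ(\varpi)$ and initial datum $\omega_0$ granted by Theorem \ref{thm:yudovich-euler}. The conservation law \eqref{eq:euler lp bound} ensures that $\Phi(t,\varpi,\cdot)$ maps $\cX_R$ into itself, and $\Phi(0,\varpi,\cdot)=\operatorname{Id}$ is immediate. It therefore remains to verify continuity in $\omega_0$ in each topology, the cocycle identity in $t$, and measurability in $(t,\varpi)$.

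Continuity in the initial datum follows directly from the stability statements of Theorem \ref{thm:yudovich stability}, applied with constant sequences $\xi^n\equiv \xi$ and $\bZ^n\equiv \bZ(\varpi)$. For $(\cX_R,\tau^{strong})$, whose topology coincides on $\cX_R$ with the $L^p_x$-metric for any $p\in(1,\infty)$: if $\omega^n_0\to\omega_0$ in $L^p_x$, part \emph{c)} gives convergence in $C([0,T];L^p_x)$ for every finite $T$, whence $\Phi(t,\varpi,\omega^n_0)\to \Phi(t,\varpi,\omega_0)$ in $L^p_x$. For $(\cX_R,\tau^{weak})$, which on $\cX_R$ coincides with the weak-$\ast$ topology of $L^\infty_x$: if $\omega^n_0 \xrightharpoonup{\ast}\omega_0$, part \emph{b)} yields convergence in $C_{w\mbox{-}\ast}([0,T];L^\infty_x)$ and, by point ii) of Remark \ref{rem:uniform_weak_convergence}, $\Phi(t,\varpi,\omega^n_0)\xrightharpoonup{\ast} \Phi(t,\varpi,\omega_0)$. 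Both topologies on $\cX_R$ are metric, so sequential continuity suffices.

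For the cocycle property, fix $\varpi,\omega_0,s$ and set $\omega:=\Phi(\cdot,\varpi,\omega_0)$, $v_u:=\omega_{s+u}$. The key observation is that the unbounded rough driver $\bA^{\tau_s\bZ(\varpi)}$ built via \eqref{eq:defn-unbounded-continuity} from the translated path satisfies $A^{\tau_s\bZ(\varpi)}_{u_1 u_2}=A^{\bZ(\varpi)}_{s+u_1,s+u_2}$ and analogously for $\bbA$, while the Lebesgue drift transforms trivially under translation. Writing the Davie expansion in Definition \ref{defn:solution_nonlinear_RPDE} for $\omega$ between $s+u_1$ and $s+u_2$ and relabelling shows that $v$ satisfies the same definition on $\R_{\geq 0}$ with driver $\tau_s \bZ(\varpi)=\bZ(\theta_s\varpi)$ (using \eqref{eq:rough cocycle property}), with remainder $v^\natural_{u_1 u_2}:=\omega^\natural_{s+u_1,s+u_2}$ which manifestly inherits finite $\fkp/3$-variation on every compact interval. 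Uniqueness (Proposition \ref{prop:nonlinear-uniqueness}) then delivers $v_t=\Phi(t,\theta_s\varpi,\omega_s)$, i.e.\ the cocycle identity.

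Finally, to obtain measurability in $(t,\varpi)$ of $\Phi(\cdot,\cdot,\omega_0)$, apply Theorem \ref{thm:yudovich stability} with $\xi^n\equiv \xi$, $\omega^n_0\equiv \omega_0$, and a sequence $\bZ^n\to\bZ$ in $\mathcal{C}^\fkp_g(\R_{\geq 0};\R^m)$: the hypotheses ii) and 2) are verified, so part \emph{c)} shows that the map $\bZ\mapsto \Phi(t,\bZ,\omega_0)$ is continuous from $\mathcal{C}^\fkp_g(\R_{\geq 0};\R^m)$ to $L^2_x$. Composing with the measurable random variable $\bZ:\Omega\to\mathcal{C}^\fkp_g(\R_{\geq 0};\R^m)$ yields measurability of $\varpi\mapsto \Phi(t,\varpi,\omega_0)$ for each fixed $t$; combined with continuity in $t$ (since $\omega\in C(\R_{\geq 0};L^2_x)$ by Theorem \ref{thm:yudovich-euler}) and the separability of the target, a standard piecewise-constant-in-$t$ approximation gives joint measurability, and thence the continuity in both $\tau^{strong}$ and $\tau^{weak}$ of the map $\omega_0 \mapsto \Phi(t,\varpi,\omega_0)$. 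The main obstacle is the cocycle step: although essentially a relabelling, one must keep the compatibility between Chen's relation, the induced shift of the unbounded rough driver, and the $\fkp/3$-variation of the Davie remainder consistent, so as to land squarely in the uniqueness class of Proposition \ref{prop:nonlinear-uniqueness}.
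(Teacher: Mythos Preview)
Your proof is correct and follows essentially the same route as the paper: define $\Phi$ via the unique solution from Theorem~\ref{thm:yudovich-euler}, obtain invariance of $\cX_R$ from \eqref{eq:euler lp bound}, continuity in $\omega_0$ from parts~b) and~c) of Theorem~\ref{thm:yudovich stability}, and the cocycle identity from uniqueness combined with the translation property \eqref{eq:rough cocycle property} of the driver. The paper phrases the cocycle step via a two-parameter evolution $S(s,t,\varpi,\omega_0)$ and splits it into the flow property plus the identity $S(h,t+h,\varpi,\cdot)=S(0,t,\theta_h\varpi,\cdot)$, whereas you work directly at the level of the Davie expansion; the content is the same. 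Your additional treatment of $(t,\varpi)$-measurability is sound but, strictly speaking, is not required by the paper's stated definition of continuous RDS (which lists only continuity in $x$, the identity at $t=0$, and the cocycle identity).
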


\begin{proof}
    Denote by $S(s,t,\varpi,\omega_0)$ the solution of \eqref{eq:rough-euler} at time $t$ when the equation is started in $\omega_0 \in \mathcal{X}_R$ at time $s$ and the driving rough path is $\bZ(\omega)$.
    From \eqref{eq:euler lp bound}, $S(s,t,\varpi,\cdot)$ is a well-defined map from $\mathcal{X}_R$ into itself, which is continuous w.r.t. both $\tau^{weak}$ and $\tau^{strong}$ thanks to points b) and c) of Theorem \ref{thm:yudovich stability}.
    
    It is straightforward to check that by uniqueness we get the flow property
    $$
S(s,t,\varpi,\omega_0) = S(u,t,\varpi, \cdot ) \circ S(s,u,\varpi,\omega_0), \qquad \forall \, (s,u,t,\varpi,\omega_0) \in \Delta_T^{(2)} \times \Omega \times \mathcal{X}<-r.
    $$
    Moreover, using uniqueness and the cocycle property of $\bZ$ we find that 
    $$
S(h,t+h,\varpi,\omega_0) = S(0,t,\theta_h\varpi,\omega_0)  \qquad \forall \, (t,h,\varpi,\omega_0) \in [0,\infty)^2 \times \Omega \times \mathcal{X}_R.
    $$
    Combining all these observations, we conclude that the map $\Phi: [0,\infty) \times \Omega \times \mathcal{X}_R \rightarrow \mathcal{X}_R$ defined by
    $$
        \Phi(t,\varpi,\omega_0) = S(0,t,\varpi,\omega_0)
    $$
    is a continuous random dynamical system.
\end{proof}

\begin{remark}
    $(\cX_R,\tau^{weak})$ and $(\cX_R,\tau^{strong})$ are natural choices to discuss continuous RDS induced by the Euler dynamics, in light of their compactness/separability properties, but they are not the only possible ones.
    For instance, we could have endowed $\cX_R$ with the convergence induced by $\| \cdot\|_{L^1_x}$, which is slight stronger than convergence in $L^p_x$ for some $p>1$.
    Alternatively, we could have considered the whole $\cX=L^1_x\cap L^\infty_x$, with norm $\| \cdot\|_{L^1_x\cap L^\infty_x}$; notice however that this space is not separable, and convergence in $L^\infty_x$ is a very strong assumption. Moreover, if $f^n\to f$ in $L^1_x\cap L^\infty_x$, then we can find $R<\infty$ such that $\{f^n\}_n\subset \cX_R$ 
    and $f^n\to f$ in $(\cX_R,\tau^{strong})$.
    Finally, one could be tempted to consider $\cX_R$ endowed with weak $L^1_x$-topology; however in this case we cannot prove Theorem \ref{thm:euler_RDS}, as this topology is not metrizable, therefore the sequential continuity provided by part a) of Theorem \ref{thm:yudovich stability} is not enough to guarantee continuity of the solution map.
\end{remark}

\subsection{More refined existence results for $2$D Euler}\label{subsec:nonlinear-euler-delort}

As the Biot--Savart kernel $K$ presents a specific structure, we can obtain more sophisticated results than in the general case of \eqref{eq:nonlinear-RPDE}. 
In particular, we can derive a weak existence result, in the style of those from DiPerna-Majda \cite{DiPMaj1987}, Delort \cite{Delort1991} and Schochet \cite{Schochet1995}, concerning $L^1_x\cap L^p_x$-valued solutions, for any $p\geq 1$.
To this end, we need a preliminary lemma.

\begin{lemma}\label{lem:biot-savart}
    Let $K$ be the Biot-Savart kernel on $\R^2$; then the map $f\mapsto \nabla\cdot [(K\ast f) f]$, previously defined from $L^1_x\cap L^\infty_x$ to $(W^{1,\infty})^\ast$, extends uniquely to a map from $L^1_x$ to $(W^{2,\infty})^\ast$, satifying
    \begin{align*}
        \| \nabla\cdot [(K\ast f) f]\|_{(W^{2,\infty})^\ast} \lesssim \| f\|_{L^1_x}^2.
    \end{align*}
    Moreover if $f^n \rightharpoonup f$ in $L^1_x$, then $\nabla\cdot [(K\ast f^n) f^n]$ converge to $\nabla\cdot [(K\ast f) f]$ in the sense of distributions.
\end{lemma}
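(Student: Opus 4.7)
The natural starting point is the classical \emph{Delort symmetrization identity}. For $f\in L^1_x\cap L^\infty_x$ and $\varphi\in W^{2,\infty}$, starting from
\begin{equation*}
\langle\nabla\cdot[(K\ast f)f],\varphi\rangle
= -\int_{\R^2}(K\ast f)(x)\,f(x)\cdot\nabla\varphi(x)\,\rmd x
= -\int_{\R^{2}\times\R^{2}} K(x-y)\cdot\nabla\varphi(x)\,f(x)f(y)\,\rmd x\rmd y
\end{equation*}
and symmetrizing $(x,y)\leftrightarrow(y,x)$ using the antisymmetry $K(-z)=-K(z)$, I would obtain
\begin{equation*}
\langle\nabla\cdot[(K\ast f)f],\varphi\rangle=-\tfrac12\int_{\R^{2}\times\R^{2}}H_\varphi(x,y)\,f(x)f(y)\,\rmd x\rmd y,
\qquad H_\varphi(x,y):=K(x-y)\cdot[\nabla\varphi(x)-\nabla\varphi(y)].
\end{equation*}
Since $|K(z)|\lesssim|z|^{-1}$ and $|\nabla\varphi(x)-\nabla\varphi(y)|\leq\|D^2\varphi\|_{L^\infty}|x-y|$, the kernel $H_\varphi$ belongs to $L^\infty(\R^{2}\times\R^{2})$ with $\|H_\varphi\|_{L^\infty}\lesssim\|\varphi\|_{W^{2,\infty}}$. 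This immediately gives the quantitative estimate $|\langle\nabla\cdot[(K\ast f)f],\varphi\rangle|\lesssim\|\varphi\|_{W^{2,\infty}}\|f\|_{L^1_x}^2$, and uniqueness of the extension from $L^1_x\cap L^\infty_x$ (dense in $L^1_x$) to $L^1_x\to(W^{2,\infty})^\ast$ is then automatic.

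For the weak-continuity statement, fix $\varphi\in C^\infty_c$ and let $f^n\rightharpoonup f$ in $L^1_x$. By the Dunford--Pettis theorem, $\{f^n\}_n$ is uniformly integrable and tight. The plan is to show
\begin{equation*}
I_n:=\int\!\!\int H_\varphi(x,y)\,f^n(x)f^n(y)\,\rmd x\rmd y\ \longrightarrow\ \int\!\!\int H_\varphi(x,y)\,f(x)f(y)\,\rmd x\rmd y
\end{equation*}
by splitting $H_\varphi=H_\varphi\mathbbm{1}_{|x-y|\leq\eps}+H_\varphi\mathbbm{1}_{|x-y|>\eps}$. For the diagonal part, I would use the Lebesgue-measure bound $|B(x,\eps)|=\pi\eps^2$ together with uniform integrability to obtain
\begin{equation*}
\int\!\!\int_{|x-y|\leq\eps}|f^n(x)||f^n(y)|\,\rmd x\rmd y\leq\|f^n\|_{L^1_x}\sup_{x}\int_{B(x,\eps)}|f^n(y)|\,\rmd y\xrightarrow[\eps\to 0]{}0
\end{equation*}
uniformly in $n$, which together with $\|H_\varphi\|_{L^\infty}<\infty$ makes the diagonal contribution negligible.

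The off-diagonal part is handled by a standard tensor-approximation argument: since $\varphi\in C^\infty_c$, the kernel $H_\varphi$ vanishes when both $|x|,|y|$ are large, and tightness of $\{f^n\}_n$ lets me further localize to a large compact box $B_R\times B_R$ up to an arbitrarily small error. On $\{(x,y)\in B_R\times B_R:|x-y|>\eps\}$, $H_\varphi$ is continuous and bounded, so by Stone--Weierstrass it is uniformly approximated by finite tensor sums $\sum_i g_i(x)h_i(y)$ with $g_i,h_i\in C_c(B_R)$. For each such term, weak $L^1$ convergence gives $\int g_i f^n\to\int g_i f$ and likewise for $h_i$, hence the product converges. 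Combining the three pieces yields the desired distributional convergence.

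The main obstacle is precisely the lack of continuity of $H_\varphi$ on the diagonal: although bounded, the kernel oscillates with the angle of approach as $y\to x$, so one cannot simply invoke uniform approximation on the full domain. The crucial point that unlocks the argument is that weak convergence in $L^1_x$ upgrades to uniform integrability via Dunford--Pettis, which is exactly what is needed to control the diagonal contribution uniformly in $n$—this is what fails for sequences converging only as measures (where concentration on the diagonal can persist).
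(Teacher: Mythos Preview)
Your argument is correct. The symmetrization step and the resulting $L^\infty$ bound on $H_\varphi$ match the paper's proof exactly. For the weak-continuity part, the paper takes a shorter route: it simply invokes the general fact that $f^n\rightharpoonup f$ in $L^1(\R^2)$ implies $f^n\otimes f^n\rightharpoonup f\otimes f$ in $L^1(\R^2\times\R^2)$, and then tests against $H_\varphi\in L^\infty(\R^2\times\R^2)$. Your diagonal/off-diagonal splitting, combined with Dunford--Pettis for the diagonal and Stone--Weierstrass tensor approximation for the off-diagonal, is essentially a direct, self-contained proof of (the relevant instance of) that tensor-product fact. The paper's version is more concise because it packages the analysis into a known lemma; your version has the advantage of being explicit about where uniform integrability is used and why mere weak-$\ast$ convergence of measures would fail.
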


\begin{proof}
    Since $K(z)\sim z^\perp/|z|^2$ is an odd kernel, upon symmetrization one finds
    \begin{equation}\label{eq:biot-savart-symmetry}
        \langle \nabla\cdot [(K\ast f) f], \varphi\rangle
        \sim \int_{\R^2\times\R^2} \frac{(x-y)^\perp \cdot (\nabla\varphi(x)-\nabla\varphi(y))}{|x-y|^2} f(x) f(y) \dd x \dd y
    \end{equation}
    which implies the estimate
    \begin{align*}
        |\langle \nabla\cdot [(K\ast f) f], \varphi\rangle| \leq \| f\|_{L^1_x}^2 \| \varphi\|_{W^{2,\infty}}.
    \end{align*}
    The second claim follows from formula \eqref{eq:biot-savart-symmetry} and the fact that, if $\rho^n\rightharpoonup \rho$ in $L^1(\R^2)$, then $\rho^n\otimes \rho^n\rightharpoonup \rho \otimes\rho$ in $L^1(\R^2\times \R^2)$.
\end{proof}

\begin{remark}
    Lemma \ref{lem:biot-savart} readily allows to give meaning to weak solutions $\omega\in \cB_b([0,T];L^1_x)$ to \eqref{eq:rough-euler}, in the style of Definition \ref{defn:solution-rough-continuity}.
    Indeed, in this case the forcing term $\mu$ with $\dot\mu=\nabla\cdot[(K\ast\omega)\omega]$ is well defined and
    \begin{equation}\label{eq:estim_nonlinearity}
        \| \mu\|_{C^{1-\var}([s,t];\cF_{-2,R})}
        \leq \int_s^t \| \nabla\cdot( [K\ast \omega_s] \omega_s)\|_{\cF_{-2,R}} \dd s
        \lesssim |t-s| \| \omega\|_{\cB_b([s,t];L^1_x)}^2
    \end{equation}
    uniformly in $s\leq t$ and $R\geq 1$.
\end{remark}

\begin{proposition}\label{prop:schochet-delort}
    Let $\fkp \in [2,3)$, $\bZ\in\mathcal{C}^\fkp_g(\R_{\geq 0};\R^m)$, $\xi\in C^2_b$ with $\nabla\cdot\xi=0$.
    Then for any $p\in [1,\infty)$ and any $\omega_0\in L^1_x\cap L^p_x$, there exists a global weak solution $\omega\in \cB_b(\R_{\geq 0}; L^1_x\cap L^p_x)$ to \eqref{eq:rough-euler} satisfying
    \begin{equation}\label{eq:euler.existence.bound}
        \sup_{t\geq 0} \| \omega_t\|_{L^q_x} \leq \| \omega_0\|_{L^q_x} \quad \forall\, q\in [1,p].
    \end{equation}
\end{proposition}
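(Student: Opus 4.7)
The plan is a compactness argument exploiting the well-posedness theory of Theorem~\ref{thm:yudovich-euler}. First, I would approximate $\omega_0$ by a sequence $\omega_0^n \in L^1_x \cap L^\infty_x$ converging to $\omega_0$ strongly in $L^1_x \cap L^p_x$ with $\|\omega_0^n\|_{L^q_x} \leq \|\omega_0\|_{L^q_x}$ for every $q \in [1,p]$ (for instance by truncation, $\omega_0^n := \omega_0\,\mathbbm{1}_{\{|\omega_0|\leq n\}\cap B_n}$). Theorem~\ref{thm:yudovich-euler} then generates unique global Yudovich solutions $\omega^n \in \cB_b(\R_{\geq 0}; L^1_x\cap L^\infty_x)$ with flow representation $\omega^n_t = \omega_0^n\circ\Phi^{n,-1}_t$ and $L^q$-conservation $\|\omega^n_t\|_{L^q_x}=\|\omega_0^n\|_{L^q_x}$ for every $q\in[1,\infty]$, which yields a uniform bound in $\cB_b(\R_{\geq 0};L^1_x\cap L^p_x)$. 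Since $u^n:=K\ast\omega^n$ is divergence-free the flow $\Phi^n$ is measure-preserving; combined with the strong $L^1$-convergence of the initial data (which implies equi-integrability of $\{\omega_0^n\}$), this gives equi-integrability of $\{\omega^n_t : n\in\N,\, t\in[0,T]\}$ in $L^1_x$, as in the proof of Proposition~\ref{prop:existence-linear-RPDE}.

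Next, viewing each $\omega^n$ as a solution to the abstract rough PDE~\eqref{eq:rough-abstract-PDE} with unbounded rough driver $\bA$ associated to $(\xi,\bZ)$ and forcing $\mu^n_t := \int_0^t \nabla\cdot[(K\ast\omega^n_s)\omega^n_s]\,\rmd s$, the bound~\eqref{eq:estim_nonlinearity} gives $\|\mu^n\|_{C^{1-\var}([0,T];\cF_{-2,R})} \lesssim T\|\omega_0\|_{L^1_x}^2$ uniformly in $n$ and $R\geq 1$, so Lemma~\ref{lem:apriori-unbounded} produces uniform bounds on $\delta\omega^n$ in $\cF_{-1,R}$ and on the remainders $\omega^{n,\natural}$ in $\cF_{-3,R}$. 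Combining the uniform $L^1\cap L^p$-bound, equi-integrability (plus the tightness argument discussed below), and the equicontinuity, Propositions~\ref{prop:compactness-l1} and~\ref{prop:compactness-lp} of Appendix~\ref{app:compactness}, together with a Cantor diagonal extraction over $T\in\N$, produce a subsequence and a limit $\omega\in \cB_b(\R_{\geq 0}; L^1_x\cap L^p_x)$ with $\omega^n\to\omega$ in $C_w([0,T];L^q_x)$ for every $T>0$ and $q\in[1,p]$; the bound~\eqref{eq:euler.existence.bound} then follows from weak lower semicontinuity of the $L^q$-norms.

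To conclude, I would verify that $\omega$ solves~\eqref{eq:rough-euler} in the sense of Definition~\ref{defn:solution_nonlinear_RPDE}. The linear rough terms $A_{st}\omega^n_s$ and $\bbA_{st}\omega^n_s$ pass to their limits in $\cF_{-3,R}$ by standard weak-strong arguments; the heart of the matter is the nonlinearity, which is handled by Lemma~\ref{lem:biot-savart}: since $\omega^n_s\rightharpoonup\omega_s$ weakly in $L^1_x$, the symmetrized expression~\eqref{eq:biot-savart-symmetry} yields $\langle\mu^n_t,\varphi\rangle\to\langle\mu_t,\varphi\rangle$ for every $\varphi\in C^\infty_c$, with $\mu_t := \int_0^t\nabla\cdot[(K\ast\omega_s)\omega_s]\,\rmd s$. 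Lower semicontinuity of the variation seminorm (Remark~\ref{rem:p-variation-lsc}) together with the uniform bounds from Lemma~\ref{lem:apriori-unbounded} then places the limit remainder $\omega^\natural$ in $C^{\fkp/3-\var}_2\cF_{-3,R}$ for every $R\geq 1$. The main obstacle is tightness of $\{\omega^n_t\}$ in $L^1_x$, required to upgrade $L^1_\loc$-convergence to genuine weak $L^1_x$-convergence before applying Lemma~\ref{lem:biot-savart}: for $p>1$ Hardy--Littlewood--Sobolev provides $\|u^n\|_{L^\infty_x} \lesssim \|\omega_0\|_{L^1_x\cap L^p_x}$ uniformly in $n$, controlling the speed at which the essentially compact supports of $\omega_0^n$ spread under $\Phi^n$; for the critical case $p=1$, this uniform $L^\infty$-bound is unavailable and tightness must be extracted more delicately, either via an $L^p$-bootstrap ($p>1$ arbitrarily close to $1$) on the approximating family or by exploiting the decay of the symmetrized kernel in~\eqref{eq:biot-savart-symmetry} away from $\supp\nabla\varphi$ to effectively localize the nonlinear pairing.
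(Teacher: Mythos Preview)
Your overall strategy matches the paper's, but there are two genuine gaps.

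First, you invoke Theorem~\ref{thm:yudovich-euler} directly, but that result requires $\xi\in C^3_b$ whereas here only $\xi\in C^2_b$ is assumed. The paper fixes this by mollifying $\xi$ as well, taking $\xi^n\in C^3_b$ with $\sup_n\|\xi^n\|_{C^2_b}\leq\|\xi\|_{C^2_b}$, $\nabla\cdot\xi^n=0$ and $\xi^n\to\xi$ in $C^2_\loc$, and only then applying the Yudovich theory to the triple $(\omega_0^n,\xi^n,\bZ)$. Passing to the limit in the rough driver $\bA^n$ is then handled exactly as in Proposition~\ref{prop:existence-linear-RPDE}.

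Second, your tightness argument does not go through. The claim that Hardy--Littlewood--Sobolev gives $\|u^n\|_{L^\infty_x}\lesssim\|\omega_0\|_{L^1_x\cap L^p_x}$ is false for $p\in(1,2]$: writing $K=K\mathbbm{1}_{|x|\leq 1}+K\mathbbm{1}_{|x|>1}$, the near-origin piece lies in $L^q_x$ only for $q<2$, so $K^1\ast\omega^n\in L^\infty_x$ would require a uniform bound on $\omega^n$ in some $L^{q'}_x$ with $q'>2$, which you do not have. Your fallback suggestions for $p=1$ are not arguments. The paper's device is cleaner and covers all $p\geq 1$ at once using only the $L^1$ bound: from $K^1\in L^1_x$ and $K^2\in L^\infty_x$ one gets $u^{n,1}:=K^1\ast\omega^n$ uniformly bounded in $L^1_x$ and $u^{n,2}:=K^2\ast\omega^n$ uniformly bounded in $L^\infty_x$, both controlled by $\|\omega_0\|_{L^1_x}$. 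This places $u^n$ precisely in the growth class~\eqref{eq:growth-condition-pde}, so the quantitative tightness estimate~\eqref{eq:equi-p-integrability-proof3} from the proof of Lemma~\ref{lem:equi-p-integrability} applies and yields~\eqref{eq:schochet_proof4}. With this in hand the rest of your outline (equi-integrability via the measure-preserving flow, equicontinuity via~\eqref{eq:estim_nonlinearity} and Lemma~\ref{lem:apriori-unbounded}, and convergence of the nonlinearity via Lemma~\ref{lem:biot-savart}) is correct and coincides with the paper's argument.
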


\begin{proof}
    We give the proof for $p=1$, the other cases being similar.
    For fixed $\bZ$, consider a sequence of approximations $(\xi^n,\omega_0^n)$ of $(\xi,\omega_0)$ obtained by mollifications, so that $\xi_n\in C^3_b$, $\omega^n_0\in L^1_x\cap L^\infty_x$ and
    \begin{align*}
        \sup_n \|\xi^n\|_{C^2_b}\leq \| \xi\|_{C^2_b},\quad \nabla\cdot \xi^n=0, \quad \xi^n\to\xi \text{ in }C^2_\loc, \quad
        \sup_n \| \omega^n_0\|_{L^1_x} \leq \| \omega_0\|_{L^1_x}, \quad \omega^n_0\to \omega_0 \text{ in } L^1_x.
    \end{align*}
    For each $\omega^n_0$, let $\omega^n$ be the unique solution in $\cB_b(\R_{\geq 0};L^1_x\cap L^\infty_x)$ to
    \begin{align*}
        \dd \omega^n_t + \nabla\cdot [(K\ast \omega^n_t)\cdot \omega^n_t]\, \rmd t + \sum_{k=1}^m \nabla\cdot (\xi^n_k\, \omega_t)\, \rmd \bZ^k_t = 0
    \end{align*}
    whose existence is guaranteed by Theorem \ref{thm:yudovich-euler}.
    By \eqref{eq:euler_flow_representation}, such solutions are of the form
    \begin{equation}\label{eq:schochet_proof1}
        \omega^n_t(x) = \omega^n_0(\Phi^{-1;n}_t(x))
    \end{equation}
    where $\Phi^n_t$, resp. $\Phi^{-1;n}_t$, are incompressible flows. By construction, we have
    \begin{equation}\label{eq:schochet_proof2}
        \sup_{t\geq 0} \| \omega^n_t\|_{L^1_x}=\| \omega^n_0\|_{L^1_x} \leq \| \omega_0\|_{L^1_x} \quad \forall\, n\in\N.
    \end{equation}
    Set $K^1:= K\mathbbm{1}_{|x|\leq 1}$, $K^2:= K\mathbbm{1}_{|x|> 1}$, so that the Biot-Savart kernel decomposes
    \begin{equation}\label{eq:biot-savart-decomposition}
        K=K^1+K^2, \quad K^1\in L^q_x\quad \forall\, q\in [1,2),\quad K^2\in L^{\tilde q}_x\quad \forall\, \tilde q\in (2,\infty];
    \end{equation}
    then the velocity fields $u^n_t:=K\ast\omega^n_t$ associated to the solutions similarly decompose as $u^n_t=u^{n,1}_t+u^{n,2}_t$, where by \eqref{eq:schochet_proof2} and Young convolutional inequalities we have
    \begin{equation}\label{eq:schochet_proof3}\begin{split}
        & \sup_{n\in\N} \sup_{t\in [0,T]} \| u^{n,1}_t\|_{L^1_x} \leq \| K^1\|_{L^1_x} \| \omega^n_t\|_{L^1_x} \lesssim \| \omega_0\|_{L^1_x},\\
        & \sup_{n\in\N} \sup_{t\in [0,T]} \| u^{n,2}_t\|_{L^\infty_x} \leq \| K^2\|_{L^\infty_x} \| \omega^n_t\|_{L^1_x} \lesssim \| \omega_0\|_{L^1_x}.
    \end{split}\end{equation}
    By the uniform estimate \eqref{eq:schochet_proof3}, arguing as in Lemma \ref{lem:equi-p-integrability} (in particular, bound \eqref{eq:equi-p-integrability-proof3}) and Corollary \ref{cor:linear-RPDE-stability-1}, we deduce that
    \begin{equation}\label{eq:schochet_proof4}
        \lim_{R\to\infty} \sup_{n\in\N}\sup_{t\in [0,T]} \int_{|x|>R} |\omega^n_t(x)|\dd x = 0 \qquad \forall\, T\in (0,\infty).
    \end{equation}
    Estimate \eqref{eq:schochet_proof2} provided equiboundedness for $\{\omega^n\}_n$, \eqref{eq:schochet_proof4} tightness; using the flow representation \eqref{eq:schochet_proof1}, the fact that $\omega^n_0\to\omega_0$ and arguing as in Proposition \ref{prop:existence-linear-RPDE}, local equi-integrability of $\{\omega^n\}_n$ follows as well.
    In order to invoke Proposition \ref{prop:compactness-l1}, it remains to verify weak equicontinuity.

    Note that $\omega^n$ solves a rough continuity equation with forcing $\mu^n_t:=\int_0^t \nabla\cdot [(K_s\ast\omega^n_s)\omega^n_s] \dd s$ and we have the uniform bound \eqref{eq:schochet_proof2}. We can thus invoke the a priori estimates from Lemma \ref{lem:apriori-unbounded} (in particular, apply bound \eqref{eq:apriori-unbounded-eq3}, together with \eqref{eq:schochet_proof2} and the formula for $w_\ast$) to find
     \begin{align*}
        \| \omega^n_t-\omega^n_s\|_{\cF_{-1,R}}
        \leq \llbracket \omega^n \rrbracket_{\fkp,[s,t];E_{-1}}
        \lesssim w_{\bA^n}(s,t)^{\frac{1}{\fkp}} + w_{\bA^n}(s,t)^{\frac{1}{\fkp}-\frac{1}{3}} w_{\mu^n}(s,t)^{\frac{1}{3}} + w_{\mu^n}(s,t)^{\frac{1}{2}}
    \end{align*}
    where the operators $\bA^n$ are defined in function of $(\xi^n,\bZ)$ as in Lemma \ref{lem:URD_continuity}. Noting that $\bZ$ (which is independent of $n$) is responsible for the time regularity of $\bA^n$, and that $\{\xi^n\}$ are bounded in $C^2_b$, for any $T\in (0,\infty)$ we can find a control $\gamma_T$ such that
    \begin{align*}
        \sup_{n\in\N} w_{\bA^n}(s,t) \leq \gamma_T(s,t) \quad \forall\, (s,t)\in\Delta_T.
    \end{align*}
    Moreover by estimate \eqref{eq:estim_nonlinearity}, we have
    \begin{align*}
        w_{\mu^n}(s,t)
        =\llbracket \mu^n \rrbracket_{C^{1-\var}([s,t];\cF_{-2,R})}
        \lesssim |t-s| \| \omega^n\|_{\cB_b([0,T];L^1_x)}^2
        \leq |t-s| \|\omega_0\|_{L^1_x}
    \end{align*}
    and so overall we find
    \begin{align*}
        \sup_{n\in\N} \| \omega^n_t-\omega^n_s\|_{\cF_{-1,R}} \lesssim_T \gamma_T(s,t)^{\frac{1}{\fkp}} + \gamma_T(s,t)^{\frac{1}{\fkp}-\frac{1}{3}} |t-s|^{\frac{1}{3}} + |t-s|^{\frac{1}{2}} \quad \forall\, (s,t)\in \Delta_T, \quad T\in (0,+\infty).
    \end{align*}
    We can therefore apply Proposition \ref{prop:compactness-l1} to extract a (not relabelled) subsequence such that $\omega^n\to \omega$ in $C_w([0,T];L^1_x)$, for fixed finite $T$; up to a Cantor diagonal argument, we may further assume that the above convergence holds for all $T\in (0,+\infty)$.
    
    By \eqref{eq:schochet_proof2} and properties of weak convergence, $\omega$ satisfies \eqref{eq:euler.existence.bound}, so it only remains to show that solves \eqref{eq:rough-euler}.
    By Lemma \ref{lem:biot-savart} and dominated convergence, it's easy to check that
    \begin{align*}
        \mu^n_t =\int_0^t \nabla\cdot [(K_s\ast\omega^n_s)\omega^n_s] \dd s \to \int_0^t \nabla\cdot [(K_s\ast\omega_s)\omega_s] \dd s=: \mu_t \quad\forall t\geq 0
    \end{align*}
    in the sense of distributions; the rest of the argument needed in order to pass to the limit in the rough part is then identical to that of Proposition \ref{prop:existence-linear-RPDE}, which overall shows that $\omega$ solves \eqref{eq:rough-euler} as desired.
\end{proof}

Under additional regularity of $\xi$ and higher integrability, the weak solutions from Proposition \ref{prop:schochet-delort} satisfy additional properties.

\begin{proposition}\label{prop:schochet-delort-renormalized}
    Let $\fkp \in [2,3)$, $\bZ\in\mathcal{C}^\fkp_g(\R_{\geq 0};\R^m)$, $\xi\in C^3_b$ with $\nabla\cdot\xi=0$.
    Let $p\in [2,\infty)$, $T\in (0,+\infty)$ and suppose that $\omega\in\cB_b([0,T];L^1_x\cap L^p_x)$ is a weak solution to \eqref{eq:rough-euler}.
    Then $\omega\in C([0,T];L^1_x\cap L^p_x)$ and is renormalized: for any $\beta\in C^1_b$, $v= \beta(\omega)$ is a weak solution on $[0,T]$ to the RPDE
    \begin{align*}
        \dd v_t + (K\ast \omega_t)\cdot\nabla v_t + \xi\cdot\nabla v_t\, \dd \bZ_t =0. 
    \end{align*}
\end{proposition}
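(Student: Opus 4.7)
The strategy is to treat $\omega$ as a weak solution of the linear rough continuity equation \eqref{eq:rc_in_unique} with drift $b := u = K \ast \omega$ and to invoke the linear theory from Section \ref{sec:linear-RPDEs}. Since $K$ is divergence-free as a Fourier multiplier, $\nabla \cdot u = 0$, so \eqref{eq:rc_in_unique} and \eqref{eq:lt_in_unique} coincide; hence both continuity in $L^q_x$ (from Proposition \ref{prop:absolute-continuity-Lp}) and renormalizability (from Corollary \ref{cor:renormalized}) will follow once the regularity conditions on the drift are verified.

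The first step would be to check that $u = K \ast \omega$ satisfies the hypotheses \eqref{eq:condition-stability} for the exponent $p$. The divergence condition is immediate. Using Mihlin's multiplier theorem applied to $\nabla K$ (of $0$-homogeneity, exactly as in the proof of Theorem \ref{thm:intro_wellposed_nonlinear}), one obtains $\nabla u \in L^\infty_t L^p_x$; since $p \geq p'$, this gives $u \in L^1_t W^{1,p'}_\loc$. For the growth condition on $u/(1+|x|)$, I would split $K = K^1 + K^2$ as in \eqref{eq:biot-savart-decomposition}: Young's inequality yields $u^2 := K^2 \ast \omega \in L^\infty_t L^\infty_x$ (using $\omega \in L^1_x$ and $K^2 \in L^\infty_x$) and $u^1 := K^1 \ast \omega \in L^\infty_t L^p_x$ (using $\omega \in L^p_x$ and $K^1 \in L^1_x$). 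Combining this with the elementary splitting $L^p_x \subset L^{p'}_x + L^\infty_x$ via $f = f\mathbf{1}_{|f|>1} + f\mathbf{1}_{|f|\leq 1}$, one obtains $u/(1+|x|) \in L^\infty_t(L^{p'}_x + L^\infty_x)$, a hybrid between the growth conditions of \eqref{eq:condition-uniqueness} and \eqref{eq:condition-uniqueness-variant}.

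With these verifications in place, Proposition \ref{prop:absolute-continuity-Lp} applied to $\omega$, viewed as the unique weak solution of \eqref{eq:rc_in_unique} with drift $u$ in the class $\cB_b([0,T];L^1_x\cap L^p_x)$, gives $\omega \in C([0,T]; L^q_x)$ for every $q \in [1, p]$, hence $\omega \in C([0,T]; L^1_x \cap L^p_x)$; Corollary \ref{cor:renormalized} then yields renormalizability of $\omega$, i.e., $v=\beta(\omega)$ solves \eqref{eq:lt_in_unique} with drift $u = K\ast\omega$ for every $\beta \in C^1_b$. I expect the main technical subtlety to lie in showing that the hybrid growth condition above is compatible with the proofs of mass conservation (Lemma \ref{lem:conservation-mass}), duality (Theorem \ref{thm:product-rule}) and their consequences underlying Proposition \ref{prop:absolute-continuity-Lp} and Corollary \ref{cor:renormalized}. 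This should be routine: the Hölder-type bounds invoked in those proofs naturally split along $L^{p'}$ (paired with $\omega \in L^p_x$) and $L^\infty$ (paired with $\omega \in L^1_x$), so that every instance of $b\rho/(1+|x|)$ or $b\rho f/(1+|x|)$ ends up in $L^1_t L^1_x$ exactly as required.
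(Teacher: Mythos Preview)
Your strategy is exactly the paper's: view $\omega$ as a solution of the linear RPDE with drift $u = K\ast\omega$, verify the DiPerna--Lions conditions, and invoke Proposition~\ref{prop:absolute-continuity-Lp} and Corollary~\ref{cor:renormalized}. The only difference lies in how the growth condition is checked. The paper avoids your hybrid $L^{p'}_x + L^\infty_x$ altogether by working at the fixed exponent $p=2$ (legitimate since $\omega \in L^1_x\cap L^p_x \subset L^2_x$ for $p\geq 2$) and exploiting that in $\R^2$ one has $1/(1+|x|)\in L^4_x$: then $u^2 = K^2\ast\omega \in L^4_x$ (from $K^2\in L^4_x$ and $\omega\in L^1_x$) gives $u^2/(1+|x|)\in L^2_x$ by H\"older, while $u^1 = K^1\ast\omega \in L^2_x$ directly. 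This places $u/(1+|x|)$ cleanly in $\cB_b([0,T];L^2_x)$, so condition~\eqref{eq:condition-uniqueness-variant} holds with $p=p'=2$ and the linear results apply verbatim, with no need to revisit the proofs of Lemma~\ref{lem:conservation-mass} or Theorem~\ref{thm:product-rule}. Your route would also work after the routine checks you flag, but the paper's dimension-specific trick is shorter and sidesteps the issue entirely.
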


\begin{proof}
    Since $\omega$ also solves a linear transport (equiv. continuity) RPDE with drift $u=K\ast \omega$, in order to conclude we want to invoke Theorem \ref{thm:uniqueness-linear-RPDE}, Proposition \ref{prop:absolute-continuity-Lp} and Corollary \ref{cor:renormalized}.
    To this end, since $\omega\in \cB_b([0,T];L^2_x)$ and $\nabla\cdot u=0$, it suffices to check that
    \begin{equation}\label{eq:renormalized_goal}
        \frac{u}{1+|x|} \in \cB_b([0,T];L^2_x), \quad \nabla u\in \cB_b([0,T];L^2_x).
    \end{equation}
    The second requirement immediately follows from properties of the Biot-Savart kernel, so we focus on the first one. Employing the decomposition \eqref{eq:biot-savart-decomposition} of $K$ and setting corresponding
    \begin{align*}
        u=u^1+u^2=: K^1\ast\omega+K^2\ast\omega,
    \end{align*}
    by \eqref{eq:biot-savart-decomposition} and Young convolution inequalites it holds that
    \begin{align*}
       & \| u^1_t\|_{L^2_x} \leq \| K^1\|_{L^1_x} \| \omega_t\|_{L^2_x} \lesssim \|  \omega\|_{\cB_b([0,T];L^2_x)},\\
       & \bigg\| \frac{u^2_t}{1+|x|} \bigg\|_{L^2_x} \leq \| K^2\ast\omega_t\|_{L^4_x}\, \bigg\| \frac{1}{1+|x|} \bigg\|_{L^4_x} \lesssim \| K^2\|_{L^4_x} \| \omega_t\|_{L^1_x} \lesssim \|  \omega\|_{\cB_b([0,T];L^1_x)}
    \end{align*}
    which overall proves \eqref{eq:renormalized_goal}.
\end{proof}

\begin{proof}[Proof of Theorem \ref{thm:intro-schochet-delort}]
    This is just a combination of Propositions \ref{prop:schochet-delort} and \ref{prop:schochet-delort-renormalized}, the latter applied on $[0,T]$ for any finite $T$.
\end{proof}

\begin{remark}
    Our results are partially incomplete compared to the deterministic case \cite{DiPMaj1987,Delort1991,Schochet1995}, where the possibility of measure-valued initial conditions $\omega_0$ ``with a preference sign'' is allowed (think of $\omega_0=\delta_0+\tilde \omega_0$, where $\tilde\omega_0\in L^1_x$.
    Moreover in the series of works \cite{BoBoCr2016,CiCrSp2020,CiCrSp2021}, it has been shown that in the full range $p\in [1,\infty)$, $(L^1_x\cap L^p_x)$-valued solutions can be constructed in such a way that they are renormalized and admit a Lagrangian representation. We leave such more delicate extension to future works.
\end{remark}

\appendix

\section{Some useful lemmas}\label{app:tech-lem}

We collect in this appendix some basic results that have been used throughout the paper. We start with some well-known Taylor-type expansion which are frequently needed in rough path theory.

To this end, let us consider a path $y:I\to \R^d$ and a regular function $f: \R^d \rightarrow \R$; one can then immediately extend everything to $\R^m$-valued functions by arguing componentwise.
For $k\in \{1,2\}$, define
\begin{equation*}
    \lp f \rp^{k,y}_{st} = \int_0^1 (1- \lambda)^{k-1} f(y_s + \lambda \delta y_{st}) \dd \lambda;
\end{equation*}
then by Taylor's formula we have
\begin{equation*}
    \delta f(y)_{st} := f(y_t) - f(y_s) = \lp Df \rp^{1,y}_{st} \delta y_{st}, \quad
    \lp f \rp^{1,y}_{st} - f(y_s) = \lp Df \rp^{2,y} \delta y_{st} .
\end{equation*}

\begin{lemma}\label{lem:basic-taylor}
Let $f\in C^2_b(\R^d;\R)$, $\xi\in C^0_b(\R^m;\R^d)$; then there exists a constant $C=C(\| \xi\|_{C^0_b}, \| f\|_{C^2_b})$ such that for any $y:I\to \R^d$ and any $z\in \R^m$ it holds 
\begin{equation*}
    |\delta f(y)_{st} - Df(y_s) \xi(y_s) z|
    \lesssim |\delta y_{st}-\xi(y_s) z| + |\delta y| |z| \quad \forall \, (s,t)\in\Delta_I,\, z\in \R^m.
\end{equation*}
\end{lemma}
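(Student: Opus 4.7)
The plan is to exploit the two Taylor identities recalled immediately before the statement, treating the linearization error as a sum of two natural pieces: the discrepancy between $\delta y_{st}$ and $\xi(y_s)z$ on one hand, and the discrepancy between the averaged Jacobian $\lp Df\rp^{1,y}_{st}$ and the pointwise one $Df(y_s)$ on the other.

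First I would invoke the first Taylor identity $\delta f(y)_{st}=\lp Df\rp^{1,y}_{st}\,\delta y_{st}$ and perform the algebraic decomposition
\begin{equation*}
\delta f(y)_{st}-Df(y_s)\xi(y_s)z=\lp Df\rp^{1,y}_{st}\bigl(\delta y_{st}-\xi(y_s)z\bigr)+\bigl(\lp Df\rp^{1,y}_{st}-Df(y_s)\bigr)\xi(y_s)z.
\end{equation*}
The first summand is immediately controlled by $\|Df\|_{C^0_b}\,|\delta y_{st}-\xi(y_s)z|$, since $|\lp Df\rp^{1,y}_{st}|\le\|Df\|_{C^0_b}\le\|f\|_{C^2_b}$ by $\int_0^1\dd\lambda=1$.

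For the second summand I would apply the second Taylor identity with $f$ replaced by $Df$, which yields
\begin{equation*}
\lp Df\rp^{1,y}_{st}-Df(y_s)=\lp D^2 f\rp^{2,y}_{st}\,\delta y_{st};
\end{equation*}
using $\int_0^1(1-\lambda)\,\dd\lambda=\tfrac12$, the modulus of the right-hand side is bounded by $\tfrac12\|D^2 f\|_{C^0_b}\,|\delta y_{st}|$. Multiplying by $|\xi(y_s)|\,|z|\le\|\xi\|_{C^0_b}|z|$ and combining with the estimate for the first summand produces the claimed inequality with a constant $C$ that depends only on $\|f\|_{C^2_b}$ and $\|\xi\|_{C^0_b}$.

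No serious obstacle is expected: this is a routine second-order Taylor estimate. The only subtlety worth flagging is to keep the combination $\delta y_{st}-\xi(y_s)z$ intact rather than splitting it via the triangle inequality, since preserving this cancellation is precisely what later allows the bound to close when $\delta y_{st}-\xi(y_s)\,\delta Z_{st}$ plays the role of a higher-order remainder in the rough setting.
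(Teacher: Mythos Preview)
Your proof is correct and follows essentially the same approach as the paper: both use the decomposition $\lp Df\rp^{1,y}_{st}(\delta y_{st}-\xi(y_s)z)+\lp D^2 f\rp^{2,y}_{st}\,\delta y_{st}\,\xi(y_s)z$, obtained from the two Taylor identities stated just before the lemma, and then bound each term using $\|f\|_{C^2_b}$ and $\|\xi\|_{C^0_b}$. Your version simply spells out the intermediate step $\lp Df\rp^{1,y}_{st}-Df(y_s)=\lp D^2 f\rp^{2,y}_{st}\,\delta y_{st}$ and the constants more explicitly.
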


\begin{proof}
By Taylor expansion and algebraic manipulations, it holds
\begin{align*}
    \delta f(y)_{st} - Df(y_s) \xi(y_s) z
    & = \lp Df \rp^{1,y}_{st} \delta y_{st} - Df (y_s) \xi(y_s) z\\
    & = \lp Df \rp^{1,y}_{st} [\delta y_{st}-\xi(y_s) z] + \lp D^2 f\rp^{2,y}_{st} \delta y_{st}\, \xi(y_s) z;
\end{align*}
the regularity assumptions on $f$, $\xi$ then readily imply the conclusion.  
\end{proof}

\begin{lemma}\label{lem:four-point-estim}
Let $f\in C^2_b(\R^d;\R)$, then there exists $C=C(\| f\|_{C^2_b})$ such that
\begin{equation*}
    |f(x_1)-f(x_2)-f(x_3)+f(x_4)|\lesssim |x_1-x_2||x_2-x_4| + |x_1-x_2-x_3+x_4| \quad \forall\, x_i\in\R^d.
\end{equation*}
\end{lemma}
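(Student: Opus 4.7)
The plan is to combine two complementary first-order Taylor expansions of the mixed increment $f(x_1)-f(x_2)-f(x_3)+f(x_4)$ and then stitch them together by a case split on the size of $|x_1-x_2|$.

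The main step is to write the increment as $[f(x_1)-f(x_2)]-[f(x_3)-f(x_4)]$ and represent each bracket via the integral form of the mean value theorem, giving
\[
f(x_1)-f(x_2)-f(x_3)+f(x_4) = \int_0^1 \big[ Df(x_2+\lambda(x_1-x_2))(x_1-x_2) - Df(x_4+\lambda(x_3-x_4))(x_3-x_4)\big]\,\dd\lambda.
\]
Substituting $x_3-x_4 = (x_1-x_2)-(x_1-x_2-x_3+x_4)$ and rearranging splits the integrand into a ``drift'' piece $Df(x_4+\lambda(x_3-x_4))\cdot(x_1-x_2-x_3+x_4)$, bounded by $\|Df\|_\infty |x_1-x_2-x_3+x_4|$, and a ``difference of $Df$'' piece which I would control via the identity $(x_2+\lambda(x_1-x_2)) - (x_4+\lambda(x_3-x_4)) = (x_2-x_4) + \lambda(x_1-x_2-x_3+x_4)$ together with a Lipschitz estimate on $Df$. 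This produces the near-final bound
\[
|f(x_1)-f(x_2)-f(x_3)+f(x_4)| \lesssim |x_1-x_2-x_3+x_4| + |x_1-x_2|\big(|x_2-x_4|+|x_1-x_2-x_3+x_4|\big),
\]
with implicit constant depending on $\|f\|_{C^2_b}$, which already contains the desired term $|x_1-x_2||x_2-x_4|$.

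The only obstacle is the surviving cross term $|x_1-x_2||x_1-x_2-x_3+x_4|$, which does not appear on the right-hand side of the claim. When $|x_1-x_2|\leq 1$ it is immediately absorbed into $|x_1-x_2-x_3+x_4|$. When $|x_1-x_2|>1$, I would instead discard the above expansion and exploit the alternative splitting $f(x_1)-f(x_2)-f(x_3)+f(x_4) = [f(x_1)-f(x_3)]-[f(x_2)-f(x_4)]$ together with the bare first-order estimate $|f(y)-f(y')|\leq \|Df\|_\infty|y-y'|$. The elementary identity $x_1-x_3 = (x_2-x_4)+(x_1-x_2-x_3+x_4)$ then yields $|f(x_1)-f(x_2)-f(x_3)+f(x_4)|\leq \|Df\|_\infty(2|x_2-x_4|+|x_1-x_2-x_3+x_4|)$, and the hypothesis $|x_1-x_2|>1$ finally allows one to absorb $|x_2-x_4|\leq|x_1-x_2||x_2-x_4|$. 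Putting the two regimes together gives the announced estimate with a constant depending only on $\|f\|_{C^2_b}$.
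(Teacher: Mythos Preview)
Your argument is correct, but it takes a slightly longer route than the paper's. The paper avoids the cross term $|x_1-x_2||x_1-x_2-x_3+x_4|$ altogether by choosing a smarter intermediate point: instead of expanding $f(x_3)-f(x_4)$ along the segment from $x_4$ to $x_3$, it inserts the auxiliary point $x_4+(x_1-x_2)$ and writes
\[
f(x_1)-f(x_2)-f(x_3)+f(x_4)=\Big(\int_0^1\big[Df(x_2+\lambda(x_1-x_2))-Df(x_4+\lambda(x_1-x_2))\big]\,\dd\lambda\Big)(x_1-x_2)+\big[f(x_4+x_1-x_2)-f(x_3)\big].
\]
In the integral the two arguments now differ by exactly $x_2-x_4$, independently of $\lambda$, so the Lipschitz estimate on $Df$ gives the clean term $|x_1-x_2||x_2-x_4|$ with no residual $|x_1-x_2||x_1-x_2-x_3+x_4|$; the second bracket is handled by the Lipschitz bound on $f$ and contributes exactly $|x_1-x_2-x_3+x_4|$. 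This yields the claim in one stroke, with no case split. Your approach trades this algebraic trick for a more ``symmetric'' expansion and then patches the extra term via the dichotomy $|x_1-x_2|\lessgtr 1$; it is perfectly valid and perhaps more mechanical to find, at the cost of a less streamlined argument.
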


\begin{proof}
    Again applying Taylor expansions and algebraic manipulations, we find
    \begin{align*}
        f& (x_1) -f(x_2)-f(x_3)+f(x_4)\\
        & = \bigg(\int_0^1 \big[ Df(x_2+\lambda (x_1-x_2)) - Df(x_4+\lambda (x_1-x_2))\big] \dd \lambda\bigg) (x_1-x_2) + \big[ f(x_4+x_1-x_2)-f(x_3)];
    \end{align*}
    since by assumption $f$ and $Df$ are globally Lipschitz, conclusion follows.
\end{proof}

In order to state the next lemma, it is convenient to introduce some basic notation. Given $f\in C^2_b$, $\xi\in C^0_b$ and $y$, $z$ as in Lemma \ref{lem:basic-taylor}, let us set
\begin{equation*}
    f(y)^\sharp_{st}:= \delta f(y)_{st} - Df(y_s) \xi(y_s)z, \quad y^\sharp_{st}:=\delta y_{st} - \xi(y_s)z;
\end{equation*}
moreover for fixed $\xi$ and $z$, given two distinct paths $y^1$, $y^2$, consider
\begin{equation*}
    v_t:= y^1_t-y^2_t, \quad
    v^\sharp_{st} := y^{1,\sharp}_{st} - y^{2,\sharp}_{st} = \delta v_{st} - (\xi(y^1_s)-\xi(y^2_s) ) z.
\end{equation*}

\begin{lemma}\label{lem:basic-taylor-2}
    Let $f\in C^3_b$, $\xi\in C^1_b$; then there exists a constant $C=C(\| \xi\|_{C^1_b}, \| f\|_{C^3_b})$ such that for any $y^1,y^2:I\to \R^d$ and any $z\in \R^m$, for $v$ as defined above, it holds 
\begin{equation*}
    |f(y^1)^\sharp_{st} - f(y^2)^\sharp_{st}|
    \lesssim |v^\sharp_{st}| + |\delta v_{st}| \big(|y^{1,\sharp}_{st}| 
    + |z| \big) + |v_s| \big(|y^{1,\sharp}_{st}| + |\delta y^1_{st}| |z|\big) \quad \forall \, (s,t)\in\Delta_I.
\end{equation*}
\end{lemma}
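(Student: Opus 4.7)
My plan is to reduce the estimate to a chain of direct Taylor-type manipulations, starting from the identity
\[
f(y)^\sharp_{st} \;=\; \lp Df\rp^{1,y}_{st}\, y^\sharp_{st} \;+\; \lp D^2 f\rp^{2,y}_{st}\bigl(\delta y_{st}\otimes \xi(y_s)z\bigr),
\]
which is nothing but a rearrangement of the expansion used in the proof of Lemma~\ref{lem:basic-taylor}. Subtracting the corresponding identity for $y^2$ and inserting intermediate terms, I would decompose
\begin{align*}
f(y^1)^\sharp_{st} - f(y^2)^\sharp_{st}
&= \lp Df\rp^{1,y^1}_{st}\, v^\sharp_{st}
+ \bigl(\lp Df\rp^{1,y^1}_{st}-\lp Df\rp^{1,y^2}_{st}\bigr)\, y^{2,\sharp}_{st}\\
&\quad + \lp D^2 f\rp^{2,y^1}_{st}\bigl(\delta v_{st}\otimes \xi(y^1_s)z\bigr)
+ \lp D^2 f\rp^{2,y^1}_{st}\bigl(\delta y^2_{st}\otimes (\xi(y^1_s)-\xi(y^2_s))z\bigr)\\
&\quad + \bigl(\lp D^2 f\rp^{2,y^1}_{st}-\lp D^2 f\rp^{2,y^2}_{st}\bigr)\bigl(\delta y^2_{st}\otimes \xi(y^2_s)z\bigr).
\end{align*}

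Next, each of the five terms gets bounded using $f\in C^3_b$ and $\xi\in C^1_b$. The first one is $\lesssim |v^\sharp_{st}|$ by $\|Df\|_{C^0_b}$. For the remaining ``difference'' factors, I would exploit the fact that, at every $\lambda\in[0,1]$, the arguments differ by exactly $v_s + \lambda\,\delta v_{st}$, which by the fundamental theorem of calculus gives
\[
\bigl|\lp Df\rp^{1,y^1}_{st}-\lp Df\rp^{1,y^2}_{st}\bigr|
\;\lesssim\; |v_s| + |\delta v_{st}|,
\qquad
\bigl|\lp D^2 f\rp^{2,y^1}_{st}-\lp D^2 f\rp^{2,y^2}_{st}\bigr|
\;\lesssim\; |v_s| + |\delta v_{st}|,
\]
with hidden constants depending only on $\|D^2 f\|_{C^0_b}$ and $\|D^3 f\|_{C^0_b}$. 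Combined with the mean-value bound $|\xi(y^1_s)-\xi(y^2_s)|\lesssim \|D\xi\|_{C^0_b}|v_s|$ and the triangle inequalities $|y^{2,\sharp}_{st}|\leq |y^{1,\sharp}_{st}|+|v^\sharp_{st}|$, $|\delta y^2_{st}|\leq |\delta y^1_{st}|+|\delta v_{st}|$, this yields a bound of the shape
\[
|v^\sharp_{st}| + (|v_s|+|\delta v_{st}|)\bigl(|y^{1,\sharp}_{st}|+|v^\sharp_{st}|\bigr) + |\delta v_{st}|\,|z| + (|v_s|+|\delta v_{st}|)\bigl(|\delta y^1_{st}|+|\delta v_{st}|\bigr)|z|.
\]

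Finally, I would clean up to match the statement. The cross-terms of the form $|v_s||v^\sharp_{st}|$, $|\delta v_{st}||v^\sharp_{st}|$, $|v_s||\delta v_{st}||z|$, $|\delta v_{st}|^2|z|$ that appear in the raw expansion can all be reabsorbed either into $|v^\sharp_{st}|$ (when multiplying by a factor bounded in the natural function-space norm coming from the context of use) or directly into $|\delta v_{st}|(|y^{1,\sharp}_{st}|+|z|)$ and $|v_s|(|y^{1,\sharp}_{st}|+|\delta y^1_{st}||z|)$, possibly after invoking $|\delta v_{st}|\leq |v^\sharp_{st}|+\|D\xi\|_{C^0_b}|v_s||z|$ to trade one factor. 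The main obstacle is precisely this last bookkeeping step: making sure that none of the ``spurious'' quadratic-in-$v$ remainders survives beyond the five admissible monomials listed on the right-hand side. Once this matching is done, the constant $C$ manifestly depends only on $\|f\|_{C^3_b}$ and $\|\xi\|_{C^1_b}$, completing the proof.
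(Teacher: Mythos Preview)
Your starting identity and five--term telescoping are fine, but the ``cleanup'' step does not close, and this is a genuine gap rather than bookkeeping. The lemma is stated for \emph{arbitrary} paths $y^1,y^2$ and arbitrary $z\in\R^m$, with $C$ depending only on $\|f\|_{C^3_b}$ and $\|\xi\|_{C^1_b}$. Hence none of $|v_s|$, $|\delta v_{st}|$, $|\delta y^1_{st}|$, $|z|$ can be absorbed into the constant, and the phrase ``bounded in the natural function--space norm coming from the context of use'' is not available here. Concretely, your fifth term yields $(|v_s|+|\delta v_{st}|)\,|\delta y^2_{st}|\,|z|$, and after the triangle inequality $|\delta y^2_{st}|\le |\delta y^1_{st}|+|\delta v_{st}|$ you are left in particular with $|\delta v_{st}|\,|\delta y^1_{st}|\,|z|$. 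This monomial is not dominated by any of the five admissible ones; substituting $|\delta y^1_{st}|\le |y^{1,\sharp}_{st}|+C|z|$ or $|\delta v_{st}|\le |v^\sharp_{st}|+C|v_s||z|$ only produces new inadmissible terms with extra $|z|$ factors. The same issue affects $|v_s||v^\sharp_{st}|$ and $|\delta v_{st}||v^\sharp_{st}|$ coming from term~2.

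The paper avoids both problems by two deliberate choices in the decomposition. First, it pairs the difference $\lp Df\rp^{1,y^1}_{st}-\lp Df\rp^{1,y^2}_{st}$ with $y^{1,\sharp}_{st}$ rather than $y^{2,\sharp}_{st}$, so no $|v^\sharp_{st}|$ cross--terms appear. Second, and more importantly, it does \emph{not} handle the second--order part by a Lipschitz bound on $\lp D^2f\rp^{2,\cdot}$; instead it writes that part as an integral of the four--point increment $Df(y^1_s+\lambda\delta y^1_{st})-Df(y^1_s)-Df(y^2_s+\lambda\delta y^2_{st})+Df(y^2_s)$ and applies Lemma~\ref{lem:four-point-estim} to $Df\in C^2_b$. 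This gives directly $(|\delta y^1_{st}||v_s|+|\delta v_{st}|)\,|z|$, with no $|\delta v_{st}||\delta y^1_{st}||z|$ term. That second--order cancellation is the missing ingredient in your argument; once you insert it (and swap $y^{2,\sharp}$ for $y^{1,\sharp}$ in term~2), the five admissible monomials appear with no leftovers.
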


\begin{proof}
    Similar computations to those from Lemma \ref{lem:basic-taylor} give us
    \begin{align*}
        f(y^1& )^\sharp_{st} - f(y^2)^\sharp_{st}
        = \lp D f\rp_{st}^{1,y^1} \, y^{1,\sharp}_{st} - \lp D f\rp_{st}^{1,y^2} \, y^{2,\sharp}_{st} + \\
        & + \Big(\int_0^1 \big[ Df(y^1_s + \lambda \delta y^1_{st}) - Df(y^1_s) \big] \dd \lambda\Big)\, \xi(y^1_s) z - \Big(\int_0^1 \big[ Df(y^2_s + \lambda \delta y^2_{st}) - Df(y^2_s) \big] \dd \lambda\Big)\, \xi(y^2_s) z\\
        & = I_1+I_2+I_3+I_4
    \end{align*}
    for
    \begin{align*}
        I_1 & := \lp D f\rp_{st}^{1,y^2} (y^{1,\sharp}_{st} - y^{2,\sharp}_{st}) = \lp D f\rp_{st}^{1,y^2} v^\sharp_{st},\\
        I_2 & := \Big( \int_0^1 \big[ Df (y^1_s+\lambda \delta y^1_{st}) - Df (y^2_s+\lambda \delta y^2_{st})\big] \dd \lambda\Big)\, y^{1,\sharp}_{st},\\
        I_3 & := \Big(\int_0^1 \big[ Df(y^1_s + \lambda \delta y^1_{st}) - Df(y^1_s) -  Df(y^2_s + \lambda \delta y^2_{st}) + Df(y^2_s)\big] \dd \lambda\Big)\, \xi(y^2_s) z,\\
        I_4 &:= \lp D^2 f\rp^{2,y^2}_{st} \delta y^1_{st}\, \big( \xi(y^1_s)-\xi(y^2_s)\big) z.
    \end{align*}
    We can estimate terms $I_1$, $I_2$ and $I_4$ in a standard manner, yielding
    \begin{align*}
        |I_1| \lesssim  |v^\sharp_{st}|,
        \quad |I_2|\lesssim (|v_s| +|\delta v_{st}|) |y^{1,\sharp}_{st}|,
        \quad |I_4| \lesssim |\delta y^1_{st}| |v_s| |z|.
    \end{align*}
    For term $I_3$ we can instead apply Lemma \ref{lem:four-point-estim}, which gives
    \begin{align*}
        |I_3| \lesssim \big(|\delta y^1_{st}| |v_s| + |\delta v_{st}| \big) |z|.
    \end{align*}
    Combining all the estimates gives the conclusion.
\end{proof}

The next basic lemma allows to pass from local to global estimates, which is often useful to obtain a priori estimates when combined with sewing techniques (Lemma \ref{lem:sewing}).

\begin{lemma}\label{lem:local-to-global}
    Let $I$ be an interval, $E$ a Banach space, $g\in C(\Delta_I;E)$; suppose there exist constants $C$, $h>0$, an exponent $\mathfrak{p}\in (0,+\infty)$ and some controls $w$, $\tilde w$ on $I$ such that
    \begin{equation}\label{eq:local-to-global-assumption}
        \| g_{st}\|_E \leq C\, \tilde w(s,t)^{\frac{1}{\mathfrak{p}}} \quad \text{ for all } (s,t)\in \Delta_I \text{ such that } w(s,t) \leq h.
    \end{equation}
    Then $g\in C^{\mathfrak{p}-\var}_2(I;E)$ and it holds
    \begin{equation}\label{eq:local-to-global-conclusion}
        \llbracket g\rrbracket_{\mathfrak{p},[s,t];E}
        \lesssim_\mathfrak{p} C\, \tilde w(s,t)^{\frac{1}{\mathfrak{p}}} + h^{-\frac{1}{\mathfrak{p}}}\, w(s,t)^{\frac{1}{\mathfrak{p}}} \| g\|_{C(\Delta_{[s,t]};E)} \quad \forall\, (s,t)\in\Delta_I
    \end{equation}
    where we set $\| g\|_{C(\Delta_{[s,t]};E)} :=\sup_{(u,v)\in \Delta_{[s,t]}} \| g_{uv}\|_E$.
    
    If additionally $\mathfrak{p}\in [1,\infty)$ and $g$ is the increment of a path, i.e. $g=\delta x$ for some $x\in C(I;E)$, then it holds
    \begin{equation}\label{eq:local-to-global-conclusion-2}
        \llbracket x\rrbracket_{\mathfrak{p},[s,t];E}
        \leq 2C\, \tilde w(s,t)^{\frac{1}{\mathfrak{p}}} \Big(1 + h^{\frac{1}{\mathfrak{p}}-1} w(s,t)^{1-\frac{1}{\mathfrak{p}}}\Big)
        \quad \forall\, (s,t)\in\Delta_I
    \end{equation}
    where the estimate now does not depend on $\mathfrak{p}$.
\end{lemma}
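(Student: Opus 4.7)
The plan is to proceed in three stages: construct a coarse skeleton partition of $[s,t]$ adapted to the threshold $h$, use it to prove \eqref{eq:local-to-global-conclusion}, then refine the argument in the increment case to obtain the sharper bound \eqref{eq:local-to-global-conclusion-2}.

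First, since $w$ is a continuous, superadditive control with $w(s,s)=0$, I would construct inductively a partition $s = s_0 < s_1 < \cdots < s_N = t$ with $w(s_i, s_{i+1}) \leq h$ for all $i$ and $N \leq 1 + w(s,t)/h$, greedily taking $s_{i+1}$ to be the supremum of $u \in (s_i, t]$ such that $w(s_i, u) \leq h$ (superadditivity forces $Nh \leq w(s,t)$ when counting the complete blocks). Given this skeleton, for any partition $\pi=\{t_j\}$ of $[s,t]$, I would split the indices $j$ into a "good" set $G$ for which $[t_j,t_{j+1}]\subseteq [s_i,s_{i+1}]$ for some $i$, and a "bad" set $B$ of intervals straddling at least one breakpoint $s_i$; one has $|B|\leq N-1\leq w(s,t)/h$. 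On good intervals $w(t_j,t_{j+1})\leq h$, so the hypothesis gives $\|g_{t_jt_{j+1}}\|_E^{\mathfrak{p}}\leq C^{\mathfrak{p}}\tilde w(t_j,t_{j+1})$, and superadditivity of $\tilde w$ bounds the sum over $G$ by $C^{\mathfrak{p}}\tilde w(s,t)$. The bad intervals contribute at most $|B|\cdot \|g\|_{C(\Delta_{[s,t]};E)}^{\mathfrak{p}}$. Taking the $\mathfrak{p}$-th root and using $(a+b)^{1/\mathfrak{p}}\lesssim_\mathfrak{p} a^{1/\mathfrak{p}}+b^{1/\mathfrak{p}}$ yields \eqref{eq:local-to-global-conclusion}.

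For the increment case with $\mathfrak{p}\geq 1$, I would avoid the crude sup-bound on bad intervals by applying the same skeleton construction \emph{to each} $[t_j,t_{j+1}]$: produce a sub-partition $t_j = u_0^{(j)} < \cdots < u_{M_j}^{(j)}= t_{j+1}$ with $w(u_i^{(j)}, u_{i+1}^{(j)}) \leq h$ and $M_j \leq 1 + w(t_j,t_{j+1})/h$. The triangle inequality together with the local bound and the discrete power-mean inequality (valid since $\mathfrak{p}\geq 1$) give
\[
\|x_{t_{j+1}}-x_{t_j}\|_E
\leq C\sum_i \tilde w(u_i^{(j)},u_{i+1}^{(j)})^{1/\mathfrak{p}}
\leq C\, M_j^{1-1/\mathfrak{p}}\, \tilde w(t_j,t_{j+1})^{1/\mathfrak{p}},
\]
where superadditivity of $\tilde w$ enters in the last step. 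Raising to power $\mathfrak{p}$ and using the elementary bound $(1+a)^{\mathfrak{p}-1}\leq 2^{\mathfrak{p}-1}(1+a^{\mathfrak{p}-1})$ together with the trivial $w(t_j,t_{j+1})\leq w(s,t)$ gives
\[
\|x_{t_{j+1}}-x_{t_j}\|_E^{\mathfrak{p}} \leq 2^{\mathfrak{p}-1} C^{\mathfrak{p}} \bigl(1 + h^{1-\mathfrak{p}} w(s,t)^{\mathfrak{p}-1}\bigr)\, \tilde w(t_j,t_{j+1}).
\]
Summing over $j$ via superadditivity of $\tilde w$ and extracting the $\mathfrak{p}$-th root, using $(a+b)^{1/\mathfrak{p}}\leq a^{1/\mathfrak{p}}+b^{1/\mathfrak{p}}$ and $2^{1-1/\mathfrak{p}}\leq 2$, produces \eqref{eq:local-to-global-conclusion-2}.

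The main subtlety to watch is the sharpness of the $w(s,t)$-exponent: the separated good/bad decomposition of the first step would only yield a linear factor $w(s,t)/h$, whereas the sharp bound has $(w(s,t)/h)^{1-1/\mathfrak{p}}$. The essential insight is that in the increment case one should not separate partition intervals into good and bad, but instead refine \emph{all} of them uniformly, so that the power-mean inequality acts at the refinement level and interpolates between the two controls $\tilde w$ and $w$, recovering the correct exponent without any $\mathfrak{p}$-dependent loss in the prefactor.
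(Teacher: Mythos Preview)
Your argument is correct and close in spirit to the paper's, though the paper streamlines both halves. For \eqref{eq:local-to-global-conclusion}, the paper avoids the skeleton entirely: given any partition $\pi=\{t_i\}$ of $[s,t]$, it sets $\mathcal{I}=\{i:w(t_i,t_{i+1})\leq h\}$ directly, and superadditivity of $w$ immediately forces $|\mathcal{I}^c|\leq w(s,t)/h$, yielding the same two-term bound without an auxiliary partition. For \eqref{eq:local-to-global-conclusion-2}, the paper builds the greedy partition only once, on $[s,t]$ itself, obtaining the pointwise estimate $\|\delta x_{st}\|_E\leq 2C\,\tilde w(s,t)^{1/\mathfrak{p}}\bigl(1+(w(s,t)/h)^{1-1/\mathfrak{p}}\bigr)$; it then observes that the right-hand side is of the form $c(s,t)^{1/\mathfrak{p}}$ for a control $c$ (since $\tilde w$ multiplied by an increasing function of $w$ is still a control, cf.\ Remark~\ref{rem:properties-controls}) and invokes the optimality of the $\mathfrak{p}$-variation control from Remark~\ref{rem:relation-control-variation} to conclude. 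Your route of refining each $[t_j,t_{j+1}]$ separately and summing over the partition reaches the same constant by explicit computation; this is a bit longer but entirely self-contained, as it does not appeal to the control-optimality property.
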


\begin{proof}
    Up to relabelling the controls $w$, $\tilde w$, we can assume $C=h=1$ in both cases.
    
    Fix $(s,t)\in\Delta_I$ and consider any finite partition $\pi=\{t_i\}_{i=0}^N$ of $[s,t]$. Correspondingly, define $\mathcal{I}:=\{i\in \{0,\ldots, N-1\} : w(t_i,t_{i+1})\leq 1\}$; by superadditivity, $\sum_i w(t_i,t_{i+1}) \leq w(s,t)$, therefore the complement $\mathcal{I}^c=\{0,\ldots, N-1\}\setminus \mathcal{I}$ can have at most cardinality $w(s,t)$.
    In order to estimate the $\mathfrak{p}$-variation of $g$ along the partition $\pi$, we split the sum and treat differently $i\in\mathcal{I}$ and $i\in\mathcal{I}^c$:
    \begin{align*}
        \sum_{i} \| g_{t_i t_{i+1}} \|_E^\mathfrak{p}
        & \leq \sum_{i\in \mathcal{I}} \| g_{t_i t_{i+1}} \|_E^\mathfrak{p} + \sum_{i\in \mathcal{I}^c} \| g_{t_i t_{i+1}} \|_E^\mathfrak{p}\\
        & \leq \sum_{i\in \mathcal{I}} \tilde w(t_i,t_{i+1})  + {\rm Card} (\mathcal{I}^c)\, \| g\|_{C(\Delta_{[s,t]};E)}^\mathfrak{p}\\
        & \leq \tilde w(s,t) + w(s,t) \| g\|_{C(\Delta_{[s,t]};E)}^\mathfrak{p}
    \end{align*}
    where we first used the assumption \eqref{eq:local-to-global-assumption}, then superadditivity of $\tilde w$ and finally the above observation on ${\rm Card} (\mathcal{I}^c)$. Taking supremum over all possible partitions and elevating both sides of the estimate to $1/\fkp$ readily yields \eqref{eq:local-to-global-conclusion}.

    Concerning the claim \eqref{eq:local-to-global-conclusion-2}, we proceed as in \cite[Proposition 5.10(ii)]{FV2010}.
    Fix any $[s,t]\subset I$; we only need to consider the case $w(s,t)>1$, otherwise there is nothing to prove (since \eqref{eq:local-to-global-conclusion-2} is implied by \eqref{eq:local-to-global-assumption}).
    Define $t_0=s$ and $t_{i+1}=\inf \{u>t_i\,:  w(t_i,u)=1\}\wedge t$, then by superadditivity it holds $t_n=t$ for $n\geq w(s,t)$ and the resulting partition contains at most $N=1+w(s,t)\leq 2 w(s,t)$ subintervals. It holds
    \begin{align*}
        \| \delta x_{st}\|_E
        & \leq \sum_{i=0}^{N-1} \| \delta x_{t_i,t_{i+1}}\|_E
        \leq \sum_{i=0}^{N-1} \tilde w(t_i,t_{i+1})^{\frac{1}{\mathfrak{p}}}\\
        & \leq N^{1-\frac{1}{\mathfrak{p}}} \Big( \sum_{i=0}^{N-1} \tilde w(t_i,t_{i+1}) \Big)^{\frac{1}{\mathfrak{p}}}
        \leq 2\, w(s,t)^{1-\frac{1}{\mathfrak{p}}}\, \tilde w(s,t)^{\frac{1}{\mathfrak{p}}}
    \end{align*}
    where we used the previous observation on $N$, superadditivity of $\tilde w$ and H\"older's inequality.
    Combined with the assumption \eqref{eq:local-to-global-assumption}, this overall shows that for any $(s,t)\in\Delta_I$ it holds
    \begin{align*}
        \| \delta x_{st}\|_E \leq 2 \tilde w(s,t)^{\frac{1}{\mathfrak{p}}} \big(1 + w(s,t)^{\frac{1}{\mathfrak{p}}} \big)
    \end{align*}
    which readily yields the conclusion in view of Remark \ref{rem:relation-control-variation}.
\end{proof}

Another basic tool in a priori estimates is the so called rough Gr\"onwall lemma.
Recall that, as in Remark \ref{rem:properties-controls}, a function $\gamma:\Delta_T \to [0,+\infty)$ is increasing if $\gamma(s,t)\leq \gamma(s',t')$ whenever $[s,t]\subset [s',t']$.

\begin{lemma}[Lemma A.2 from \cite{HLN2021}]\label{lem:rough-gronwall}
Let $G:[0,T]\to \R_{\geq 0}$ be a path such that for some constants $C$, $L>0$, $\mathfrak{p}\in [1,\infty)$, some control $w$ and some increasing function $\gamma$ on $\Delta_T$, one has
\begin{equation*}
G_t-G_s \leq C \, \sup_{r\leq t}\, G_r \, w(s,t)^{\frac{1}{\mathfrak{p}}} + \gamma(s,t)
\end{equation*}
for every $s<t$ satisfying $w(s,t)\leq L$. Then it holds
\begin{equation*}
\sup_{t\in [0,T]} G_t \leq 2 \exp\bigg( \frac{w(0,T)}{\lambda L}\bigg) \Big( G_0 + \gamma(0,T) \Big)
\end{equation*}
for $\lambda := 1 \wedge [L (2C e^2)^\mathfrak{p}]^{-1}$.
\end{lemma}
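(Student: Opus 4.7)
The idea is a discrete Grönwall-type iteration on a partition of $[0,T]$ adapted to the control $w$, where on each block the assumed local estimate can be turned into a clean multiplicative recursion.

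First I would fix the partition: set $t_0 = 0$ and, recursively, $t_{i+1} := \inf\{u > t_i : w(t_i, u) \geq \lambda L\} \wedge T$. By continuity of $w$, one has $w(t_i, t_{i+1}) \leq \lambda L \leq L$, so the standing hypothesis applies on each $[t_i, t_{i+1}]$. The number $N$ of blocks is controlled by superadditivity of $w$:
\begin{equation*}
(N-1)\, \lambda L \;\leq\; \sum_{i=0}^{N-2} w(t_i, t_{i+1}) \;\leq\; w(0,T),
\end{equation*}
so $N \leq w(0,T)/(\lambda L) + 1$.

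Next, write $H_i := \sup_{r \in [0, t_i]} G_r$, and apply the hypothesis on $[t_i, t]$ for any $t \in [t_i, t_{i+1}]$:
\begin{equation*}
G_t \leq G_{t_i} + C\, w(t_i, t)^{1/\mathfrak{p}} \sup_{r\leq t} G_r + \gamma(t_i, t).
\end{equation*}
Taking the supremum over $t \in [t_i, t_{i+1}]$ and using that $\sup_{r\leq t_{i+1}} G_r \leq H_{i+1}$, one obtains
\begin{equation*}
\sup_{t \in [t_i, t_{i+1}]} G_t \;\leq\; H_i + C(\lambda L)^{1/\mathfrak{p}}\, H_{i+1} + \gamma(0,T).
\end{equation*}
Since $H_{i+1} = \max(H_i, \sup_{t \in [t_i, t_{i+1}]} G_t)$, the same bound holds with $H_{i+1}$ on the left. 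Now the crucial point: by the definition $\lambda = 1 \wedge [L(2Ce^2)^{\mathfrak{p}}]^{-1}$, we have $C(\lambda L)^{1/\mathfrak{p}} \leq 1/(2e^2)$, which is strictly less than $1$. Reabsorbing the $H_{i+1}$ term on the left yields
\begin{equation*}
H_{i+1} \;\leq\; \alpha\, \big( H_i + \gamma(0,T) \big), \qquad \alpha := \frac{1}{1 - 1/(2e^2)}.
\end{equation*}

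Iterating this recursion from $H_0 = G_0$ gives $H_N \leq \alpha^N G_0 + \gamma(0,T)\sum_{j=1}^{N} \alpha^j$, and then summing the geometric series and absorbing the $\alpha/(\alpha-1)$ factor into the prefactor yields $H_N \leq \alpha^N \cdot 2e^2\,(G_0 + \gamma(0,T))$ (using $\alpha/(\alpha-1) = 2e^2$). Plugging in the estimate $N \leq w(0,T)/(\lambda L) + 1$ together with the elementary bound $\alpha \leq \exp(1/(2e^2 - 1))$ converts the geometric growth into the desired exponential growth in $w(0,T)/(\lambda L)$; the remaining numerical constants (powers of $\alpha$ at the endpoint index, the leading $2e^2$) can be absorbed into the factor $2 \exp(w(0,T)/(\lambda L))$ thanks to the very pessimistic choice of $e^2$ inside $\lambda$.

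The main technical point will be matching the exact constants promised in the statement — namely justifying that the various $O(1)$ losses from the reabsorption, the geometric-series tail, and the ceiling in $N$ all fit under the clean prefactor $2$. The flexibility comes precisely from the somewhat generous choice $\lambda \leq [L(2Ce^2)^{\mathfrak{p}}]^{-1}$, which makes $\alpha$ so close to $1$ that $\alpha^N$ loses essentially nothing compared to $\exp(w(0,T)/(\lambda L))$. Everything else is routine arithmetic.
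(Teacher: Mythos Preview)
The paper does not prove this lemma; it is simply quoted from \cite{HLN2021} (Lemma A.2 there) without proof. Your approach---partition adapted to the control $w$, reabsorb on each block using the smallness $C(\lambda L)^{1/\mathfrak p}\le 1/(2e^2)$, then iterate the recursion $H_{i+1}\le\alpha(H_i+\gamma(0,T))$---is exactly the standard argument used in \cite{HLN2021} and earlier variants such as \cite[Lemma~2.12]{DGHT2019}, so there is nothing to contrast.

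One small remark on your final paragraph: the step ``$H_N\le \alpha^N\cdot 2e^2(G_0+\gamma)$ followed by absorbing $2e^2$ into $2\exp(w(0,T)/(\lambda L))$'' does not quite work when $w(0,T)/(\lambda L)$ is small (e.g.\ $N=1$ forces $2e^2\alpha\le 2$, which is false). The fix is simply to bound the geometric tail by $\sum_{j=1}^N\alpha^j\le N\alpha^N$ instead of $2e^2\alpha^N$, and then use $N\le w(0,T)/(\lambda L)+1$ together with $\ln\alpha\le 1/(2e^2-1)\ll 1$ to check directly that $N\alpha^N\le 2\exp(N-1)\le 2\exp(w(0,T)/(\lambda L))$ for all $N\ge 1$. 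This is the ``routine arithmetic'' you allude to, and it does go through.
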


\section{Compactness criteria}\label{app:compactness}

We collect here some useful compactness Ascoli--Arzelà type results, taylored to uniform convergence in weak topologies (cf. Definition \ref{defn:uniform_weak_convergence}).

To this end, recall that $\langle \cdot,\cdot\rangle$ denotes both the duality pairing between test functions $C^\infty_c$ and distributions $\cD'$, and between $L^p_x$ and its dual $L^{p'}_x$.
For any $p\in [1,\infty)$, we employ the notation $C_w([0,T];L^p_x)$ in agreement with Definition \ref{defn:uniform_weak_convergence}, similarly for $C_{w-\ast}([0,T];L^\infty_x)$.

The next statement is taken from \cite{crippa2021elementary}, although we conveniently rephrase it in terms of Definition \ref{defn:uniform_weak_convergence}, so that it can be interpreted as a compactness criterion for convergence in $C_w([0,T];L^1)$.

\begin{proposition}[Theorem A.1 from \cite{crippa2021elementary}]\label{prop:compactness-l1}
    Let $T\in (0,+\infty)$, $\{f^n\}_n$ be a sequence in $\mathcal{B}_b([0,T];L^1_x)$ which is bounded and equi-integrable in space, uniformly in time, namely such that:
    \begin{itemize}
        \item[i.] {\rm (uniform boundedness)}  $\sup_{n\in\N, t\in [0,T]} \| f^n_t\|_{L^1}<\infty$.
        \item[ii.] {\rm (local equi-integrability)} For any $\eps>0$ there exists $\delta_\eps>0$ such that
        \begin{align*}
            \mathscr{L}^d(A)<\delta_\eps\quad \Rightarrow \quad \sup_{n\in\N, t\in [0,T]} \int_A |f^n_t(x)| \dd x <\eps;
        \end{align*}  
        \item[iii.] {\rm (tightness)} For any $\eps>0$ there exists $\Omega_\eps\subset\R^d$ such that
        \begin{align*}
            \mathscr{L}^d(\Omega_\eps)<\infty \quad \text{and} \quad \sup_{n\in\N, t\in [0,T]} \int_{\R^d\setminus \Omega_\eps} |f^n_t(x)| \dd x <\eps.
        \end{align*}
    \end{itemize}
    Further assume the following:
    \begin{itemize}
        \item[iv.] {\rm (weak equicontinuity)} For every $\varphi\in C^\infty_c$, the functions $\{t\mapsto \langle f^n_t, \varphi \rangle\}_n$ are uniformly equicontinuous on $[0,T]$; namely, for any $\eps>0$ there exists $\delta=\delta(\varphi,\eps)>0$ such that
        \begin{align*}
            s, t\in [0,T],\, |t-s|<\delta \quad \Rightarrow\quad \sup_{n\in\N} |\langle f^n_t-f^n_s,\varphi\rangle|<\eps.
        \end{align*}
    \end{itemize}
    Then there exists a subsequence $\{f^{n_k}\}_k$ and a function $f$ such that $f^{n_k}\to f$ in $C_w([0,T];L^1_x)$.
\end{proposition}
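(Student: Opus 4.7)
The plan is to follow a classical Ascoli--Arzelà scheme, adapted to the weak $L^1$ topology by exploiting the Dunford--Pettis theorem. The starting observation is that, thanks to (i)--(ii)--(iii), for every fixed $t\in[0,T]$ the family $\{f^n_t\}_n$ is relatively weakly compact in $L^1_x$ by Dunford--Pettis; more importantly, the whole family $\{f^n_t: n\in\N, t\in[0,T]\}$ is relatively weakly compact in $L^1_x$. On such a weakly relatively compact set $K\subset L^1_x$, the weak topology is metrizable, and moreover it is determined by testing against a countable family of test functions drawn from $C^\infty_c$ (which is dense in $L^\infty_x$ against $L^1$-tight equi-integrable sequences).

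The core step is then the following. Fix a countable dense subset $\{\varphi_j\}_{j\in\N}\subset C^\infty_c$ (w.r.t. $\|\cdot\|_{L^\infty_x}$ on any fixed ball) and consider, for each $j$, the sequence of real-valued functions $g^n_j: t\mapsto \langle f^n_t,\varphi_j\rangle$. By (i) they are uniformly bounded, and by (iv) they are uniformly equicontinuous on $[0,T]$. A standard Ascoli--Arzelà argument combined with a Cantor diagonal extraction yields a subsequence $\{f^{n_k}\}_k$ and continuous functions $h_j:[0,T]\to\R$ such that $g^{n_k}_j\to h_j$ uniformly on $[0,T]$, for every $j\in\N$.

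Next I would identify the limit $f$. For each fixed $t\in [0,T]$, the family $\{f^{n_k}_t\}_k$ is relatively weakly compact in $L^1_x$ (by Dunford--Pettis via (i)--(ii)--(iii)), hence admits weakly converging sub-subsequences; the convergence of $\langle f^{n_k}_t,\varphi_j\rangle$ for every $j$ forces all such cluster points to coincide, so in fact $f^{n_k}_t\rightharpoonup f_t$ in $L^1_x$ for some $f_t\in L^1_x$ with $\langle f_t,\varphi_j\rangle=h_j(t)$. By weak lower semicontinuity, the limit $f$ inherits (i)--(ii)--(iii).

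The final, most delicate step is to upgrade this pointwise weak convergence to uniform convergence in the sense of Definition \ref{defn:uniform_weak_convergence}, that is, to show
\[
\lim_{k\to\infty}\sup_{t\in[0,T]} |\langle f^{n_k}_t-f_t,g\rangle|=0\qquad\forall\, g\in L^\infty_x.
\]
This is where the equi-integrability (ii) and tightness (iii) play their decisive role: given $\eps>0$ and $g\in L^\infty_x$, one first uses (iii) to restrict the integral to a set $\Omega_\eps$ of finite measure up to an error $\eps\|g\|_{L^\infty_x}$, then approximates $g\mathbf{1}_{\Omega_\eps}$ in $L^\infty_x$-norm by some $\varphi_j$ on the support, controlling the remainder uniformly in $(n,t)$ via (ii). The error coming from $\varphi_j$ is then handled by the uniform convergence $g^{n_k}_j\to h_j$ established above. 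The main obstacle is precisely this last upgrade from $C^\infty_c$ test functions to arbitrary $g\in L^\infty_x$ \emph{uniformly} in $t$; it is the point where all three quantitative conditions (i)--(ii)--(iii) enter simultaneously, and where the usual pitfall of confusing pointwise-in-$t$ weak convergence with uniform weak convergence has to be avoided.
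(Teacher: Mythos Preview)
The paper does not give its own proof of this proposition: it is quoted verbatim as \cite[Theorem~A.1]{crippa2021elementary}. Your approach is exactly the natural one and mirrors the proof the paper \emph{does} give for the reflexive case $p\in(1,\infty]$ in Proposition~\ref{prop:compactness-lp} (countable test family, Ascoli--Arzel\`a plus diagonal extraction, identification of the pointwise weak limit via Dunford--Pettis, and then an upgrade to arbitrary dual elements).

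There is, however, one imprecision in your final step that is worth flagging, because it is precisely the point where $p=1$ differs from $p>1$. You write that one ``approximates $g\mathbf{1}_{\Omega_\eps}$ in $L^\infty_x$-norm by some $\varphi_j$''. This is not possible in general: $C^\infty_c$ is \emph{not} dense in $L^\infty_x$, so a generic $g\in L^\infty_x$ (say an indicator function) cannot be uniformly approximated by smooth functions. The correct mechanism is Lusin's theorem: given $g\mathbf{1}_{\Omega_\eps}$ on a set of finite measure, find $\psi\in C_c$ with $\|\psi\|_{L^\infty_x}\le \|g\|_{L^\infty_x}$ and $\mathscr{L}^d(\{g\mathbf{1}_{\Omega_\eps}\neq\psi\})<\delta_\eps$. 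Then
\[
|\langle f^{n_k}_t-f_t,\, g\mathbf{1}_{\Omega_\eps}-\psi\rangle|
\le 2\|g\|_{L^\infty_x}\sup_{n,t}\int_{\{g\mathbf{1}_{\Omega_\eps}\neq\psi\}}|f^{n}_t|\,\dd x
< 2\|g\|_{L^\infty_x}\,\eps,
\]
uniformly in $(k,t)$, by the local equi-integrability (ii). One then approximates $\psi\in C_c$ by some $\varphi_j\in C^\infty_c$ in genuine sup-norm (mollification), and closes with uniform boundedness (i). You correctly signal that (ii) is needed to control the remainder, but the approximation you invoke is the wrong one; with the Lusin step in place, your argument is complete and matches the cited reference.
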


Under weaker assumptions, we can still deduce ``uniform weak convergence in $L^1_\loc$''.

\begin{definition}
    We say that $f^n\to f$ in $C_w([0,T];L^1_\loc)$ if, for every $\varphi\in C^\infty_c$, $f^n \varphi\to f\varphi$ in $C_w([0,T];L^1)$.
\end{definition}

\begin{corollary}\label{cor:compactness-l1-loc}
    Let $T\in (0,+\infty)$, $\{f^n\}_n$ be a sequence in $\mathcal{B}_b([0,T];L^1_x)$ which is bounded, locally equi-integrable and weakly equicontinuous, namely satisfying properties \textit{i.}, \textit{ii.} and \textit{iv.} from Proposition \ref{prop:compactness-l1}. Then there exists a subsequence $\{f^{n_k}\}_k$ and a function $f\in \mathcal{B}_b([0,T];L^1_x)$ such that $f^{n_k}\to f$ in $C_w([0,T];L^1_\loc)$. Moreover it holds
    \begin{equation}\label{eq:lsc_weak_l1loc}
        \| f\|_{\mathcal{B}_b([0,T];L^1_x)} = \sup_{t\in [0,T]}\| f_t\|_{L^1}
        \leq \liminf_{n\to\infty} \sup_{t\in [0,T]} \| f^n_t\|_{L^1} = \liminf_{n\to\infty} \| f^n\|_{\mathcal{B}_b([0,T];L^1_x)}.
    \end{equation}
\end{corollary}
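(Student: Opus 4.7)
The strategy is to localize with smooth cutoffs and apply Proposition \ref{prop:compactness-l1} on each localized sequence, then glue the limits together. For each $R \in \N$, I would fix $\chi_R \in C^\infty_c(\R^d)$ with $\chi_R \equiv 1$ on $B_R$ and $\supp \chi_R \subset B_{R+1}$, and consider the localized sequence $\{\chi_R f^n\}_n \subset \mathcal{B}_b([0,T]; L^1_x)$. The pointwise bound $|\chi_R f^n_t| \leq |f^n_t|$ transfers the uniform boundedness \emph{i.} and local equi-integrability \emph{ii.} from $\{f^n\}_n$ to $\{\chi_R f^n\}_n$. Tightness \emph{iii.} is automatic since $\supp(\chi_R f^n_t) \subset B_{R+1}$ has finite Lebesgue measure. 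The weak equicontinuity \emph{iv.} also transfers: for any $\varphi \in C^\infty_c$ we have $\langle \chi_R f^n_t, \varphi\rangle = \langle f^n_t, \chi_R \varphi\rangle$ with $\chi_R \varphi \in C^\infty_c$. Thus Proposition \ref{prop:compactness-l1} applies and yields a subsequence along which $\chi_R f^n \to g^R$ in $C_w([0,T]; L^1_x)$ for some limit $g^R$.

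A standard Cantor diagonal extraction then produces a single subsequence $\{f^{n_k}\}_k$ such that $\chi_R f^{n_k} \to g^R$ in $C_w([0,T]; L^1_x)$ for every $R \in \N$. Consistency of the limits follows from the elementary observation that for $R_1 < R_2$ and any $\psi \in L^\infty_x$ supported in $B_{R_1}$ we have $\chi_{R_1} \psi = \chi_{R_2} \psi = \psi$, so
\begin{equation*}
    \langle g^{R_1}_t, \psi\rangle = \lim_k \langle f^{n_k}_t, \psi\rangle = \langle g^{R_2}_t, \psi\rangle \qquad \forall\, t \in [0,T].
\end{equation*}
This shows $g^{R_1}_t = g^{R_2}_t$ as elements of $L^1(B_{R_1})$, and I may unambiguously define $f_t := g^R_t$ on $B_R$ for each $R$; the resulting function belongs to $\mathcal{B}_b([0,T]; L^1_\loc)$.

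To conclude convergence in $C_w([0,T]; L^1_\loc)$, I would fix any $\varphi \in C^\infty_c$ and choose $R \in \N$ with $\supp \varphi \subset B_R$; then $\varphi f^{n_k} = \varphi \chi_R f^{n_k}$ and, for every $\psi \in L^\infty_x$,
\begin{equation*}
    \sup_{t \in [0,T]} |\langle (f^{n_k}_t - f_t)\varphi, \psi\rangle| = \sup_{t \in [0,T]} |\langle \chi_R f^{n_k}_t - g^R_t, \varphi\psi\rangle| \xrightarrow{k\to\infty} 0
\end{equation*}
since $\varphi\psi \in L^\infty_x$, proving $f^{n_k}\varphi \to f\varphi$ in $C_w([0,T]; L^1_x)$. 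Finally, the bound \eqref{eq:lsc_weak_l1loc} follows from lower semicontinuity of $\|\cdot\|_{L^1_x}$ under pointwise-in-time weak convergence: for each $t$, $\|g^R_t\|_{L^1_x} \leq \liminf_k \|\chi_R f^{n_k}_t\|_{L^1_x} \leq \liminf_k \|f^{n_k}_t\|_{L^1_x}$. Combined with $\int_{B_R} |f_t|\rmdx \leq \|g^R_t\|_{L^1_x}$ and the elementary inequality $\sup_t \liminf_k \leq \liminf_k \sup_t$, this yields $\sup_t \int_{B_R} |f_t|\rmdx \leq \liminf_n \|f^n\|_{\mathcal{B}_b([0,T]; L^1_x)}$; sending $R \to \infty$ by monotone convergence (both in $t$ fixed and in the outer supremum, since the left-hand side is non-decreasing in $R$) gives the claim. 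No step presents a serious obstacle; the most delicate point is the compatibility of the local limits $g^R$, which is resolved by the explicit duality argument above.
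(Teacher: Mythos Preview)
Your proof is correct and follows essentially the same route as the paper's: localize with smooth cutoffs, apply Proposition~\ref{prop:compactness-l1} to each localized sequence, run a Cantor diagonal, verify consistency of the local limits, and conclude the lower-semicontinuity bound via weak lower semicontinuity of the $L^1$-norm. The only point the paper handles a bit more explicitly is the Borel measurability of the limit $f$ as a map into $L^1_x$ (via the fact that $C^\infty_c$ is norming for $L^1_x$ and hence generates its Borel $\sigma$-algebra), which you leave implicit when asserting $f\in\mathcal{B}_b([0,T];L^1_\loc)$.
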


\begin{proof}
    First notice that, by definition of convergence in $C_w([0,T];L^1_x)$, if $g^n\to g$ in $C_w([0,T];L^1_x)$, then also $g^n \varphi\to g \varphi$ in $C_w([0,T];L^1_x)$, for any $\varphi\in L^\infty_x$.
    
    Consider a sequence $\{\psi^R\}_{R\in\N}\subset C^\infty_c$ of nonnegative functions such that $\psi^R\equiv 1$ on $B_R$, $\psi^R\equiv 0$ on $B_{R+1}^c$ and $\psi^R \psi^{R+1}=\psi^R$.
    By assumption, for any fixed $R$, $\{f^n \psi^R\}_n$ satisfies the assumptions of Proposition \ref{prop:compactness-l1}, therefore we can extract a (not relabelled) subsequence so that $f^n \psi^R\to g^R$ for some $g^R\in C_w([0,T];L^1_x)$. By the initial observations, these limits are consistent, in the sense that $g^{R+1} \psi^R= g^R$. By a Cantor diagonal argument, we can deduce the existence of a (n.r.) subsequence and a function $f:[0,T]\to L^1_\loc$ such that $f^n \psi^R\to f \psi^R$ in $C_w([0,T];L^1_x)$ for all $R\in \N$.
    Again by the initial remark, since for any given $\varphi\in C^\infty_c$ we can find $R$ large enough such that $\psi^R \varphi = \varphi$, we deduce that $f^n \varphi\to f \varphi$ for all $\varphi\in C^\infty_c$.

    It remains to show that $f\in \mathcal{B}_b([0,T];L^1_x)$ and estimate \eqref{eq:lsc_weak_l1loc}. To this end, recall that $C^\infty_c$ is a norming subspace for $L^1_x$, in the sense that for any $g\in L^1_\loc$ it holds that
    \begin{equation}\label{eq:norming_l1}
        \| g\|_{L^1_x} = \sup\Big\{|\langle g, \varphi\rangle|: \varphi\in C^\infty_c \text{ such that } \| \varphi\|_{L^\infty_x} \leq 1\Big\}.
    \end{equation}
    It follows that the Borel $\sigma$-algebra of $L^1_x$ is generated by the functions $g\mapsto \langle g,\varphi\rangle$ ranging over $\varphi\in C^\infty_c$. Since by construction $f$ is such that $t\mapsto \langle f_t,\varphi\rangle$ is continuous for any $\varphi\in C^\infty_c$, we deduce that $f$ is Borel measurable.
    Concerning estimate \eqref{eq:lsc_weak_l1loc}, by virtue of \eqref{eq:norming_l1}, for any fixed $t\in [0,T]$ and any $\varphi\in C^\infty_c$ with $\| \varphi\|_{L^\infty_x} \leq 1$ it holds
    \begin{align*}
        |\langle f_t, \varphi \rangle|
        = \lim_{n\to\infty} |\langle f^n_t, \varphi \rangle|
        \leq \liminf_{n\to\infty}  \|f^n_t\|_{L^1_x}
    \end{align*}
    Taking the supremum over such $\varphi$ thus yields
    \begin{align*}
        \| f_t\|_{L^1_x}
        \leq \liminf_{n\to\infty}  \|f^n_t\|_{L^1_x}
        \leq \liminf_{n\to\infty}  \sup_{t\in [0,T]} \|f^n_t\|_{L^1_x}
    \end{align*}
    from which the conclusion follows upon taking another supremum over $t\in [0,T]$.
\end{proof}

We haven't found a reference in the literature for the next result, which is an analogue of Proposition \ref{prop:compactness-l1} for $L^p_x$-spaces; therefore we give a short proof.

\begin{proposition}\label{prop:compactness-lp}
    Let $p\in (1,\infty)$, $T\in (0,+\infty)$, $\{f^n\}_n$ be a bounded sequence in $\mathcal{B}_b([0,T];L^p_x)$; further assume that, for each $\varphi\in C^\infty_c$, the functions $\{t\mapsto \langle f^n_t, \varphi\rangle\}_n$ are equicontinuous on $[0,T]$.
    Then $\{f^n\}_n\subset C_w([0,T];L^p_x)$ and there exists a subsequence $\{f^{n_k}\}_k$ such that $f^{n_k}\to f$ in $C_w([0,T];L^p_x)$.
    
    In the case $p=\infty$, under the same assumptions, all the conclusion of the statement still apply, with the only differences that $\{f^n\}_n\subset C_{w-\ast}([0,T];L^\infty_x)$ and $f^{n_k}\to f$ in $C_{w-\ast}([0,T];L^\infty_x)$.
\end{proposition}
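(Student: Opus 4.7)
The strategy is a classical Ascoli--Arzelà argument, adapted to weak (resp. weak\mbox{-}$\ast$) topologies via separability of a predual and a diagonal extraction.

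First I would verify that $f^n\in C_w([0,T];L^p_x)$ (resp. $C_{w-\ast}([0,T];L^\infty_x)$). By hypothesis, $t\mapsto \langle f^n_t,\varphi\rangle$ is continuous for every $\varphi\in C^\infty_c$; combining this with the uniform bound $R:=\sup_n \|f^n\|_{\mathcal{B}_b([0,T];L^p_x)}<\infty$ and the density of $C^\infty_c$ in $L^{p'}_x$ for $p\in (1,\infty)$ (resp. in $L^1_x$ for $p=\infty$), a standard $3\eps$-argument upgrades continuity of $\langle f^n_\cdot,\varphi\rangle$ to all $\varphi\in L^{p'}_x$ (resp. $\varphi\in L^1_x$).

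Next, I would fix a countable dense subset $\{\varphi_k\}_{k\in\N}\subset C^\infty_c$ of $L^{p'}_x$ for $p\in (1,\infty)$ (resp. of $L^1_x$ for $p=\infty$); this is possible since $C^\infty_c$ is dense in both spaces. For each fixed $k$, by hypothesis $\{t\mapsto \langle f^n_t,\varphi_k\rangle\}_n$ is equicontinuous on $[0,T]$, and it is uniformly bounded since
\begin{equation*}
    \sup_{n,t} |\langle f^n_t,\varphi_k\rangle| \leq R\,\|\varphi_k\|_{L^{p'}_x}.
\end{equation*}
The classical Ascoli--Arzelà theorem on $C([0,T])$, together with a Cantor diagonal extraction, yields a subsequence (not relabelled) $\{f^{n_j}\}_j$ and continuous scalar functions $g_k\in C([0,T])$ such that
\begin{equation*}
    \lim_{j\to\infty}\sup_{t\in [0,T]} |\langle f^{n_j}_t,\varphi_k\rangle - g_k(t)|=0 \qquad \forall\,k\in\N.
\end{equation*}

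I would then identify the limit. For each fixed $t\in [0,T]$, the sequence $\{f^{n_j}_t\}_j$ is bounded in $L^p_x$; for $p\in (1,\infty)$ reflexivity yields weak compactness, while for $p=\infty$ the Banach--Alaoglu theorem yields weak\mbox{-}$\ast$ compactness. By density of $\{\varphi_k\}$ and the fact that the scalar limits $g_k(t)$ uniquely determine any weak/weak\mbox{-}$\ast$ cluster point, the entire sequence $\{f^{n_j}_t\}_j$ has a unique limit, which we denote $f_t\in L^p_x$; moreover $\|f_t\|_{L^p_x}\leq R$ by lower semicontinuity of the norm, and $\langle f_t,\varphi_k\rangle = g_k(t)$ for every $k$.

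Finally, I would show that the convergence is uniform in the weak topology. Given any $\varphi\in L^{p'}_x$ (resp. $\varphi\in L^1_x$) and $\eps>0$, pick $\varphi_k$ from the dense family with $\|\varphi-\varphi_k\|_{L^{p'}_x}<\eps$; then
\begin{equation*}
    \sup_{t\in [0,T]}|\langle f^{n_j}_t - f_t,\varphi\rangle|
    \leq \sup_{t\in [0,T]}|\langle f^{n_j}_t - f_t,\varphi_k\rangle| + 2R\,\eps,
\end{equation*}
and the first term vanishes as $j\to\infty$. Since $\eps>0$ is arbitrary, this yields $f^{n_j}\to f$ in $C_w([0,T];L^p_x)$ (resp. in $C_{w-\ast}([0,T];L^\infty_x)$), once continuity of $t\mapsto\langle f_t,\varphi\rangle$ is checked—which follows because $f$ is a uniform limit of continuous scalar maps $\langle f^{n_j}_\cdot,\varphi\rangle$.

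There is no serious technical obstacle; the only point requiring care is the case $p=\infty$, where non-reflexivity forces one to work with weak\mbox{-}$\ast$ convergence and test against $L^1_x$ rather than $(L^\infty_x)^\ast$, but separability of $L^1_x$ makes the diagonal extraction go through verbatim.
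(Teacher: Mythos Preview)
Your proof is correct and follows essentially the same route as the paper: a countable dense family in $C^\infty_c$, Ascoli--Arzelà plus a diagonal extraction to get uniform scalar convergence, identification of the pointwise weak limit via weak (weak\mbox{-}$\ast$) compactness, and a $3\eps$-argument with density to upgrade to convergence in $C_w([0,T];L^p_x)$. You are slightly more explicit than the paper in verifying $f^n\in C_w([0,T];L^p_x)$ and the continuity of the limit, but the argument is the same.
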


\begin{proof}
    We only treat $p\in (1,\infty)$, the case $p=\infty$ being similar.
    Fix $R\geq 0$ large enough such that $\| f^n_t\|_{L^p_x} \leq R$ for all $n\in\N$ and $t\in [0,T]$; consider a countable collection $\{\psi^m\}_{m\in\N}\subset C^\infty_c$ which is dense in $L^{p'}_x$.
    By the assumptions, for any fixed $m$, $\{t\mapsto \langle f^n_t, \varphi^m\rangle\}_n$ are equibounded, equicontinuous maps, therefore by Ascoli-Arzelà they are precompact in $C([0,T])$.
    Performing a Canton diagonal extraction argument, we can then find a subsequence such that $\langle f^{n_k}_t, \psi^m\rangle$ converge as $k\to\infty$ to some limit $g^m$ in $C([0,T])$, for all $m\in\N$.
    From now on we will work only with this subsequence and drop the subindex $n_k$ for simplicity.

    Fix any $t\in [0,T]$, then the sequence $\{f^n_t\}_n$ is bounded in $L^p_x$, thus precompact in the weak topology; at the same time, since $\{\psi^m\}_m$ are dense in $L^{p'}_x$ and $\langle f^n_t, \psi^m\rangle$ converge for all $m$, there can exist at most one limit point $f_t$ for the whole sequence.
    We can conclude that $f^n_t\rightharpoonup f_t$ in $L^p_x$, for all $p\in [0,T]$, and that $g^m_t=\langle f_t,\psi^m\rangle$.
    Since $\| f^n_t\|_{L^p}\leq R$, lower semicontinuity of the norm in the weak topology implies that $\| f_t\|_{L^p}\leq R$ as well, for all $t\in [0,T]$, so that $f\in \mathcal{B}_b([0,T];L^p_x)$.
    
    It remains to show convergence in $C_w([0,T];L^p_x)$. 
    Given $\psi\in L^{p'}_x$, fix $\eps>0$ and choose $\psi^m$ such that $\| \psi-\psi^m\|_{L^{p'}_x}<\eps$; then
    \begin{align*}
        \limsup_{n\to\infty} \sup_{t\in [0,T]} |\langle f^n_t-f_t,\psi \rangle|
        & \leq \limsup_{n\to\infty} \sup_{t\in [0,T]} |\langle f^n_t-f_t,\psi-\psi^m \rangle| + \limsup_{k\to\infty} \sup_{t\in [0,T]} |\langle f^n_t-f_t,\psi^m \rangle|\\
        & \leq \limsup_{k\to\infty} \sup_{t\in [0,T]} \| f^n_t-f_t\|_{L^p_x} \| \psi-\psi^m\|_{L^{p'}_x}
        \leq 2 R \eps.
    \end{align*}
    By the arbitrariness of $\eps>0$, the conclusion follows.    
\end{proof}

Finally, we provide a criterion to upgrade uniform convergence in the weak topology to uniform convergence in the strong one.

\begin{corollary}\label{cor:compactness-lp}
    Let $p\in (1,\infty)$, $T\in (0,+\infty)$ and $\{f^n\}_n$ be a sequence satisfying the assumptions of Proposition \ref{prop:compactness-lp} and such that $f^n\to f$ in $C_w([0,T];L^p_x)$.
    Then the following are equivalent:
    \begin{itemize}
        \item[i.] The functions $\{t\mapsto \| f^n_t\|_{L^p_x} \}_n$ are equicontinuous on $[0,T]$ and
        \begin{align*}
            \limsup_{n\to\infty} \| f^n_t\|_{L^p_x} \leq \| f_t\|_{L^p_x} \quad \forall\, t\in [0,T].
        \end{align*}
        \item[ii.] $f^n,f\in C([0,T];L^p_x)$ and we have the strong convergence
        \begin{equation}\label{eq:cor-compactness-lp}
            \lim_{n\to\infty} \sup_{t\in [0,T]} \| f^n_t-f_t\|_{L^p_x} = 0.
        \end{equation}
    \end{itemize}
\end{corollary}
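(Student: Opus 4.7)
\medskip

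\noindent\textbf{Proof proposal.} The implication (ii) $\Rightarrow$ (i) is essentially immediate: if $\sup_{t\in [0,T]} \| f^n_t - f_t\|_{L^p_x}\to 0$, then the reverse triangle inequality gives $\| f^n_t\|_{L^p_x}\to \| f_t\|_{L^p_x}$ uniformly in $t$, hence equicontinuity of $\{t\mapsto \| f^n_t\|_{L^p_x}\}_n$ follows from continuity of $t\mapsto \| f_t\|_{L^p_x}$ (which itself is a consequence of $f\in C([0,T];L^p_x)$), and the $\limsup$ bound is trivial.

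The substantive content is (i) $\Rightarrow$ (ii), which I would break into four steps. First, I would verify the \emph{pointwise} strong convergence $f^n_t\to f_t$ in $L^p_x$ for every $t\in [0,T]$. Indeed, weak lower semicontinuity of the norm combined with the assumed $\limsup$ bound gives
\begin{equation*}
\| f_t\|_{L^p_x} \leq \liminf_{n\to\infty} \| f^n_t\|_{L^p_x} \leq \limsup_{n\to\infty} \| f^n_t\|_{L^p_x} \leq \| f_t\|_{L^p_x},
\end{equation*}
so $\| f^n_t\|_{L^p_x}\to \| f_t\|_{L^p_x}$. Since $L^p_x$ is uniformly convex for $p\in(1,\infty)$, weak convergence together with norm convergence implies strong convergence (the Radon--Riesz property), whence $f^n_t\to f_t$ in $L^p_x$.

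Second, I would upgrade pointwise norm convergence to uniform norm convergence. The sequence $\{t\mapsto \| f^n_t\|_{L^p_x}\}_n$ is equibounded (by the uniform bound of Proposition \ref{prop:compactness-lp}) and equicontinuous by hypothesis, and converges pointwise to $t\mapsto \| f_t\|_{L^p_x}$; a standard Ascoli--Arzelà type argument then yields uniform convergence on $[0,T]$, and in particular $t\mapsto \| f_t\|_{L^p_x}$ is continuous. Combining weak continuity of $f$ (inherited from $f\in C_w([0,T];L^p_x)$ via Proposition \ref{prop:compactness-lp}) with norm continuity and using once more the Radon--Riesz property shows $f\in C([0,T];L^p_x)$, and the same argument applied to each $f^n$ gives $f^n\in C([0,T];L^p_x)$.

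Third, I would conclude the uniform strong convergence \eqref{eq:cor-compactness-lp} by a compactness-contradiction argument. Assume it fails: then there exist $\eps>0$, a subsequence (not relabelled) and times $t_n\in[0,T]$ with $\| f^n_{t_n}-f_{t_n}\|_{L^p_x}\geq \eps$. By compactness of $[0,T]$, up to a further subsequence, $t_n\to t^\ast$. Since $f\in C([0,T];L^p_x)$, we have $f_{t_n}\to f_{t^\ast}$ strongly; on the other hand, uniform convergence in $C_w([0,T];L^p_x)$ together with $t_n\to t^\ast$ yields $f^n_{t_n}\rightharpoonup f_{t^\ast}$ (cf. Remark \ref{rem:uniform_weak_convergence}\,ii)), while the uniform convergence $\|f^n_\cdot\|_{L^p_x}\to \| f_\cdot\|_{L^p_x}$ established in the previous step gives $\| f^n_{t_n}\|_{L^p_x}\to \| f_{t^\ast}\|_{L^p_x}$. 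A final application of the Radon--Riesz property then forces $f^n_{t_n}\to f_{t^\ast}$ strongly in $L^p_x$, contradicting $\| f^n_{t_n}-f_{t_n}\|_{L^p_x}\geq \eps$.

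The only mildly delicate point is making sure the Radon--Riesz property is invoked correctly: it crucially requires $p\in (1,\infty)$ (the hypothesis of the corollary), and fails at the endpoints $p\in\{1,\infty\}$, which is why the statement is restricted to reflexive $L^p_x$.
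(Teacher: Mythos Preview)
Your proposal is correct and follows essentially the same approach as the paper's own proof: both rely on the Radon--Riesz property of $L^p_x$ for $p\in(1,\infty)$ to convert weak convergence plus norm convergence into strong convergence, upgrade pointwise norm convergence to uniform via Ascoli--Arzel\`a, and finish with the identical contradiction argument along a subsequence $t_n\to t^\ast$. The only cosmetic difference is the order in which $f^n\in C([0,T];L^p_x)$ is established (the paper does it first, you embed it in Step 2), but the logic is the same.
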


\begin{proof}
    The implication $ii.\Rightarrow i.$ is trivial, so let us show $i.\Rightarrow ii.$
    
    Recall that, for any sequence $g^n$ in $L^p_x$ such that $g^n\rightharpoonup g$ and $\| g^n\|_{L^p_x} \to \| g\|_{L^p_x}$, it holds $\|g^n- g\|_{L^p_x}\to 0$; this is a consequence of uniform convexity of $L^p$ spaces and the associated properties, see \cite[Sections 3.7 \& 4.3]{brezis2011functional}.
    In view of this fact and the assumptions, we immediately deduce that $f^n\in C([0,T];L^p_x)$ for all $n$; moreover by $f^n_t\rightharpoonup f_t$, lower semicontinuity of $\| \cdot\|_{L^p_x}$ in weak topologies and the assumptions, it holds
    \begin{align*}
        \lim_{n\to\infty} \| f^n_t\|_{L^p_x} = \| f_t\|_{L^p_x} \quad \forall\, t\in [0,T].
    \end{align*}
    As the family $\{t\mapsto \| f^n_t\|_{L^p_x} \}_n$ is uniformly continuous, thus compact, we deduce that the above convergence is not only pointwise but rather uniform, namely
    \begin{equation}\label{eq:cor-compactness-proof1}
        \lim_{n\to\infty}\sup_{t\in [0,T]} \big| \| f^n_t\| - \| f_t\| \big| = 0;
    \end{equation}
    but then $t\mapsto \| f_t\|_{L^p_x}\in C([0,T])$ and arguing as above $f\in C([0,T];L^p_x)$.
    Next recall that, since $f^n\to f$ in $C_w([0,T];L^p_x)$ by assumption, by Remark \ref{rem:uniform_weak_convergence} it holds that
    \begin{equation}\label{eq:cor-compactness-proof2}
        f^n_{t_n}\rightharpoonup f_t \text{ in } L^p_x \text{ as } n\to\infty \text{ for any sequence $\{t_n\}_n\subset [0,T]$ such that } t_n\to t.
    \end{equation}
    Suppose now by contradiction that the conclusion \eqref{eq:cor-compactness-lp} is not true; then there exist $\delta>0$ and a sequence $\{t_n\}_n$ such that $\| f^n_{t_n} - f_{t_n}\|_{L^p_x} >\delta >0$ for all $n$ sufficiently large.
    Since $[0,T]$ is compact, we can further assume such that $t_n\to t$ for some $t\in [0,T]$.
    But then by \eqref{eq:cor-compactness-proof1} and \eqref{eq:cor-compactness-proof2} it holds $f^n_{t_n}\rightharpoonup f_t$ and $\| f^n_{t_n}\|_{L^p_x}\to \| f_t\|_{L^p_x}$, which by uniform convexity implies $f^n_{t_n}\to f_t$ strongly in $L^p_x$. Since $f\in C([0,T];L^p_x)$, $f_{t_n}\to f_t$ strongly in $L^p_x$ as well; but then by triangular inequality we find    
    $0= \lim_{n\to\infty} \| f^n_{t_n} - f_{t_n}\|_{L^p_x}>\delta>0$, which is absurd.
\end{proof}

\begin{remark}
    It is clear from the proof that a statement in the style of Corollary \ref{cor:compactness-lp} holds for $L^p_x$ replaced by any separable, uniformly convex Banach space $V$.
    On the other hand, it does not hold for non-uniformly convex spaces like $L^1_x$: already without dependence on $t$, a standard counterexample on $L^1((0,2\pi))$ is given by $f^n(x)=1+\sin(nx)$, satisfying $f^n\rightharpoonup 1$, $\| f^n\|_{L^1(0,2\pi)}=2\pi=\|1\|_{L^1(0,2\pi)}$ for all $n$, but $\| f^n-1\|_{L^1(0,2\pi)}=4\neq 0$ for all $n$.
\end{remark}

\section{Smoothing operators and tensor products}\label{app:smoothing}

The first aim to this section is to present the proof of Lemma \ref{lem:smoothing-examples}. To this end, we start by treating specifically Part b) of the statement in the proposition below; 
recall the definition of $\clF_{l,R}$ by \eqref{eq:F_k,R}.

\begin{proposition}\label{prop:smoothing-F_kR}
    For all $R\in [3,\infty)$, the scale $(\clF_{l,R})_{0\leq l \leq 3}$ admits a smoothing $(J^\eta)_{\eta\in (0,1]}$, in the sense of Definition \ref{defn:smoothing}; moreover it can be chosen in such a way that $\interleave J\interleave$ does not depend on $R$.
\end{proposition}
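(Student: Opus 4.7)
My plan is to realize $(J^\eta)_\eta$ as a Richardson extrapolation of a scaled mollification, constructed so that a dilation factor simultaneously restores the support condition and the first-order cost of that dilation is subsequently cancelled. Fix a radially symmetric probability density $\rho\in C^\infty_c(B_1)$ and set $\rho^\eta(x):=\eta^{-d}\rho(x/\eta)$. For $\eta\in(0,1]$ and $R\geq 3$ put $\lambda_\eta:=1+2\eta/R\in[1,5/3]$, $\Lambda_\eta(x):=\lambda_\eta x$, and define
\begin{equation*}
\tilde J^\eta \varphi:=\rho^\eta\ast(\varphi\circ\Lambda_\eta), \qquad J^\eta:=2\tilde J^{\eta/2}-\tilde J^\eta .
\end{equation*}
Since $\supp(\varphi\circ\Lambda_\eta)\subset B_{R/\lambda_\eta}$ and convolution with $\rho^\eta$ enlarges supports by at most $\eta$, one gets $\supp(\tilde J^\eta\varphi)\subset B_{R/\lambda_\eta+\eta}\subset B_R$, the last inclusion being equivalent to $\eta\leq R/2$, which holds uniformly for $\eta\in(0,1]$ and $R\geq 3$; the analogous bound applies to $\tilde J^{\eta/2}$. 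Hence both $\tilde J^\eta$ and $J^\eta$ send $\mathcal{F}_{l,R}$ into itself.

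Using the equivalent form $\tilde J^\eta\varphi=(\rho^{\lambda_\eta\eta}\ast\varphi)\circ\Lambda_\eta$, together with the standard mollification bounds $\|D^k\rho^\mu\|_{L^1}\lesssim\mu^{-k}$ and the chain rule for $\Lambda_\eta$, the smoothing estimates $\|\tilde J^\eta\|_{\mathcal{L}(\mathcal{F}_{j,R},\mathcal{F}_{l,R})}\lesssim\eta^{-(l-j)}$ follow with constants depending only on $\|\rho\|_{C^3}$, since $\lambda_\eta$ is bounded uniformly in $\eta$ and $R$; linearity transfers them to $J^\eta$. For the approximation estimates I use the decomposition
\begin{equation*}
\tilde J^\eta\varphi-\varphi=\big[(\rho^{\lambda_\eta\eta}\ast\varphi-\varphi)\circ\Lambda_\eta\big]+\big[\varphi\circ\Lambda_\eta-\varphi\big].
\end{equation*}
Symmetry of $\rho$ makes the first bracket of size $O(\eta^2\|\varphi\|_{W^{2,\infty}})$, while Taylor expansion and the bound $|x|\leq R$ on $\supp\varphi$ give
\begin{equation*}
\varphi(\lambda_\eta x)-\varphi(x)=\frac{2\eta}{R}\,x\cdot\nabla\varphi(x)+O\big(\eta^2\|\varphi\|_{W^{2,\infty}}\big),
\end{equation*}
with constants independent of $R$. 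The offending linear-in-$\eta$ dilation term $\tfrac{2\eta}{R}x\cdot\nabla\varphi$ is exactly what Richardson extrapolation is designed to annihilate:
\begin{equation*}
J^\eta\varphi-\varphi=2(\tilde J^{\eta/2}\varphi-\varphi)-(\tilde J^\eta\varphi-\varphi)=O\big(\eta^2\|\varphi\|_{W^{2,\infty}}\big),
\end{equation*}
yielding the $(j,l)=(0,2)$ bound. The $(0,1)$ bound is immediate from a single Taylor expansion, and the $(1,2)$ bound follows from the same decomposition after differentiating once and applying analogous first-order estimates on $D\varphi$.

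The main obstacle is reconciling two competing demands: a naive mollification enlarges the support and so fails to map $\mathcal{F}_{l,R}$ into itself, whereas any elementary correction (multiplication by a cutoff, or composition with a spatial dilation) introduces an $O(\eta)$ error that cannot be absorbed into $\eta^2\|\varphi\|_{W^{2,\infty}}$. Richardson extrapolation of two scaled mollifications at scales $\eta/2$ and $\eta$ cancels precisely this linear term without spoiling the support preservation, the smoothing bounds, or the $R$-independence of the constants.
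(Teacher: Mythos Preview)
Your construction is correct and gives $R$-independent constants, but it proceeds by a genuinely different mechanism than the paper's proof. The paper defines $J^{R,\eta}\varphi:=M^\eta(\Theta^{R,\eta}\varphi)$, where $M^\eta$ is a standard mollifier and $\Theta^{R,\eta}$ is a smooth cutoff equal to $1$ on $B_{R-3\eta}$ and $0$ outside $B_{R-2\eta}$. The cutoff restores the support condition but its derivatives blow up like $\eta^{-k}$; the key observation is that since $\varphi\in\mathcal{F}_{l,R}$ vanishes on $\partial B_R$ together with its derivatives up to order $l-1$, a Taylor expansion at the boundary yields $|\varphi(x)|\lesssim\eta^{l}\|\varphi\|_{W^{l,\infty}}$ on the thin annulus where $D\Theta^{R,\eta}\neq 0$, and this exactly compensates the cutoff growth. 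No extrapolation step is needed.

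Your route replaces the cutoff by a dilation $\Lambda_\eta$ and then removes the resulting first-order defect $\tfrac{2\eta}{R}x\cdot\nabla\varphi$ via Richardson extrapolation. This avoids the boundary Taylor argument entirely and is arguably cleaner for radial domains; the paper's approach, on the other hand, relies only on local behaviour near the boundary and is the one used elsewhere in the literature (e.g.\ for the anisotropic scale $\mathcal{E}_{l,R}$ in part~c), where a single global dilation does not respect the support condition as neatly). Both proofs yield the same uniform-in-$R$ outcome.
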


\begin{proof}
    As the proof is very similar to other existing in the literature, cf. \cite[Section 5.3]{DGHT2019} and \cite[Proposition A.3]{hocquet2018energy}, we will mostly sketch it, making however particular attention on explaining why the resulting estimates do not degenerate as $R$ becomes larger and larger.
    For practical convenience, we will restrict ourselves to $R\geq 3$; it is clear however that, up to tuning the parameters correctly, any $R\geq 1$ could be considered (or more generally $R\geq R_0>0$, for any fixed threshold $R_0$, in which case the resulting estimates only depend on $R_0$).

    For $\eta\in (0,1)$, consider a family $\{\Theta^{R,\eta}\}_{\eta\in (0,1)}\subset C^\infty_c$ such that $\Theta^{R,\theta}(x)\in [0,1]$ for all $x$ and
    \begin{equation}\label{eq:cutoff-requirements}
        \Theta^{R,\eta}\equiv 1 \text{ on } B_{R-3\eta},
        \quad \Theta^{R,\eta}\equiv 0 \text{ on } \R^d\setminus B_{R-2\eta},
        \quad | D^k \Theta^{R,\eta}| \lesssim \eta^{-k}\quad \forall\, k\geq 0.
    \end{equation}
    Let us illustrate a standard way to construct $\Theta$. Consider a smooth, radially symmetric probability density $h$ supported on $B_1$; define the associated mollifiers $h^\eta=\eta^{-d} h(\eta^{-1} \cdot)$ and $M^\eta \varphi = h^\eta\ast \varphi$.
    Then we can take $\psi^R(x):= \mathbbm{1}_{B_{R-5\eta/2}}$ and $\Theta_\eta := M^{\eta/2} \psi^R$.
    The verification of the first two requirements from \eqref{eq:cutoff-requirements} are immediate, in light of the supports of $\psi^R$ and $h^{\eta/2}$ and properties of convolution; for the last one, by Young's convolutional inequality and scaling it holds
    \begin{align*}
        \| D^l \Theta^{R,\eta}\|_{L^\infty_x}
        \leq \| \psi^R\|_{L^\infty_x} \| D^l h^{\eta/2}\|_{L^1_x}
        \lesssim \eta^{-l} \| D^l h\|_{L^1_x}\quad \forall\, l\geq 0
    \end{align*}
    uniformly in $R$.
    Arguing as in \cite[Lemma A.3]{hocquet2018energy}, one can then show that
    \begin{equation}\label{eq:cutoff-prop1}
        \| \Theta^{R,\eta} \varphi\|_{W^{l,\infty}} \leq C_h \| \varphi\|_{W^{l,\infty}}
        \quad \text{for all } l\in \{0,1,2\} \text{ and } \varphi\in \clF_{l,R}
    \end{equation}
    as well as
    \begin{equation}\label{eq:cutoff-prop2}
        \| (1-\Theta^{R,\eta}) \varphi\|_{W^{j,\infty}} \leq C_h \eta^{l-j} \| \varphi\|_{W^{l,\infty}}
        \quad \text{for all } l,j \in \{0,1,2,3\} \text{ with } j\leq l \text
        { and } \varphi\in \clF_{l,R},
    \end{equation}
    where the costant only depends on the choice of the smooth function $h$.
    Let us shortly illustrate the main difficulty in deriving the bounds \eqref{eq:cutoff-prop1}-\eqref{eq:cutoff-prop2} and how to overcome it.
    Whenever estimating $W^{l,\infty}$-norms, the most problematic terms comes from the highest order derivatives $\partial^\alpha$, $|\alpha|=l$, falling on $\Theta^{R,\eta}$ as a consequence of looking at $\partial^\alpha (\Theta^{R,\eta} \varphi)$ and Leibniz rule; the main task is to estimate $\| (\partial^\alpha \Theta^{R,\eta}) \varphi\|_{L^\infty_x}$, where $\| \partial^\alpha_x \Theta^{R,\eta}\|_{L^\infty_x}$ grows like $\eta^{-|\alpha|}$.
    However, the only $x$ where $D^2_x\Theta_\eta(x)$ is non zero by construction belong to $\clA_{R,\eta}:= B_{R-2\eta}\setminus B_{R-\eta}$; by hypothesis $\varphi$ is supported on $B_R$, therefore all its derivative vanish at the boundary; for any $x\in \mathcal{A}_{R,\eta}$, by Taylor expanding $\varphi$ around $\bar x = R x/|x|$, we deduce that
    \begin{align*}
        |\varphi(x)|
        \lesssim \| \varphi\|_{W^{l,\infty}}\, |x-\bar x|^l \lesssim \| \varphi\|_{W^{l,\infty}}\, \eta^l
        \quad \forall\, x\in\clA_{R,\eta}
    \end{align*}
    Such estimates compensate the growth of derivatives of $\Theta^{R,\eta}$ for $x\in\clA_{R,\eta}$, allowing in the end to prove to prove the desired bounds \eqref{eq:cutoff-prop1}-\eqref{eq:cutoff-prop2}.

    Next, let us recall that the smoothing operators ${M^\eta}$ as defined above form a smoothing on the scales $(W^{l,\infty})_{0\leq l\leq 3}$, cf. \cite[Lemma A.2]{hocquet2018energy}.

    Finally, we can define a family of operators $(J^{R,\eta})_\eta$ on the scale $\clF_{l,R}$ by
    \begin{equation*}
        J^{R,\eta} \varphi:= M^\eta(\Theta^{R,\eta} \varphi).
    \end{equation*}
    The definition is meaningful: by construction, ${\rm supp}\, (\Theta^{R,\eta} \varphi) \subset B_{R-2\eta}$, so that by properties of convolutions ${\rm supp}\, (J^{R,\eta} \varphi) \subset B_{R-\eta}$, showing that $J^{R,\eta} \varphi\in \clF_{l,R}$.
    
    The proof that all conditions from Definition \ref{defn:smoothing} are satisfied by $(J^{R,\eta})_\eta$ follows from combining the bounds \eqref{eq:cutoff-prop1}-\eqref{eq:cutoff-prop2} (which were uniform in $R$), the fact that $(M^\eta)_\eta$ is a smoothing on $W^{l,\infty}$ and the triangular inequality.
    For instance, for any $(j,l)\in\{(0,1),(0,2),(1,2)\}$ and $\varphi\in \clF_{l,R}$, it holds
    \begin{align*}
        \| \varphi - J^{R,\eta} \varphi\|_{W^{j,\infty}}
        & \leq \| \varphi - M^\eta \varphi\|_{W^{j,\infty}} + \| M^\eta(\varphi - \Theta^{R,\eta} \varphi) \|_{W^{j,\infty}}\\
        & \lesssim \eta^{l-j} \| \varphi\|_{W^{l,\infty}} + \|\varphi - \Theta^{R,\eta} \varphi \|_{W^{j,\infty}}
        \lesssim \eta^{l-j} \| \varphi\|_{W^{l,\infty}}
    \end{align*}
    where we repeatedly used the smoothing properties of $(M^\eta)_\eta$ and the bounds \eqref{eq:cutoff-prop2}.
    The verification of the other properties is similar; overall, this yields the desired uniform-in-$R$ bounds for $\interleave J^R\interleave$.
\end{proof}

We are now ready to complete the

\begin{proof}[Proof of Lemma \ref{lem:smoothing-examples}]
    Part b) of the statement is covered by Proposition \ref{prop:smoothing-F_kR} above, while part c) is given by \cite[Corollary 5.4]{DGHT2019}.
    The statement from a) is much more classical and comes from interpolation theory, but let us briefly give an explicit example of the smoothing $(J^\eta)_\eta$.
    Consider the inhomogeneous Littlewood-Paley blocks $(\Delta_j)_{j\geq -1}$ associated to a partition of unity and the corresponding low-frequency cut-off operator $S_j$ given by
    \begin{equation*}
        S_j \varphi = \sum_{i\leq j-1}\Delta_i \varphi,
    \end{equation*}
    see \cite{BCD} for more details. Set $J^\eta:= S_{j(\eta)}$ for $j(\eta)$ defined as the integer part of $-\log_2 \eta$; then $(J^\eta)_{\eta}$ satisfies the requirements \eqref{eq:defn-smoothing-prop1}-\eqref{eq:defn-smoothing-prop2}.
    Indeed, the boundedness of $S_j$ as element of $\mathcal{L}(W^{l,p},W^{l,p})$ comes from \cite[Remark 2.11]{BCD}, while the bounds for $\| J^\eta\|_{\cL(W^{l,p},W^{l',p})}$ follow from a standard application of Bernstein estimates, cf. \cite[Lemma 2.1]{BCD}.    
\end{proof}

Next we present the proof of Lemma \ref{lem:tensor-product}; recall again the definitions of $\cF_{l,R}$ and $\clE_{l,R}$ from \eqref{eq:F_k,R} and \eqref{eq:E_k,R} respectively.

\begin{proof}[Proof of Lemma \ref{lem:tensor-product}]
    In order to prove the claim we need to show that, for any given $f\in \cF_{-l,R+1}$, $g\in \cF_{-j,R+1}$ and $\Phi\in\clE_{l+j,R}$, it holds
    \begin{align*}
        |\langle f\otimes g, \Phi\rangle| \lesssim  \| f\|_{\cF_{-l,R+1}} \| g\|_{\cF_{-j,R+1}} \| \Phi\|_{\clE_{l+j},R};
    \end{align*}
    recall that the distributional tensor product is defined (or equivalently characterized) by duality by
    \begin{align*}
        \langle f\otimes g, \psi\rangle := \int_{\R^{2d}} f(x) g(y) \psi(x,y)\, \dd x \dd y \qquad \forall\, \psi\in C^\infty_c(\R^{2d})
    \end{align*}

    Fix $\Phi\in \clE_{l+j,R}$; observe that by definition its support is included in $\{(x,y): |x_+|\leq R, |x_{-}|\leq 1\}$ and thus in $B_{R+1}\times B_{R+1} = \{(x,y): |x|\vee |y|\leq R+1\}$.
    For fixed $g$, introduce the auxiliary function
    \begin{align*}
        \varphi^g(x) := \langle  g, \Phi(x,\cdot) \rangle
        =\int_{\R^d} g(y) \Phi(x,y) \dd y  
    \end{align*}
    so that $\langle f\otimes g, \psi\rangle = \langle f, \varphi^g\rangle$.
    In order to conclude, it then suffices to show that
    \begin{align*}
         \| \varphi^g\|_{\cF_{l,R+1}}
         \lesssim \| g\|_{\cF_{-j,R+1}} \| \Phi\|_{\clE_{l+j,R}} 
    \end{align*}
    since in that case
    $|\langle f, \varphi^g\rangle|
    \leq\|f\|_{\cF_{-l,R+1}} \| \varphi^g\|_{\cF_{l,R+1}}
    \lesssim \|f\|_{\cF_{-l,R+1}} \| g\|_{\cF_{-j,R+1}} \| \Phi\|_{\clE_{l+j,R}}$.

    By the previous observation on the support of $\Phi$, it's clear that $\Phi(x,\cdot)\in \cF_{j,R+1}$ for all $x$, so that $\varphi^g$ is well defined, and that moreover $\varphi^g(x)=0$ for all $x\in B_{R+1}^c$. Therefore we only need to check that $\varphi^g\in W^{l,\infty}$.
    If $l=0$ this is immediate, since by definition
    \begin{align*}
        \sup_{x\in\R^d} |\varphi^g(x)|
        \leq \| g\|_{\cF_{-j,R+1}} \sup_{x\in\R^d} \| \Phi(x,\cdot)\|_{\cF_{j,R+1}}
        \lesssim \| g\|_{\cF_{-j,R+1}}\| \Phi\|_{\clE_{j,R}}.
    \end{align*}
    Assume now $l>0$; we first claim that for all derivatives $\alpha$ up to order $l-1$, we have the relation
    $\partial^\alpha_x \varphi^g(x) = \langle g, \partial^\alpha_x \Phi(x,\cdot)\rangle$.
    It is enough to prove this for $|\alpha|=1$ and $l\geq 2$, since the general case follows by iteration.
    By using difference quotients, we have
    \begin{align*}
        \partial_{x_i} \varphi^g(x)
        = \lim_{h\to 0} \Big\langle g, \frac{\Phi(x+h e_i,\cdot)-\Phi(x,\cdot)}{h} \Big\rangle
        = \langle g, \partial_{x_i} \Phi(x,\cdot)\rangle
    \end{align*}
    where the last passage comes from the easily verifiable fact that, if $\Phi\in W^{l+j,\infty}$ with $l\geq 2$, then $(\Phi(x+he_i,\cdot)-\Phi(x,\cdot))/h$ converges to $\partial_{x_i} \Phi(x,\cdot)$ in $W^{j,\infty}$, for any fixed $x\in\R^d$.

    Having established the claim, in order to conclude that $\varphi^g\in W^{l,\infty}$, it suffices to show that all its derivatives of order $\alpha$ with $|\alpha|= l-1$, are bounded and globally Lipschitz (recall that $f\in W^{1,\infty}$ if and only if $f$ is bounded and Lipschitz, in which case its optimal Lipschitz constant is exactly $\| Df\|_{L^\infty_x}$).
    We omit boundedness, which is simpler, and pass to verify the Lipschitz property.
    Observing that for $\Phi\in \clE_{l+j,R}$, it holds $\partial^\alpha_x \Phi\in \clE_{j+1,R}$, we find
    \begin{align*}
        |\partial^\alpha_x \varphi^g(x) - \partial^\alpha_x \varphi^g(z)|
        & = |\langle g, \partial^\alpha_x \Phi(x,\cdot) - \partial^\alpha_x \Phi(z,\cdot)  \rangle|\\
        & \leq \| \partial^\alpha_x \Phi(x,\cdot) - \partial^\alpha_x \Phi(z,\cdot)\|_{\clF_{j,R+1}} \| g\|_{\clF_{-j,R+1}}\\
        & \lesssim \| \partial^\alpha_x \Phi\|_{\clE_{j+1,R}} \| g\|_{\clF_{-j,R+1}} |x-z| 
        \lesssim \| \Phi\|_{\clE_{l+j,R}} \| g\|_{\clF_{-j,R+1}} |x-z|,
    \end{align*}
    which overall proves that $\| \varphi^g\|_{W^{l,\infty}} \lesssim \| \Phi\|_{\clE_{l+j,R}} \| g\|_{\clF_{-j,R+1}}$ and yields the conclusion. 
\end{proof}

\section*{Acknowledgements and funding information}
We are grateful to Daniel Goodair for discussions about $2$D Euler equations on bounded domains and for pointing out the results from \cite{Goodair2023}.

LG is supported by the SNSF Grant 182565 and by the Swiss State Secretariat for Education, Research and Innovation (SERI) under contract number MB22.00034, and by the Istituto Nazionale di Alta Matematica (INdAM) through the project GNAMPA 2025 “Modelli stocastici in Fluidodinamica e Turbolenza”.

JML acknowledges the support received from the US AFOSR Grant FA8655-21-1-7034.

\bibliographystyle{alpha}
\bibliography{bibliography}

\newcommand{\etalchar}[1]{$^{#1}$}
\begin{thebibliography}{DGHT19b}

\bibitem[ABC{\etalchar{+}}24]{ABCDLGJK2024}
Dallas Albritton, Elia Bru\'e, Maria Colombo, Camillo De~Lellis, Vikram Giri,
  Maximilian Janisch, and Hyunju Kwon.
\newblock {\em Instability and non-uniqueness for the 2{D} {E}uler equations,
  after {M}. {V}ishik}, volume 219 of {\em Annals of Mathematics Studies}.
\newblock Princeton University Press, Princeton, NJ, 2024.

\bibitem[Amb04]{Ambrosio2004}
Luigi Ambrosio.
\newblock Transport equation and {C}auchy problem for {$BV$} vector fields.
\newblock {\em Invent. Math.}, 158(2):227--260, 2004.

\bibitem[BBC16]{BoBoCr2016}
Anna Bohun, Fran\c{c}ois Bouchut, and Gianluca Crippa.
\newblock Lagrangian solutions to the 2{D} {E}uler system with {$L^1$}
  vorticity and infinite energy.
\newblock {\em Nonlinear Anal.}, 132:160--172, 2016.

\bibitem[BC23]{BruCol2023}
Elia Bru\'e and Maria Colombo.
\newblock Nonuniqueness of solutions to the {E}uler equations with vorticity in
  a {L}orentz space.
\newblock {\em Comm. Math. Phys.}, 403(2):1171--1192, 2023.

\bibitem[BCD11]{BCD}
Hajer Bahouri, Jean-Yves Chemin, and Rapha\"{e}l Danchin.
\newblock {\em Fourier analysis and nonlinear partial differential equations},
  volume 343.
\newblock Springer, Heidelberg, 2011.

\bibitem[BCF91]{BrCaFl1991}
Z.~Brze\'zniak, M.~Capi\'nski, and F.~Flandoli.
\newblock Stochastic partial differential equations and turbulence.
\newblock {\em Math. Models Methods Appl. Sci.}, 1(1):41--59, 1991.

\bibitem[BCF92]{BrCaFl1992}
Z.~Brze\'zniak, M.~Capi\'nski, and F.~Flandoli.
\newblock Stochastic {N}avier-{S}tokes equations with multiplicative noise.
\newblock {\em Stochastic Anal. Appl.}, 10(5):523--532, 1992.

\bibitem[BCK24]{brue2024flexibility}
Elia Bru{\`e}, Maria Colombo, and Anuj Kumar.
\newblock Flexibility of two-dimensional euler flows with integrable vorticity.
\newblock {\em arXiv:2408.07934}, 2024.

\bibitem[BCMW22]{BONNEFOI202258}
Timothée Bonnefoi, Ajay Chandra, Augustin Moinat, and Hendrik Weber.
\newblock A priori bounds for rough differential equations with a non-linear
  damping term.
\newblock {\em Journal of Differential Equations}, 318:58--93, 2022.

\bibitem[BDFT21]{BDFT2021}
C.~Bellingeri, A.~Djurdjevac, P.~K. Friz, and N.~Tapia.
\newblock Transport and continuity equations with (very) rough noise.
\newblock {\em Partial Differ. Equ. Appl.}, 2(4), 2021.

\bibitem[BFM16]{brzezniak2016existence}
Zdzis{\l}aw Brze{\'z}niak, Franco Flandoli, and Mario Maurelli.
\newblock Existence and uniqueness for stochastic 2{D} {E}uler flows with
  bounded vorticity.
\newblock {\em Archive for Rational Mechanics and Analysis}, 221(1):107--142,
  2016.

\bibitem[BG17]{BaiGub2017}
Ismael Bailleul and Massimiliano Gubinelli.
\newblock Unbounded rough drivers.
\newblock {\em Ann. Fac. Sci. Toulouse Math. (6)}, 26(4):795--830, 2017.

\bibitem[Bih56]{bihari1956}
I.~Bihari.
\newblock A generalization of a lemma of {B}ellman and its application to
  uniqueness problems of differential equations.
\newblock {\em Acta Math. Acad. Sci. Hungar.}, 7:81--94, 1956.

\bibitem[BP22]{bedrossian2022vlasov}
Jacob Bedrossian and Stavros Papathanasiou.
\newblock The {Vlasov-Poisson and Vlasov-Poisson-Fokker-Planck systems in
  stochastic electromagnetic fields: local well-posedness}.
\newblock {\em arXiv:2211.03336}, 2022.

\bibitem[Bre00]{Brenier2000}
Y.~Brenier.
\newblock Convergence of the {V}lasov-{P}oisson system to the incompressible
  {E}uler equations.
\newblock {\em Comm. Partial Differential Equations}, 25(3-4):737--754, 2000.

\bibitem[Bre11]{brezis2011functional}
Haim Brezis.
\newblock {\em Functional analysis, {S}obolev spaces and partial differential
  equations}.
\newblock Universitext. Springer, New York, 2011.

\bibitem[BRS17]{BRS}
I.~Bailleul, S.~Riedel, and M.~Scheutzow.
\newblock Random dynamical systems, rough paths and rough flows.
\newblock {\em Journal of Differential Equations}, 262(12):5792--5823, 2017.

\bibitem[CCH{\etalchar{+}}20a]{cotter2020modelling}
Colin Cotter, Dan Crisan, Darryl Holm, Wei Pan, and Igor Shevchenko.
\newblock Modelling {Uncertainty Using Stochastic Transport Noise in a 2-Layer
  Quasi-Geostrophic Model}.
\newblock {\em Foundations of Data Science}, 2(2):173--205, 2020.

\bibitem[CCH{\etalchar{+}}20b]{cotter2020filtering}
Colin Cotter, Dan Crisan, Darryl~D. Holm, Wei Pan, and Igor Shevchenko.
\newblock A {Particle Filter for Stochastic Advection by Lie Transport: A Case
  Study for the Damped and Forced Incompressible Two-Dimensional Euler
  Equation}.
\newblock {\em SIAM/ASA Journal on Uncertainty Quantification},
  8(4):1446--1492, 2020.

\bibitem[CCS20]{CiCrSp2020}
Gennaro Ciampa, Gianluca Crippa, and Stefano Spirito.
\newblock Weak solutions obtained by the vortex method for the 2{D} {E}uler
  equations are {L}agrangian and conserve the energy.
\newblock {\em J. Nonlinear Sci.}, 30(6):2787--2820, 2020.

\bibitem[CCS21]{CiCrSp2021}
Gennaro Ciampa, Gianluca Crippa, and Stefano Spirito.
\newblock Strong convergence of the vorticity for the 2{D} {E}uler equations in
  the inviscid limit.
\newblock {\em Arch. Ration. Mech. Anal.}, 240(1):295--326, 2021.

\bibitem[CDE22]{CDE2022}
Peter Constantin, Theodore~D. Drivas, and Tarek~M. Elgindi.
\newblock Inviscid limit of vorticity distributions in the {Y}udovich class.
\newblock {\em Comm. Pure Appl. Math.}, 75(1):60--82, 2022.

\bibitem[CDFO13]{CDFO2013}
D.~Crisan, J.~Diehl, P.~K. Friz, and H.~Oberhauser.
\newblock Robust filtering: correlated noise and multidimensional observation.
\newblock {\em Ann. Appl. Probab.}, 23(5):2139--2160, 2013.

\bibitem[CDL08]{CriDeL2008}
Gianluca Crippa and Camillo De~Lellis.
\newblock Estimates and regularity results for the {D}i{P}erna-{L}ions flow.
\newblock {\em J. Reine Angew. Math.}, 616:15--46, 2008.

\bibitem[CF09]{CarFri2009}
Michael Caruana and Peter Friz.
\newblock Partial differential equations driven by rough paths.
\newblock {\em J. Differential Equations}, 247(1):140--173, 2009.

\bibitem[CFH19]{crisan2019solution}
Dan Crisan, Franco Flandoli, and Darryl~D. Holm.
\newblock So{lution Properties of a 3D Stochastic Euler Fluid Equation}.
\newblock {\em Journal of Nonlinear Science}, 29(3):813--870, 2019.

\bibitem[CFO11]{CaFrOb2011}
Michael Caruana, Peter~K. Friz, and Harald Oberhauser.
\newblock A (rough) pathwise approach to a class of non-linear stochastic
  partial differential equations.
\newblock {\em Ann. Inst. H. Poincar\'e{} C Anal. Non Lin\'eaire},
  28(1):27--46, 2011.

\bibitem[CG24]{CriGoo2024}
Dan Crisan and Daniel Goodair.
\newblock {\em Stochastic calculus in infinite dimensions and {SPDE}s}.
\newblock SpringerBriefs in Mathematics. Springer, Cham, [2024] \copyright
  2024.

\bibitem[CHK23]{CrHoKo2023}
D.~Crisan, D.~D. Holm, and P.~Korn.
\newblock An implementation of {H}asselmann's paradigm for stochastic climate
  modelling based on stochastic {L}ie transport.
\newblock {\em Nonlinearity}, 36(9):4862--4903, 2023.

\bibitem[CHLN22a]{CrisanHolmLeahyNilssen22b}
Dan Crisan, Darryl~D. Holm, James-Michael Leahy, and Torstein Nilssen.
\newblock Solution properties of the incompressible euler system with rough
  path advection.
\newblock {\em Journal of Functional Analysis}, 283(9):109632, 2022.

\bibitem[CHLN22b]{CrisanHolmLeahyNilssen22a}
Dan Crisan, Darryl~D. Holm, James-Michael Leahy, and Torstein Nilssen.
\newblock Variational principles for fluid dynamics on rough paths.
\newblock {\em Advances in Mathematics}, 404:108409, 2022.

\bibitem[CISS24]{CISS2024}
Gianluca Crippa, Marco Inversi, Chiara Saffirio, and Giorgio Stefani.
\newblock Existence and stability of weak solutions of the {V}lasov-{P}oisson
  system in localised {Y}udovich spaces.
\newblock {\em Nonlinearity}, 37(9):Paper No. 095015, 26, 2024.

\bibitem[CS24]{crippa2021elementary}
Gianluca Crippa and Giorgio Stefani.
\newblock An elementary proof of existence and uniqueness for the {E}uler flow
  in localized {Y}udovich spaces.
\newblock {\em Calc. Var. Partial Differential Equations}, 63(7):Paper No. 168,
  31, 2024.

\bibitem[Dav08]{davie2008}
A.~M. Davie.
\newblock Differential equations driven by rough paths: an approach via
  discrete approximation.
\newblock {\em Appl. Math. Res. Express. AMRX}, pages 1--40, 2008.

\bibitem[Del91]{Delort1991}
Jean-Marc Delort.
\newblock Existence de nappes de tourbillon en dimension deux.
\newblock {\em J. Amer. Math. Soc.}, 4(3):553--586, 1991.

\bibitem[DFS17]{DFS}
Joscha Diehl, Peter~K. Friz, and Wilhelm Stannat.
\newblock Stochastic partial differential equations: a rough paths view on weak
  solutions via {F}eynman-{K}ac.
\newblock {\em Ann. Fac. Sci. Toulouse Math. S\'erie 6}, 26(4):911--947, 2017.

\bibitem[DGHT19a]{DGHT2019}
Aur\'{e}lien Deya, Massimiliano Gubinelli, Martina Hofmanov\'{a}, and Samy
  Tindel.
\newblock A priori estimates for rough {PDE}s with application to rough
  conservation laws.
\newblock {\em J. Funct. Anal.}, 276(12):3577--3645, 2019.

\bibitem[DGHT19b]{DGHTa}
Aurélien Deya, Massimiliano Gubinelli, Martina Hofmanová, and Samy Tindel.
\newblock One-dimensional reflected rough differential equations.
\newblock {\em Stochastic Processes and their Applications}, 129(9):3261--3281,
  2019.

\bibitem[DL89]{diperna1989ordinary}
Ronald~J. DiPerna and Pierre-Louis Lions.
\newblock Ordinary differential equations, transport theory and sobolev spaces.
\newblock {\em Inventiones mathematicae}, 98(3):511--547, 1989.

\bibitem[DM87]{DiPMaj1987}
Ronald~J. DiPerna and Andrew~J. Majda.
\newblock Concentrations in regularizations for {$2$}-{D} incompressible flow.
\newblock {\em Comm. Pure Appl. Math.}, 40(3):301--345, 1987.

\bibitem[Dob79]{Dobrushin1979}
R.~L. Dobrushin.
\newblock Vlasov equations.
\newblock {\em Funktsional. Anal. i Prilozhen.}, 13(2):48--58, 96, 1979.

\bibitem[DOR15]{DOR2015}
Joscha Diehl, Harald Oberhauser, and Sebastian Riedel.
\newblock A {L}\'evy area between {B}rownian motion and rough paths with
  applications to robust nonlinear filtering and rough partial differential
  equations.
\newblock {\em Stochastic Process. Appl.}, 125(1):161--181, 2015.

\bibitem[FdLP06]{feyel2006}
Denis Feyel and Arnaud de~La~Pradelle.
\newblock Curvilinear integrals along enriched paths.
\newblock {\em Electron. J. Probab.}, 11:no. 34, 860--892, 2006.

\bibitem[FGP10]{FGP2010}
F.~Flandoli, M.~Gubinelli, and E.~Priola.
\newblock Well-posedness of the transport equation by stochastic perturbation.
\newblock {\em Invent. Math.}, 180(1):1--53, 2010.

\bibitem[FH20]{FH2020}
Peter~K. Friz and Martin Hairer.
\newblock {\em A course on rough paths}.
\newblock Universitext. Springer, Cham, 2020.
\newblock Second edition.

\bibitem[FHL21]{friz2021rough}
Peter~K. Friz, Antoine Hocquet, and Khoa L{\^e}.
\newblock Rough stochastic differential equations.
\newblock {\em arXiv:2106.10340}, 2021.

\bibitem[FHLN22]{FHLN}
Franco Flandoli, Martina Hofmanov{\'a}, Dejun Luo, and Torstein Nilssen.
\newblock {Global well-posedness of the 3D Navier–Stokes equations perturbed
  by a deterministic vector field}.
\newblock {\em The Annals of Applied Probability}, 32(4):2568 -- 2586, 2022.

\bibitem[FL23]{FlaLuon2023}
Franco Flandoli and Eliseo Luongo.
\newblock {\em Stochastic partial differential equations in fluid mechanics},
  volume 2330 of {\em Lecture Notes in Mathematics}.
\newblock Springer, Singapore, [2023] \copyright 2023.

\bibitem[Fla11]{Flandoli2015}
Franco Flandoli.
\newblock {\em Random perturbation of {PDE}s and fluid dynamic models}, volume
  2015 of {\em Lecture Notes in Mathematics}.
\newblock Springer, Heidelberg, 2011.
\newblock Lectures from the 40th Probability Summer School held in Saint-Flour,
  2010, \'Ecole d'\'Et\'e{} de Probabilit\'es de Saint-Flour. [Saint-Flour
  Probability Summer School].

\bibitem[FNS20]{FNS}
Peter~K. Friz, Torstein Nilssen, and Wilhelm Stannat.
\newblock Existence, uniqueness and stability of semi-linear rough partial
  differential equations.
\newblock {\em Journal of Differential Equations}, 268(4):1686--1721, 2020.

\bibitem[FOB{\etalchar{+}}15]{franzke2015stochastic}
Christian~LE Franzke, Terence~J O'Kane, Judith Berner, Paul~D. Williams, and
  Valerio Lucarini.
\newblock Stochastic climate theory and modeling.
\newblock {\em Wiley Interdisciplinary Reviews: Climate Change}, 6(1):63--78,
  2015.

\bibitem[FPD{\etalchar{+}}14]{faranda2014modelling}
Davide Faranda, Flavio Maria~Emanuele Pons, Bérengère Dubrulle, François
  Daviaud, Brice Saint-Michel, Éric Herbert, and Pierre-Philippe Cortet.
\newblock Modelling and analysis of turbulent datasets using {Auto Regressive
  Moving Average processes}.
\newblock {\em Physics of Fluids}, 26(10):105101, 2014.

\bibitem[Fun79]{funaki1979}
Tadahisa Funaki.
\newblock Construction of a solution of random transport equation with boundary
  condition.
\newblock {\em J. Math. Soc. Japan}, 31(4):719--744, 1979.

\bibitem[FV10]{FV2010}
Peter~K. Friz and Nicolas~B. Victoir.
\newblock {\em Multidimensional stochastic processes as rough paths}, volume
  120 of {\em Cambridge Studies in Advanced Mathematics}.
\newblock Cambridge University Press, Cambridge, 2010.
\newblock Theory and applications.

\bibitem[Goo23]{Goodair2023}
Daniel Goodair.
\newblock Navier--{Stokes equations with Navier boundary conditions and
  stochastic Lie transport: Well-posedness and inviscid limit}.
\newblock {\em arXiv:2308.04290; to appear on J. Differential Equations}, 2023.

\bibitem[Gra14]{Grafakos.classical}
Loukas Grafakos.
\newblock {\em Classical {F}ourier analysis}, volume 249 of {\em Graduate Texts
  in Mathematics}.
\newblock Springer, New York, third edition, 2014.

\bibitem[Gub04]{gubinelli2004}
M.~Gubinelli.
\newblock Controlling rough paths.
\newblock {\em J. Funct. Anal.}, 216(1):86--140, 2004.

\bibitem[Has76]{Hasselmann1976}
Klaus Hasselmann.
\newblock Stochastic climate models {Part I. Theory}.
\newblock {\em Tellus}, 28(6):473--485, 1976.

\bibitem[HH18]{hocquet2018energy}
Antoine Hocquet and Martina Hofmanov{\'a}.
\newblock An energy method for rough partial differential equations.
\newblock {\em Journal of Differential Equations}, 265(4):1407--1466, 2018.

\bibitem[HHN25]{HHN2025}
Antoine Hocquet, Martina Hofmanov\'a, and Torstein Nilssen.
\newblock Unbounded rough drivers, rough {PDE}s and applications.
\newblock {\em arXiv:2501.01186}, 2025.

\bibitem[HLN19]{HLN1}
Martina Hofmanov{\'a}, James-Michael Leahy, and Torstein Nilssen.
\newblock On the {N}avier--{S}tokes equation perturbed by rough transport
  noise.
\newblock {\em Journal of Evolution Equations}, 19(1):203--247, 2019.

\bibitem[HLN21]{HLN2021}
Martina Hofmanov\'{a}, James-Michael Leahy, and Torstein Nilssen.
\newblock On a rough perturbation of the {N}avier-{S}tokes system and its
  vorticity formulation.
\newblock {\em Ann. Appl. Probab.}, 31(2):736--777, 2021.

\bibitem[HN21]{HN2021}
Antoine Hocquet and Torstein Nilssen.
\newblock An {I}t\^o formula for rough partial differential equations and some
  applications.
\newblock {\em Potential Anal.}, 54(2):331--386, 2021.

\bibitem[Hol15]{Holm2015}
Darryl~D. Holm.
\newblock Variational principles for stochastic fluid dynamics.
\newblock {\em Proceedings of the Royal Society A: Mathematical, Physical and
  Engineering Sciences}, 471(2014):0963, 2015.

\bibitem[Iac22]{Iacobelli2022}
Mikaela Iacobelli.
\newblock A new perspective on {W}asserstein distances for kinetic problems.
\newblock {\em Arch. Ration. Mech. Anal.}, 244(1):27--50, 2022.

\bibitem[IJ24]{IacJun2024}
Mikaela Iacobelli and Jonathan Junn\'e.
\newblock Stability estimates for the {V}lasov-{P}oisson system in
  {$p$}-kinetic {W}asserstein distances.
\newblock {\em Bull. Lond. Math. Soc.}, 56(7):2250--2267, 2024.

\bibitem[IS23]{inversi2023lagrangian}
Marco Inversi and Giorgio Stefani.
\newblock Lagrangian stability for a system of non-local continuity equations
  under {O}sgood condition.
\newblock {\em arXiv:2301.11822}, 2023.

\bibitem[Kun97]{Kunita1997}
Hiroshi Kunita.
\newblock {\em Stochastic flows and stochastic differential equations},
  volume~24 of {\em Cambridge Studies in Advanced Mathematics}.
\newblock Cambridge University Press, Cambridge, 1997.
\newblock Reprint of the 1990 original.

\bibitem[Loe06]{loeper2006uniqueness}
Gr{\'e}goire Loeper.
\newblock Uniqueness of the solution to the {V}lasov--{P}oisson system with
  bounded density.
\newblock {\em Journal de math{\'e}matiques pures et appliqu{\'e}es},
  86(1):68--79, 2006.

\bibitem[LQ02]{LyQi2002}
Terry Lyons and Zhongmin Qian.
\newblock {\em System control and rough paths}.
\newblock Oxford Mathematical Monographs. Oxford University Press, Oxford,
  2002.

\bibitem[LR13]{LR2013}
Wei Liu and Michael R\"{o}ckner.
\newblock Local and global well-posedness of {SPDE} with generalized coercivity
  conditions.
\newblock {\em J. Differential Equations}, 254(2):725--755, 2013.

\bibitem[LSEO17]{lilly2017fractional}
Jonathan~M. Lilly, Adam~M. Sykulski, Jeffrey~J. Early, and Sofia~C. Olhede.
\newblock F{ractional Brownian motion, the Matérn process, and stochastic
  modeling of turbulent dispersion}.
\newblock {\em Nonlinear Processes in Geophysics}, 24(3):481--514, 2017.

\bibitem[Lyo98]{Lyons98}
Terry~J. Lyons.
\newblock Differential equations driven by rough signals.
\newblock {\em Revista Matemática Iberoamericana}, 14(2):215--310, 1998.

\bibitem[Med02]{medvedev2001}
A.V. Medvedev.
\newblock On a concave differentiable majorant of a modulus of continuity.
\newblock {\em Real Anal. Exchange}, 27(1):123--129, 2001/02.

\bibitem[M{\'e}m14]{memin2014fluid}
Etienne M{\'e}min.
\newblock Fluid flow dynamics under location uncertainty.
\newblock {\em Geophysical \& Astrophysical Fluid Dynamics}, 108(2):119--146,
  2014.

\bibitem[MP12]{marchioro2012mathematical}
Carlo Marchioro and Mario Pulvirenti.
\newblock {\em Mathematical theory of incompressible nonviscous fluids},
  volume~96.
\newblock Springer Science \& Business Media, 2012.

\bibitem[MR04]{MikRoz2004}
R.~Mikulevicius and B.~L. Rozovskii.
\newblock Stochastic {N}avier-{S}tokes equations for turbulent flows.
\newblock {\em SIAM J. Math. Anal.}, 35(5):1250--1310, 2004.

\bibitem[Ngu22]{nguyen2022}
Huy~Q. Nguyen.
\newblock Remarks on the solution map for {Y}udovich solutions of the {E}uler
  equations.
\newblock {\em J. Math. Fluid Mech.}, 24(2):Paper No. 44, 9, 2022.

\bibitem[NLSW21]{NSW2021}
Helena~J. Nussenzveig~Lopes, Christian Seis, and Emil Wiedemann.
\newblock On the vanishing viscosity limit for 2{D} incompressible flows with
  unbounded vorticity.
\newblock {\em Nonlinearity}, 34(5):3112--3121, 2021.

\bibitem[NO21]{NevOli2021}
Wladimir Neves and Christian Olivera.
\newblock Initial-boundary value problem for stochastic transport equations.
\newblock {\em Stoch. Partial Differ. Equ. Anal. Comput.}, 9(3):674--701, 2021.

\bibitem[Nob24]{noboriguchi2024}
Dai Noboriguchi.
\newblock Initial-boundary value problem for transport equations driven by
  rough paths.
\newblock {\em Theory Probab. Math. Statist.}, (110):167--183, 2024.

\bibitem[RS17]{RieSch2017}
S.~Riedel and M.~Scheutzow.
\newblock Rough differential equations with unbounded drift term.
\newblock {\em J. Differential Equations}, 262(1):283--312, 2017.

\bibitem[RT24]{roveri2024wellposednessrough2deuler}
Leonardo Roveri and Francesco Triggiano.
\newblock Well-posedness of rough 2{D} {E}uler equation with bounded vorticity.
\newblock {\em arXiv:2410.24040}, 2024.

\bibitem[Sch95]{Schochet1995}
Steven Schochet.
\newblock The weak vorticity formulation of the {$2$}-{D} {E}uler equations and
  concentration-cancellation.
\newblock {\em Comm. Partial Differential Equations}, 20(5-6):1077--1104, 1995.

\bibitem[Sim87]{Simon1987}
Jacques Simon.
\newblock Compact sets in the space {$L^p(0,T;B)$}.
\newblock {\em Ann. Mat. Pura Appl. (4)}, 146:65--96, 1987.

\bibitem[Yud63]{yudocivh1963}
V.~I. Yudovich.
\newblock Non-stationary flows of an ideal incompressible fluid.
\newblock {\em \v{Z}. Vy\v{c}isl. Mat i Mat. Fiz.}, 3:1032--1066, 1963.

\bibitem[Yud95]{yudovich1995}
V.~I. Yudovich.
\newblock Uniqueness theorem for the basic nonstationary problem in the
  dynamics of an ideal incompressible fluid.
\newblock {\em Math. Res. Lett.}, 2(1):27--38, 1995.

\end{thebibliography}
\end{document}